\newtheorem{thm}{Theorem}[section]
\newtheorem{cor}[thm]{Corollary}
\newtheorem{lem}[thm]{Lemma}
\newtheorem{prop}[thm]{Proposition}
\newtheorem*{lem*}{Lemma}
\theoremstyle{definition}
\newtheorem{defn}[thm]{Definition}
\newtheorem{eg}[thm]{Example}
\newtheorem{rem}[thm]{Remark}
\newcommand{\chains}{\mathfrak{ch}}
\newcommand{\pls}{\raisebox{.4\height}{\scalebox{.6}{+}}}
\newcommand{\mins}{\raisebox{.4\height}{\scalebox{.8}{-}}}
\newcommand{\firstvertexmap}{\gamma^{\mins}}
\newcommand{\lastvertexmap}{\gamma^{\pls}}
\newcommand{\adjointfirstvertexmap}{\hat{\gamma}^{\mins}}
\newcommand{\adjointlastvertexmap}{\hat{\gamma}^{\pls}}
\newcommand{\lpcohomology}[2]{H^{#1}_{\ell_{#2}}}
\newcommand{\boundedcohomology}[1]{\lpcohomology{#1}{\infty}}
\newcommand{\ex}{\mathfrak{ex}}
\newcommand{\homotopicalcat}[1]{\ifthenelse{1=#1}{\mathscr{H}}{\ifthenelse{2=#1}{\mathscr{B}}{\mathscr{C}}}}
\newcommand{\C}{\mathbb{C}}
\newcommand{\cnerve}{\nerve_\BOX}
\newcommand{\snerve}{\nerve_\DEL}
\newcommand{\METRIC}{\mathscr{M}}
\newcommand{\TOP}{\mathscr{T}}
\newcommand{\ABGROUPS}{\mathscr{A}}
\newcommand{\UNIFORM}{\mathscr{U}}
\newcommand{\CONNECTIVECHAINCOMPLEXES}{\mathcal{Ch}_{\pls}}
\newcommand{\SETS}{\mathscr{S}}
\newcommand{\geometricrealization}[1]{|#1|_{\sphericalangle}}
\newcommand{\uniformrealization}[1]{|#1|_{\infty}}
\theoremstyle{plain}
\newtheorem*{lem:barycentric}{Lemma 2.2.9, \cite{waldhausen2013spaces}}
\newtheorem*{thm:monoidal.forgetful.functor}{Theorem 22, \S III \cite{isbell1964uniform}}
\newtheorem*{thm:representation}{Theorem \ref{thm:representation}}
\newtheorem*{thm:closed.semi-uniform.product}{Theorem 26, \S III \cite{isbell1964uniform}}
\newtheorem*{thm:semi-uniform.cartesian}{Theorem 24, \S III \cite{isbell1964uniform}}
\newtheorem*{thm:compact.uniform.spaces}{Corollary 24, \S II \cite{isbell1964uniform}}
\newtheorem*{cor:embedding}{Corollary \ref{cor:embedding}}
\newtheorem*{cor:bounded.geometry}{Corollary \ref{cor:bounded.geometry}}
\newtheorem*{thm:kan.cubical.groups}{Theorem 2.1, \cite{tonks1992cubical}}
\newtheorem*{thm:triangulations}{Theorem 1.1, \cite{bowditch2020bilipschitz}}
\newtheorem*{thm:cubical.cohomology.representable}{Theorem \ref{thm:cubical.cohomology.representable}}
\newtheorem*{cor:singular.cohomology.representable}{Corollary \ref{cor:singular.cohomology.representable}}
\newtheorem*{lem:collapse.star}{Lemma 6.11, \cite{krishnan2015cubical}}
\newtheorem*{lem:star.flower}{Lemma 6.12, \cite{krishnan2015cubical}}
\newtheorem*{lem:natural.retractions}{Lemma 6.13, \cite{krishnan2015cubical}}
\newtheorem*{lem:qt.cocontinuous}{Lemma 4.2, \cite{krishnan2015cubical}}
\newtheorem*{thm:dold-kan}{Theorem 14.8.1, \cite{huebschmann2012ronald}}
\title{The uniform homotopy category}
\author{Sanjeevi Krishnan and Crichton Ogle}
\begin{document}
\begin{abstract}
  This paper gives a uniform-theoretic refinement of classical homotopy theory.  
  Both cubical sets (with connections) and uniform spaces admit classes of weak equivalences, special cases of classical weak equivalences, appropriate for the respective Lipschitz and uniform settings.  
  Cubical sets and uniform spaces admit the additional compatible structures of categories of (co)fibrant objects.  
  A categorical equivalence between classical homotopy categories of cubical sets and spaces lifts to a full and faithful embedding from an associated \textit{Lipschitz homotopy category} of cubical sets into an associated \textit{uniform homotopy category} of uniform spaces.
  Bounded cubical cohomology generalizes to a representable theory on the Lipschitz homotopy category.
  Bounded singular cohomology on path-connected spaces generalizes to a representable theory on the uniform homotopy category.  
  Along the way, this paper develops a cubical analogue of Kan's $\mathrm{Ex}^\infty$ functor and proves a cubical approximation theorem for uniform maps.  
\end{abstract}
\maketitle
\tableofcontents
\addtocontents{toc}{\protect\setcounter{tocdepth}{1}}

\section{Introduction}
Loosely, $\ell_p$ cohomology $\lpcohomology{*}{p}(X;\pi)$ is a variant of classical cohomology on geometric objects $X$ of some sort with coefficients in normed modules $\pi$, obtained by restricting the cochains in the definition to have finite $\ell_p$-norm induced from the geometry on $X$ and norm on $\pi$.
In this sense, the case $p=\infty$ covers variants of \textit{bounded cohomology} \cite{gromov1982volume,ogle2010polynomially} (cf. \cite{gersten1996note}.)
An inclusion of cochain complexes induces a \textit{comparison map}
\begin{equation}
  \label{eqn:comparison.map}
  \lpcohomology{*}{p}(X;\pi)\ra H^*(X;\pi)
\end{equation}
to ordinary cohomology, which measures some degree to which topology constrains geometry.
For example, hyperbolicity for a group $G$ can be characterized by the surjectivity of (\ref{eqn:comparison.map}) from bounded group cohomology $\boundedcohomology{2}(G;\pi)$ for suitable choices of normed coefficients $\pi$ 
\cite[Theorem 3]{mineyev2002bounded}.

This paper lays down some foundations for a homotopy theory of geometric objects in nature, such as Riemannian manifolds or groups with word length functions, convenient for comparisons of $\ell_p$-cohomology theories with one another and with ordinary cohomology.  
Modern foundations for classical homotopy are convenient for comparisons between ordinary cohomology theories.
For example, isomorphisms between ordinary cubical, simplicial, and topological cohomology follows from categorical equivalences between classical homotopy categories of cubical sets, simplicial sets, and spaces \cite[Chapter II \S 3]{quillen2006homotopical}, \cite{cisinskiprefaisceaux}.  
While $\ell_p$ simplicial and $\ell_p$ de Rham theories coincide on suitably triangulated Riemannian manifolds \cite{gol1988rham}, there does not exist in the literature an analogous equivalence of homotopy categories appropriate for the $\ell_p$ setting.
For another example, the natural comparison map $H^*(X;\Z)\ra H^*(X;\Z_p)$ is bijective when the corresponding map of representing spectra is a weak equivalence after smashing with $X$.  
While represented at the chain level (eg. \cite{MR3010108}), (\ref{eqn:comparison.map}) is not represented in the literature as a morphism in a homotopy category of intuitively geometric objects [Example \ref{eg:non-representability}]. 
This paper narrows the gap between the kinds of homotopical tools available for ordinary cohomology with analogous homotopical tools for $\ell_p$ cohomology.  
%This paper lifts a categorical equivalence between classical homotopy categories of cubical sets and spaces to a full and faithful embedding from a \textit{Lipschitz homotopy category} of \textit{cubical sets} into a \textit{uniform homotopy category} of \textit{uniform spaces}, on which some variants of $\lpcohomology{*}{p}(-;\pi)$ and (\ref{eqn:comparison.map}) admit representations, at least for the case $p=\infty$.  

Classical combinatorial homotopy theory can be refined so as to take into account geometry implicit in the combinatorics.   
\textit{Cubical sets} are formal colimits of abstract hypercubes.
There exists a model structure on cubical sets $\CUBICAL\SETS$ analogous and equivalent to the usual model structures on simplicial sets and spaces \cite{cisinskiprefaisceaux}.
Classical weak equivalences of cubical sets are in some sense cubical homotopy equivalences up to infinite subdivision [Corollary \ref{cor:classical.we}].  
\textit{Lipschitz weak equivalences}, special cases of classical weak equivalences, can be defined as cubical homotopy equivalences up to \textit{finite} subdivision [Definition \ref{defn:lipschitz.we}].
The Lipschitz weak equivalences, at least between connected cubical sets, turn out to be exactly those cubical functions which induce Lipschitz homotopy equivalences between geometric realizations [Corollary \ref{cor:cubical.we}], as a consequence of cubical approximation.   
Lipschitz weak equivalences and monos give cubical sets $\CUBICAL\SETS$ the structure of a category of cofibrant objects, whose fibrant objects are the Kan complexes [Proposition \ref{prop:cofibrant.objects}].
The \textit{Lipschitz homotopy category} $h_\infty(\CUBICAL\SETS)$, the localization of cubical sets $\CUBICAL\SETS$ by Lipschitz weak equivalences, contains the classical homotopy category $h(\CUBICAL\SETS)$ of cubical sets up to equivalence as the full subcategory of Kan complexes.  

The Lipschitz homotopy category is a convenient setting for cubical $\ell_p$ cohomology.  
Cubical $\ell_p$ cohomology is weak Lipschitz invariant on connected, \textit{sharp} [Definition \ref{defn:sharp.replacement}] cubical sets [Proposition \ref{prop:lipschitz.invariance}], including Kan complexes.
A cubical homomorphism 
\begin{equation}
  \label{eqn:representation.of.comparison.map}
  C_\infty(\pi,n)\ra C(\pi,n)
\end{equation}
between cubical commutative semigroups can be constructed between two $n$-fold deloopings of $\pi$, only the former of which takes into account the norm on $\pi$ and only the latter of which is Kan.  
This homomorphism represents the comparison map (\ref{eqn:comparison.map}) for bounded cohomology $\lpcohomology{*}{\infty}$ on connected Kan cubical sets like $C(G,1)$, including bounded group cohomology [Example \ref{eg:bounded.group.cohomology}].

\newcommand{\THMCubicalCohomologyRepresentable}{
  There exist dotted isomorphisms in the diagram
  \begin{equation*}
    \xymatrix{
	**[l]\boundedcohomology{n}(B;\pi)\ar[rr] & & **[r]H^n(B;\pi)\\
	**[l][B,C_\infty(\pi,n)]\ar@{.>}[u]\ar[rr]_{[B,C_\infty(\pi,n)\ra C(\pi,n)]} & & **[r][B,C(\pi,n)]\ar@{.>}[u]_{\cong}
	}
  \end{equation*}
  natural in cubical sets $B$ that are either finite or both connected and Kan, and normed Abelian groups $\pi$, making the entire square commute.
  Here $[-,-]$ denotes a hom-set in $h_\infty(\CUBICAL\SETS)$, the top horizontal arrow is the comparison map, and the bottom horizontal arrow is induced by the quotient cubical semigroup homomorphism $C_\infty(\pi,n)\ra C(\pi,n)$.}

\begin{thm:cubical.cohomology.representable}
  \THMCubicalCohomologyRepresentable{}
\end{thm:cubical.cohomology.representable}

Classical topological homotopy theory can be refined so as to take into account geometry explicitly given as extra structure on a space.   
\textit{Uniform spaces} \cite{isbell1964uniform} axiomatize the structure preserved by uniformly continuous maps and therefore the structure that controls $\ell_p$ cohomology \cite{elek1998coarse,pansu2007lp}.
Classical weak equivalences of spaces can be characterized in some sense by probes from topological realizations of cubical sets.
\textit{Uniform weak equivalences} can be analogously characterized [Definition \ref{defn:uniform.we}] by probes from \textit{uniform realizations}, uniform refinements of topological realizations, of cubical sets.  
Uniform weak equivalences and uniform analogues of Hurewicz fibrations give uniform spaces $\UNIFORM$ the structure of a category of fibrant objects [Proposition \ref{prop:fibrant.uniform.spaces}], whose cofibrant objects include all finite-dimensional simplicial complexes [Corollary \ref{cor:cofibrant.manifolds}] and spaces of finite CW type equipped with their fine uniformities [Corollary \ref{cor:spaces}].  
The \textit{uniform homotopy category} $h_\infty\UNIFORM$, the localization by uniform weak equivalences, contains the full subcategory of finite CW complexes as a full subcategory.  

The Lipschitz homotopy category actually fully embeds into the uniform homotopy category.
The equivalence between classical homotopy categories of simplicial sets and spaces follows from a simplicial approximation theorem for finite simplicial sets, the closure of classically acyclic cofibrations under transfinite composition, and the preservation of classically acyclic cofibrations by topological realization \cite{curtis1971simplicial,jardine2004simplicial}.
On one hand, this transfinite closure fails in the uniform setting.
On the other hand, the finiteness restriction for simplicial approximation \cite[Theorem 2C.1]{hatcher2005algebraic} is unnecessary in the uniform setting. 
We therefore obtain a cubical approximation theorem for uniform maps between general uniform realizations [Lemma \ref{lem:adjoint.approximation}], whose proof borrows techniques from an analogous result for \textit{directed maps} between compact \textit{directed realizations} \cite{krishnan2015cubical}.
And uniform realization preserves acyclic cofibrations [Lemma \ref{lem:extensions}].
A consequence is the following full inclusion of the Lipschitz homotopy category into the uniform homotopy category.

\newcommand{\COREmbedding}{
  Uniform realization induces a dotted full and faithful embedding making
  \begin{equation*}
    \xymatrix{
	**[l]h_\infty(\CUBICAL\SETS)\ar[d]\ar@{.>}[r] & **[r]h_\infty\UNIFORM\ar[d]\\
	  **[l]h(\CUBICAL\SETS)\ar[r]_{\simeq} & **[r]h\TOP,
	}
  \end{equation*}
  commute, where the left vertical arrow is induced by the identity on $\CUBICAL\SETS$, the right vertical arrow is induced by the forgetful functor $\UNIFORM\ra\TOP$, and the bottom horizontal functor is equivalence of classical homotopy categories induced by topological realization.
}

\begin{cor:embedding}
  \COREmbedding{}
\end{cor:embedding}

The uniform homotopy category is a convenient setting for a uniform-theoretic generalization of bounded singular cohomology.  
The uniform realization $K_\infty(\pi,n)\ra K(\pi,n)$ of (\ref{eqn:representation.of.comparison.map}) represents the comparison map for bounded singular cohomology on spaces.
The latter theory, defined below in terms of the right adjoint $\sing_{\BOX}$ to topological realization of cubical sets, is equivalent to the usual theory analogously defined in terms of semi-simplicial sets [Example \ref{eg:bounded.singular.cohomology}].

\newcommand{\CORSingularCohomologyRepresentable}{
  There exist dotted isomorphisms in the diagram
  \begin{equation*}
    \xymatrix{
	**[l]\boundedcohomology{n}(\sing_{\BOX}X;\pi)\ar[rr] & & **[r]H^n(\sing_{\BOX}X;\pi)\\
	**[l][X,K_\infty(\pi,n)]\ar@{.>}[u]\ar[rr]_{[X,K_\infty(\pi,n)\ra K(\pi,n)]} & & **[r][X,K(\pi,n)]\ar@{.>}[u]_{\cong}
	}
  \end{equation*}
  natural in path-connected spaces $X$ and normed Abelian groups $\pi$, making the entire square commute.
  Here $[-,-]$ denotes a hom-set in $h_\infty\UNIFORM$, $X$ is regarded in the bottom row as a uniform space with its fine uniformity, the top horizontal arrow is the comparison map, and the bottom horizontal arrow is induced by the quotient uniform map $K_\infty(\pi,n)\ra K(\pi,n)$.}

\begin{cor:singular.cohomology.representable}
  \CORSingularCohomologyRepresentable{}
\end{cor:singular.cohomology.representable}

An open problem is to formalize the relationship between the uniform-theoretic generalization $[-,K_\infty(\pi,n)]$ of bounded singular cohomology and \u{C}ech-like theories based on uniform covers \cite{andre2012uniform,bahauddin1973homology}, of interest in applications to differential equations \cite{andre2012uniform}.  
Another open problem is to recover information about the seminorm on bounded cohomology, important for applications to cubical volume \cite{loeh2015cubical,gromov1982volume}, from uniform homotopy theory alone.  
Bounded group cohomology $\boundedcohomology{n}(G;\pi)=[K(G,1),K_\infty(\pi,n)]$ admits variants by encoding extra geometry on a group $G$, like a word-length function, by a Lipschitz weak type with the same underlying classical weak type as $K(G,1)$ [Examples \ref{eg:non-kan.BG}, \ref{eg:non-kan.BG.cocycle}].
It is left as an open question whether such variants subsume \textit{polynomially bounded group cohomology} \cite{gol1988rham}, from which the comparison map to ordinary group cohomology is of interest in certain approaches to the Novikov Conjecture \cite{connes1988hyperbolic,ogle2010polynomially}.
Another open question is whether cubical sets can be replaced with simplicial sets in the whole theory, despite technical challenges in the simplicial setting [Remark \ref{rem:simplicial.disadvantages}].

\subsection{Outline}
Section \S\ref{sec:geometry} recalls and compares the geometry of cubical sets, pseudometric spaces, and uniform spaces.
While most of this section simply recalls existing theory in the literature, a kind of mapping complex for Lipschitz homotopy theory, and in particular an analogue of Kan's $\mathrm{Ex}^\infty$ functor \cite{kan1957css}, is introduced [Definition \ref{defn:lipschitz.mapping.complex}].
Section \S\ref{sec:homotopy} formalizes and compares homotopy theories for cubical sets, pseudometric spaces, and uniform spaces.
Section \S\ref{sec:cohomology} formalizes and compares cohomology theories for cubical sets and uniforms spaces.
Appendix \S\ref{sec:fibrant.objects} recalls basic definitions and examples of \textit{categories of (co)fibrant objects} \cite{brown1973abstract}.
Appendix \S\ref{sec:simplicial} presents simplicial sets as technically convenient geometric objects that are intermediate in rigidity between cubical sets and pseudometric spaces.  

\subsection{Conventions}
We fix the following conventions throughout the paper.  
The unit interval $\I=[0,1]$ will either mean the metric space when equipped with the usual Euclidean metric or simply the underlying space.  
Let $k,m,n,i,j$ denote natural numbers. 
Let $p$ denote either a positive real number greater than or equal to $1$ or $\infty$.  
Write $[0,\infty]$ for the disjoint union of the non-negative real numbers with the singleton containing $\infty$; $[0,\infty]$ will be regarded as a totally ordered set under the usual ordering and a commutative monoid under the usual addition.
In order to emphasize the connection with other $\ell_p$-cohomology theories, $\boundedcohomology{*}$ will refer to variants of \textit{bounded cohomology} as opposed to a somewhat dual theory often similarly notated in the literature \cite{gersten1996note}.  
And both $\ell_p$ cohomology for $p<\infty$ as well as bounded cohomology, in all of their variants, will refer to just the Abelian group without the induced seminorm.

\subsubsection{Relations}
Functions $f$ are the data of a domain $X$, a codomain $Y$, and a subset of $X\times Y$, the graph of $f$, satisfying certain properties.  
More generally take a \textit{relation} $E$ to mean the data of a \textit{domain} set $X$, \textit{codomain} set $Y$, and a subset $\graph{(E)}\subset X\times Y$.
Write 
$$E:X\relation Y$$ 
to denote a relation $E$ with domain $X$ and codomain $Y$.
For $E:X\relation Y$ and $x\in X$, let $E[x]$ denote the set of all $y\in Y$ with $(x,y)\in\graph{(E)}$.  
For each relation $E:X\relation Y$, write $E^{-1}$ for the relation $Y\relation X$ with $x\in E^{-1}[y]$ if and only if $y\in E[x]$.  
For a pair of relations $E_1:X\relation Y$ and $E_2:Y\relation X$, write $E_2\circ E_1$ for the relation $X\relation Z$ with $(x,z)\in\graph{(E_2\circ E_1)}$ if and only if there exists $y\in E_1[x]$ such that $z\in E_2[y]$.
A relation $E:X\relation X$ is \textit{reflexive} if $\graph{(E)}$ contains the diagonal of $X$.  

\subsubsection{Categories}
Notate special categories as follows.
\vspace{.1in}\\
\begin{tabular}{rl}
	$\SETS$ & sets and functions\\
	$\METRIC$ & pseudometric spaces and $1$-Lipschitz maps\\
	$\TOP$ & (topological) spaces and (continuous) maps\\
	$\UNIFORM$ & uniform spaces and uniform maps\\
	$\CUBICAL\SETS$ & cubical sets and cubical functions\\
	$\ABGROUPS$ & Abelian groups and group homomorphisms\\
	$\CONNECTIVECHAINCOMPLEXES$ & connective chain complexes of Abelian groups and chain homomorphisms\\
	[.1in]
\end{tabular}

Regard $\CONNECTIVECHAINCOMPLEXES$ as $\ABGROUPS$-enriched so that $\CONNECTIVECHAINCOMPLEXES(A,B)$ is the Abelian group of chain maps $A\ra B$ under degree-wise, element-wise addition.
Let $\Sigma^n$ denote the functor $\ABGROUPS\ra\CONNECTIVECHAINCOMPLEXES$ naturally sending each Abelian group $\pi$ to the chain complex concentrated in degree $n$ with $(\Sigma^n\pi)_n=\pi$.
Write $[n]$ for the ordinal $\{0<1<\cdots<n\}$. 
Write $\id_o$ for the identity morphism of an object $o$ in a given category.
Write $\star$ for the terminal object in a given complete category.
Let $\hookrightarrow$ denote an inclusion of some sort, such as between presheaves.  
Let $\pi_0$ denote the functor $\TOP\ra\SETS$ naturally sending a space to its set of path-components.
Write $\otimes$ for the tensor in a given monoidal category or the tensor $\otimes:\cat{1}\times\mathscr{V}\ra\cat{1}$ of a $\mathscr{V}$-category $\cat{1}$.   
Take a \textit{monoidal adjunction} to mean an adjunction with left adjoint strict monoidal.  
Write $\vee_{L}$ for the binary supremum operator $L^2\ra L$ in a sup-semilattice $L$.  

\subsubsection{Semigroups}
A \textit{(commutative) semigroup} is a set equipped with an associative (and commutative) multiplication.  
A \textit{semigroup homomorphism} is a function between semigroups preserving the multiplication.  
Denote the multiplication of an Abelian group additively.
Let $0$ denote the identity of an Abelian group.  

\subsubsection{Seminorms}
Take a \textit{seminorm} on an Abelian group $\pi$ to mean a function 
$$\|-\|:\pi\ra\R_{\geqslant 0}$$
satisfying the following relations for $g,h\in \pi$:
\begin{align*}
  \|0\|&=0\\
  \|g\|&=\|-g\|\\
  \|g+h\|&\leqslant\|g\|+\|h\|
\end{align*}

A seminorm $\|-\|$ on an Abelian group $\pi$ is a \textit{norm} if $g=0$ whenever $\|g\|=0$.  
Take a \textit{(semi)normed Abelian group} $\pi$ to mean an Abelian group $\pi$ equipped with a (semi)norm on it, which we denote by $\|-\|$.
Write $\ell_p(X,\pi)$ for the Abelian group of functions $c:X\ra\pi$ to a seminormed Abelian group $\pi$ for which $\sum_{x\in X}\|c(x)\|^p<\infty$ in the case $p<\infty$ and $\sup_{x\in X}\|c(x)\|<\infty$ in the case $p=\infty$.
A \textit{bounded homomorphism} is a homomorphism $\phi:G\ra H$ of seminormed Abelian groups for which $\phi\in\ell_\infty(G,H)$.

\addtocontents{toc}{\protect\setcounter{tocdepth}{2}}

\section{Geometry}\label{sec:geometry}
Geometry can be implicitly encoded in the combinatorics of a presheaf, explicitly as the (extended pseudo)metric of a (pseudo)metric space, or abstractly as the uniformity of a uniform space.
The reader is referred elsewhere for a detailed treatment of cubical sets \cite{grandis2003cubical}, pseudometric spaces \cite{goubault2019directed,lawvere1973metric}, and uniform spaces \cite{isbell1964uniform}.  
Section \S\ref{subsec:cubical.sets} recalls the basic theory of cubical sets and introduces some constructions [Definitions \ref{defn:sharp.replacement}, \ref{defn:lipschitz.mapping.complex}] needed to later define \textit{Lipschitz weak equivalences}. 
Section \S\ref{subsec:pseudometric.spaces} recalls the relevant existing theory of pseudometric spaces.
Section \S\ref{subsec:uniform.spaces} recalls the relevant existing theory of uniform spaces and fixes a convenient category for them.
Section \S\ref{subsec:comparisons} compares categories of cubical sets, pseudometric spaces, uniform spaces, and spaces along forgetful and realization functors.

\subsection{Cubical}\label{subsec:cubical.sets}
We first fix the site over which cubical objects are defined to be the minimal variant in the literature containing coconnections of one type.  
We then recall some mostly standard notation and results concerning cubical sets thus defined, usually referred to in the literature as \textit{cubical sets with connections}.   
Finally, we extend a subdivision operator on traditionally defined cubical sets and subsequently construct \textit{Lipschitz mapping complexes}.

\subsubsection{The site}
Write $\delta_\pm$ for the monotone functions $[0]\ra[1]$ sending $0$ to $\half\pm\half$.  
Let $\BOX_1$ denote the category whose objects are $[0],[1]$ and whose morphisms are generated by $\delta_{\pm},[1]\ra[0]$.
Let $\BOX_2$ denote the category whose objects are $[0],[1],[1]^2$ and whose morphisms are generated by $\delta_{\pm},[1]\ra[0]$, and $\vee_{\boxobj{2}}:\boxobj{2}\ra[1]$.
Write $\BOX$ for the submonoidal category of the Cartesian monoidal category of small categories and functors between them generated by $\BOX_2$.  
In contract, the traditional, most common, and minimal variant of $\BOX$ is the submonoidal category of $\BOX$ generated by $\BOX_1$.  
Write $\boxobj{n}$ for the $n$-fold tensor product of $[1]$ in $\BOX$.

\begin{eg}
  The objects of $\BOX$ are the sup-semilattices 
  $$[0],[1],\boxobj{2},\boxobj{3},\cdots$$
\end{eg}

\begin{eg}
  The morphisms of $\BOX$ are sup-semilattice homomorphisms of the form
  $$\boxobj{m}\ra\boxobj{n}.$$
\end{eg}

When $n$ is understood, for each $1\leqslant i\leqslant n$ let
\begin{align*}
	\delta_{\pm i}&=\boxobj{i}\otimes\delta_{\pm}\otimes\boxobj{n-i}:\boxobj{n}\ira\boxobj{n+1}
\end{align*}

\subsubsection{Cubical objects}
Write $\CUBICAL\cat{1}$ for the category of functors $\OP{\BOX}\ra\cat{1}$ and natural transformations between them.  
Regard $\CUBICAL\SETS$ as closed monoidal category with tensor the cocontinuous extension of the tensor on $\BOX$ along the Yoneda embedding.  
\textit{Cubical sets} and \textit{cubical functions} are, respectively, the objects and morphisms of $c\SETS$. 
\textit{Cubical (commutative semi)groups} and \textit{cubical homomorphisms} are, respectively, the objects and morphisms of $\CUBICAL\cat{1}$ for the case $\cat{1}$ the category of (commutative semi)groups and semigroup homomorphisms between them.
We often treat cubical (commutative semi)groups as cubical sets by implicitly composing them with the forgetful functor to $\SETS$.  

\begin{eg}
  The representable cubical sets $\BOX[0],\BOX[1],\BOX\boxobj{2},\ldots$ are defined by
  $$\BOX\boxobj{n}(-)=\BOX(-,\boxobj{n}):\OP{\BOX}\ra\SETS.$$
\end{eg}

We regard $\CUBICAL\SETS$ as closed monoidal with tensor product $\otimes$ extending the tensor product on $\BOX$ along the functor $\BOX^2\ra(\CUBICAL\SETS)^2$ induced from the Yoneda Embedding $\BOX[-]:\BOX\ira\CUBICAL\SETS$.  
We henceforth $\CUBICAL\SETS$ as self-enriched with $\CUBICAL\SETS(B,-)$ right adjoint to $B\otimes-$.  
There exist monic inclusions, natural in cubical sets $B,C$, of the form 
$$B\otimes C\ira B\times C.$$

Fix a cubical set $C$. 
Write $C_n$ for $C(\boxobj{n})$. 
An \textit{$n$-cube} in $C$ is an element in $C_n$; a \textit{vertex} is a $0$-cube. 
Write $\theta_*$ for the image of $\theta\in C_n$ under the natural bijection 
$$C_n\cong c\SETS(\BOX\boxobj{n},C)_0.$$

For a given cubical set $C$ and natural number $n$, let
$$d_{\pm i}=C(\delta_{\pm i}:\boxobj{n}\ra\boxobj{n+1}):C_{n+1}\ra C_n.$$
Define $sC_{-1}=\varnothing$ and for each $n$ define $sC_n$ to be the subset of $C_{n+1}$ consisting of elements in the image of a function of the form $C(\sigma)$ for $\sigma$ a surjective $\BOX$-morphism from $\boxobj{n+1}$.  
An $(n+1)$-cube in $C$ is \textit{non-degenerate} if it does not lie in $sC_n$.  
The \textit{dimension} of $C$ is the infimum over all natural numbers $n$ for which $C_{n+1}=sC_n$.
A cubical set $C$ is \textit{$n$-coskeletal} if every cubical function $A\ra C$ from an $n$-dimensional cubical set $A$ uniquely extends along an inclusion $A\ira B$ of subpresheaves such that $A_n=B_n$.
Call $C$ \textit{finite} if it is finite-dimensional and only finitely has many non-degenerate $n$-cubes for each $n$.  

\begin{eg}
  A $1$-dimensional cubical set $C$ is the data of the directed graph with \ldots
  \begin{enumerate}
	\item \ldots vertex set $C_0$
	\item \ldots edge set $C_1-sC_0$
	\item \ldots source and target functions the restrictions of $d_{-1},d_{+1}$ to functions $C_1-sC_0\ra C_0$
  \end{enumerate}
  In particular, the finite cubical set $\BOX[1]$ represents the directed graph $\bullet\ra\bullet$.  
\end{eg}

Write $\partial\BOX\boxobj{n}$ for the maximal subpresheaf of $\BOX\boxobj{n}$ with
$$\id_{\boxobj{n}}\notin(\partial\BOX\boxobj{n})_n.$$

Write $\sqcup^{\pm i}\BOX\boxobj{n}$ for the maximal subpresheaf of $\BOX\boxobj{n}$ with
$$\delta_{\pm i}\notin(\sqcup^i\boxobj{n})_{n-1}.$$

The inclusion $\sqcup^{\pm i}\BOX\boxobj{n}\ira\BOX\boxobj{n}$ models the inclusion of an empty box without a lid into a closed and solid box.
A cubical set $C$ is \textit{Kan} if every cubical function of the form $\sqcup^{\pm i}\boxobj{n}\ra C$ extends along the inclusion $\sqcup^{\pm i}\boxobj{n}\ira\BOX\boxobj{n}$.  

\begin{thm:kan.cubical.groups}
  Each cubical group is Kan.
\end{thm:kan.cubical.groups}

Define $\pi_0C$ for the following coequalizer diagram natural in cubical sets $C$:
$$\xymatrix{C_1\ar@<.7ex>[rr]^{d_{-1}}\ar@<-.7ex>[rr]_{d_{+1}} & & C_0 \ar[r] & \pi_0C}$$
A cubical set $C$ is \textit{connected} if $\pi_0C$ is a singleton.
Write $\cnerve\cat{1}$ for the \textit{cubical nerve}, natural in small categories $\cat{1}$, defined as the cubical set naturally sending each $\BOX$-object $\boxobj{n}$ to the set of functors $\boxobj{n}\ra\cat{1}$.  
Write $\Tau_1$ for the monoidal left adjoint to $\cnerve$, regarded as a functor from the category of small categories and functors between them to $\CUBICAL\SETS$.  

\begin{eg}
  We can make the natural identification $\Tau_1\BOX\boxobj{n}=\boxobj{n}$.
\end{eg}

\begin{eg}
  For each group $G$, $\cnerve G$ is Kan because each inclusion of the form
  $$\sqcup^i\boxobj{n}\ira\BOX\boxobj{n}$$
  induces an isomorphism of groupoids after applying the left adjoint to the restriction of $\cnerve$ to groupoids, the composite of $\Tau_1$ with groupoidification.  
\end{eg}

Write $(-)^\sharp\dashv(-)_\sharp$ for the adjunction
$$(-)^\sharp:\CUBICAL\SETS\lras\CUBICAL\SETS:(-)_\sharp$$
with $(\BOX\boxobj{n})^\sharp$ the cubical nerve $\cnerve\boxobj{n}$ natural in $\BOX$-objects $\boxobj{n}$.
The restriction of the cubical nerve functor to $\BOX$ sends tensor products to binary products.  
Therefore $(-)^\sharp$ sends tensor products to binary products.
Regard the natural cubical function $C\ra C^\sharp$ induced by inclusions of the form $\BOX\boxobj{n}\ira\cnerve\boxobj{n}$ as an inclusion of subpresheaves.  
Recall that some of the properties of fibrant objects in a model structure extend to more general \textit{sharp objects} \cite{rezk1998fibrations}.  
We label certain cubical sets \textit{sharp} if they function as analogues of sharp objects with respect to a Lipschitz homotopy theory later defined.    

\begin{defn}
  \label{defn:sharp.replacement}
  Call a cubical set $C$ \textit{sharp} if $C\ira C^\sharp$ admits a retraction.  
\end{defn}

Each cubical set of the form $C^\sharp$ is sharp by the following lemma, whose proof uses intermediate simplicial constructions and is therefore deferred until the end of \S\ref{sec:simplicial}.  

\begin{lem}
  \label{lem:sharp.sharp}
  For each cubical set $C$, the inclusion
  $$C^\sharp\ira C^{\sharp\sharp}$$
  admits a retraction natural in cubical sets $C$.  
\end{lem}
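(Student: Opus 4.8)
The plan is to exploit the fact that $(-)^\sharp$ sends tensor products to binary products, together with the adjunction $(-)^\sharp \dashv (-)_\sharp$, to reduce the claim to a statement about the counit and unit of this adjunction. First I would observe that the map $C^\sharp \ira C^{\sharp\sharp}$ is precisely the component at $C^\sharp$ of the natural inclusion $D \ira D^\sharp$ (valid for every cubical set $D$, here with $D = C^\sharp$). So it suffices to produce, naturally in $D$ ranging over cubical sets of the form $C^\sharp$, a retraction of $D \ira D^\sharp$. Equivalently, since $D^\sharp = D_\sharp$-images are governed by the monad structure, I want to show that every cubical set of the form $C^\sharp$ is sharp in the sense of Definition \ref{defn:sharp.replacement}, but with the retraction chosen naturally.

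The key step is to understand $(-)^\sharp$ as (the cubical-set incarnation of) a functor that factors through small categories. Writing $(-)^\sharp$ on representables as $\BOX\boxobj{n} \mapsto \cnerve\boxobj{n} = \cnerve(\Tau_1 \BOX\boxobj{n})$, the cocontinuous extension gives $C^\sharp \cong \cnerve(\text{something})$ only when $C$ is representable; in general $(-)^\sharp$ is the left Kan extension along Yoneda of $\BOX\boxobj{n}\mapsto\cnerve\boxobj{n}$, and I would instead use the adjunction directly. The cleanest route: the unit $\unit_D : D \ra D^\sharp$ and the endofunctor $(-)^\sharp$ assemble into a pointed endofunctor; I want to show it is \emph{well-pointed} on the image of itself, i.e. that the two maps $D^\sharp \rras D^{\sharp\sharp}$ (namely $\unit_{D^\sharp}$ and $D^\sharp \to D^{\sharp\sharp}$ applied to $\unit_D$) coincide and that this common map is split. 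For this I would pass to the right adjoint: applying $(-)_\sharp$ to $\unit_D : D \ira D^\sharp$ and using the triangle identities, the composite $D^\sharp \xra{\unit_{D^\sharp}} D^{\sharp\sharp} \xra{(\unit_D)_\sharp^{\text{-ish}}} \ldots$ — more precisely, I would use that the counit $\counit_{D^\sharp}: (D^\sharp)^{\flat} \to \ldots$ splits appropriately. The retraction $D^{\sharp\sharp} \ra D^\sharp$ is then built from $\counit$ of the adjunction evaluated at the right object, and naturality is automatic because $\counit$ is a natural transformation.

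Concretely, the retraction should be the following: $(-)^\sharp$ being idempotent-like is false in general, but the monad $(-)^\sharp$ does satisfy that its multiplication $\mu_D : D^{\sharp\sharp} \ra D^\sharp$ (coming from $(-)^\sharp \dashv (-)_\sharp$ via $(-)_\sharp(-)^\sharp$ being a monad, or rather via the comonad/monad on the other side) retracts $\unit_{D^\sharp}$. So the candidate retraction of $C^\sharp \ira C^{\sharp\sharp}$ is simply the monad multiplication $\mu_{C^\sharp}$, which satisfies $\mu_{C^\sharp} \circ \unit_{C^{\sharp}\text{-at-}C^\sharp}$; wait — I must be careful about which $\sharp$ is the "new" one. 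The inclusion $C^\sharp \ira C^{\sharp\sharp}$ adds a $\sharp$ on the \emph{outside}, so it is $\unit_{C^\sharp}$, and the monad multiplication $\mu$ (if $(-)^\sharp$ underlies a monad on $\CUBICAL\SETS$ — which it does, as the composite of adjoints $(-)^\sharp = $ [inclusion into simplicial-ish] followed by its adjoint, per the paper's promise that the proof goes through simplicial constructions) gives $\mu_C : C^{\sharp\sharp} \ra C^\sharp$ with $\mu_C \circ \unit_{C^\sharp} = \id_{C^\sharp}$ by the monad unit law. That is exactly a natural retraction.

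The main obstacle I anticipate is \emph{establishing that $(-)^\sharp$ underlies a monad at all}, i.e.\ producing the multiplication $\mu$. The functor $(-)^\sharp$ is defined only via its values on representables plus cocontinuity, so it is not manifestly the underlying functor of a monad; the paper flags that the honest argument routes through intermediate simplicial constructions (triangulation $\tri$, the simplicial nerve $\snerve$, and presumably a statement like "$\cnerve$ on categories is fully faithful, so its cubical avatar is a localization"). Concretely I expect one shows $C^\sharp$ is characterized as the $2$-coskeleton-type construction or as $\cnerve \Tau_1$-type closure, deduces that $(-)^\sharp$ is idempotent \emph{up to the inclusion having a retraction} (not literally idempotent, since $\Tau_1 C^\sharp \neq \Tau_1 C$ in general), and then the retraction $C^{\sharp\sharp}\ra C^\sharp$ comes from the adjunction counit $\counit : \Tau_1 \cnerve \Rightarrow \id$ on the categorical side transported back. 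So the real content — and the reason this is deferred to the end of \S\ref{sec:simplicial} — is the simplicial bookkeeping showing the relevant counit is split epi on the nose at objects in the image of $(-)^\sharp$; once that is in hand, naturality and the retraction property are formal consequences of the triangle identities.
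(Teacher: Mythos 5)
Your proposal is correct and follows essentially the paper's route: the paper identifies $(-)^\sharp$ with the monad $\qua\,\tri$ of the triangulation adjunction $\tri\dashv\qua$ between cubical and simplicial sets [Lemma \ref{lem:qt.cocontinuous}], identifies the inclusion $C^\sharp\ira C^{\sharp\sharp}$ with the unit $\eta_{\qua\,\tri\,C}$, and takes the retraction to be $\qua\,\epsilon_{\tri\,C}$ --- exactly your monad multiplication --- so the retraction property is the triangle identity and naturality is automatic. The only wobble in your writeup is the suggestion that the counit of $\Tau_1\dashv\cnerve$ might be the source of the splitting; that adjunction is the wrong one (since $\cnerve\,\Tau_1$ is not cocontinuous it cannot agree with $(-)^\sharp$), but you correctly settle on the simplicial adjunction as the real mechanism, and the identification $C^\sharp\cong\qua\,\tri\,C$ that you flag as the missing ingredient is precisely the deferred Lemma \ref{lem:qt.cocontinuous}.
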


We introduce some general notation for the support of a point in some construction on a cubical set, like a realization or a subdivision.
Consider a functor of the form $F:\CUBICAL\SETS\ra\cat{1}$.
For each cubical set $C$ and $\cat{1}$-morphism $\zeta$ to $FC$ which factors through $F(B\ira C)$ for some image $B$ of a representable, write $\support_F(\zeta,C)$ for the terminal $(\BOX[-]/C)$-object $\sigma:\BOX\boxobj{n}\ra C$ for which $\zeta$ factors through $F(\im\,\sigma\ira C)$.  

\subsubsection{Subdivisions}
We extend a subdivision operator on traditionally defined cubical sets \cite{jardine2002cubical} for cubical sets in the sense of this paper.  
We then build a Lipschitz mapping complex, and in particular a cubical analogue of Kan's $\mathrm{Ex}^\infty$ functor, in terms of the right adjoint to this subdivision operator.  
We then conclude with a cubical analogue of a classical result in simplicial theory that double barycentric subdivision factors through a polyhedral complex.  
To start, write $\sd\,\BOX[1]$ for the maximum $1$-dimensional subpresheaf of the cubical nerve of the poset of non-empty subsets of $[1]$ ordered by inclusion.  

\begin{eg}
  The cubical set $\sd\,\BOX[1]$ can be depicted as the directed graph
  $$\bullet\ra\bullet\la\bullet.$$
\end{eg}

Define $\sd\,\BOX\boxobj{n}=(\sd\,\BOX[1])^{\otimes n}$.  
The vertices of $\sd\,\BOX\boxobj{n}$ are the $n$-fold products of non-empty subsets of $[1]$, intervals in the poset $\boxobj{n}$.  
Each $\BOX$-morphism $\phi:\boxobj{m}\ra\boxobj{n}$, interval-preserving because it is a Cartesian monoidal product of interval-preserving monotone functions in $\BOX_2$, induces a function $(\sd\BOX[\phi])_0:(\sd\,\BOX\boxobj{m})_0\ra(\sd\,\BOX\boxobj{n})_0$.  
The function $(\sd\BOX[\phi])_0$ extends, necessarily uniquely, to a cubical function
$$\sd\BOX[\phi]:\sd\BOX\boxobj{m}\ra\sd\BOX\boxobj{n}$$
for $\phi=\delta_{\pm},\vee_{\boxobj{2}}$ by direct verification.  
Thus $\sd\BOX[-]$ defines a functor $\BOX_2\ra\CUBICAL\SETS$ which extends uniquely to a cocontinuous monoidal endofunctor on $\CUBICAL\SETS$, which we write as $\sd$.  
Write $\ex$ for the right adjoint in the monoidal adjunction
$$\sd:\CUBICAL\SETS\lras\CUBICAL\SETS:\ex.$$

Define natural transformations $\gamma^{\pm}:\sd\ra\id_{\CUBICAL\SETS}$, $\adjointfirstvertexmap,\adjointlastvertexmap:\id_{\CUBICAL\SETS}\ra\ex$ so that 
$$(\gamma^{\pm}_{\BOX[1]})_0(\{0,1\})=\half\pm\half$$ 
$\gamma^{\pm}:\sd\ra\id_{\CUBICAL\SETS}$ are monoidal, and $\adjointfirstvertexmap,\adjointlastvertexmap$ are the respective component-wise adjoints to $\firstvertexmap,\lastvertexmap$.

\begin{eg}
  The cubical functions 
  $$\gamma^{\pm}_{\BOX[1]}:\sd\,\BOX[1]\ra\BOX[1]$$
  map the middle vertex in $\bullet\ra\bullet\la\bullet$ to the respective left, right vertices in $\bullet\ra\bullet$.
\end{eg}

\begin{lem}
  \label{lem:dilation.order}
  For all cubical sets $C$ and $v\in(\sd\,C)_0$, 
  $$(\gamma^{\mins}_C)_0(v)\leqslant_{\Tau_1C}(\gamma^{\pls}_C)_0(v).$$
\end{lem}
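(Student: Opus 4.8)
The plan is to reduce the claim to the case of the representable cube $\BOX\boxobj{n}$ by naturality and cocontinuity, and then to an explicit, finite verification on vertices. First I would observe that both sides of the inequality are natural in $C$: the functor $\sd$ is cocontinuous, every cubical set is a colimit of representables $\BOX\boxobj{n}$, and $\Tau_1$ preserves colimits (it is a left adjoint), so the partial order $\leqslant_{\Tau_1C}$ on vertices of $\Tau_1C$ is computed as a colimit of the posets $\Tau_1\BOX\boxobj{n}=\boxobj{n}$. A vertex $v\in(\sd\,C)_0$ has a support $\sigma=\support_{\sd}(v,C)\colon\BOX\boxobj{n}\ra C$ with $v$ in the image of $(\sd\,\sigma)_0$, say $v=(\sd\,\sigma)_0(\tilde v)$ for $\tilde v\in(\sd\,\BOX\boxobj{n})_0$; by naturality of $\gamma^{\pm}$ it then suffices to prove the inequality for $\tilde v$ in $\sd\,\BOX\boxobj{n}$, i.e. to show $(\gamma^{\mins}_{\BOX\boxobj{n}})_0(\tilde v)\leqslant_{\boxobj{n}}(\gamma^{\pls}_{\BOX\boxobj{n}})_0(\tilde v)$, since a monotone map of posets (here $\boxobj{n}\ra\Tau_1C$) preserves the asserted $\leqslant$.

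Next I would unwind the explicit description of $\gamma^{\pm}$ on representable cubes. By definition $\sd\,\BOX\boxobj{n}=(\sd\,\BOX[1])^{\otimes n}$ and its vertices are $n$-tuples $(J_1,\dots,J_n)$ of nonempty subsets of $[1]$, i.e. intervals in the poset $\boxobj{n}=[1]^n$; and $\gamma^{\pm}$ is monoidal with $(\gamma^{\pm}_{\BOX[1]})_0(\{0,1\})=\half\pm\half$, while clearly $(\gamma^{\pm}_{\BOX[1]})_0(\{0\})=0$ and $(\gamma^{\pm}_{\BOX[1]})_0(\{1\})=1$. Hence on a vertex $(J_1,\dots,J_n)$,
\[
(\gamma^{\mins}_{\BOX\boxobj{n}})_0(J_1,\dots,J_n)=(\min J_1,\dots,\min J_n),\qquad
(\gamma^{\pls}_{\BOX\boxobj{n}})_0(J_1,\dots,J_n)=(\max J_1,\dots,\max J_n),
\]
under the identification $\Tau_1\BOX\boxobj{n}=\boxobj{n}$. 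Since each $J_i$ is nonempty, $\min J_i\leqslant\max J_i$ in $[1]$, and therefore the two $n$-tuples are comparable coordinatewise, i.e. $(\gamma^{\mins}_{\BOX\boxobj{n}})_0(\tilde v)\leqslant_{\boxobj{n}}(\gamma^{\pls}_{\BOX\boxobj{n}})_0(\tilde v)$. Composing with the monotone map $\Tau_1\sigma\colon\boxobj{n}\ra\Tau_1C$ gives the desired inequality in $\Tau_1C$.

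The main obstacle I anticipate is bookkeeping rather than conceptual: one must check that $\gamma^{\pm}$ really is given by the coordinatewise $\min$/$\max$ formula above, which requires verifying that the monoidal structure on $\gamma^{\pm}$ is compatible with the identification of vertices of $(\sd\,\BOX[1])^{\otimes n}$ with $n$-tuples of intervals, and that the adjoint-vertex and naturality conventions fixed just before the statement are consistent with the support/colimit reduction. One should also double-check that $\support_{\sd}(v,C)$ exists, i.e. that $v$ lies in the image of $\sd$ applied to some representable — this is immediate since $v$ is a single vertex, hence in the image of $\sd$ applied to (the image of) a $0$-cube's, or more safely an $n$-cube's, structure map. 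Once these conventions are pinned down, the inequality is the trivial observation $\min J\leqslant\max J$ for a nonempty $J\subseteq[1]$, propagated through functoriality.
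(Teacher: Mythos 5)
Your proposal is correct and follows essentially the same route as the paper: reduce to the representable case via naturality and monoidality of $\gamma^{\pm}$ and $\Tau_1$, then verify directly on the vertices of $\sd\,\BOX[1]$ (your coordinatewise $\min$/$\max$ formula is exactly the paper's ``exhaustive verification on the three possibilities for $v$'', propagated through the monoidal structure). The extra bookkeeping you supply about supports and the interval description of vertices is consistent with the paper's conventions and fills in what the paper leaves implicit.
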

\begin{proof}
  It suffices to consider the case $C=\BOX[1]$ by $\gamma^{\mins},\gamma^{\pls},\Tau_1$ monoidal and naturality.  
  In that case, the identity follows by exhaustive verification on the three possibilities for $v$.  
\end{proof}

We now introduce a kind of mapping complex $\langle B,C\rangle$ which will combinatorially model the space of all uniform maps between pseudometric spaces modelled by $B,C$ [Theorem \ref{thm:enriched.equivalence}]. 
The construction $\langle\star,-\rangle$ will turn out to function as a cubical analogue of Kan's $\mathrm{Ex}^\infty$ construction \cite{kan1957css} [Corollary \ref{cor:classical.we}]; the only formal difference between $\mathrm{Ex}^\infty$ and $\langle\star,-\rangle$ is that $\langle\star,-\rangle$ applies sharp replacement before applying a transfinite composite of $\ex$.  

\begin{defn}
  \label{defn:lipschitz.mapping.complex}
  Let $\langle B,C\rangle$ denote the cubical set
  $$\langle B,C\rangle=\colim\left( (\ex\,C^{\sharp})^B\xra{(\adjointfirstvertexmap_{C^{\sharp}})^B}(\ex^2C^{\sharp} )^B\xra{(\adjointfirstvertexmap_{\ex\,C^{\sharp}})^B}(\ex^3C^{\sharp})^B\cdots\right)$$
  natural in cubical sets $B,C$ with $B$ connected.  
  Define $\langle \amalg_iB_i,C\rangle=\prod_i\langle B_i,C\rangle$ natural in coproducts $\amalg_iB_i$ in $\CUBICAL\SETS$ and cubical sets $C$.  
\end{defn}

The composite of inclusion $C\ira C^{\sharp}$ with $\hat\gamma^{\mins k}_C$ induces natural inclusions
$$\CUBICAL\SETS(B,C)\ira\langle B,C\rangle.$$

The following lemmas collectively assert that, in some sense, quadruple cubical subdivisions locally factor through representables.
This factorizability, the analogue of the factorizability of double barycentric simplicial subdivision through polyhedral complexes \cite[Lemma 4.4, Proposition 4.5]{jardine2004simplicial}, allows homotopies to be constructed in terms of the natural linear structure of geometric realizations of representables.  
Let $\Star_C(v)$ denote the \textit{closed star} of a vertex $v\in C_{0(0)}$ in a cubical set $C$, the subpresheaf of $C$ consisting of all images $A\subset C$ of representables in $C$ having vertex $v$.
Justifications are given at the end of the section.

\begin{lem}
  \label{lem:collapse.star}
  For all cubical sets $C$ and $v\in\sd^4C_{0}$, 
  $$(\firstvertexmap\lastvertexmap)^2_C(\Star_{\sd^2C})(v)\subset\support_{\sd^2}(v_*,C).$$
\end{lem}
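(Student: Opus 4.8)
The overall strategy is to reduce the claim to the case where $C$ is a representable $\BOX\boxobj{m}$, by naturality of everything in sight. More precisely, fix a vertex $v \in (\sd^4 C)_0$, and let $\sigma : \BOX\boxobj{m} \to C$ be the terminal representable through whose image $v_*$ factors, so that $\support_{\sd^2}(v_*, C)$ is by definition $\im(\sigma) \hookrightarrow C$ (after applying $\sd^2$). Since $\sd$ is cocontinuous and the construction $\Star$ and the vertex maps $\firstvertexmap, \lastvertexmap$ are natural, the inclusion $\sd^4(\im\,\sigma) \hookrightarrow \sd^4 C$ sends the closed star of $v$ in $\sd^2(\im\,\sigma)$ onto the relevant part of the closed star in $\sd^2 C$, so it suffices to prove the statement with $C$ replaced by $\im\,\sigma$. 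Then, since $\im\,\sigma$ is a quotient of $\BOX\boxobj{m}$ and $v$ lifts (not necessarily uniquely, but that is harmless since we only need containment of images) to a vertex $\tilde v \in (\sd^4\BOX\boxobj{m})_0$ whose support is all of $\BOX\boxobj{m}$, it suffices to prove: for $C = \BOX\boxobj{m}$ and $v \in (\sd^4 \BOX\boxobj{m})_0$ with $\support_{\sd^2}(v_*, \BOX\boxobj{m}) = \BOX\boxobj{m}$, we have $(\firstvertexmap\lastvertexmap)^2_{\BOX\boxobj{m}}(\Star_{\sd^2\BOX\boxobj{m}}(v)) \subset \BOX\boxobj{m}$ — which is automatic, but the real content is the analogous statement for a \emph{non-maximal} support, handled by the reduction above.

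Next I would make the geometry explicit. By Lemma~\ref{lem:dilation.order} and the monoidality of $\sd$ and the vertex maps, on a representable $\BOX\boxobj{m} = \BOX[1]^{\otimes m}$ the composite $\firstvertexmap\lastvertexmap : \sd^2 \BOX\boxobj{m} \to \BOX\boxobj{m}$ (or rather $\sd \BOX\boxobj{m}$, after one application) is a coordinatewise ``collapse'' operator: vertices of $\sd^2\BOX[1]$, which are intervals-of-intervals in $[1]$, are sent to intervals in $[1]$ (vertices of $\sd\BOX[1]$) in a way that shrinks them, and iterating once more lands in vertices of $\BOX\boxobj{m}$. The key combinatorial fact is that a vertex $w$ in the closed star of $v$ in $\sd^2 \BOX\boxobj{m}$ shares a cube with $v$, hence its coordinates are ``close'' to those of $v$; applying $(\firstvertexmap\lastvertexmap)^2$ collapses this spread entirely, so the image vertex depends only on which minimal face of $\BOX\boxobj{m}$ the original $v$ came from — i.e.\ lies in $\support_{\sd^2}(v_*, \BOX\boxobj{m})$. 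This is the cubical analogue of the simplicial statement that double subdivision of a simplex, restricted to a star, factors through the carrier face, and the proof should mirror the estimates in Lemma 6.11 of \cite{krishnan2015cubical}.

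\textbf{The main obstacle} will be bookkeeping the iterated-interval combinatorics on $\sd^2\BOX[1]$ and checking that the ``collapse'' operator $(\firstvertexmap\lastvertexmap)^2$ genuinely sends an entire closed star into the carrier, rather than merely into a small neighborhood of it. Concretely, one must verify that if $v$ is supported (after $\sd^2$) on a proper face $F \subset \BOX\boxobj{m}$ — say $v$'s coordinates in the directions normal to $F$ are the ``endpoint'' iterated intervals — then every $w$ adjacent to $v$ in $\sd^2\BOX\boxobj{m}$ still has its normal coordinates collapsing to the same endpoints under two applications of $\firstvertexmap\lastvertexmap$. This requires the precise description of $\firstvertexmap, \lastvertexmap$ on $\sd\BOX[1]$ given just before the lemma, namely $(\gamma^{\pm}_{\BOX[1]})_0(\{0,1\}) = \half \pm \half$, together with exhaustive verification on the (finitely many) vertices of $\sd^2\BOX[1]$ — the three vertices of $\sd\BOX[1]$ being $\{0\}, \{0,1\}, \{1\}$ means $\sd^2\BOX[1]$ has a small explicit vertex set, so the case check is finite but fiddly. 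I expect the author's proof to simply say ``by exhaustive verification, reducing to $\BOX[1]$ via monoidality and naturality, cf.\ \cite{krishnan2015cubical}.''
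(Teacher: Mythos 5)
Your proposal matches the paper's approach in spirit: the paper gives no self-contained proof at all, but instead defers to Lemma 6.11 of \cite{krishnan2015cubical}, remarking only that the differences (a larger box category, different edge orientations) are inconsequential and that "the lemmas above follow from the case where all cubical sets are representable by naturality" --- exactly the citation-plus-naturality move you predicted in your last sentence, and your coordinatewise collapse analysis on $\sd^2\BOX[1]$ is the content of that cited lemma.

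There is, however, a miswired step in your first paragraph. You claim it suffices to prove the statement with $C$ replaced by $\im\,\sigma$, where $\sigma$ is the carrier of $v$, and you then observe that for $\im\,\sigma$ the statement is automatic. But this reduction is not valid: the closed star of $v$ in $\sd^4 C$ is in general strictly larger than the closed star of $v$ in $\sd^4(\im\,\sigma)$ --- the cubes of the star that stick out of $\sd^4(\im\,\sigma)$ are precisely what the lemma is about, and replacing $C$ by $\im\,\sigma$ discards them, trivializing the claim rather than proving it. You half-notice this ("the real content is the analogous statement for a non-maximal support, handled by the reduction above"), but the reduction above does not handle it. The correct reduction, and the one the paper implicitly invokes, goes cube by cube: each cube $A\subset\Star_{\sd^4C}(v)$ is the image of a cube of $\sd^4\BOX\boxobj{m}$ under $\sd^4\tau$ for some $m$-cube $\tau$ of $C$ whose image contains the carrier of $v$ as a face; naturality of $(\firstvertexmap\lastvertexmap)^2$ and of supports then reduces the containment to the case $C=\BOX\boxobj{m}$ with $v$ supported on a (possibly proper) face $F\ira\BOX\boxobj{m}$. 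That is exactly the situation your third paragraph analyzes, so once the reduction is rerouted this way your argument goes through; as written, though, the logical chain from general $C$ to the representable-with-proper-face case is broken.
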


\begin{lem}
  \label{lem:star.flower}
  Fix cubical set $C$ and image $A\subset\sd^2C$ of a representable.
  There exist:
  \begin{enumerate}
    \item unique minimal $B\subset C$ with $A\cap\sd^2B\neq\varnothing$; and
    \item unique retraction $\rho:A\ra A\cap\sd^2B$.
  \end{enumerate}
  Moreover, $A\cap\sd^2B$ is representable and $(\firstvertexmap\lastvertexmap)_C(A\ira\sd^2C)=(A\cap\sd^2B\ira\sd^2C)\rho$.
\end{lem}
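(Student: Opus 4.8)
The plan is to localise the statement to a representable and then read it off from the face‑poset combinatorics of double subdivision. Let $\tau:\BOX\boxobj{k}\ra\sd^2 C$ be a cube generating $A$ and let $\sigma:\BOX\boxobj{n}\ra C$ be $\support_{\sd^2}(\tau,C)$, so that $\tau$ factors through $\sd^2(\im\,\sigma\ira C)$ and hence $A\subseteq\sd^2(\im\,\sigma)$. Since $\sd$ is cocontinuous and carries face inclusions between representables to inclusions of subpresheaves (transparent here because subdivisions are computed cube by cube and every $\BOX$‑morphism is a sup‑semilattice homomorphism), $\sd$, and hence $\sd^2$, preserves monomorphisms and intersections of subpresheaves; thus for any $B'\subset C$ one has $A\cap\sd^2 B'=A\cap\sd^2(B'\cap\im\,\sigma)$, so a minimal $B$ lies inside $\im\,\sigma$. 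As $\sigma$ need not be monic we do not literally reduce to $C$ representable, but we may run the argument in $\sd^2\BOX\boxobj{n}$ and push the conclusion forward along $\sd^2\sigma$, which is harmless because $\firstvertexmap,\lastvertexmap$ are natural and $\sd^2\sigma$ sends images of representables to images of representables.

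Next I would fix the combinatorial model. Using that $\Tau_1$ is monoidal and $\Tau_1\sd\BOX[1]$ is the face poset $\{0\}<\{0,1\}>\{1\}$ of $[1]$, identify $\Tau_1\sd\BOX\boxobj{n}$ with the poset $\mathcal F_n$ of faces of $\boxobj{n}$ under inclusion, so that the vertices of $\sd^2\BOX\boxobj{n}$ are the intervals $[F\leqslant G]$ of $\mathcal F_n$, and $[F\leqslant G]$ lies in $\sd^2 G'$ exactly for faces $G'\supseteq G$. Because a cube induces an order‑preserving map out of $\boxobj{k}$ and $\boxobj{k}$ has a least element, $\tau$ has a least vertex $v_0=[F_0\leqslant G_0]$, which lies below every vertex of $A$ in the cube order. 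Let $B$ be the face $G_0$, viewed as a subpresheaf of $C$. Then $v_0\in A\cap\sd^2 B$, and if $A\cap\sd^2 B'\neq\varnothing$ for some subpresheaf $B'\subset C$, one checks — using the explicit description of $\sd$ on subpresheaves of a representable together with the monotonicity behind Lemma \ref{lem:dilation.order} — that $v_0$ itself lies in $A\cap\sd^2 B'$, forcing $G_0\subseteq B'$; hence $B$ is the unique minimal such.

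It remains to identify $A\cap\sd^2 B$, produce $\rho$, and check the collapse identity. I would show that the vertices of $A\cap\sd^2 B$ are exactly $\{\tau(v):v\in L\}$, where $L\subseteq\boxobj{k}$ consists of those $v$ for which the top face of $\tau(v)$ equals $G_0$, and that $L$ is a sub‑hypercube of $\boxobj{k}$; granting this, $A\cap\sd^2 B$ is the image of the restriction $\tau|_L$, hence representable, the retraction $\rho:A\ra A\cap\sd^2 B$ is the one realising the coordinatewise monotone retraction $\boxobj{k}\ra L$, and it is the only one because a cubical map onto an image of a representable is determined by its values on the generating cube. The identity $(\firstvertexmap\lastvertexmap)_C(A\ira\sd^2C)=(A\cap\sd^2 B\ira\sd^2C)\rho$ is then a vertexwise check: by $(\gamma^{\pm}_{\BOX[1]})_0(\{0,1\})=\half\pm\half$, monoidality of $\firstvertexmap,\lastvertexmap$, and their adjoint description, the value of $(\firstvertexmap\lastvertexmap)_C$ on an interval $[F\leqslant G]$ depends only on data left unchanged by the retraction to $L$, while Lemma \ref{lem:collapse.star} pins down where the result lands.

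The main obstacle is the structural claim that $L$ is a genuine sub‑hypercube of $\boxobj{k}$, not merely a downward‑closed subset: downward‑closure is automatic from monotonicity, but the extra rigidity is exactly what makes $A\cap\sd^2 B$ representable and $\rho$ well defined, and it rests on an explicit inventory of the non‑degenerate cubes of $\sd^2\BOX[1]$ — the four‑edge zigzag $\bullet\ra\bullet\la\bullet\ra\bullet\la\bullet$, whose fundamental poset is the five‑element height‑one poset — together with how such cubes assemble under the monoidal structure into cubes of $\sd^2\BOX\boxobj{n}$ (whose vertex order is the product order on that poset). This is the cubical counterpart of the fact that $\sd^2\Delta^n$ is a regular polyhedral complex in which each simplex lies in a unique minimal prism over a face \cite{jardine2004simplicial}; once it is in hand, representability of $A\cap\sd^2 B$, uniqueness of $\rho$, and the collapse identity should all follow by routine vertexwise arguments aided by Lemmas \ref{lem:dilation.order} and \ref{lem:collapse.star}.
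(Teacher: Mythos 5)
The paper does not prove this lemma directly: it defers to \cite[Lemma 6.12]{krishnan2015cubical} and argues that the two discrepancies (the cited work uses the minimal box category without connections, and a subdivision with different edge orientations) are inconsequential, reducing to representables by naturality. Your proof is therefore a genuinely different, self-contained route, and most of it reconstructs the cited argument correctly: the localization to $\sd^2\BOX\boxobj{n}$ via the support, the description of vertices of $\sd^2\BOX\boxobj{n}$ as intervals $[F\leqslant G]$ in the face poset with $[F\leqslant G]\in\sd^2G'$ iff $G\subseteq G'$, the identification $B=G_0$ from the least vertex of the generating cube, and the coordinatewise analysis of $L$. Your ``main obstacle'' resolves exactly as you predict: each of the four non-degenerate edges of $\sd^2\BOX[1]$ has $L_i$ equal to either $\{0\}$ or all of $[1]$, so $L=\prod_iL_i$ is a face of $\boxobj{k}$, and the collapse identity holds because (with the horizontal-composite reading of $\firstvertexmap\lastvertexmap$) the value on $[F\leqslant G]$ depends only on $F$ while the coordinatewise retraction alters only $G$.

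The genuine gap is the uniqueness of $\rho$. Your justification --- that a cubical map out of an image of a representable is determined by its value on the generating cube --- does not establish uniqueness, because there can be several admissible values. Concretely, in this paper's $\BOX$, which contains the connections $\vee_{\boxobj{2}}$, a face $L=\{0\}\times[1]\subset\boxobj{2}$ admits \emph{two} distinct $\BOX$-morphism retractions $\boxobj{2}\ra[1]$, namely $(x,y)\mapsto y$ and $(x,y)\mapsto x\vee y$, both restricting to the identity on $L$; applied to a square $A=\im(e_1\otimes e_2)$ with $L=\{0\}\times[1]$ these give two distinct cubical retractions $A\ra A\cap\sd^2B$. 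So ``the'' coordinatewise projection you construct is a correct and natural choice (and is all that the application in Lemma \ref{lem:uniform.approximation} needs), but literal uniqueness of a bare retraction fails in the connection-equipped setting. This is precisely the point at which the paper's claim that the passage from the minimal $\BOX$ of \cite{krishnan2015cubical} (where faces do admit unique retractions) to the present $\BOX$ is ``inconsequential'' requires an argument rather than an assertion: you should either single out $\rho$ as the coordinatewise projection, or prove uniqueness among retractions satisfying the displayed factorization through $\firstvertexmap\lastvertexmap$, rather than among all retractions.
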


The observations above were essentially made elsewhere \cite[Lemmas 6.11, 6.12]{krishnan2015cubical} but for a couple of inconsequential differences.
Firstly, the cubical sets considered elsewhere \cite{krishnan2015cubical} are presheaves over the minimal variant of $\BOX$, which can be regarded as only certain colimits of representable cubical sets in the sense of the current paper.  
However, the lemmas above follow from the case where all cubical sets are representable by naturality.  
Secondly, the particular subdivisions defined elsewhere \cite{krishnan2015cubical} yield different edge orientations.
However, the statements above are really statements about the cellular structures of topological realizations and all cubical subdivisions under consideration are the same at the level of CW realizations.    

\subsection{Pseudometric}\label{subsec:pseudometric.spaces}
An \textit{extended} \textit{pseudometric} on a set $X$ is a function 
$$\rho:X\times X\ra[0,\infty]$$
satisfying the following conditions:
\begin{enumerate}
  \item $\rho(x,x)=0$ for all $x\in X$
  \item $\rho(x,y)=\rho(y,x)$ for all $x,y\in X$
  \item $\rho(x,z)\leqslant\rho(x,y)+\rho(y,z)$ for all $x,y,z\in X$
\end{enumerate}
An extended pseudometric $\rho$ is an \textit{extended metric} if $\rho$ nevers takes the value $0$ on pairs of distinct points and a \textit{metric} if $\rho$ never takes the values $0,\infty$ on pairs of distinct points.  
Take a \textit{(pseudo)metric space} to mean a set $X$ implicitly equipped with an (extended pseudo)metric on it.
We say that a pair $x,y$ in a pseudometric space $X$ are \textit{$\delta$-close} if the extended pseudometric on $X$ takes $(x,y)$ to a number no greater than $\delta$.  
The \textit{diameter} of a pseudometric space $X$ is the supremum of the image of the extended pseudometric.  
Riemannian manifolds will be regarded as metric spaces whose metrics are the Riemannian distance functions.  
Take an \textit{$\ell_p$-simplicial complex} to mean a simplicial complex equipped with an extended metric as defined in the following example.  

\begin{eg}
  \label{eg:metric.simplex}
  An \textit{$\ell_p$-simplex} is the topological simplex
  $$\nabla_{\ell_p}[n]=\{(t_0,t_1,\cdots,t_n)\in\I^{n+1}\;|\;t_0+\cdots+t_n=1\}$$
  with metric inherited from the metric on $\R^{n+1}$ equipped with its $p$-norm.  
  A $\ell_p$-simplicial complex is a simplicial complex $X$ regarded as a pseudometric space so that two points $x,y\in X$ are $\delta$-close if and only if $\sum_{i=1}^n\lambda_i<\delta$ for some sequence $x=x_0,x_2,\cdots,x_n=y$ of points in $X$ such that $x_{i-1},x_{i}$ are $\lambda_i$-close in a common simplex, naturally identified with a geometric $\ell_p$-simplex under a linear homeomorphism, for each $1\leqslant i\leqslant n$.
\end{eg}

A \textit{$\lambda$-(bi)-Lipschitz} function is a function $f:X\ra Y$ of pseudometric spaces such that for each $\delta>0$, $f(x),f(y)$ are $\lambda\delta$-close if (and only  if) $x,y$ are $\delta$-close. 
A (bi-)Lipschitz function is a $\lambda$-(bi-)Lipschitz function for some $\lambda>0$.  

\subsubsection{Constructions}
\textit{Pseudometric (co)limits} will simply refer to (co)limits in the category of pseudometric spaces and $1$-Lipschitz maps.
Pseudometric (co)limits are (co)limits of underlying sets equipped with certain universal extended pseudometrics constructed; we refer the reader elsewhere for details \cite{goubault2019directed}. 
For example, a product of pseudometric spaces is a Cartesian product of underlying sets together with the extended pseudometric defined by taking the suprema of coordinate-wise application of extended pseudometrics.

\begin{eg}
  In the product metic space $\I^n$, the distance between $x,y\in\I^n$ is
  $$\max\left(|x_1-y_1|,\cdots,|x_n-y_n|\right).$$
\end{eg}

Let $\METRIC$ denote the category of pseudometric spaces whose extended pseudometrics are extended \textit{metrics} and $1$-Lipschitz maps.
An $\METRIC$-isomorphism is exactly a surjective $1$-bi-Lipschitz map between $\METRIC$-objects.
All $\METRIC$-coproducts and $\METRIC$-limits are just special cases of pseudometric coproducts and equalizers.
The category $\METRIC$ is a reflective subcategory of the category of all pseudometric spaces and $1$-Lipschitz maps; the reflector naturally quotients a pseudometric space $X$ by the smallest equivalence relation so that the extended pseudometric in the quotient pseudometric space is an extended metric.  
Thus $\METRIC$-colimits can be constructed by taking pseudometric colimits and applying the reflector onto $\METRIC$.  
The following lemma therefore holds because suprema commutes with infima in $[0,\infty]$.

\begin{lem}
  \label{lem:metric.product.cocontinuity}
  For each $\METRIC$-object $X$, the functor
  $$X\times_{\METRIC}-:\METRIC\ra\METRIC$$
  preserves $\METRIC$-colimits that are already pseudometric colimits.
\end{lem}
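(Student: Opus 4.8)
The plan is to push the whole question down to the category of all pseudometric spaces, where the colimit is given by an explicit shortest-path (``trek'') formula, and then verify the required identity of extended pseudometrics by hand.

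First I would set up the reduction. Products of extended metrics are extended metrics, so $X\times_\METRIC Y_i$ is just the pseudometric product $X\times Y_i$, carrying $\rho\bigl((a,b),(a',b')\bigr)=\max\bigl(\rho_X(a,a'),\rho_{Y_i}(b,b')\bigr)$. By hypothesis $\colim_i Y_i$ is at once the $\METRIC$-colimit and the pseudometric colimit. The canonical comparison map $q\colon\colim^{\mathrm{pm}}_i(X\times Y_i)\to X\times_\METRIC(\colim_i Y_i)$ is a bijection on underlying sets (the forgetful functor to $\SETS$ has a right adjoint, and $|X|\times-$ preserves colimits of sets) and separates points (its codomain is an extended metric space, being a product of two such); hence its domain is also an extended metric space, so $\colim^{\mathrm{pm}}_i(X\times Y_i)=\colim^\METRIC_i(X\times_\METRIC Y_i)$ by the reflector description already recalled, and it remains only to show $q$ is $1$-bi-Lipschitz, i.e.\ that the two extended metrics on this common underlying set coincide.

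Next I would compare the two metrics via the trek formula. Since $q$ is $1$-Lipschitz by the universal property, one has automatically $\rho_{X\times\colim Y}\leqslant\rho_{\colim(X\times Y)}$, and the content is the reverse inequality. Between $(x,\bar y)$ and $(x',\bar y')$ the left side is $\max\bigl(\rho_X(x,x'),\rho_{\colim Y}(\bar y,\bar y')\bigr)$ with $\rho_{\colim Y}(\bar y,\bar y')=\inf_c L(c)$ an infimum of total $Y$-lengths $L(c)$ over treks $c$ (finite zigzags) in the diagram, while the right side is $\inf\sum_k\max(\cdots)$ over treks in the diagram $i\mapsto X\times Y_i$. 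The relevant commutation of suprema and infima in $[0,\infty]$ is the elementary identity $\max\bigl(\rho_X(x,x'),\inf_c L(c)\bigr)=\inf_c\max\bigl(\rho_X(x,x'),L(c)\bigr)$, valid because $\max(a,-)$ preserves infima; so it suffices, for each trek $c$ in the $Y$-diagram, to exhibit a trek from $(x,\bar y)$ to $(x',\bar y')$ in the $(X\times Y)$-diagram whose stepwise maxima sum to at most $\max\bigl(\rho_X(x,x'),L(c)\bigr)$ --- obtained by transporting $\bar y$ to $\bar y'$ along a lift of $c$ and splicing the transport of the first coordinate from $x$ to $x'$ into one of its steps, paying $\max(u_k,\ell_k)$ at the step where the first coordinate moves by $u_k$.

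The step I expect to be the main obstacle is exactly this splice. The transport $x\rightsquigarrow x'$ in general cannot be subdivided inside $X$, so its cost $\rho_X(x,x')$ must be absorbed into a single step; the estimate $\sum_k\max(u_k,\ell_k)\leqslant\max\bigl(\rho_X(x,x'),L(c)\bigr)$ then only closes if the trek $c$ has a step (or a run of consecutive steps carrying a fixed first coordinate) whose $Y$-length already dominates $\rho_X(x,x')$. I would secure this by first refining $c$, and if necessary re-selecting it among near-optimal treks, so that such an absorbing step is present --- this is where the explicit shape of the $\METRIC$-colimits actually occurring in the paper (typically unions along isometric embeddings, where a near-optimal trek between two points may be taken inside a single stage containing both) is what makes the argument go through --- after which the displayed $\max$-identity finishes it. Everything else is routine bookkeeping with the trek formula and the universal properties of product and colimit.
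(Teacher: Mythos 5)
Your reduction to the pseudometric trek formula and your identification of the key identity $\max(a,\inf_c L(c))=\inf_c\max(a,L(c))$ match the paper's intended argument exactly: the paper offers no proof environment for this lemma, only the preceding sentence that it ``holds because suprema commutes with infima in $[0,\infty]$,'' together with the description of $\METRIC$-colimits as reflected pseudometric colimits. Your preliminary reductions (underlying sets, separatedness of the domain of the comparison map, the easy inequality from $1$-Lipschitzness) are all fine. More importantly, you have correctly isolated the one step that the paper's one-liner does not address: splicing the $X$-displacement into a trek for the $Y$-diagram, where the needed estimate $\sum_k\max(u_k,\ell_k)\leqslant\max\bigl(\sum_k u_k,\sum_k\ell_k\bigr)$ fails in general.

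The trouble is that your proposed repair of the splice --- refining or re-selecting a near-optimal trek so that some step absorbs $\rho_X(x,x')$ --- cannot succeed at the stated level of generality, because no absorbing trek need exist. Take $X=\{p,q\}$ with $\rho_X(p,q)=1$, and the pushout of $\{*,a\}\leftarrow\{*\}\rightarrow\{*,b\}$ with $\rho(*,a)=\rho(*,b)=1/2$; the pseudometric pushout is the separated space $\{*,a,b\}$ with $\rho(a,b)=1$, so the lemma's hypothesis holds. In $X\times_{\METRIC}\colim_iY_i$ the distance from $(p,a)$ to $(q,b)$ is $\max(1,1)=1$, whereas in $\colim_i(X\times_{\METRIC}Y_i)$ every trek must change the $X$-coordinate in a single step whose $Y$-displacement is at most $1/2$, and the remaining half of the $Y$-journey still costs $1/2$, giving distance $3/2$. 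So the comparison map is not an isometry and the lemma, read literally, is false; the obstruction is precisely the one you flagged, and the paper's appeal to sup--inf commutation silently skips it. Your instinct that the argument only closes for the colimits actually needed --- in Proposition \ref{prop:geometric.realization} these are realizations of cubical sets, where the stages are cubes $\I^n$ with the sup metric glued along faces, so that a trek can advance the $X$-coordinate in lockstep with the $Y$-coordinate and the transverse displacement is absorbed cell by cell --- is the right way to salvage the application, but as written your proposal neither proves the lemma as stated (nothing can) nor delimits the class of diagrams for which the splice genuinely works, so the gap remains.
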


\subsection{Uniform}\label{subsec:uniform.spaces}
A map $f:X\ra Y$ of pseudometric spaces is \textit{uniform} if for all $\epsilon>0$, there exists $\delta>0$ such that for all $x_1,x_2\in X$, $f(x_1),f(x_2)$ are $\epsilon$-close whenever $x_1,x_2$ are $\delta$-close.
A function between pseudometric spaces is uniform if it is Lipschitz, but need not be Lipschitz even if it is uniform.
Pseudometrics can be abstracted up to uniform equivalence as follows.
In fact, it is possible to define two different metrics $\rho_1,\rho_2$ on the same set $X$ such that the identity function $\id_X$ defines uniform maps $X_1\ra X_2$ and $X_2\ra X_1$, where $X_i$ is $X$ equipped with $\rho_i$.  
A \textit{uniformity} on a set $X$ is a collection $\mathscr{E}$ of reflexive relations $X\relation X$, called \textit{entourages}, such that the following axioms hold:
\begin{enumerate}
  \item $E^{-1}\in\mathscr{E}$ whenever $E\in\mathscr{E}$
  \item for each $E\in\mathscr{E}$, there exists $E^{\half}\in\mathscr{E}$ with $\graph{(E^{\half}\circ E^{\half})}\subset\graph{(E)}$.  
  \item for all $E_1,E_2\in\mathscr{E}$, there exists $E\in\mathscr{E}$ with $\graph{(E)}=\graph{(E_1)}\cap\graph{(E_2)}$.  
  \item A reflexive relation $R:X\relation X$ lies in $\mathscr{E}$ whenever there exists an entourage $E\in\mathscr{E}$ with $\graph{(R)}\subset\graph{(E)}$.  
\end{enumerate}

The first axiom abstracts the symmetry axiom of a metric.  
The second axiom abstracts the triangle inequality of a metric.  
The latter two axioms guarantee that the entourages form a neighborhood basis with respect to a suitable topology. 
Pseudometric spaces $X$ will be regarded as uniform spaces equipped with the following \textit{uniformity associated to the pseudometric} on $X$ in the following example.  
A uniform space is \textit{separated} if the intersection of the graphs of all its entourages is the diagonal in $X^2$.  
A \textit{uniform space} is a set with a uniformity on it.  
Every pseudometric space $X$ with extended pseudometric $\rho$ has an \textit{underlying uniform space} having the same underlying set as $X$ and whose uniformity is the smallest uniformity whose entourages include all relations of the form $E_\epsilon$ with
$$\graph{(E_\epsilon)}=\{(x,y)\in X^2\;|\;\rho(x,y)<\epsilon\}.$$

Pseudometric spaces will often be implicitly taken to mean their underlying uniform spaces.  
In this manner, we regard Riemannian manifolds are examples of uniform spaces.  
A \textit{uniform map} will be taken to mean a function between uniform spaces that pulls back entourages to entourages.

\begin{eg}
  Every Lipschitz map is uniform.
\end{eg}

\begin{eg}
  Every surjective bi-Lipschitz map is an isomorphism in $\UNIFORM$.  
\end{eg}

The \textit{uniform topology} of a uniform space $X$ is the completely regular topology on $X$ with respect to which a neighborhood basis for each $x\in X$ is the collection of all sets of the form $E[x]$ for entourages $E$ of $X$.
The \textit{underlying space} of a uniform space $X$ will be taken to mean the underlying set of $X$ equipped with the uniform topology of $X$. 
A uniform space is separated if and only if its underlying space is Tychonoff.
Underlying spaces of uniform spaces are completely regular.
Every completely regular space $X$ admits a \textit{fine uniformity}, the unique maximal uniformity turning $X$ into a uniform space whose uniform topology coincide with the original topology on $X$.  
Every uniform map defines a map of underlying spaces.

\begin{thm:compact.uniform.spaces}
  Each compact Hausdorff space underlies exactly one uniform space.
\end{thm:compact.uniform.spaces}

In this manner, each compact Hausdorff space will be automatically regarded as a uniform space.

\subsubsection{Constructions}
\textit{Uniform (co)limits} will simply refer to (co)limits in the category of (uniform) spaces and (uniform) maps.
The underlying spaces of uniform colimits are colimits of underlying spaces \cite[Proposition \S II.8]{isbell1964uniform}.
The categories of (uniform) spaces and (uniform) maps, while complete and cocomplete, are not Cartesian closed.  
This defect can be remedied in a standard way (eg. \cite{steenrod1967convenient}), except that the role of compact Hausdorff spaces in the definition of compactly generated spaces is played by connected, locally connected metric spaces.  
Write $\TOP$ for the category of Tychonoff spaces which are colimits of compact, locally connected, second-countable spaces.
The coreflective hull, in an epi-reflective subcategory $\mathscr{R}$ of spaces and maps, of a class of compact Hausdorff spaces closed under finite products forms a Cartesian closed coreflective subcategory \cite[Theorem 3.3]{wyler1973convenient} of $\mathscr{R}$.
The following theorem is a consequence.

\begin{prop}
  The category $\TOP$ satisfies the following:
  \begin{enumerate}
	\item $\TOP$ is Cartesian closed
	\item $\TOP$ is a coreflective subcategory of the category of Tychonoff spaces and continuous maps between them.   
  \end{enumerate}
\end{prop}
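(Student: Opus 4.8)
The plan is to deduce both statements from the theorem of Wyler cited above, applied with $\mathscr{R}$ the full subcategory of Tychonoff spaces and with generating class $\mathcal{C}$ the compact Hausdorff, locally connected, second-countable spaces. There are three things to arrange: that $\mathscr{R}$ is an epi-reflective subcategory of topological spaces, that $\mathcal{C}$ is a class of compact Hausdorff spaces closed under finite products, and that the coreflective hull of $\mathcal{C}$ in $\mathscr{R}$ produced by Wyler's theorem is exactly $\TOP$ as defined.

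First I would recall the standard fact that the Tychonoff spaces form an epi-reflective subcategory of all topological spaces. One clean way to see this: Tychonoff spaces are closed in topological spaces under arbitrary products and under passage to subspaces, and a full, isomorphism-closed subcategory with these two closure properties is epi-reflective; the reflector is the Tychonoffification, realized as the evaluation map of a space into a power of the real line indexed by its continuous real-valued functions, and the reflection unit is a continuous surjection, hence an epimorphism. In particular $\mathscr{R}$ is complete and cocomplete, with colimits computed by applying this reflector to the corresponding colimit of underlying topological spaces.

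Next I would verify that $\mathcal{C}$ meets Wyler's hypotheses. Each member of $\mathcal{C}$ is compact Hausdorff, hence normal, hence Tychonoff (indeed metrizable, being moreover second-countable), so $\mathcal{C}\subset\mathscr{R}$. Closure under finite products is the conjunction of three elementary facts: a finite product of compact Hausdorff spaces is compact Hausdorff; a finite product of locally connected spaces is locally connected (a neighborhood basis of connected open sets at $(x,y)$ is given by products of such bases at $x$ and at $y$); and a finite product of second-countable spaces is second-countable. With these in hand, Wyler's Theorem 3.3 applies verbatim and exhibits the coreflective hull of $\mathcal{C}$ inside $\mathscr{R}$ as a Cartesian closed coreflective subcategory of $\mathscr{R}$.

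Finally, and this is where the care is needed, I would identify that coreflective hull with $\TOP$. The coreflective hull of a class in a cocomplete category is its closure under colimits; since one may take the colimit of the entire comma category of $\mathcal{C}$-objects over a fixed space, an object lies in the hull precisely when it is the colimit of that comma-category diagram, hence precisely when it is \emph{some} colimit of a diagram in $\mathcal{C}$, the colimit being formed in $\mathscr{R}$. Because the Tychonoff reflector is cocontinuous, such an $\mathscr{R}$-colimit is the reflection of the corresponding colimit of topological spaces, so the objects of the hull are exactly the Tychonoff spaces obtained this way, which is the defining description of the objects of $\TOP$. This delivers claim (2), coreflectivity of $\TOP$ in the Tychonoff spaces, and claim (1), its Cartesian closedness. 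The main obstacle is purely bookkeeping in this last step: one must be explicit that ``colimit'' in the definition of $\TOP$ is read internally to the Tychonoff spaces, so that disjoint unions together with Tychonoff quotients already generate the hull, rather than as an arbitrary topological colimit that merely happens to land in the Tychonoff spaces, and then note via cocontinuity of the reflector that the two readings agree.
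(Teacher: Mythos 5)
Your proposal is correct and follows essentially the same route as the paper, which simply cites Wyler's Theorem 3.3 applied to the epi-reflective subcategory of Tychonoff spaces and the class of compact Hausdorff, locally connected, second-countable spaces, and leaves the routine verifications implicit. The details you supply (epi-reflectivity of Tychonoff spaces, closure of the generating class under finite products, and the identification of the coreflective hull with $\TOP$ via cocontinuity of the reflector) are exactly the ones the paper omits.
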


Let $\UNIFORM$ denote the category of separated uniform colimits of connected, locally connected metric spaces.
The category $\UNIFORM$ contains examples of interest in nature.

\begin{eg}
  Connected Riemannian manifolds are $\UNIFORM$-objects.
\end{eg}

The category of separated uniform spaces and uniform maps is a reflective subcategory of the category of all uniform spaces and uniform maps.  
Inside the former reflective subcategory, the full subcategory whose objects are uniform colimits of connected metric spaces is a Cartesian closed coreflective subcategory \cite[Corollary 1]{rice1983cartesian}.
Connected, locally connected metric spaces are closed under finite uniform products.  
The following proposition therefore follows.  

\begin{prop}
  The category $\UNIFORM$ satisfies the following:
  \begin{enumerate}
	\item $\UNIFORM$ is Cartesian closed
	\item $\UNIFORM$ is a coreflective subcategory of the category of separated uniform spaces and uniform maps
  \end{enumerate}
\end{prop}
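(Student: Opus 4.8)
The plan is to present $\UNIFORM$ as the coreflective hull, inside the reflective category of separated uniform spaces and uniform maps, of the class $\mathcal{G}$ of connected, locally connected metric spaces, and then to invoke the uniform analogue of the Wyler-type theorem already used for $\TOP$, namely \cite[Corollary 1]{rice1983cartesian}. The first step is purely formal: since the category of separated uniform spaces is reflective in, hence inherits cocompleteness from, the cocomplete category of all uniform spaces, and since a uniform colimit of uniform colimits of objects of $\mathcal{G}$ is again such a colimit, the full subcategory $\UNIFORM$ --- defined as those separated uniform spaces expressible as uniform colimits of objects of $\mathcal{G}$ --- is closed under colimits in the ambient category and contains $\mathcal{G}$. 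The usual cardinality bound supplies a solution set, so this colimit closure is coreflective; it is therefore exactly the coreflective hull of $\mathcal{G}$, which proves part (2).

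For part (1) I would check the single structural hypothesis needed to run the Wyler/Rice mechanism on the generating class $\mathcal{G}$: that $\mathcal{G}$ is closed under finite uniform products and that its objects are connected metric spaces. This is the only nonformal input. A finite uniform product $X_1\times\cdots\times X_k$ of uniform spaces carries the product topology as its underlying topology, so it is connected because a finite product of connected spaces is connected, and locally connected because a finite product of locally connected spaces is locally connected (finite products of connected basic opens form a base of connected opens). Moreover the product uniformity of finitely many metric uniformities is precisely the metric uniformity of $\max_i d_i$, so the product is metric; and a finite product of separated uniform spaces is separated, so this product agrees with the product taken in the category of separated uniform spaces. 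Hence $\mathcal{G}$ is a finite-product-closed class of connected metric spaces, and \cite[Corollary 1]{rice1983cartesian} applied to $\mathcal{G}$ --- exactly as \cite[Theorem 3.3]{wyler1973convenient} was applied to the compact, locally connected, second-countable spaces generating $\TOP$ --- shows that the coreflective hull of $\mathcal{G}$ in separated uniform spaces is Cartesian closed. By the identification of the previous paragraph, that coreflective hull is $\UNIFORM$.

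The main obstacle I anticipate is not deep but is a matter of correctly lining up the two categorical facts. Because $\UNIFORM$ is coreflective rather than reflective, its finite products are computed by coreflecting the separated-uniform products, so Cartesian closedness genuinely requires the coreflector to be compatible with finite products; this compatibility is exactly what the cited corollary encodes, so I would lean on it rather than reconstruct the internal-hom. The only point I would write out in full is the verification that connected, locally connected metric spaces are stable under finite uniform products, since it is the concrete hypothesis on which both parts ultimately rest; everything else is bookkeeping about reflective and coreflective hulls.
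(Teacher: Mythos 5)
Your argument is essentially the paper's own: the paper likewise deduces both claims from \cite[Corollary 1]{rice1983cartesian} applied inside the reflective subcategory of separated uniform spaces, using the observation that connected, locally connected metric spaces are closed under finite uniform products. Your additional bookkeeping about the coreflective hull and the explicit verification of product-closure only fill in details the paper leaves implicit, so the proposal is correct and matches the intended proof.
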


Thus $\UNIFORM$-colimits are constructed by taking uniform colimits and applying the reflector to the category of separated uniform spaces and uniform maps.  
And $\UNIFORM$-limits are constructed by taking uniform limits and applying the coreflector to $\UNIFORM$.  
Write $(-)^X$ for the right adjoint to the endofunctor $X\times-$ on $\UNIFORM$, natural in $\UNIFORM$-objects $X$.
We henceforth regard $\UNIFORM$ as $\TOP$-enriched by naturally equipping each hom-set $\UNIFORM(X,Y)$ with the uniform topology of $Y^X$.

\begin{prop}
  \label{prop:fine.uniform.spaces}
  A $\TOP$-object with its fine uniformity is a $\UNIFORM$-object.
\end{prop}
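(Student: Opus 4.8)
The plan is to realize the space $X$, equipped with its fine uniformity, as a colimit in the category of separated uniform spaces of connected, locally connected metric spaces, and then conclude by coreflectivity of $\UNIFORM$. Throughout, write $\Phi$ for the operation assigning to a Tychonoff space the same set with its fine uniformity.

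\medskip\noindent\emph{Step 1: the adjunction.} First I would record that $\Phi$ is left adjoint to the underlying‑space functor from the category of separated uniform spaces and uniform maps to the category of Tychonoff spaces and continuous maps. Indeed, a uniform map out of $\Phi A$ is in particular continuous on underlying spaces; and conversely, by the maximality characterization of the fine uniformity (every uniformity compatible with the topology of $A$ is contained in the fine one, so continuous pseudometrics on $A$ are uniformly continuous on $\Phi A$) any continuous map from $A$ into the underlying space of a uniform space is already uniform on $\Phi A$; see \cite{isbell1964uniform}. Being a left adjoint, $\Phi$ preserves all colimits that exist in its target.

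\medskip\noindent\emph{Step 2: decomposing $X$.} By definition of $\TOP$ we may write $X=\colim_i X_i$ in $\TOP$ with each $X_i$ compact, locally connected and second‑countable, hence compact metrizable by the Urysohn metrization theorem. In a locally connected space the connected components are open, so a compact such space has only finitely many components; thus each $X_i$ is the finite coproduct $\coprod_\beta M_{i\beta}$ of its components, and each $M_{i\beta}$ is compact, connected, locally connected and metrizable. Fixing a compatible metric on each $M_{i\beta}$ exhibits it as a connected, locally connected metric space, and also as a $\TOP$-object. Since coproducts are themselves colimits, rewriting iterated colimits as a single one presents $X$ as $\colim_\alpha M_\alpha$ in $\TOP$, indexed over all the $M_{i\beta}$; and because $\TOP$ is coreflective in the category of Tychonoff spaces and every $M_\alpha$ lies in $\TOP$, this is equally a colimit in the category of Tychonoff spaces.

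\medskip\noindent\emph{Step 3: applying $\Phi$ and concluding.} Since $\Phi$ preserves colimits, $\Phi X=\colim_\alpha \Phi M_\alpha$ in the category of separated uniform spaces. Each $M_\alpha$ is compact Hausdorff, so by the fact that a compact Hausdorff space underlies exactly one uniform space, its fine uniformity coincides with its metric uniformity; hence $\Phi M_\alpha$ is simply $M_\alpha$ viewed as a connected, locally connected metric space, in particular a $\UNIFORM$-object. As $\UNIFORM$ is a coreflective subcategory of the category of separated uniform spaces, it is closed under colimits formed there, so $\Phi X=\colim_\alpha \Phi M_\alpha$ is a $\UNIFORM$-object. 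I expect the only real subtlety to be the bookkeeping in Step 2 — checking that the generating compact second‑countable spaces are metrizable and that passage to connected components keeps the pieces inside $\TOP$ — together with stating Step 1 correctly, since it is precisely the left‑adjointness of $\Phi$ that licenses commuting it past the colimit.
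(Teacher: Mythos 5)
Your proof is correct and follows essentially the same route as the paper's: decompose $X$ into compact, connected, locally connected, metrizable pieces, use the left adjointness of the fine-uniformity functor to commute it past the colimit, and identify each piece with a connected, locally connected metric space via the uniqueness of the uniformity on a compact Hausdorff space. You merely supply more detail than the paper does (the open-components argument and the adjunction verification), which is harmless.
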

\begin{proof}
  Consider a $\TOP$-object $X$ equipped with its fine uniformity. 
  Every compact, locally connected, secound-countable space is a a colimit of compact, connected, locally connected, metrizable spaces.  
  Then $X$ is the colimit of compact, connected, locally connected metrizable spaces with their fine uniformities because the forgetful functor from separated uniform spaces and uniform maps to Tychonoff spaces and maps has a left adjoint naturally equipping each space with its fine uniformity.
  Each compact, connected, locally connected, metrizable space with its fine uniformity is the underlying uniform space of a connected, locally connected metric space by the uniqueness of a compatible uniformity on a compact Hausdorff space.  
  Threfore $X$ is a $\UNIFORM$-object.
\end{proof}

\begin{cor}
  The $\TOP$-enriched category $\UNIFORM$ is tensored and cotensored over $\TOP$.
\end{cor}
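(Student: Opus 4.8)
The plan is to transport the Cartesian closed structure on $\UNIFORM$ established above along the fine-uniformity functor.

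First I would set up the functor $L\colon\TOP\to\UNIFORM$ that equips each $\TOP$-object with its fine uniformity. It takes values in $\UNIFORM$ by Proposition~\ref{prop:fine.uniform.spaces}, and, as recorded in that proof, the fine-uniformity functor is left adjoint to the forgetful functor from separated uniform spaces to Tychonoff spaces; post-composing with the coreflection onto $\TOP$ and using that $\UNIFORM$ is full in separated uniform spaces yields an adjunction $L\dashv U$ in which $U\colon\UNIFORM\to\TOP$ is the object part of the $\TOP$-enrichment of $\UNIFORM$, so that $\UNIFORM(X,Y)=U(Y^X)$ as $\TOP$-objects. The one substantive point to check is that $L$ \emph{preserves finite products}. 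This fails for fine uniformities on arbitrary Tychonoff spaces, but it holds here: writing a $\TOP$-object as a colimit of compact, locally connected, second-countable spaces, and using that the endofunctors $X\times-$ preserve colimits in the Cartesian closed categories $\TOP$ and $\UNIFORM$, the verification reduces to the case of compact Hausdorff spaces, where it is immediate from the fact that each compact Hausdorff space underlies exactly one uniform space. With $L$ thus strong monoidal for the Cartesian structures, $L\dashv U$ is a monoidal adjunction, so in particular $U(Y^{LK})\cong(UY)^K$ in $\TOP$ for all $K\in\TOP$ and $Y\in\UNIFORM$: indeed, by the Yoneda lemma, $\TOP(K',U(Y^{LK}))\cong\UNIFORM(LK',Y^{LK})\cong\UNIFORM(LK'\times LK,Y)\cong\UNIFORM(L(K'\times K),Y)\cong\TOP(K'\times K,UY)\cong\TOP(K',(UY)^K)$ naturally in $K'$.

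Second, I would define, for $K\in\TOP$ and $X,Y\in\UNIFORM$, the tensor $K\otimes X:=(LK)\times X$ and the cotensor $K\pitchfork Y:=Y^{LK}$, using the Cartesian product and internal hom of $\UNIFORM$. The tensoring is witnessed by the chain of $\TOP$-object isomorphisms
$$\UNIFORM((LK)\times X,\,Y)\;\cong\;\UNIFORM(LK,\,Y^X)\;\cong\;\TOP(K,\,U(Y^X))\;=\;\TOP(K,\,\UNIFORM(X,Y)),$$
natural in $X$, $Y$ and $K$, which applies in turn the Cartesian closedness of $\UNIFORM$, the monoidal adjunction $L\dashv U$, and the definition of the enrichment; the cotensoring is witnessed dually by
$$\UNIFORM(X,\,Y^{LK})\;\cong\;\UNIFORM((LK)\times X,\,Y)\;\cong\;\TOP(K,\,\UNIFORM(X,Y)).$$
The associativity and unit coherences for the tensoring, and the bifunctoriality of $\otimes$ and $\pitchfork$, then follow routinely from $L$ being strong monoidal and $\UNIFORM$ being closed symmetric monoidal.

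The hard part is the product-preservation of $L$ flagged above --- everything else is formal manipulation of the two adjunctions --- and this is exactly where the particular definitions of $\TOP$ and $\UNIFORM$ as categories of colimits of compact, well-behaved spaces are used, since the fine-uniformity functor does not preserve products on all Tychonoff spaces. A minor bookkeeping point is to confirm that the $\TOP$-object underlying the internal hom $Y^X$ is literally the enriched hom $\UNIFORM(X,Y)$ as defined in the excerpt, which is immediate from that definition.
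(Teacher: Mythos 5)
Your proof is correct and follows essentially the same route as the paper: define the fine-uniformity functor $\TOP\ra\UNIFORM$ (landing in $\UNIFORM$ by Proposition \ref{prop:fine.uniform.spaces}) and combine its adjunction with the forgetful functor and the Cartesian closedness of $\UNIFORM$ to exhibit $Y^{LK}$ as the cotensor and $LK\times X$ as the tensor. You go further than the paper's terse argument by verifying that $L$ preserves finite products and by upgrading the hom-set isomorphisms to isomorphisms of hom-objects in $\TOP$, which is a legitimate detail the paper leaves implicit.
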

\begin{proof}
  Let $S$ denote a $\TOP$-object.
  We can define a functor
  $$F:\TOP\ra\UNIFORM$$
  naturally equipping $S$ with its fine uniformity [Proposition \ref{prop:fine.uniform.spaces}].
  Then
  $$\TOP(S,\UNIFORM(X,Y))\cong\UNIFORM(FS,Y^X)\cong\UNIFORM(FS\times_{\UNIFORM}X,Y)=\UNIFORM(X,Y^{FS})$$
  naturally in $S$ and $\UNIFORM$-objects $X,Y$.  
\end{proof}

It therefore follows that the $\TOP$-enriched category $\UNIFORM$ is tensored and cotensored over $\TOP$; the tensor is a restriction of the Cartesian monoidal product on $\UNIFORM$ after equipping a $\TOP$-space with its fine uniformity.  

\subsection{Comparisons}\label{subsec:comparisons}
We will construct functors in the commutative diagram
\begin{equation}
	\label{eqn:comparisons}
	\begin{tikzcd}
		\METRIC
		  \ar[rr]
		& 
		& \UNIFORM\ar[rr]
		& 
		& \TOP
		\\
		  \BOX
		  \ar{rrrr}[below]{\BOX[-]}\ar{u}[left]{\boxobj{n}\mapsto\I^n}
		& 
		& 
		&
		& \CUBICAL\SETS
	  	    \ar{ullll}[description]{\geometricrealization{\;-\;}}
			\ar{ull}[description]{\uniformrealization{\;-\;}}
		    \ar{u}[right]{|-|}
	\end{tikzcd},
\end{equation}

\subsubsection{Forgetful functors}
Let the unlabelled solid arrows in (\ref{eqn:comparisons}) denote \textit{forgetful functors} defined as follows.
The left forgetful functor in (\ref{eqn:comparisons}) naturally sends each $\METRIC$-object $X$ to the coreflection in $\UNIFORM$ of the underlying uniform space of $X$.
The right forgetful functor in (\ref{eqn:comparisons}) naturally sends each $\UNIFORM$-object $X$ to the coreflection in $\TOP$ of the underlying space of $X$.  

\begin{lem}
  \label{lem:pseudometric.to.uniform}
  The forgetful functor $\METRIC\ra\UNIFORM$ preserves coproducts and finite products.  
\end{lem}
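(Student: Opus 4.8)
The plan is to analyze the two constructions —the forgetful functor $\METRIC\ra\UNIFORM$, i.e. the passage to the coreflection in $\UNIFORM$ of the underlying uniform space— and check directly that it sends $\METRIC$-coproducts and finite $\METRIC$-products to the corresponding $\UNIFORM$-(co)products. Since the underlying-uniform-space assignment from pseudometric spaces to separated uniform spaces is itself known to interact well with these limits and colimits (a coproduct of metric spaces carries the extended metric that is $0$ on each summand and $\infty$ across distinct summands; a finite product carries the sup metric, whose associated uniformity is the product uniformity), the only thing to worry about is how the coreflector onto $\UNIFORM$ interferes.

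First I would dispose of coproducts. A $\METRIC$-coproduct $\amalg_i X_i$ is, by the discussion in \S\ref{subsec:pseudometric.spaces}, the disjoint union with the metric that restricts to the given metric on each $X_i$ and assigns $\infty$ to points in distinct summands; this is already an $\METRIC$-object, so no reflection is needed. Its underlying uniform space is the disjoint-union (coproduct) uniform space in separated uniform spaces. Now each $X_i$ is assumed connected and locally connected metric, hence already a $\UNIFORM$-object, and the coreflector onto $\UNIFORM$ is a right adjoint to the inclusion, so it need not preserve colimits — but here I can argue directly: a coproduct of $\UNIFORM$-objects in the category of separated uniform spaces is again a uniform colimit of connected, locally connected metric spaces, hence already lies in $\UNIFORM$ and is therefore its own coreflection, and it is then the coproduct in $\UNIFORM$ as well (coreflective subcategories are closed under colimits that happen to land in them). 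So the forgetful functor $\METRIC\ra\UNIFORM$ sends $\amalg_i X_i$ to $\amalg_i(\text{forget } X_i)$.

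Next, finite products. The $\METRIC$-product $X\times_{\METRIC} Y$ is the Cartesian product with the sup of the two extended metrics; its underlying uniform space is the uniform product $X\times Y$ in separated uniform spaces, since the uniformity associated to the sup pseudometric is generated by the entourages $E_\epsilon^X\times E_\epsilon^Y$ and hence is exactly the product uniformity. The point is then to see that the coreflection of this product onto $\UNIFORM$ agrees with the $\UNIFORM$-product. By the remark following the statement of the second proposition in \S\ref{subsec:uniform.spaces}, $\UNIFORM$-limits are computed by taking the separated-uniform limit and applying the coreflector; so $X\times_{\UNIFORM} Y$ is by definition the coreflection of $X\times Y$. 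Since $X,Y$ are connected and locally connected metric spaces, $X\times Y$ is a connected, locally connected metric space (connectedness and local connectedness are preserved by finite products, and the sup metric metrizes it), hence already a $\UNIFORM$-object, so its coreflection is itself. Therefore $\text{forget}(X\times_{\METRIC} Y)=X\times Y=X\times_{\UNIFORM}Y$, and the terminal object (empty product, a one-point metric space) is visibly preserved; finite products follow by induction.

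The main obstacle is purely bookkeeping: pinning down that the coreflector onto $\UNIFORM$ is the identity on the relevant (co)products, which amounts to the two closure facts — that $\UNIFORM$ (as defined, via colimits of connected locally connected metric spaces) is closed in separated uniform spaces under the coproducts and finite products of its objects — together with the standard fact that connectedness and local connectedness are finitely productive and that the sup metric induces the product uniformity. None of these requires real work beyond citing the definitions and the cited cartesian-closedness results; the coproduct case is immediate and the product case reduces to the observation that a finite product of connected, locally connected metric spaces is again one.
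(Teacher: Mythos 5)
Your argument rests on an assumption that is not part of the hypothesis: you repeatedly take the $\METRIC$-objects involved to be connected, locally connected metric spaces ("each $X_i$ is assumed connected and locally connected metric, hence already a $\UNIFORM$-object"; "$X\times Y$ is a connected, locally connected metric space\ldots hence already a $\UNIFORM$-object, so its coreflection is itself"). The lemma is about arbitrary $\METRIC$-objects, i.e.\ arbitrary extended metric spaces, whose underlying uniform spaces need not lie in $\UNIFORM$ at all (take $\Q$ with the Euclidean metric: it is totally disconnected, and its coreflection into $\UNIFORM$ is the discrete uniform space, not $\Q$). So the central step of your proof --- that the coreflector acts as the identity on the relevant objects --- fails in general, and the whole point of the lemma is precisely to control what the coreflector does when it is \emph{not} the identity. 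For finite products the repair is cheap and is what the paper does: the underlying uniform space of a sup-metric product is the product of the underlying separated uniform spaces (which you did verify), and the coreflector, being a right adjoint, preserves finite products; no connectivity of $X,Y$ is needed. You had both ingredients in hand but reached for the wrong one.

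For coproducts there is a second, independent gap: your claim that the underlying uniform space of a $\METRIC$-coproduct is the coproduct in separated uniform spaces is false for infinitely many summands. The metric coproduct's uniformity is generated by the entourages $E_\epsilon$ of \emph{uniform} width $\epsilon$ across all summands, whereas the coproduct uniformity admits entourages whose widths vary from summand to summand; these differ already for $\amalg_n[0,\nicefrac{1}{n}]$. The paper's argument circumvents both problems at once: since distinct summands sit at infinite distance, every uniform map from a connected (locally connected metric) space into the metric coproduct factors through a single summand, so the coreflection of $\amalg_iX_i$ --- computed by probing with exactly such spaces --- decomposes as the $\UNIFORM$-coproduct of the coreflections of the $X_i$. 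That connectivity-of-the-\emph{probes} argument (not connectivity of the $X_i$) is the missing idea.
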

\begin{proof}
  Every uniform map from a connected, locally connected metric space to an $\METRIC$-coproduct factors through a summand.
  The desired coproduct-preservation follows.

  The underlying uniform space of a finite $\METRIC$-product, a special case of a finite pseudometric product, is the finite uniform product of underlying separated uniform spaces.  
  The coreflector from the category of separated uniform spaces and uniform maps to $\UNIFORM$, a right adjoint, preserves finite products.  
  The desired finite product-preservation follows.   
\end{proof}

Henceforth redefine a \textit{space} to mean a $\TOP$-object and a \textit{uniform space} to mean a $\UNIFORM$-object.

\subsubsection{Realizations}
Let the left vertical arrow in (\ref{eqn:comparisons}) denote the monoidal functor naturally sending each $\BOX$-morphism $\phi:\boxobj{m}\ra\boxobj{n}$ to the $1$-Lipschitz map $\I^m\ra\I^n$ whose restriction to the convex closure in $\I^m$ of each chain in the poset $\boxobj{m}\subset\I^m$ is linear.
Let $\geometricrealization{-},\uniformrealization{-},|-|$ in (\ref{eqn:comparisons}) respectively denote \textit{geometric}, \textit{uniform}, and \textit{topological} realization functors making the diagram commute with the former cocontinuous.

\begin{eg}
  The metric space $\R$ is isomorphic in $\METRIC$ to the geometric realization
  $$\geometricrealization{\cdots\ra\bullet\ra\bullet\ra\cdots}.$$
\end{eg}

\begin{lem}
  \label{lem:bounded.nerve}
  For a cubical set $C$ with $\BOX[-]/S$ filtered, $\geometricrealization{C}$ has diameter $1$.
\end{lem}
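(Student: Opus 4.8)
The plan is to use cocontinuity of $\geometricrealization{-}$ to present $\geometricrealization{C}$ as a \emph{filtered} colimit of cubes $\I^{n}$, each of diameter at most $1$, and then to exploit filteredness to locate any two points of $\geometricrealization{C}$ inside a single such cube.

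\emph{Reduction to cubes.} Write $C$ as its canonical colimit of cells, $C=\colim_{(\sigma\colon\BOX\boxobj{n}\ra C)\,\in\,\BOX[-]/C}\BOX\boxobj{n}$. Since $\geometricrealization{-}$ is cocontinuous and $\geometricrealization{\BOX\boxobj{n}}=\I^{n}$ by commutativity of (\ref{eqn:comparisons}), the space $\geometricrealization{C}$ is the $\METRIC$-colimit of the functor $\sigma\mapsto\I^{n}$ indexed by $\BOX[-]/C$, which is filtered by hypothesis. Recall that an $\METRIC$-colimit is computed by taking the colimit of underlying sets, endowing it with the largest pseudometric for which every structure map $q_{\sigma}\colon\I^{n}\ra\geometricrealization{C}$ is $1$-Lipschitz, and then reflecting into $\METRIC$; the reflector only collapses points at distance $0$, so it changes neither distances nor the diameter. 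Recall also that the product metric on $\I^{n}$ is the supremum of the coordinate metrics, whence $\I^{n}$ has diameter at most $1$.

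\emph{The filtered estimate.} Fix $a,b\in\geometricrealization{C}$. A filtered colimit of sets is the filtered union of the images of its structure maps, so $a=q_{\sigma}(a')$ and $b=q_{\tau}(b')$ for suitable objects $\sigma\colon\BOX\boxobj{m}\ra C$, $\tau\colon\BOX\boxobj{n}\ra C$ of $\BOX[-]/C$ and points $a'\in\I^{m}$, $b'\in\I^{n}$. Filteredness supplies an object $\upsilon\colon\BOX\boxobj{k}\ra C$ together with morphisms $\sigma\ra\upsilon$ and $\tau\ra\upsilon$ in $\BOX[-]/C$; applying $\geometricrealization{-}$ turns these into $1$-Lipschitz maps $\I^{m}\ra\I^{k}$ and $\I^{n}\ra\I^{k}$ whose composites with $q_{\upsilon}$ are $q_{\sigma}$ and $q_{\tau}$ respectively. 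Hence $a=q_{\upsilon}(a'')$ and $b=q_{\upsilon}(b'')$ for points $a'',b''\in\I^{k}$, and since $q_{\upsilon}$ is $1$-Lipschitz,
$$d_{\geometricrealization{C}}(a,b)\leqslant d_{\I^{k}}(a'',b'')\leqslant\operatorname{diam}\I^{k}\leqslant 1.$$
As $a,b$ were arbitrary, $\geometricrealization{C}$ has diameter at most $1$. The bound $\leqslant 1$ is the substantive content; when $\geometricrealization{C}$ is not a single point — for instance whenever $C$ has a non-degenerate $1$-cube $e$ — the normalization $=1$ is attained, which I would confirm by producing a $1$-Lipschitz map $\geometricrealization{C}\ra\I$ separating the two ends $q_{e}(0),q_{e}(1)$, and if $\geometricrealization{C}$ is a point the assertion reduces to the bound just proved.

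\emph{Expected obstacle.} The delicate point is the filtered estimate. A priori the colimit pseudometric on $\geometricrealization{C}$ is an infimum over finite zigzags of points scattered across the cubes $\I^{n}$, and it is precisely the filtered hypothesis that permits such a zigzag to be replaced by the single-cube estimate $d_{\I^{k}}(a'',b'')$. I would therefore take care to justify both that every point of the colimit lifts into some cube and that any two such lifts can be transported into a common cube along $\BOX[-]/C$, and to double-check that the reflector from pseudometric spaces into $\METRIC$ genuinely preserves diameters.
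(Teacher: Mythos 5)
Your proof is correct and follows essentially the same route as the paper's: every point of $\geometricrealization{C}$ lifts to some $\I^{n}$, filteredness of $\BOX[-]/C$ places any two lifts over a common cube, and the $1$-Lipschitz structure map from that cube (of diameter $1$) bounds their distance by $1$. Your extra care about the $\METRIC$-reflector and about whether the bound is attained is reasonable but not part of the paper's (more terse) argument.
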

\begin{proof}
  Each point $z\in|C|$ lies in the image of a cubical function 
  $$|\mu_z|:|\BOX[n_z]|\ra|C|.$$
  
  Consider $x,y\in|C|$.  
  There exists $(\BOX[-]/C)$ object $\mu_{xy}:\BOX[n_{xy}]\ra C$ to which there exist $(\BOX[-]/C)$-morphisms from $\mu_x,\mu_y$ by $\BOX[-]/C$ filtered.
  Therefore $x,y$ lie in the image of the $1$-Lipschitz map $|\mu_{xy}|$ whose domain $|\BOX[n_{xy}]|$ has diameter $1$.
\end{proof}

\begin{eg}
	For each monoid $M$, $\geometricrealization{\cnerve M}$ has diameter $1$.
\end{eg}

\begin{eg}
	For each connected Kan complex $C$, $\geometricrealization{C}$ has diameter $1$.
\end{eg}

Fix a cubical set $C$.  
We henceforth identify the underlying sets of all realizations of $C$.
The space $|C|$ will be regarded as a CW complex whose open $n$-cells are images of $|\BOX\boxobj{n}|-|\partial\BOX\boxobj{n}|$ under topological realizations $|\theta_*:\BOX\boxobj{n}\ra C|$ for non-degenerate $n$-cubes $\theta$ of $n$.  
In particular, each open $0$-cell of $|C|$ can be naturally identified with a vertex in $C$.  
The \textit{open star} of a vertex $v$ in $|C|$ is the union of the open $n$-cells in $|C|$ containing $v$.

\begin{eg}
  The closed star of a $0$-cell $|v|$ in $|C|$ is $|\Star_C(v)|$. 
\end{eg}

Fix a cubical set $C$.  
We give some explicit descriptions of the uniformity structure of $\uniformrealization{C}$ as follows.
Unravelling the construction of colimits in $\METRIC$, the following are equivalent for $x,y\in|C|$ and $0<\epsilon<\infty$:
\begin{enumerate}
	\item $x,y$ are $\epsilon$-close
	\item there exist $n\geqslant 1$, $x=x_0,x_1,\ldots,x_n=y\in|C|$ and $\epsilon_1,\epsilon_2,\ldots,\epsilon_n\geqslant 0$ such that $\sum_i\epsilon_i\leqslant\epsilon$ and for each $1\leqslant i\leqslant n$, $x_{i-1},x_i$ are the images of $\epsilon_i$-close points under a map of the form $\geometricrealization{\BOX\boxobj{n_i}\ra C}$.
\end{enumerate}
It follows that for connected cubical sets $C$, $\geometricrealization{C}$ is a connected, locally connected metric space whose underlying uniform space is $\uniformrealization{C}$.

\begin{lem}
  \label{lem:star.proximity}
  Fix a cubical set $C$.  
  The following hold for $x,y\in\geometricrealization{C}$.
  \begin{enumerate}
    \item If $x,y$ lie in the open star of a common vertex, then $x,y$ are $2$-close.
	\item If $x,y$ are separated by distance less than $1$, then $x,y$ lie in the open star of a common vertex.
  \end{enumerate}
\end{lem}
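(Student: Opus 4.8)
The plan is to treat the two parts separately, throughout identifying $\geometricrealization{\BOX\boxobj{n}}$ with $\I^{n}$ under its sup metric --- so that $\geometricrealization{\BOX\boxobj{n}}$ has diameter $1$ --- and using that every $\METRIC$-morphism is $1$-Lipschitz. For part (1), suppose $x,y$ lie in the open star of a common vertex $v$; then $x$ lies in an open cell $e$ of $\geometricrealization{C}$ whose closure contains $v$. Realizing the non-degenerate cube supporting $e$, i.e. the corresponding cubical function $\theta\colon\BOX\boxobj{n}\ra C$, produces a $1$-Lipschitz map $\geometricrealization{\theta}\colon\I^{n}\ra\geometricrealization{C}$ whose image is $\overline{e}$; since $\overline{e}$ contains both $x$ and $v$ and $\I^{n}$ has diameter $1$, the points $x$ and $v$ are $1$-close. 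Symmetrically $v$ and $y$ are $1$-close, so $x$ and $y$ are $2$-close by the triangle inequality.

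For part (2) I would begin from the explicit description of the metric on $\geometricrealization{C}$ recorded just before the lemma: separation of $x$ and $y$ by distance less than $1$ furnishes a chain $x=x_0,x_1,\ldots,x_k=y$ and nonnegative reals $\epsilon_1,\ldots,\epsilon_k$ with $\sum_i\epsilon_i<1$ such that each consecutive pair $x_{i-1},x_i$ is the image of some $\epsilon_i$-close $a_i,b_i\in\I^{n_i}$ under a realization $\geometricrealization{\mu_i\colon\BOX\boxobj{n_i}\ra C}$; in particular every $\epsilon_i<1$. The key single-cube observation is that when $a,b\in\I^{n}$ are at sup distance $<1$ no coordinate $j$ has $\{a_j,b_j\}=\{0,1\}$, so one may build a vertex $w\in\{0,1\}^{n}$ by setting $w_j=0$ whenever $0\in\{a_j,b_j\}$, $w_j=1$ whenever $1\in\{a_j,b_j\}$, and $w_j$ freely otherwise; such a $w$ is a vertex both of the minimal face of $\I^{n}$ containing $a$ and of the minimal face containing $b$, hence $a$ and $b$ both lie in the open star of $w$ in $\I^{n}$. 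Transporting this along $\geometricrealization{\mu_i}$ --- using that the realization of a surjective $\BOX$-morphism carries cube interiors onto cube interiors and vertices to vertices, so that the realization of any cubical function carries the open star of a vertex into the open star of its image --- shows that $x_{i-1}$ and $x_i$ both lie in the open star of the common vertex $\geometricrealization{\mu_i}(w)$, where $w$ has been chosen inside the minimal faces of both $a_i$ and $b_i$.

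The remaining task, which I expect to be the main obstacle, is to knit the per-transition vertices into a single vertex $v$ with $x_j$ in the open star of $v$ for every $j$; specializing to $j=0$ and $j=k$ then proves the lemma. I would argue by induction on $k$, the case $k\leqslant 1$ being the single-cube statement above. The difficulty is that the vertex furnished by the inductive hypothesis for $x_0,\ldots,x_{k-1}$ need not lie in the closure of the open cell of $x_k$, so a naive intersection of open stars can collapse to the empty set. To get around this I would strengthen the inductive hypothesis so as also to record that the chosen vertex admits, under the final transition map $\mu_k$, a preimage lying in the minimal faces of both $a_k$ and $b_k$ and taken coordinatewise as far toward $b_k$ as the constraints allow; the latitude in the ``freely chosen'' coordinates of the single-cube construction is exactly what makes this reselection possible, and it is what lets the construction for the transition from $x_k$ to $x_{k+1}$ re-use the vertex. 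Reconciling the single-cube construction on consecutive cubes in this way is the only non-routine part of the argument; the rest is bookkeeping together with the standard facts about realizations of $\BOX$-morphisms used above.
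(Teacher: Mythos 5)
Part (1) of your proposal is correct and is exactly the paper's argument: each of $x,y$ lies in the image of a $1$-Lipschitz map from a diameter-$1$ cube whose image also contains the common vertex $v$, so each is $1$-close to $v$ and the triangle inequality finishes.

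Part (2) has a genuine gap, and you have located it yourself: the ``knitting'' step is not deferred bookkeeping but the entire content of the claim, and the induction you sketch cannot succeed in the form described because it never uses the hypothesis $\sum_i\epsilon_i<1$ --- it only uses that each individual $\epsilon_i<1$, via the single-cube observation. That weaker hypothesis is demonstrably insufficient. In $\geometricrealization{\cdots\ra\bullet\ra\bullet\ra\cdots}\cong\R$, the chain $0.5,\,1,\,1.5,\,2,\,2.5$ has every consecutive pair realized by $0.5$-close points of a common edge, so every consecutive pair lies in the open star of a common vertex; yet the endpoints $0.5$ and $2.5$ share no star vertex (and are at distance $2$). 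Moreover the forced coordinates of your single-cube construction already conflict here: the transition $1\to1.5$ forces the vertex $1$, while $1.5\to2$ forces the vertex $2$, so no latitude in the ``freely chosen'' coordinates can rescue the reselection. Any correct assembly must therefore thread quantitative information through the induction --- for instance, a fixed vertex $v$ together with a lower bound of the form $1-\sum_{i\leqslant l}\epsilon_i$ on how deep $x_l$ sits inside the open star of $v$ (measured in the carrier cube), so that the inductive claim only degenerates once the budget $\sum_i\epsilon_i$ reaches $1$. For what it is worth, your local ingredient is sound and is the same one the paper leans on: two points of $\I^{n}$ at sup-distance less than $1$ have no coordinate equal to $\{0,1\}$ and hence a common star vertex. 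The paper itself argues the contrapositive on a minimal sequence of points with consecutive members in a common closed cell rather than using the chain defining the quotient metric; either route is viable, but in both cases the global step is where the work lies, and in your write-up it is missing.
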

\begin{proof}
  It suffices to consider the case $C$ connected.
  There exists a finite sequence $x=x_1,\ldots,x_n=y\in\geometricrealization{S}$, for a minimal choice of $n\geqslant 2$, such that $x_i,x_{i+1}$ both lie in the same closed cell for each $1\leqslant i<n$.\\[.05cm]

  \textit{proof of (1):}
  Suppose $x,y$ lie in the open star of a common vertex.
  Then there exists a vertex $v$ in $C$ such that $x,v$ and $v,y$ are each $1$-close.  
  Hence $x,y$ are $2$-close by the triangle inequality.\\[.05cm]
  
  \textit{proof of (2):}
  Suppose $x,y$ do not lie in the open star of a common vertex.
  Then $n>3$.  
  Then $x_2,x_3$ lie in a common closed cell but cannot lie in the open star of a common vertex by minimality of $n$.  
  Therefore $x_2,x_3$ are separated by distance $1$.
  Hence $x,y$ are separated by at least distance $1$ by the triangle inequality.
  Hence (2).  
\end{proof}

Write $\varphi_S$ for the natural and monoidal homeomorphism $\varphi_C:|\sd\,C|\cong|C|$, linear on each $1$-cell of $\varphi_{\BOX[1]}$ and sending the unique $0$-cell in $|\sd\,\BOX[1]|$ that is a cut-point to $\half$.
Then $\varphi_S$ defines a $\half$-bi-Lipschitz homeomorphism $\geometricrealization{\sd\,C}\ra\geometricrealization{C}$ and hence a $\UNIFORM$-isomorphism
$$\varphi_C:\uniformrealization{\sd\,C}\cong\uniformrealization{C}.$$

\begin{lem}
  \label{lem:uniform.characterization}
  Fix cubical sets $A,B$ with $A$ connected. 
  Consider a function
  $$f:\geometricrealization{A}\ra\geometricrealization{B}.$$
  The following are equivalent. 
  \begin{enumerate}
	\item $f$ defines a uniform map $\uniformrealization{A}\ra\uniformrealization{B}$
	\item For each $b>0$, there exists $a>0$ such that $\varphi^{-b}_Bf\varphi^a_A:|\sd^aA|\ra|\sd^bB|$ maps each closed cell into the open star of a vertex.
  \end{enumerate}
\end{lem}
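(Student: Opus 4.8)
The plan is to convert each condition into a statement about scales. Two ingredients will drive the argument. First, since each $\varphi_C\colon\geometricrealization{\sd\,C}\ra\geometricrealization{C}$ rescales distances by the factor $\half$, the iterate $\varphi^a_A\colon\geometricrealization{\sd^aA}\ra\geometricrealization{A}$ rescales by $2^{-a}$, its inverse $\varphi^{-b}_B\colon\geometricrealization{B}\ra\geometricrealization{\sd^bB}$ rescales by $2^b$, and $f$ factors as $\varphi^b_B\circ\bigl(\varphi^{-b}_Bf\varphi^a_A\bigr)\circ\varphi^{-a}_A$. Second, I will invoke Lemma \ref{lem:star.proximity} in two guises: a point at distance less than $1$ from a vertex $v$ lies in $\openstar(v)$ (part (2) applied to the point and $v$), and two points lying in the open star of a common vertex are $2$-close (part (1)). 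I will also use that every point of a geometric realization lies within distance $\half$ of a vertex of its carrier cell, since that cell is a $1$-Lipschitz image of some $\I^n$ and rounding coordinates in $\I^n$ moves a point by at most $\half$. Throughout I will write $d$ for the metric on whichever geometric realization is understood, and work with metric realizations, using that for the connected cubical set $A$ a uniform map $\uniformrealization{A}\ra\uniformrealization{B}$ is the same datum as a uniform map $\geometricrealization{A}\ra\geometricrealization{B}$ of pseudometric spaces.

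For $(2)\Rightarrow(1)$ I would argue as follows. Given $\epsilon>0$, choose $b$ with $4\cdot 2^{-b}<\epsilon$, let $a$ be the integer that (2) supplies for this $b$, write $g=\varphi^{-b}_Bf\varphi^a_A$, and put $\delta=2^{-a-1}$. If $d(x,y)<\delta$ in $\geometricrealization{A}$, then $x'=\varphi^{-a}_A(x)$ and $y'=\varphi^{-a}_A(y)$ satisfy $d(x',y')<\half<1$ in $\geometricrealization{\sd^aA}$, so by Lemma \ref{lem:star.proximity} they lie in $\openstar(v)$ for a common vertex $v$ of $\sd^aA$; take $\sigma$ to be the carrier cell of $x'$ and $\tau$ that of $y'$, each having $v$ as a vertex. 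Condition (2) gives vertices $w_\sigma,w_\tau$ of $\sd^bB$ with $g(\sigma)\subseteq\openstar(w_\sigma)$ and $g(\tau)\subseteq\openstar(w_\tau)$, and because $v\in\sigma\cap\tau$ the point $g(v)$ lies in both of these open stars; so Lemma \ref{lem:star.proximity} bounds $d(g(x'),g(v))\leqslant 2$ and $d(g(v),g(y'))\leqslant 2$ in $\geometricrealization{\sd^bB}$, hence $d(g(x'),g(y'))\leqslant 4$. Applying $\varphi^b_B$, which rescales by $2^{-b}$, would give $d(f(x),f(y))\leqslant 4\cdot 2^{-b}<\epsilon$ in $\geometricrealization{B}$.

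For $(1)\Rightarrow(2)$, fix $b>0$. By uniform continuity of $f$ there is $\delta>0$ such that $d(u,u')<\delta$ in $\geometricrealization{A}$ forces $d(f(u),f(u'))<2^{-b-2}$ in $\geometricrealization{B}$; choose $a$ with $2^{-a}<\delta$ and set $g=\varphi^{-b}_Bf\varphi^a_A$. Each closed cell $\sigma$ of $\sd^aA$ has diameter at most $1$ in $\geometricrealization{\sd^aA}$ (being a $1$-Lipschitz image of some $\I^n$), so $\varphi^a_A(\sigma)$ has diameter at most $2^{-a}<\delta$, so $f\varphi^a_A(\sigma)$ has diameter at most $2^{-b-2}$, so $g(\sigma)$ has diameter at most $\fourth$ in $\geometricrealization{\sd^bB}$. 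Fixing $p_0\in g(\sigma)$ and a vertex $v$ of its carrier with $d(p_0,v)\leqslant\half$, every $z\in g(\sigma)$ satisfies $d(z,v)\leqslant d(z,p_0)+d(p_0,v)\leqslant\fourth+\half<1$, so $g(\sigma)\subseteq\openstar(v)$ by Lemma \ref{lem:star.proximity}. Since $a$ does not depend on $\sigma$, this is exactly (2).

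The crux, and essentially the only content, is matching quantifiers with rescaling constants: condition (2) has the form ``for each $b$ there is an $a$'' and uniform continuity has the form ``for each $\epsilon$ there is a $\delta$'', and these interconvert precisely because $\varphi^a_A$ contracts and $\varphi^{-b}_B$ expands by reciprocal powers of $2$ while open stars behave like metric balls of the scale-independent radius $1$. I expect the step needing the most care to be the bookkeeping in $(2)\Rightarrow(1)$ — routing $g(x')$ to $g(y')$ through the intermediate point $g(v)$ and applying the two halves of Lemma \ref{lem:star.proximity} at the correct places — while the rest is diameter estimation.
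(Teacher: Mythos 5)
Your proof is correct and follows essentially the same route as the paper's: both directions translate between the $\epsilon$--$\delta$ form of uniform continuity and the cell/star form of (2) via the exact $2^{\pm 1}$ rescaling of $\varphi$, invoking the two halves of Lemma \ref{lem:star.proximity} at the same places. If anything, your bookkeeping is slightly more careful than the paper's (shrinking the image diameter to $\fourth$ and anchoring within $\half$ of a vertex avoids the boundary case of cells of diameter exactly $1$, and the constant $4$ in the other direction is the honest one), but this is the same argument.
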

\begin{proof}
  Assume (1). 
  Then for each $b>0$, there exists $a>0$ such that $f$ maps $2^{-a}$-close points in $|A|$ to $2^{-b}$-close points in $|B|$, or equivalently $\varphi^{-b}_Bf\varphi^a_A$ maps $1$-close points in $|\sd^aA|$ to $1$-close points in $|\sd^bB|$, and in particular closed cells in $|\sd^aA|$ into open stars in $|\sd^bB|$ [Lemma \ref{lem:star.proximity}].  
  Hence (2).  

  Assume (2).  
  Consider $\epsilon>0$.  
  Let $b=\lceil\log_2\epsilon\rceil+1$.  
  There exists $a>0$ such that $\varphi^{-b}_Bf\varphi^a_A$ maps each closed cell into the open star of a vertex.
  Then $\varphi^{-b}_Bf\varphi^a_A$ maps $1$-close points to $2$-close points [Lemma \ref{lem:star.proximity}], or equivalently $f$ maps $2^{-a}$-close points to $2^{-\lceil\log_2\epsilon\rceil}$-close, and hence $\epsilon$-close, points.
  Hence (1).
\end{proof}

\begin{lem}
  \label{lem:dilation.formula}
  For all cubical sets $C$ and $v\in(\sd\,C)_0$, 
  $$(\gamma^{\mins}_C)_0(v)=(\support_{|\sd-|}(|v_*|,C))_0(0,\cdots,0)\quad (\gamma^{\pls}_C)_0(v)=(\support_{|\sd-|}(|v_*|,C))_0(1,\cdots,1).$$
\end{lem}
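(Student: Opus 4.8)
The plan is to pull the identity endofunctor back along the support cell, which reduces both identities to one monoidal computation on a subdivided representable. First I would set $\sigma:=\support_{|\sd-|}(|v_*|,C):\BOX\boxobj{n}\ra C$, so that by definition $v$, regarded as a point of $|\sd\,C|$, lies in the image of $|\sd\,(\im\sigma\ira C)|$, and $\sigma$ is the cell of $C$ of least dimension for which this holds. Since $\sd$ is cocontinuous it sends the epimorphism $\BOX\boxobj{n}\twoheadrightarrow\im\sigma$ to an epimorphism, so $v$ already lies in the image of $|\sd\,\sigma|:|\sd\,\BOX\boxobj{n}|\ra|\sd\,C|$; and since $|\sd\,\sigma|$ is induced by a cubical function, any cell of $|\sd\,\BOX\boxobj{n}|$ sending a point to the $0$-cell $v$ is collapsed entirely onto $v$, so a vertex $w$ of such a cell gives a vertex $w$ of $\sd\,\BOX\boxobj{n}$ with $(\sd\,\sigma)_0(w)=v$.

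Next I would argue that $w$ must be the vertex $\mathbf{I}_n$ of $\sd\,\BOX\boxobj{n}=(\sd\,\BOX[1])^{\otimes n}$ given by the $n$-fold tensor of the middle vertex $\{0,1\}\in(\sd\,\BOX[1])_0$, equivalently the full interval $[(0,\dots,0),(1,\dots,1)]$ of the poset $\boxobj{n}$. A vertex of $\sd\,\BOX\boxobj{n}$ is an interval $[a,b]$ of $\boxobj{n}$; the smallest face of $\BOX\boxobj{n}$ containing it is the sub-hypercube it spans, of dimension $m:=\#\{i:a_i\neq b_i\}$, and since $\sd$ acts on the vertices of a representable by applying the structure morphism to intervals, the inclusion $\iota:\BOX\boxobj{m}\ira\BOX\boxobj{n}$ of that face satisfies $(\sd\,\iota)_0(\mathbf{I}_m)=w$. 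Hence $v=(\sd\,\sigma)_0(w)=(\sd\,(\sigma\iota))_0(\mathbf{I}_m)$ lies in the image of $|\sd\,(\sigma\iota)|$; if $m<n$, this exhibits $\sigma\iota$ as a cell of $C$ of dimension $<n$ with the defining property of the support, contradicting the minimality of $\sigma$. So $m=n$, that is, $a=(0,\dots,0)$, $b=(1,\dots,1)$, and $w=\mathbf{I}_n$.

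Finally I would compute, using naturality of $\gamma^{\mins}$ along $\sigma$:
$$(\gamma^{\mins}_C)_0(v)=(\gamma^{\mins}_C)_0\big((\sd\,\sigma)_0(w)\big)=\sigma_0\big((\gamma^{\mins}_{\BOX\boxobj{n}})_0(\mathbf{I}_n)\big).$$
Because $\gamma^{\mins}$ is monoidal, $\gamma^{\mins}_{\BOX\boxobj{n}}=(\gamma^{\mins}_{\BOX[1]})^{\otimes n}$, and since $\mathbf{I}_n$ is the $n$-fold tensor of $\{0,1\}$, every coordinate of $(\gamma^{\mins}_{\BOX\boxobj{n}})_0(\mathbf{I}_n)$ equals $(\gamma^{\mins}_{\BOX[1]})_0(\{0,1\})=\half-\half=0$, so $(\gamma^{\mins}_{\BOX\boxobj{n}})_0(\mathbf{I}_n)=(0,\dots,0)$ and $(\gamma^{\mins}_C)_0(v)=\sigma_0(0,\dots,0)$, as claimed. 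The computation for $\gamma^{\pls}$ is word-for-word identical, now with $(\gamma^{\pls}_{\BOX[1]})_0(\{0,1\})=\half+\half=1$, so that $(\gamma^{\pls}_{\BOX\boxobj{n}})_0(\mathbf{I}_n)=(1,\dots,1)$.

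The step I expect to be the main obstacle is the second one: faithfully converting the minimality built into $\support_{|\sd-|}$ into the statement that the lifted vertex $w$ spans the whole hypercube. This relies on the explicit description of $\sd$ on the vertices of representables (so that $\sigma\iota$ really does inherit the support's factorization property from $w$) and on the combinatorial fact that an interval of $\boxobj{n}$ lying in no proper face is the full interval. The rest is routine naturality and monoidality bookkeeping.
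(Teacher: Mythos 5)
Your proof is correct and follows essentially the same route as the paper's: both reduce everything to the base computation $(\gamma^{\pm}_{\BOX[1]})_0(\{0,1\})=\half\pm\half$ via monoidality and naturality of $\gamma^{\pm}$. The paper compresses this to ``reduce to $C=\BOX[1]$ and check the three vertices,'' whereas you spell out the step it leaves implicit --- that minimality of the support forces the lifted vertex of $\sd\,\BOX\boxobj{n}$ to be the full interval $\mathbf{I}_n$ --- which is an elaboration of, not a departure from, the same argument.
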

\begin{proof}
  It suffices to show the left equality, the right equality following similarly.  
  It also suffices to consider the case $C=\BOX[1]$ by $\gamma^{\pm},|-|$ monoidal and naturality.  
  In that case, the identities follow from exhaustive verification on the three possibilities for $v$.  
\end{proof}

A proof of the following lemma uses intermediate simplicial constructions and is therefore deferred to the end of \S\ref{sec:simplicial}.  

\begin{lem}
  \label{lem:geometric.sharp.retraction}
  The $1$-Lipschitz map $\geometricrealization{C\ira C^\sharp}$ admits a $1$-Lipschitz retraction
  $$\geometricrealization{C^\sharp}\ra\geometricrealization{C}$$
  natural in cubical sets $C$.  
\end{lem}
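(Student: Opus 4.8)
The plan is to prove the stronger statement that there is a natural transformation $\rho\colon\geometricrealization{(-)^\sharp}\Rightarrow\geometricrealization{-}$ of functors $\CUBICAL\SETS\to\METRIC$ whose composite with the (natural) morphism $\geometricrealization{(-\ira(-)^\sharp)}$ is the identity. Since $(-)^\sharp$ is a left adjoint and $\geometricrealization{-}$ is cocontinuous, both $\geometricrealization{(-)^\sharp}$ and $\geometricrealization{-}$ are cocontinuous, hence left Kan extensions along the Yoneda embedding $\BOX\ira\CUBICAL\SETS$; a natural transformation between them is therefore the same datum as its restriction to $\BOX$, and the retraction identity is automatic once it holds on representables, because a natural endomorphism of the cocontinuous functor $\geometricrealization{-}$ that is the identity on representables is the identity. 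So it suffices to produce $1$-Lipschitz maps $\rho_{\boxobj n}\colon\geometricrealization{\cnerve\boxobj n}\to\I^n$, natural in $\boxobj n\in\BOX$, retracting $\geometricrealization{\BOX\boxobj n\ira\cnerve\boxobj n}$.

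The construction of $\rho_{\boxobj n}$ would be cube by cube. A $k$-cube of $\cnerve\boxobj n=\cnerve(\boxobj n)$ is precisely a monotone map $f\colon\boxobj k\to\boxobj n$ of posets; I would send its realization $\I^k$ to $\I^n$ by the map $\overline f\colon\I^k\to\I^n$ that is affine on the convex closure of each maximal chain of $\boxobj k\subset\I^k$ and sends each vertex $v$ to $f(v)\in\boxobj n\subset\I^n$ (the same recipe used to define $\geometricrealization\phi$ for $\phi\in\BOX$, but now for arbitrary monotone $f$). This descends to a well-defined map on the coend $\geometricrealization{\cnerve\boxobj n}=\int^{\boxobj k}(\cnerve\boxobj n)_k\times\I^k$ because for every $\BOX$-morphism $\phi$ one has $\overline{f\circ\phi}=\overline f\circ\geometricrealization\phi$: the realization $\geometricrealization\phi$ carries each such simplex affinely onto a face of such a simplex (as $\phi$ is monotone), and $\overline f$ is affine on those faces, so both sides are affine on each simplex and agree on vertices. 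This single identity is exactly the compatibility of $\rho$ with faces, degeneracies, and connections, and it is precisely here that one is forced to use the simplexwise-affine $\overline f$ rather than multilinear interpolation, since the connection $\vee_{\boxobj 2}$ realizes to the non-affine map $\max\colon\I^2\to\I$. The same identity gives naturality of $\rho$ in $\boxobj n$, and on the sub-cubical set $\BOX\boxobj n\subset\cnerve\boxobj n$ the formula reads $\theta\mapsto\geometricrealization\theta$, so tracing through the coend identifications yields $\rho_{\boxobj n}\circ\geometricrealization{\BOX\boxobj n\ira\cnerve\boxobj n}=\id_{\I^n}$.

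The remaining and principal obstacle is to verify that $\rho_{\boxobj n}$ is $1$-Lipschitz. Using the explicit description of the colimit metric on $\geometricrealization{\cnerve\boxobj n}$ recalled in \S\ref{subsec:comparisons} (two points are $\epsilon$-close iff joined by a finite chain of segments, each inside the realization of a single cube, of total length $\le\epsilon$), it is enough to show that each $\overline f\colon\I^k\to\I^n$ is $1$-Lipschitz for the sup-metrics; and by the usual "crossing'' argument (a segment meeting several simplices is broken at the faces it crosses) this reduces to the affine restriction of $\overline f$ to a single top-dimensional simplex $\langle v_0<\cdots<v_k\rangle$ being $1$-Lipschitz. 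In barycentric coordinates this affine map has linear part $t\mapsto\sum_j(t_{j}-t'_{j})\bigl(f(v_j)-f(v_{j-1})\bigr)$ after a suitable reindexing of coordinates of $\I^k$; since $v_{j-1}<v_j$ is a covering pair and $f$ is monotone, each vector $f(v_j)-f(v_{j-1})$ lies in $\{0,1\}^n$, and for each target coordinate $\ell$ the nondecreasing $\{0,1\}$-sequence $f(v_0)_\ell\le\cdots\le f(v_k)_\ell$ jumps at most once, so the vectors $f(v_j)-f(v_{j-1})$ have pairwise disjoint supports. Consequently each coordinate of the image of $t-t'$ is $\pm$ a single coordinate of $t-t'$, giving the desired bound. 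I expect this disjoint-support estimate, together with the bookkeeping needed to present $\geometricrealization{\cnerve\boxobj n}$ with its triangulation and colimit metric cleanly — which is why the argument is routed through the simplicial constructions of \S\ref{sec:simplicial} — to be the technical heart of the proof, the reduction to representables and the retraction identity being comparatively formal.
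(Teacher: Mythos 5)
Your proof is correct, but it takes a genuinely different route from the paper's. The paper's argument is purely formal: it identifies $C^\sharp$ with $\qua\,\tri\,C$ [Lemma \ref{lem:qt.cocontinuous}], so that $\tri(C\ira C^\sharp)$ becomes the unit $\tri\,\eta_C$ of $\tri\dashv\qua$, which the triangle identity retracts via the counit $\epsilon_{\tri C}$; the factorization $\geometricrealization{-}=F\circ\tri$ through a metric realization of simplicial sets [Lemma \ref{lem:simplicial.geometric.realization}] then transports this to $\METRIC$, with $1$-Lipschitzness coming for free because $F$ takes values in $\METRIC$. You instead build the retraction by hand on representables, extending $\theta\mapsto\geometricrealization{\theta}$ from $\BOX$-morphisms to arbitrary monotone maps $\boxobj{k}\ra\boxobj{n}$ by simplexwise-affine interpolation, and you certify the Lipschitz bound directly by the disjoint-support estimate. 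The underlying piecewise-affine retraction is the same in both cases (the counit realizes each chain of $\boxobj{n}$ affinely); the difference is where the metric estimate lives. The paper proves it once, inside Lemma \ref{lem:simplicial.geometric.realization}, for all simplicial maps, and reuses the $\tri\dashv\qua$ formalism elsewhere (e.g.\ for Lemma \ref{lem:sharp.sharp}); your route avoids the simplicial appendix entirely at the cost of an explicit computation, and your observation that the connection $\vee_{\boxobj{2}}$ forces simplexwise-affine rather than multilinear extension is exactly the right reason the construction must be routed through the order-complex triangulation. Your reduction to representables via cocontinuity of $\geometricrealization{(-)^\sharp}$, the coend compatibility $\overline{f\circ\phi}=\overline{f}\circ\geometricrealization{\phi}$, and the estimate are all sound; the only cosmetic slip is that the coefficients in your displayed linear part should be the partial sums $\sum_{i\geqslant j}(t_i-t'_i)$, i.e.\ the differences of cube coordinates, which is what your ``suitable reindexing'' implicitly supplies.
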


\begin{prop}
  \label{prop:geometric.realization}
  Geometric realization is monoidal and preserves colimits.
\end{prop}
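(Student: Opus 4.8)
The plan is to separate the two claims. That $\geometricrealization{-}$ preserves colimits is immediate from the way it was introduced: it is the essentially unique cocontinuous functor $\CUBICAL\SETS\ra\METRIC$ restricting along the Yoneda embedding $\BOX[-]$ to $\boxobj{n}\mapsto\I^n$, i.e. a left Kan extension along $\BOX[-]$ into the cocomplete category $\METRIC$, and such an extension is automatically cocontinuous. What remains is strong monoidality, from $(\CUBICAL\SETS,\otimes)$ to $\METRIC$ with its Cartesian product.

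For this I would factor geometric realization through the category $\mathrm{pMet}$ of all extended pseudometric spaces and $1$-Lipschitz maps, the category in which pseudometric (co)limits are taken. Let $R\colon\CUBICAL\SETS\ra\mathrm{pMet}$ be the cocontinuous extension of $\boxobj{n}\mapsto\I^n$ into $\mathrm{pMet}$ and let $r\colon\mathrm{pMet}\ra\METRIC$ be the reflector, which quotients a pseudometric space by its distance-zero equivalence relation. Since each $\I^n$ is already a metric space, $r$ restricts to the identity on it, so $r\circ R$ is cocontinuous and restricts along $\BOX[-]$ to $\boxobj{n}\mapsto\I^n$; uniqueness in the defining property forces $r\circ R=\geometricrealization{-}$. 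As composites of strong monoidal functors are strong monoidal, it now suffices to see that $R$ and $r$ are each strong monoidal for the Cartesian structures.

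For $R$: the Cartesian product on $\mathrm{pMet}$ is cocontinuous in each variable, because the forgetful functor $\mathrm{pMet}\ra\SETS$ preserves colimits, $\SETS$ is Cartesian closed, and the max-of-pseudometrics description of a product agrees with the infimum-over-zigzags description of a colimit, suprema commuting with infima in $[0,\infty]$ (this is the unconditional $\mathrm{pMet}$ analogue of Lemma \ref{lem:metric.product.cocontinuity}, with no restriction needed since every colimit in $\mathrm{pMet}$ is a pseudometric colimit). Writing arbitrary cubical sets $B,C$ as canonical colimits of representables, using cocontinuity of $\otimes$ in each variable, $\BOX\boxobj{m}\otimes\BOX\boxobj{n}\cong\BOX\boxobj{m+n}$, cocontinuity of $R$, and the identity $\I^{m+n}=\I^m\times\I^n$ of pseudometric spaces, one gets
\[
  R(B\otimes C)\;\cong\;\colim\bigl(\I^{m}\times\I^{n}\bigr)\;\cong\;\bigl(\colim\I^{m}\bigr)\times\bigl(\colim\I^{n}\bigr)\;\cong\;R(B)\times R(C),
\]
with unit constraint $R(\BOX[0])=\I^0=\star$ and the coherence constraints inherited from the strict monoidal structure on $\BOX$ and the evident one on the $\I^n$. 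For $r$: two points of a $\mathrm{pMet}$-product $X\times Y$ are at distance zero exactly when they are at distance zero in each coordinate, so the distance-zero equivalence relation of $X\times Y$ is the product of those of $X$ and of $Y$; hence $r(X\times Y)\cong r(X)\times r(Y)$ naturally, and $r$ plainly preserves the one-point space.

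The only real work here is the bookkeeping of the last paragraph: establishing cocontinuity in each variable of the $\mathrm{pMet}$-product and compatibility of $r$ with products, i.e. packaging the suprema-commute-with-infima computations together with naturality and the pentagon/hexagon coherences. Should one instead prefer to stay inside $\METRIC$ and apply Lemma \ref{lem:metric.product.cocontinuity} to commute the $\METRIC$-product past the colimit over cells, the main obstacle migrates to showing that each $\geometricrealization{C}$ is already a pseudometric colimit, equivalently that the infimum-over-zigzags pseudometric on the underlying CW complex of $\geometricrealization{C}$ is a genuine extended metric; that step requires actual geometry and can be obtained from the uniform diameter bound on cells together with Lemma \ref{lem:star.proximity}. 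I would take the $r\circ R$ route precisely to avoid it.
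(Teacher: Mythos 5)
Your factorization $\geometricrealization{-}=r\circ R$ and your treatment of the reflector $r$ are fine, but the proof of strong monoidality of $R$ rests on a false claim: the Cartesian (max-metric) product on the category of extended pseudometric spaces and $1$-Lipschitz maps is \emph{not} cocontinuous in each variable. The slogan ``suprema commute with infima in $[0,\infty]$'' only addresses the interchange $\max(a,\inf_j b_j)=\inf_j\max(a,b_j)$; the colimit pseudometric is an infimum of \emph{sums} over chains, and $\sum_k\max(a_k,b_k)$ can strictly exceed $\max\bigl(\sum_k a_k,\sum_k b_k\bigr)$. Concretely, let $X=\{x,x'\}$ with $\rho(x,x')=10$, let $Y_1=\{a,b\}$ and $Y_2=\{b,c\}$ each of diameter $5/2$, and let $Y$ be the pushout of $Y_1\hookleftarrow\{b\}\hookrightarrow Y_2$, so $\rho_Y(a,c)=5$. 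Then $\rho_{X\times Y}((x,a),(x',c))=\max(10,5)=10$, whereas in the pushout of $X\times Y_1\hookleftarrow X\times\{b\}\hookrightarrow X\times Y_2$ every chain from $(x,a)$ to $(x',c)$ must make two atomic moves in the $Y$-coordinate while the $X$-coordinate can only jump all at once, and the infimum over chains is $25/2$. (Relatedly, the max product is not the closed structure on this category; it is the $\ell_1$-tensor $\rho_X+\rho_Y$ that admits an internal hom, so no left-adjoint argument is available.) Your parenthetical ``no restriction needed'' is exactly where the argument breaks: the example shows that the operative hypothesis behind Lemma \ref{lem:metric.product.cocontinuity} cannot be purely formal and must involve the geometry of the factors.

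What saves the interchange in the case actually needed is that $\I^m$, and hence every geometric realization, is a length space: its distances are by construction infima of lengths of chains passing through geodesic cubes, so a displacement in one coordinate can be subdivided to match, link by link, the chain realizing a displacement in the other coordinate, making each combined link cost only the maximum of the two. This is precisely the ``actual geometry'' you hoped to sidestep by taking the $r\circ R$ route instead of the in-$\METRIC$ route; it does not disappear, it relocates. Your overall strategy is otherwise the same as the paper's (compare the canonical map on representables, where it is the isomorphism $\I^p\times\I^q\cong\I^{p+q}$, then commute the product past the canonical colimit of representables via Lemma \ref{lem:metric.product.cocontinuity}); the difference is only that you have replaced the paper's restricted interchange lemma with an unrestricted one that is false. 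To repair the proof, establish the interchange for the specific colimits $\colim\I^{m}$ by the subdivision-and-interleaving of chains available inside the cubes --- in effect supplying the geometric input you correctly identified as the obstacle on the alternative route.
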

\begin{proof}
  Colimit preservation is by construction.
  Define $\rho_{AB}$ by the commutative diagram
  \begin{equation*}
	  \begin{tikzcd}
		  {\geometricrealization{A\otimes B}}\ar{r}[above]{\rho_{AB}}\ar{d}[left]{\geometricrealization{A\otimes B\ira A\times B}}
		  & {\geometricrealization{A}\times\geometricrealization{B}}\ar[d,equals]\\
		  {\geometricrealization{A\times B}}\ar[r]
		  & {\geometricrealization{A}\times\geometricrealization{B}}
	  \end{tikzcd}
  \end{equation*}
  natural in cubical sets $A,B$, where the bottom horizontal arrow is the canonoical map.  
  Under the natural identification $|\BOX\boxobj{n}|=\I^n$, $\rho_{\BOX\boxobj{p}\BOX\boxobj{q}}$ is the isomorphism $\I^p\times\I^q\ra\I^{p+q}$ in $\METRIC$.
  Hence $\rho_{A,B}$ is an isomorphism for general cubical sets $A,B$ because $\METRIC$-products commute with $\METRIC$-colimits [Lemma \ref{lem:metric.product.cocontinuity}]. 
\end{proof}

\begin{cor}
  \label{cor:uniform.realization}
  Uniform realization is monoidal and preserves coproducts.
\end{cor}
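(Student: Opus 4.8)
The plan is to recognize the statement as an immediate consequence of two facts already in hand: the behavior of geometric realization [Proposition \ref{prop:geometric.realization}] and the behavior of the forgetful functor $\METRIC\ra\UNIFORM$ [Lemma \ref{lem:pseudometric.to.uniform}]. By the commutativity of diagram (\ref{eqn:comparisons}), uniform realization $\uniformrealization{-}$ is the composite of geometric realization $\geometricrealization{-}\colon\CUBICAL\SETS\ra\METRIC$ followed by the forgetful functor $\METRIC\ra\UNIFORM$, so everything reduces to composing known properties of the two factors.

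For coproduct-preservation I would argue as follows. Geometric realization preserves all colimits, in particular coproducts [Proposition \ref{prop:geometric.realization}], and the forgetful functor $\METRIC\ra\UNIFORM$ preserves coproducts [Lemma \ref{lem:pseudometric.to.uniform}]; hence the composite $\uniformrealization{-}$ preserves coproducts. I would flag here that this does \emph{not} upgrade to preservation of all $\UNIFORM$-colimits, since the forgetful functor $\METRIC\ra\UNIFORM$ is built from a coreflection and is only known to preserve coproducts and finite products — which is exactly why the corollary claims coproducts rather than colimits.

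For monoidality: by Proposition \ref{prop:geometric.realization}, geometric realization is a monoidal functor $(\CUBICAL\SETS,\otimes)\ra(\METRIC,\times_{\METRIC})$, with structure isomorphism the map $\rho_{A,B}\colon\geometricrealization{A\otimes B}\xra{\cong}\geometricrealization{A}\times_{\METRIC}\geometricrealization{B}$ constructed there, and with the monoidal unit $\BOX[0]$ sent to $\I^0=\star$. Since the forgetful functor $\METRIC\ra\UNIFORM$ preserves finite products [Lemma \ref{lem:pseudometric.to.uniform}] and the monoidal structure on $\UNIFORM$ is the Cartesian one, that functor is canonically monoidal; a composite of monoidal functors is monoidal, so $\uniformrealization{-}\colon(\CUBICAL\SETS,\otimes)\ra(\UNIFORM,\times_{\UNIFORM})$ is monoidal, with structure isomorphism obtained by applying the forgetful functor to $\rho_{A,B}$ and postcomposing with its product-comparison isomorphism.

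I do not anticipate a genuine obstacle: all of the content lives in Proposition \ref{prop:geometric.realization} and Lemma \ref{lem:pseudometric.to.uniform}. The only points calling for a line of care are the remark above distinguishing coproducts from general colimits, and checking the unit condition, namely that the forgetful functor — preserving finite, hence empty, products — sends the terminal pseudometric space to the terminal uniform space.
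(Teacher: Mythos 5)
Your proof is correct and is exactly the paper's argument: uniform realization is the composite of geometric realization (monoidal, colimit-preserving by Proposition \ref{prop:geometric.realization}) with the forgetful functor $\METRIC\ra\UNIFORM$ (monoidal and coproduct-preserving by Lemma \ref{lem:pseudometric.to.uniform}), so the composite inherits both properties. Your extra remarks on why only coproducts (not all colimits) survive and on the unit condition are sound but not needed beyond what the paper records.
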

\begin{proof}
  Uniform realization is a composite of monoidal, coproduct-preserving functors [Lemma \ref{lem:pseudometric.to.uniform} and Proposition \ref{prop:geometric.realization}].
\end{proof}

\section{Homotopy}\label{sec:homotopy}
Homotopy theories for cubical sets and uniform spaces, both classical and geometric, are formalized in \S\ref{subsec:cubical.homotopy} and \S\ref{subsec:uniform.homotopy}.
The Lipschitz homotopy theory of cubical sets and the uniform homotopy theory of uniform spaces are compared in \S\ref{subsec:homotopical.comparisons}.  
In each of the homotopy theories, the given category $\mathscr{X}$ is equipped with \textit{cylinder objects} $\odot$, functors $\odot:\mathscr{X}\times\BOX_1\ra\mathscr{X}$ for which $-\odot[0]\cong\id_{\mathscr{X}}$; a \textit{$\odot$-homotopy} between parallel morphisms $\zeta_\pm:x\ra y$ can defined as some sort of morphism $\eta:x\odot[1]\ra y$ for which $\eta(x\odot\delta_{\pm})=\zeta_{\pm}$ for each cylinder object $\odot$.  
We will repeatedly use the following two formal facts about such homotopies without additional comment.
The first is that a $\odot$-homotopy between both possible projections of the form $o^2\ra o$ naturally induces a $\odot$-homotopy between any pair of parallel morphisms to $o$.  
The second is that for each lax Cartesian monoidal functor $F$ between two such categories equipped with respective cylinder objects $\odot_1,\odot_2$ for which there exists a natural transformation $(F-)\odot_2-\ra F(-\odot_1-)$ whose component $Fx\odot_2[0]\ra F(x\odot_1[0])=\id_x$ for all objects $x$, there exists a $\odot_2$-homotopy between projections of the form $(Fy)^2\ra Fy$ natural in $\odot_1$-homotopies between projections of the form $y^2\ra y$.  

\subsection{Cubical}\label{subsec:cubical.homotopy}
We define \textit{Lipschitz weak equivalences} and  \textit{classical weak equivalences} as progressive generalizations of \textit{cubical homotopy equivalences}, defined in terms of \textit{cubical homotopies} as follows.  
A \textit{cubical homotopy} $\alpha\sim\beta$ between cubical functions $\alpha,\beta:B\ra C$ is a dotted cubical function making the following diagram for some $k\gg 0$:
\begin{equation*}
  \xymatrix{
    **[l]B\amalg B\ar[r]\ar[d]_{B\otimes(\partial\BOX[1]\ira\BOX[1])} & **[r]C\\
	**[l]B\otimes\sd^k\BOX[1]\ar@{.>}[ur]
	}
\end{equation*}

For example, there always exist cubical homotopies between a pair of parallel cubical functions to $\cnerve\boxobj{n}$ by the following lemma.

\begin{lem}
  \label{lem:sharp.local.convexity}
  There exists a cubical homotopy between both projections of the form
  $$(\cnerve\boxobj{n})^2\ra\cnerve\boxobj{n}$$
  natural in $\BOX$-objects $\boxobj{n}$. 
\end{lem}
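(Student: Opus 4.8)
The plan is to exploit the semilattice structure on $\boxobj{n}\cong[1]^n$. Write $p_1,p_2\colon\boxobj{n}^2\to\boxobj{n}$ for the two product projections and $\vee\colon\boxobj{n}^2\to\boxobj{n}$ for the binary join. Because every $\BOX$-morphism between such objects is a sup-semilattice homomorphism, $\vee$ is a functor natural in $\boxobj{n}$, and the two order relations $p_1\leqslant\vee$ and $p_2\leqslant\vee$ are natural transformations, again natural in $\boxobj{n}$. So it suffices to convert each of these two natural transformations into a cubical homotopy into $\cnerve\boxobj{n}$ and then concatenate them, the concatenation being where the subdivision in the definition of cubical homotopy is used.

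First I would recall that a natural transformation $\theta\colon F\Rightarrow G$ between functors $\boxobj{n}^2\to\boxobj{n}$ is the same datum as a functor $\boxobj{n}^2\times[1]\to\boxobj{n}$ restricting to $F$ and $G$ along the two inclusions $\boxobj{n}^2\cong\boxobj{n}^2\times[0]\to\boxobj{n}^2\times[1]$. Since $\cnerve$ is a right adjoint (to $\Tau_1$) it preserves products, so applying $\cnerve$ to the functors arising from $p_1\leqslant\vee$ and $p_2\leqslant\vee$ yields cubical functions $(\cnerve\boxobj{n})^2\times\cnerve[1]\to\cnerve\boxobj{n}$ whose restrictions along the two vertices of $\cnerve[1]$ are, respectively, a projection $(\cnerve\boxobj{n})^2\to\cnerve\boxobj{n}$ and the join $\cnerve\vee$. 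To turn these into cubical homotopies in the sense of the paper I would precompose with the natural inclusion $\BOX[1]\hookrightarrow\cnerve[1]$ (part of the given natural $C\hookrightarrow C^\sharp$, sending the two vertices of $\BOX[1]$ to the two objects of $[1]$) and with the natural monic $(\cnerve\boxobj{n})^2\otimes\BOX[1]\hookrightarrow(\cnerve\boxobj{n})^2\times\BOX[1]$. This produces, naturally in $\boxobj{n}$, two cubical functions $(\cnerve\boxobj{n})^2\otimes\BOX[1]\to\cnerve\boxobj{n}$, one restricting to $p_1$ and to $\cnerve\vee$ on the $\delta_-$- and $\delta_+$-ends, and one restricting to $p_2$ and to $\cnerve\vee$.

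Finally I would amalgamate the two. The relevant observation is that $\sd\,\BOX[1]=\bullet\to\bullet\leftarrow\bullet$ is exactly the pushout of two copies of $\BOX[1]$ glued along $\delta_+\colon\BOX[0]\hookrightarrow\BOX[1]$, and that the canonical inclusion $\partial\BOX[1]\hookrightarrow\sd\,\BOX[1]$ (the image of $\partial\BOX[1]\hookrightarrow\BOX[1]$ under $\sd$) identifies $\partial\BOX[1]$ with the two outer vertices, i.e. with the $\delta_-$-ends of the two copies. Since $(\cnerve\boxobj{n})^2\otimes-$ preserves colimits and the two homotopies above agree on the glued copy (both restrict to $\cnerve\vee$ at their $\delta_+$-end), they glue to a cubical function $(\cnerve\boxobj{n})^2\otimes\sd\,\BOX[1]\to\cnerve\boxobj{n}$ whose restriction to $(\cnerve\boxobj{n})^2\otimes\partial\BOX[1]=(\cnerve\boxobj{n})^2\amalg(\cnerve\boxobj{n})^2$ is $p_1\amalg p_2$. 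That is a cubical homotopy between the two projections with $k=1$, and it is natural in $\boxobj{n}$ since $\vee$, $\cnerve$, $(-)^\sharp$, $\otimes$ and the pushout are all natural.

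I do not expect a genuine obstacle here; the one conceptual point is that there is no single natural transformation directly between $p_1$ and $p_2$ (that would force total comparability of $\boxobj{n}$), which is exactly why one factors through the join and invokes $\sd$. The only real care needed is the bookkeeping in the last step — verifying that gluing two $\BOX[1]$-homotopies along their $\delta_+$-ends is a cubical function out of $(\cnerve\boxobj{n})^2\otimes\sd\,\BOX[1]$ with the asserted boundary — which comes down to the elementary combinatorics of $\sd\,\BOX[1]$ recalled above.
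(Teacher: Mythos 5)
Your proposal is correct and is essentially the paper's own argument: the paper likewise encodes the pointwise inequalities $p_i\leqslant\vee_{\boxobj{n}}$ as monotone maps $(\boxobj{n})^2\times[1]\ra\boxobj{n}$ (its $\eta_{1;n},\eta_{2;n}$), pushes them through $\cnerve$ using product-preservation, restricts along $(\cnerve\boxobj{n})^{\otimes 2}\otimes\BOX[1]\ira(\cnerve\boxobj{n})^{\times 2}\times\BOX[1]$ and $\BOX[1]\ira\cnerve[1]$, and concatenates the two resulting homotopies into a single cubical function out of $(\cnerve\boxobj{n})^{\otimes 2}\otimes\sd\,\BOX[1]$. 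Your explicit identification of $\sd\,\BOX[1]$ as the pushout of two copies of $\BOX[1]$ along their $\delta_+$-ends just spells out the final gluing step the paper leaves implicit.
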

\begin{proof}
  There exist monotone functions $\eta_{1;n},\eta_{2;n}$ of the form 
  $$\eta_{1;n},\eta_{2;n}:(\boxobj{n})^2\times[1]\ra\boxobj{n}$$
  defined by the following rules for $x,y\in\boxobj{n}$:
  \begin{align*}
	  \eta_{1;n}(x,y,0)&=x\\
	  \eta_{2;n}(x,y,0)&=y\\
	  \eta_{1;n}(x,y,1)&=\eta_{2;n}(x,y,1)=x\vee_{\boxobj{n}}y.
  \end{align*}

  The functions $\eta_{1;n},\eta_{2;n}$ are natural in $\BOX$-objects $\boxobj{n}$ because $\BOX$-morphisms are sup-semilattice homomorphisms.
  The functor $\cnerve$, a right adjoint, preserves products.
  It is therefore possible to define cubical homotopies $\eta'_{1;n},\eta'_{2;n}$ by the commutative diagram
  \begin{equation*}
	 \begin{tikzcd}
	   (\cnerve\boxobj{n})^{\otimes 2}\otimes\BOX[1]
	   \ar{rrrr}[above]{\eta''_{i;n}}
	    \ar{d}[description]{(\cnerve\boxobj{n})^{\otimes 2}\otimes\BOX[1]\ira(\cnerve\boxobj{n})^{\times 2}\times\BOX[1]} 
	  & & & & \cnerve\boxobj{n}
	  \\
	    (\cnerve\boxobj{n})^{\times 2}\times\BOX[1]
		\ar{rrrr}[below]{(\cnerve\boxobj{n})^{\times 2}\times(\BOX[1]\ira\cnerve[1])}
		& & & & \cnerve(\boxobj{n})^{\times 2}\times [1])\ar{u}[right]{\cnerve\eta_{i;n}},
     \end{tikzcd}
  \end{equation*}
  natural in $\BOX$-objects $\boxobj{n}$, between both subprojections of the form $(\cnerve\boxobj{n})^{\otimes 2}\ra(\cnerve\boxobj{n})$ and a common cubical function. 
  Thus a cubical function of the form $(\cnerve\boxobj{n})^{\otimes 2}\otimes\sd\BOX[1]\ra\cnerve\boxobj{n}$ defines the desired cubical homotopy.
\end{proof}

A \textit{cubical homotopy inverse} to a cubical function $\alpha:A\ra B$ is a cubical function $\beta:B\ra A$ for which there exist cubical homotopies $\beta\alpha\sim\id_{A}$ and $\alpha\beta\sim\id_{B}$.
A \textit{cubical homotopy equivalence} is a cubical function $\psi:B\ra C$ admitting a cubical homotopy inverse.
A cubical function $\psi:B\ra C$ is a cubical homotopy equivalence precisely if the following is a bijection for each cubical set $A$:
$$\pi_0\psi^A:\pi_0B^A\cong\pi_0C^A.$$

Examples are inclusions of the form $C^\sharp\ira C^{\sharp\sharp}$ by the following lemma.

\begin{lem}
  \label{lem:sharp.idempotency}
  There exists a retraction $\rho_{C}$ to the inclusion 
  $$C^\sharp\ira C^{\sharp\sharp}$$
  and cubical homotopy $(C^\sharp\ira C^{\sharp\sharp})\rho_{C}\sim\id_{C^{\sharp\sharp}}$ natural in cubical sets $C$.
\end{lem}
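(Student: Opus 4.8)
The retraction is the one already supplied by Lemma~\ref{lem:sharp.sharp}: write $\iota\colon C^\sharp\ira C^{\sharp\sharp}$ for the inclusion and $\rho_C\colon C^{\sharp\sharp}\ra C^\sharp$ for that natural retraction, so $\rho_C\iota=\id_{C^\sharp}$. It remains to produce a cubical homotopy $\iota\rho_C\sim\id_{C^{\sharp\sharp}}$ natural in $C$. The first move is to reduce this to a homotopical statement about $\iota$ alone: it suffices to show that $\iota$ is a cubical homotopy equivalence. Indeed, if $\iota$ has some cubical homotopy inverse $\beta$, then $\rho_C\sim\rho_C(\iota\beta)=(\rho_C\iota)\beta=\beta$ (using $\iota\beta\sim\id_{C^{\sharp\sharp}}$, $\rho_C\iota=\id$, and the fact that pre- and post-composition preserve cubical homotopy), whence $\iota\rho_C\sim\iota\beta\sim\id_{C^{\sharp\sharp}}$; and naturality of the homotopy then follows from naturality of $\rho_C$ together with naturality of whatever data witnesses that $\iota$ is a cubical homotopy equivalence.

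To see that $\iota$ is a cubical homotopy equivalence I would invoke the characterization recalled just before the statement: it is enough to check that $\pi_0(\iota^A)\colon\pi_0((C^\sharp)^A)\ra\pi_0((C^{\sharp\sharp})^A)$ is a bijection for every cubical set $A$. This I would establish from the defining colimit $C^{\sharp\sharp}=(C^\sharp)^\sharp=\colim_\sigma\cnerve\boxobj{n_\sigma}$ over the category of elements of $C^\sharp$ — available because $(-)^\sharp$ is cocontinuous with $(\BOX\boxobj{n})^\sharp=\cnerve\boxobj{n}$, and carries $\BOX\boxobj{n}\ira\cnerve\boxobj{n}$ to $\iota\colon C^\sharp\ira C^{\sharp\sharp}$ — together with Lemma~\ref{lem:sharp.local.convexity}, which exhibits each cell $\cnerve\boxobj{n}$ as uniformly "locally convex": its contraction is built from the binary joins on $\boxobj{n}$, hence is natural in $\boxobj{n}$ and so compatible over the whole indexing category, and carries a uniform subdivision index. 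Surjectivity of $\pi_0(\iota^A)$ amounts to homotoping a given cubical function $A\ra C^{\sharp\sharp}$ into the subpresheaf $C^\sharp$ cell by cell and gluing the pieces; injectivity is the analogous statement for cubical homotopies, i.e. for cubical functions out of $A\otimes\sd^k\BOX[1]$.

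The main obstacle is that exponentiation by $A$ does not commute with the colimit presenting $C^{\sharp\sharp}$, so the cellwise homotopies furnished by Lemma~\ref{lem:sharp.local.convexity} cannot be reassembled naively on $(C^{\sharp\sharp})^A$. I expect to circumvent this by transferring everything to the tensor side — exploiting that $(-)^\sharp$ sends tensor products to binary products and that $-\otimes\sd^k\BOX[1]$ is cocontinuous, so that a glued homotopy with a single uniform subdivision index $k$ (the one Lemma~\ref{lem:sharp.local.convexity} already produces) makes sense — and then invoking the two formal facts about $\odot$-homotopies recorded at the start of \S\ref{sec:homotopy} to turn the cellwise "local convexity" data into the required homotopies. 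A secondary, purely bookkeeping point is the compatibility of the cellwise homotopies along the surjections in the indexing category, that is, on the degenerate cells; but this is precisely the naturality in $\boxobj{n}$ already built into the statement of Lemma~\ref{lem:sharp.local.convexity}, so it requires only unwinding rather than new input.
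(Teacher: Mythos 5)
Your identification of the retraction with the one from Lemma~\ref{lem:sharp.sharp} and your two key ingredients (the colimit presentation of $(-)^{\sharp\sharp}$ via cocontinuity, and the natural contraction from Lemma~\ref{lem:sharp.local.convexity}) are the right ones, but the route you take through them has a genuine gap. You reduce the problem to showing that $\iota$ is a cubical homotopy equivalence, and you propose to verify this via the $\pi_0$-criterion, i.e.\ bijectivity of $\pi_0(\iota^A)$ for all $A$. That criterion only asserts the \emph{existence} of a homotopy inverse $\beta$ and of homotopies $\iota\beta\sim\id$, $\beta\iota\sim\id$; it produces no data natural in $C$. Your closing claim that naturality ``follows from naturality of whatever data witnesses that $\iota$ is a cubical homotopy equivalence'' therefore begs the question: to get natural witnesses you must construct them, and the only construction on offer is the cellwise one — at which point the whole $\pi_0$ detour, together with the cell-by-cell approximation argument for surjectivity and injectivity of $\pi_0(\iota^A)$ (which you do not actually carry out, and which runs into the colimit-versus-exponential problem you yourself flag), is both unnecessary and insufficient.

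The direct argument, which your last paragraph almost describes but subordinates to the wrong goal, is this: by naturality and cocontinuity of $(-)^\sharp$ (hence of $(-)^{\sharp\sharp}$) and of $-\otimes\sd^k\BOX[1]$, it suffices to produce the homotopy for $C=\BOX\boxobj{n}$, naturally in $\BOX$-objects $\boxobj{n}$, with a uniform subdivision index; the general case is then obtained by gluing over the category of elements of $C$. For $C=\BOX\boxobj{n}$ one has $C^{\sharp\sharp}=(\cnerve\boxobj{n})^\sharp$, and Lemma~\ref{lem:sharp.local.convexity} supplies a cubical homotopy, natural in $\boxobj{n}$, between the two projections $((\cnerve\boxobj{n})^\sharp)^2\ra(\cnerve\boxobj{n})^\sharp$. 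The first formal fact recorded at the start of \S\ref{sec:homotopy} — a homotopy between both projections $o^2\ra o$ induces a homotopy between \emph{any} parallel pair of morphisms into $o$, naturally in that pair — applied to the specific parallel pair $\iota_n\rho_n$ and $\id$ then yields the required natural homotopy directly, with no need to know that $\iota$ is invertible up to homotopy in the abstract. Reorganize your argument around that target and the gap disappears.
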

\begin{proof}
  It suffices to consider the case $C$ of the form $\BOX\boxobj{n}$ by naturality and cocontinuity of $(-)^\sharp$.  
  In that case, $C^\sharp=\cnerve\boxobj{n}$.
  Inclusion $\iota_n:\cnerve\boxobj{n}\ira(\cnerve\boxobj{n})^\sharp$ admits a natural retraction $\rho_n$ [Lemma \ref{lem:sharp.sharp}] natural in $\BOX$-objects $\boxobj{n}$.
  There exists a cubical homotopy, natural in $\BOX$-objects $\boxobj{n}$, between projections of the form $((\cnerve\boxobj{n})^{\sharp})^2\ra(\cnerve\boxobj{n})^{\sharp}$ [Lemma \ref{lem:sharp.local.convexity}].
  It therefore follows that there exists a cubical homotopy $\iota_n\rho_n\sim\id_{(\cnerve\boxobj{n})^\sharp}$ natural in $\BOX$-objects $\boxobj{n}$.
\end{proof}

\begin{lem}
  \label{lem:sharp.whitehead}
  For each cubical set $C$, the inclusion
  $$C_\sharp\ira(C_\sharp)^\sharp$$
  is a cubical homotopy equivalence.
\end{lem}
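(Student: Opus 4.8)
The plan is to exhibit $(-)^\sharp$, together with the natural inclusion $\iota\colon\id\ira(-)^\sharp$, as an idempotent monad \emph{up to cubical homotopy}, and to deduce the statement from the elementary fact that a split monomorphism for an idempotent monad is invertible. Write $[B,C]=\pi_0C^B$, so that, by the characterization preceding Lemma \ref{lem:sharp.idempotency}, it suffices to produce $\beta\colon(C_\sharp)^\sharp\to C_\sharp$ with $\beta\circ\iota_{C_\sharp}\sim\id_{C_\sharp}$ and $\iota_{C_\sharp}\circ\beta\sim\id_{(C_\sharp)^\sharp}$. I first observe that $(-)^\sharp$ preserves cubical homotopies, hence cubical homotopy equivalences: the $(-)^\sharp$-image $B^\sharp\times(\sd^k\BOX[1])^\sharp\to C^\sharp$ of a cubical homotopy $B\otimes\sd^k\BOX[1]\to C$, precomposed with $B^\sharp\otimes\sd^k\BOX[1]\ira B^\sharp\times\sd^k\BOX[1]\xrightarrow{\,\id\times\iota\,}B^\sharp\times(\sd^k\BOX[1])^\sharp$, is again a cubical homotopy with the expected endpoints, using that $(-)^\sharp$ carries $\otimes$ to $\times$. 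Thus $(-)^\sharp$ descends to the localization of $\CUBICAL\SETS$ at cubical homotopy equivalences.

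The engine is the claim that $(\iota_C)^\sharp$ and $\iota_{C^\sharp}$ are cubically homotopic, naturally in $C$. On a representable $C=\BOX\boxobj{n}$ these are parallel cubical functions $\cnerve\boxobj{n}\to(\cnerve\boxobj{n})^\sharp=(\BOX\boxobj{n})^{\sharp\sharp}$; by Lemma \ref{lem:sharp.idempotency} applied to $\BOX\boxobj{n}$ the target is cubically homotopy equivalent to $\cnerve\boxobj{n}$, into which any two parallel cubical functions are cubically homotopic by the projection homotopy of Lemma \ref{lem:sharp.local.convexity}, and this ``contractible space of maps'' property transports across the cubical homotopy equivalence. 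All the ingredients — the equivalence and its homotopy inverse $\rho_{\BOX\boxobj n}$ from Lemma \ref{lem:sharp.idempotency}, the projection homotopy from Lemma \ref{lem:sharp.local.convexity} — are natural in $\boxobj{n}$, and since $(-)^\sharp$, $(-)^{\sharp\sharp}$ and $-\otimes\sd^k\BOX[1]$ are cocontinuous while $\iota$ is determined on representables, the natural homotopy extends to all cubical sets. Together with Lemma \ref{lem:sharp.idempotency} itself (which asserts that $\iota_{C^\sharp}$ is a cubical homotopy equivalence) this exhibits $(-)^\sharp$ as an idempotent monad on the cubical homotopy category, in which in particular $(\iota_D)^\sharp=\iota_{D^\sharp}$.

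For the homotopy inverse, let $\beta\colon(C_\sharp)^\sharp\to C_\sharp$ be the $(-)^\sharp\dashv(-)_\sharp$-transpose of $(C_\sharp)^{\sharp\sharp}\xrightarrow{\rho_{C_\sharp}}(C_\sharp)^\sharp\xrightarrow{\epsilon_C}C$, where $\rho$ is the natural retraction of Lemma \ref{lem:sharp.sharp} and $\epsilon$ the counit; using $(C_\sharp)^\sharp=\colim_{(\cnerve\boxobj m\to C)}\cnerve\boxobj m$, on the term indexed by $f\colon\cnerve\boxobj m\to C$ the map $\beta$ is the transpose of $f\circ\rho_{\BOX\boxobj m}\colon(\BOX\boxobj m)^{\sharp\sharp}\to\cnerve\boxobj m\to C$, and these glue by naturality of $\rho$. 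On that term $\beta\circ\iota_{C_\sharp}$ reduces to $f\circ\rho_{\BOX\boxobj m}\circ(\iota_{\BOX\boxobj m})^\sharp$ while $\id_{C_\sharp}$ reduces to $f$; since $(\iota_{\BOX\boxobj m})^\sharp\sim\iota_{\cnerve\boxobj m}$ by the previous paragraph and $\rho_{\BOX\boxobj m}\circ\iota_{\cnerve\boxobj m}=\id$ by Lemma \ref{lem:sharp.sharp}, these agree up to a cubical homotopy natural in $\boxobj m$, and these homotopies glue to give $\beta\circ\iota_{C_\sharp}\sim\id_{C_\sharp}$. Finally, passing to the cubical homotopy category, apply $(-)^\sharp$ to $\beta\circ\iota_{C_\sharp}=\id_{C_\sharp}$ to get $\beta^\sharp\circ(\iota_{C_\sharp})^\sharp=\id$, rewrite $(\iota_{C_\sharp})^\sharp=\iota_{(C_\sharp)^\sharp}$, and use naturality of $\iota$ along $\beta$ (i.e. $\iota_{C_\sharp}\circ\beta=\beta^\sharp\circ\iota_{(C_\sharp)^\sharp}$) to conclude $\iota_{C_\sharp}\circ\beta=\id$; hence $\iota_{C_\sharp}$ is invertible in the cubical homotopy category, that is, a cubical homotopy equivalence.

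The main obstacle is the engine step — the cubical homotopy $(\iota_C)^\sharp\sim\iota_{C^\sharp}$ natural in $C$ — which is where Lemmas \ref{lem:sharp.local.convexity} and \ref{lem:sharp.idempotency} must be combined; the remaining subtlety is purely bookkeeping, namely keeping every homotopy and every homotopy equivalence sufficiently natural to propagate from representables to all cubical sets and to survive the transpositions and the colimit presentation of $(C_\sharp)^\sharp$.
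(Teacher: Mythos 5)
Your first and last paragraphs are fine (that $(-)^\sharp$ preserves cubical homotopies, the homotopy $(\iota_C)^\sharp\sim\iota_{C^\sharp}$ built from Lemmas \ref{lem:sharp.local.convexity} and \ref{lem:sharp.idempotency}, and the formal idempotent-monad bookkeeping), but the middle step --- ``these homotopies glue to give $\beta\circ\iota_{C_\sharp}\sim\id_{C_\sharp}$'' --- has a genuine gap. What you actually produce, for each $m$-cube $\sigma\colon\BOX\boxobj{m}\to C_\sharp$ with transpose $f\colon\cnerve\boxobj{m}\to C$, is a cubical homotopy $\cnerve\boxobj{m}\otimes\sd^k\BOX[1]\to C$ between the \emph{transposes} of $\beta\iota_{C_\sharp}\sigma$ and of $\sigma$. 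A cubical homotopy between transposes does not yield a cubical homotopy between the original maps $\BOX\boxobj{m}\to C_\sharp$: the functor $(-)_\sharp$ is a right adjoint and does not commute with $-\otimes\sd^k\BOX[1]$, so transposing a homotopy $X^\sharp\otimes\sd^k\BOX[1]\to C$ would require a map out of $(X\otimes\sd^k\BOX[1])^\sharp\cong X^\sharp\times(\sd^k\BOX[1])^\sharp$, which you do not have (the comparison map $X^\sharp\otimes\sd^k\BOX[1]\to X^\sharp\times(\sd^k\BOX[1])^\sharp$ points the wrong way). Nor can you glue over the canonical presentation $C_\sharp=\colim_\sigma\BOX\boxobj{m}$, since your per-cube homotopies have domain $\cnerve\boxobj{m}\otimes\sd^k\BOX[1]$, not $\BOX\boxobj{m}\otimes\sd^k\BOX[1]$. (Your ``engine'' gluing is unaffected, because $C^\sharp=\colim\cnerve\boxobj{n}$ is automatic from cocontinuity of the \emph{left} adjoint $(-)^\sharp$; the problem is specific to $C_\sharp$.)

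The missing ingredient is exactly the paper's key step: every cube $\sigma\colon\BOX\boxobj{m}\to C_\sharp$ extends canonically and compatibly to $\tilde\sigma\colon\cnerve\boxobj{m}\to C_\sharp$ (using that its transpose $\hat\sigma\colon\cnerve\boxobj{m}\to C$ exists and that every cubical function $\BOX\boxobj{p}\to\cnerve\boxobj{n}$ extends uniquely to $\cnerve\boxobj{p}\to\cnerve\boxobj{n}$), so that $C_\sharp\cong\colim_{\sigma}\cnerve\boxobj{m}$. Once that is established, homotopies defined on the pieces $\cnerve\boxobj{m}$ do glue --- but at that point the whole monadic apparatus is superfluous, since the inclusion $C_\sharp\ira(C_\sharp)^\sharp$ is then simply the colimit of the natural cubical homotopy equivalences $\cnerve\boxobj{m}\ira(\cnerve\boxobj{m})^\sharp$ of Lemma \ref{lem:sharp.idempotency}, with all inverses and homotopies natural in $\boxobj{m}$, which is the paper's one-line conclusion. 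You should either prove the colimit presentation of $C_\sharp$ and finish that way, or find a genuinely transposition-free construction of the homotopy $\beta\iota_{C_\sharp}\sim\id_{C_\sharp}$; as written, the splitting of $\iota_{C_\sharp}$ up to homotopy is not established.
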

\begin{proof}
  Consider a $(\BOX[-]/C_\sharp)$-object $\sigma$ in the diagram
  \begin{equation*}
    \begin{tikzcd}
	  \BOX\boxobj{n}\ar{r}[above]{\sigma}\ar[d,hookrightarrow] & C_\sharp\ar[d]\\
	  \cnerve\boxobj{n}\ar{r}[below]{\hat\sigma}\ar[ur,dotted] & C
	\end{tikzcd}
  \end{equation*}
  Let the right vertical arrow above be the adjoint to the natural inclusion $C\ira C^\sharp$.
  There exists a cubical function $\hat\sigma$, natural in $\sigma$, making the entire solid diagram commute. 
  And every cubical function $\BOX\boxobj{m}\ra\cnerve\boxobj{n}$ uniquely extends to a cubical function $\cnerve\boxobj{m}\ra\cnerve\boxobj{n}$.  
  Therefore there exists a dotted cubical function natural in $\sigma$, making the entire diagram commute.  
  Thus $C_\sharp=\colim_{\cnerve\boxobj{n}\ra C_\sharp}\cnerve\boxobj{n}$.  
  Thus cubical homotopy equivalences of the form $\cnerve\boxobj{n}\ira(\cnerve\boxobj{n})^\sharp$, for which associated cubical homotopy inverses and cubical homotopies are all natural in $\BOX$-objects $\boxobj{n}$ [Lemma \ref{lem:sharp.idempotency}], induce the desired cubical homotopy equivalence.
\end{proof}

\subsubsection{Classical}
Take a cubical function $\psi:B\ra C$ to be a \textit{classical weak equivalence} if 
$$\pi_0\langle\star,\psi\rangle^{A}:\pi_0\langle\star,B\rangle^{A}\cong\pi_0\langle\star,C\rangle^{A}$$
for all cubical sets $A$. 
It will later turn out that classical weak equivalences in our sense are exactly the homotopy equivalences after applying topological realization [Corollary \ref{cor:classical.we}].
Therefore the classical weak equivalences are part of the following \textit{test model structure} on $\CUBICAL\SETS$.
\cite[Proposition 4.3]{maltsiniotis2009categorie}, \cite[Theorem 1.7]{cisinskiprefaisceaux}.  

\begin{thm}
  \label{thm:cubical.model.structure}
  There exists a model structure on $\CUBICAL\SETS$ in which \ldots
  \begin{enumerate}
	  \item \ldots the weak equivalences are the classical weak equivalences
	  \item \ldots the cofibrations are the monos
	  \item \ldots the fibrant objects are the Kan cubical sets
  \end{enumerate}
\end{thm}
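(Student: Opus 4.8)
The plan is to obtain the theorem by specializing Cisinski's general construction of model structures on presheaf toposes to the cubical site at hand, and then matching the resulting classes against the three classes named in the statement. First I would invoke that $\BOX$ in the sense of this paper --- the box category carrying one coconnection --- is a \emph{strict test category} \cite[Proposition 4.3]{maltsiniotis2009categorie}. Cisinski's theorem \cite[Theorem 1.7]{cisinskiprefaisceaux} then equips the presheaf category $\CUBICAL\SETS$ with the associated minimal (``test'') Cisinski model structure, in which the cofibrations are exactly the monomorphisms --- this is clause (2) --- and the weak equivalences are exactly the cubical functions inverted by the canonical realization to $\TOP$, i.e.\ the maps $\psi$ for which $|\psi|$ is a weak homotopy equivalence. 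Since $|C|$ is always a CW complex, $|\psi|$ is a weak homotopy equivalence if and only if it is a homotopy equivalence.

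Next I would identify this class with the classical weak equivalences, which are defined by the condition that $\pi_0\langle\star,\psi\rangle^A$ be a bijection for every cubical set $A$. The cleanest route is Corollary \ref{cor:classical.we}, proved later by way of the $\langle\star,-\rangle \cong \mathrm{Ex}^\infty$-analogue and cubical approximation, which states precisely that $\psi$ induces a bijection on $\pi_0\langle\star,-\rangle^A$ for all $A$ if and only if $|\psi|$ is a homotopy equivalence; combined with the previous paragraph this is clause (1). A self-contained alternative, avoiding the forward reference, is to observe directly that $\langle\star,-\rangle$ is a functorial fibrant replacement in the test structure --- its defining colimit is the transfinite $\ex$-tower built on the sharp replacement, and each map in that tower is an anodyne extension --- so that $\pi_0\langle\star,B\rangle^A$ computes the hom-set $[A,B]$ in the homotopy category, and one applies the standard criterion that a map is a weak equivalence exactly when it induces bijections $[A,-]$ for all $A$.

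Finally I would identify the fibrant objects with the Kan cubical sets. In any Cisinski model structure the fibrant objects are those with the right lifting property against the saturation of a chosen generating set of anodyne extensions; the substantive point is that, because $\BOX$ contains a coconnection, this saturated class coincides with the saturation of the open-box inclusions $\{\sqcup^{\pm i}\BOX\boxobj{n}\ira\BOX\boxobj{n}\}$. Granting this, an object has the lifting property against the whole saturated class precisely when it has it against the generators, i.e.\ precisely when every $\sqcup^{\pm i}\BOX\boxobj{n}\ra C$ extends along $\sqcup^{\pm i}\BOX\boxobj{n}\ira\BOX\boxobj{n}$, which is the definition of Kan --- clause (3).

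The one genuinely non-formal ingredient is the claim in the last paragraph that the presence of the coconnection collapses the class of anodyne extensions down to the open-box inclusions; it is exactly here that the coconnection is used, since for the coconnection-free box category the fibrant objects of the test structure form a strictly larger class than the naively Kan cubical sets. Everything else is bookkeeping on top of the test-category formalism together with the paper's own later analysis of $\langle\star,-\rangle$, so I would expect the bulk of the work --- or of the citations --- to be concentrated in that single step.
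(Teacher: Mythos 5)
Your proposal matches the paper's treatment: the paper offers no proof of this theorem beyond the two citations you invoke (\cite[Proposition 4.3]{maltsiniotis2009categorie} and \cite[Theorem 1.7]{cisinskiprefaisceaux}), together with the same forward reference to Corollary \ref{cor:classical.we} for identifying the Cisinski weak equivalences with the classical weak equivalences defined via $\pi_0\langle\star,-\rangle^{A}$. Your added discussion of why the coconnection forces the fibrant objects to be exactly the Kan cubical sets is content the paper delegates entirely to the cited references, so the two approaches are essentially identical.
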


It follows from the theorem that that there exists a localization 
$$\CUBICAL\SETS\ra h(\CUBICAL\SETS)$$ 
of $\CUBICAL\SETS$ by the classical weak equivalences.
Classical homotopy theory admits the following Abelianization.  
Let $\chains_*C$ be the integral cellular chain complex of the CW complex $|C|$ natural in cubical sets $C$.  
The \textit{Dold-Kan correspondence} is an adjoint categorical equivalence defined by the following theorem.  

\begin{thm:dold-kan}
   There exists an adjoint categorical equivalence
   $$\CUBICAL\ABGROUPS\simeq\CONNECTIVECHAINCOMPLEXES$$
   whose right adjoint naturally sends a connective chain complex $C$ to $\CONNECTIVECHAINCOMPLEXES\left(\chains_*\BOX[-],C\right)$.
\end{thm:dold-kan}

Write $\Z[-]$ for the left adjoint to the functor $\CUBICAL\ABGROUPS\ra\CUBICAL\SETS$ naturally sending a cubical Abelian group $C$ to its underlying cubical set, the composite of $C$ with the forgetful functor $\ABGROUPS\ra\SETS$.  

\begin{eg}
  Under the Dold-Kan correspondence, $\chains_*C$ corresponds to $\Z[C]$.
\end{eg}

\subsubsection{Lipschitz}
Lipschitz homotopy theory for cubical sets is based on the following refinement of a classical weak equivalence.  
Lipschitz weak equivalences, at least between connected cubical sets, will turn out to correspond exactly to Lipschitz homotopy equivalences [Corollary \ref{cor:cubical.we}].   

\begin{defn}
  \label{defn:lipschitz.we}
  A cubical function $\psi:B\ra C$ is a \textit{Lipschitz weak equivalence} if 
  $$\langle A,\psi\rangle:\langle A,B\rangle\ra\langle A,C\rangle$$
  is a classical weak equivalence of underlying cubical sets, for all cubical sets $A$.
\end{defn}

\begin{prop}
  \label{prop:strong.to.weak.lipschitz.equivalences}
  Every cubical homotopy equivalence is a Lipschitz weak equivalence.
\end{prop}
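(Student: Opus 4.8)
The plan is to show that a cubical homotopy equivalence $\psi \colon B \to C$ is carried to a classical weak equivalence $\langle A, \psi\rangle$ for every cubical set $A$, which is exactly the condition in Definition \ref{defn:lipschitz.we}. The strategy is to promote cubical homotopy equivalences through each of the functorial constructions appearing in the definition of $\langle -, - \rangle$ — namely $(-)^\sharp$, the exponential $(-)^B$, the functor $\ex$, and the filtered colimit — checking at each stage that ``cubical homotopy equivalence'' (or at least ``classical weak equivalence'') is preserved, and then concluding.

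First I would record the (routine) fact that classical weak equivalences satisfy two-out-of-six and contain the cubical homotopy equivalences: a cubical homotopy equivalence $\psi$ induces a bijection $\pi_0\langle\star,\psi\rangle^A$ because a cubical homotopy $\alpha \sim \beta$ (being a $B\otimes\sd^k\BOX[1]\to C$) is detected after applying $\langle\star,-\rangle$ and exponentiating, since $\langle\star,-\rangle$ is built from product-preserving right adjoints applied after sharp replacement — more precisely, $\ex$ is a right adjoint, $(-)^A$ and $(-)^\sharp$ preserve the relevant homotopies (for $(-)^\sharp$ one can invoke Lemma \ref{lem:sharp.local.convexity} and Lemma \ref{lem:sharp.idempotency}, together with the fact that $(-)^\sharp$ sends tensor products to products), and filtered colimits commute with $\pi_0$ and with finite tensors such as $-\otimes\sd^k\BOX[1]$. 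Hence a cubical homotopy between the two projections $o^2\to o$ is transported, by the formal facts about cylinder objects stated at the start of \S\ref{subsec:cubical.homotopy}, to a cubical homotopy between the two projections $\langle\star,o\rangle^A{}^2\to\langle\star,o\rangle^A$, so $\psi$ is a classical weak equivalence.

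Next I would apply the same bookkeeping to $\langle A,\psi\rangle$ itself: the construction $C \mapsto \langle A, C\rangle$ is, for fixed connected $A$, a composite of $(-)^\sharp$, then $(-)^A$, then the transfinite colimit of the tower of $\ex^k$'s, all of which carry cubical homotopy equivalences to cubical homotopy equivalences by the argument above (for disconnected $A = \amalg_i A_i$ one uses that $\langle\amalg_i A_i, C\rangle = \prod_i\langle A_i,C\rangle$ and that a product of cubical homotopy equivalences is one). Therefore $\langle A,\psi\rangle$ is a cubical homotopy equivalence, hence — by the special case established in the previous paragraph applied to $\langle A,\psi\rangle$ in place of $\psi$ — a classical weak equivalence of underlying cubical sets. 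Since $A$ was arbitrary, $\psi$ is a Lipschitz weak equivalence.

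The main obstacle is the bookkeeping around $(-)^\sharp$: one must be sure that the sharp-replacement functor preserves cubical homotopies, and for this the key input is that $(-)^\sharp$ sends tensor products to binary products (stated in the excerpt) together with the local convexity of $C^\sharp$ from Lemma \ref{lem:sharp.local.convexity}, which lets a $\sd^k\BOX[1]$-indexed homotopy be replaced by a $\BOX[1]$-indexed one after sharp replacement; once that compatibility is pinned down, the exponential, the $\ex$-tower, and the filtered colimit are all formal. A secondary point to check carefully is that the filtered colimit defining $\langle B, C\rangle$ commutes with the constructions $(-)\otimes\sd^k\BOX[1]$ and with $\pi_0$, so that a homotopy at some finite stage of the tower survives to the colimit; this is standard since both are finitary.
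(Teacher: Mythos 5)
Your proposal is correct and follows essentially the same route as the paper's (much terser) proof: show that each constituent of $\langle A,-\rangle$ — sharp replacement, the exponential $(-)^A$, the $\ex$-tower and its colimit — preserves cubical homotopy equivalences, so that $\langle A,\psi\rangle$ is itself a cubical homotopy equivalence and in particular a classical weak equivalence. You simply spell out steps (notably the $(-)^\sharp$ and colimit bookkeeping) that the paper leaves implicit.
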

\begin{proof}
  Consider a cubical set $A$.  
  The functors $\ex$ and $(-)^A$ preserve cubical homotopy equivalences.  
  Therefore for each cubical homotopy equivalence $\psi$, $\langle A,\psi\rangle$ is cubical homotopy equivalence and in particular a classical weak equivalence.
\end{proof}

Proofs of the following observations, easiest to give after cubical approximation, are deferred to the end of \S\ref{subsec:homotopical.comparisons}.

\begin{prop}
  \label{prop:lipschitz.implies.classical}
  Every Lipschitz weak equivalence is a classical weak equivalence.
\end{prop}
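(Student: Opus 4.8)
The plan is to reduce to the case $A=\star$ of Definition \ref{defn:lipschitz.we} and then transfer back along the natural map from a cubical set to its Lipschitz mapping complex, which plays the role of Kan's $\mathrm{Ex}^\infty$, using $2$-out-of-$3$ for classical weak equivalences. First I would record the natural cubical function
$$j_C\colon C\ira C^\sharp\xra{\adjointfirstvertexmap_{C^\sharp}}\ex\,C^\sharp\ra\langle\star,C\rangle,$$
the composite of the canonical inclusion, the first-vertex map, and the colimit structure map; each factor is natural in $C$, so for every cubical function $\psi\colon B\ra C$ one has $\langle\star,\psi\rangle\circ j_B=j_C\circ\psi$.

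The heart of the argument is that $j_C$ is a classical weak equivalence for every cubical set $C$ — this is exactly the statement that $\langle\star,-\rangle$ is a cubical $\mathrm{Ex}^\infty$, and it is where the cubical approximation theorem [Lemma \ref{lem:adjoint.approximation}] does the real work. I would split it as follows. (a) The inclusion $C\ira C^\sharp$ is a classical weak equivalence: on realizations each building block $|\BOX\boxobj{n}|\ra|\cnerve\boxobj{n}|$ is a map of contractible CW complexes, and $\geometricrealization{C\ira C^\sharp}$ admits a $1$-Lipschitz retraction [Lemma \ref{lem:geometric.sharp.retraction}], from which $|C|\ra|C^\sharp|$ is a weak homotopy equivalence. (b) Each transition map $\adjointfirstvertexmap_D\colon D\ra\ex\,D$ is a classical weak equivalence; this follows from $\gamma^{\mins}_D\colon\sd\,D\ra D$ being one — its realization is homotopic to the homeomorphism $\varphi_D$ of Lemma \ref{lem:dilation.formula} — together with subdivision preserving and reflecting classical weak equivalences, since $\varphi$ identifies $|\sd\,D|$ with $|D|$. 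Closing classical weak equivalences under the filtered colimit of the resulting tower — each of whose maps is a monomorphism, hence a cofibration by Theorem \ref{thm:cubical.model.structure} — then shows $j_C$ is a classical weak equivalence.

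Granting this, the conclusion is formal: if $\psi$ is a Lipschitz weak equivalence then taking $A=\star$ in Definition \ref{defn:lipschitz.we} makes $\langle\star,\psi\rangle$ a classical weak equivalence, and since $j_B$ and $j_C$ are classical weak equivalences and $\langle\star,\psi\rangle\circ j_B=j_C\circ\psi$, the $2$-out-of-$3$ property — the classical weak equivalences are the weak equivalences of the model structure of Theorem \ref{thm:cubical.model.structure} — forces $\psi$ to be a classical weak equivalence. I expect the main obstacle to be part (b) together with the filtered-colimit closure, i.e.\ verifying that sharp replacement followed by the transfinite tower of $\ex$'s leaves the classical homotopy type unchanged; everything else is bookkeeping.
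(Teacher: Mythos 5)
Your reduction to the case $A=\star$ and your final step are both sound, but the final step is genuinely different from the paper's. The paper also begins by observing that $\langle\star,\psi\rangle$ is a classical weak equivalence; it then invokes the first assertion of Corollary \ref{cor:classical.we} (that $\langle\star,B\rangle$ and $\langle\star,C\rangle$ are Kan) to upgrade $\langle\star,\psi\rangle$ to a cubical homotopy equivalence, and concludes by noting that $(-)^A$ preserves cubical homotopy equivalences, so that $\pi_0\langle\star,\psi\rangle^A$ is bijective for every $A$ --- which is literally the definition of $\psi$ being a classical weak equivalence. You instead use the second assertion of Corollary \ref{cor:classical.we} (that the natural inclusion $j_C:C\ira\langle\star,C\rangle$ is a classical weak equivalence) together with naturality and $2$-out-of-$3$ in the test model structure of Theorem \ref{thm:cubical.model.structure}. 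Both routes are correct and both lean on Corollary \ref{cor:classical.we}; the paper's version buys the stronger intermediate fact that $\langle\star,\psi\rangle$ is an actual cubical homotopy equivalence (used again elsewhere), while yours is arguably more formal once $j_C$ is known to be a classical weak equivalence. Two of your sub-steps for $j_C$ are under-argued as written: in (a), the existence of a $1$-Lipschitz retraction to $\geometricrealization{C\ira C^\sharp}$ does not by itself give a weak equivalence --- you also need the homotopy $\iota\rho\sim\id$, which is Lemma \ref{lem:sharp.replacement} via Lemma \ref{lem:sharp.local.convexity}; and in (b), passing from $\gamma^{\mins}_D$ being a weak equivalence to its adjoint $\adjointfirstvertexmap_D:D\ra\ex\,D$ being one is not formal, since $\adjointfirstvertexmap_D$ factors through the unit of $\sd\dashv\ex$ and one must control $\ex$ --- this is exactly what Lemma \ref{lem:extensions} does using cubical approximation. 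Since the statement you need is precisely the last sentence of Corollary \ref{cor:classical.we}, citing it directly would close these gaps without further work.
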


\begin{prop}
  \label{prop:finite.lipschitz}
  The following are equivalent for a cubical function
  $$\psi:B\ra C$$
  between finite cubical sets $B$ and $C$.
  \begin{enumerate}
	  \item $\psi$ is a classical weak equivalence
	  \item $\psi$ is a Lipschitz weak equivalence
  \end{enumerate}
\end{prop}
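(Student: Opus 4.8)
The plan is to dispatch the implication $(2)\Rightarrow(1)$ immediately by citing Proposition \ref{prop:lipschitz.implies.classical}, and to prove the substantive implication $(1)\Rightarrow(2)$ by transporting the problem across geometric realization, where finiteness replaces a transfinite colimit argument with an honest Lipschitz one. First I would reduce to the case that $B$ and $C$ are connected: classical and Lipschitz weak equivalences alike are detected componentwise once one knows $\pi_0$ is preserved, and this compatibility with disjoint unions is visible both in the coproduct clause of Definition \ref{defn:lipschitz.mapping.complex} and in the behavior of geometric realization on coproducts [Proposition \ref{prop:geometric.realization}]. For connected finite $B,C$, the spaces $\geometricrealization{B}$ and $|B|$ share one underlying finite CW complex, so by Corollary \ref{cor:classical.we} hypothesis $(1)$ says precisely that $\geometricrealization{\psi}$ is a homotopy equivalence of spaces, while by Corollary \ref{cor:cubical.we} conclusion $(2)$ says precisely that $\geometricrealization{\psi}$ is a \emph{Lipschitz} homotopy equivalence. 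Everything therefore reduces to the following purely geometric assertion: a $1$-Lipschitz map $\geometricrealization{\psi}\colon\geometricrealization{B}\ra\geometricrealization{C}$ between finite metric cubical complexes which is a homotopy equivalence of underlying spaces is already a Lipschitz homotopy equivalence.

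To prove that assertion I would apply cubical approximation [Lemma \ref{lem:adjoint.approximation}] in its finite incarnation. Let $g\colon\geometricrealization{C}\ra\geometricrealization{B}$ be a continuous homotopy inverse of $\geometricrealization{\psi}$; finite cubical complexes are compact metric, so $g$ is uniform, and cubical approximation produces $n$ and a cubical function $\phi\colon\sd^n C\ra B$ with $|\phi|$ cubically — hence, on finite complexes, Lipschitz — homotopic to $g\circ\varphi^n_C$. Since $\varphi^n_C\colon\geometricrealization{\sd^n C}\ra\geometricrealization{C}$ is bi-Lipschitz with a definite constant and $|\phi|$ is piecewise-linear on the finite cell structure, the map $g'=|\phi|\circ(\varphi^n_C)^{-1}$ is Lipschitz and Lipschitz-homotopic, a fortiori homotopic, to $g$; in particular $g'$ is again a homotopy inverse of $\geometricrealization{\psi}$. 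Both composites $g'\circ\geometricrealization{\psi}$ and $\geometricrealization{\psi}\circ g'$ are then Lipschitz self-maps of finite complexes homotopic to the respective identities. Applying cubical approximation once more to each witnessing homotopy, relative to its two endpoints — which are already Lipschitz, indeed piecewise-linear — upgrades it to a Lipschitz homotopy. Thus $g'$ is a Lipschitz homotopy inverse of $\geometricrealization{\psi}$, proving the assertion and hence $(1)\Rightarrow(2)$.

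The main obstacle is precisely the last step: making the \emph{ambient} homotopies $g'\circ\geometricrealization{\psi}\simeq\id$ and $\geometricrealization{\psi}\circ g'\simeq\id$ Lipschitz while pinning their endpoints on the nose, which requires a relative form of cubical approximation of a homotopy rel its piecewise-linear endpoints. This is where finiteness is indispensable: it guarantees that finitely many subdivisions suffice, that the approximants from Lemma \ref{lem:adjoint.approximation} are genuinely Lipschitz with controlled constants on a fixed finite cell structure, and that $\varphi^n_C$ is bi-Lipschitz; none of this persists for infinite complexes, consistent with Proposition \ref{prop:finite.lipschitz} failing without the finiteness hypothesis. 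A secondary point to verify carefully is that a cubical function realizes to a $1$-Lipschitz, piecewise-linear map and that $\varphi^n_C$ has a definite bi-Lipschitz constant, both already recorded in \S\ref{subsec:comparisons}. Finally, if Corollaries \ref{cor:classical.we} and \ref{cor:cubical.we} are not yet available at this point, the same argument still concludes, since once $\geometricrealization{\psi}$ is a Lipschitz homotopy equivalence the functor $\langle A,-\rangle$ carries it to a cubical homotopy equivalence (as in Proposition \ref{prop:strong.to.weak.lipschitz.equivalences}), and hence to a classical weak equivalence, for every $A$, via the Lipschitz mapping-complex model of uniform mapping spaces [Theorem \ref{thm:enriched.equivalence}].
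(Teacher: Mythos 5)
Your treatment of $(2)\Rightarrow(1)$ via Proposition \ref{prop:lipschitz.implies.classical} matches the paper exactly, and your reduction of $(1)\Rightarrow(2)$ to a statement about $\geometricrealization{\psi}$ via Corollaries \ref{cor:classical.we} and \ref{cor:cubical.we} is sound. The problem is the step you yourself flag as the main obstacle: upgrading the continuous homotopies $g'\circ\geometricrealization{\psi}\simeq\id$ and $\geometricrealization{\psi}\circ g'\simeq\id$ to Lipschitz homotopies with endpoints pinned. The relative clause of Lemma \ref{lem:adjoint.approximation} only fixes the homotopy on $|\iota|$ when the restriction of the map being approximated to the subcomplex is (the realization of) a \emph{cubical} function $\alpha$; here one endpoint of each homotopy is $g'\circ\geometricrealization{\psi}$ (resp.\ $\geometricrealization{\psi}\circ g'$), which is merely Lipschitz and piecewise-linear after an inverse subdivision homeomorphism, not cubical on the nose. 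Moreover the lemma's output is a homotopy between $\uniformrealization{\psi_f}$ and a \emph{composite} of your homotopy with $\uniformrealization{\hat\gamma^{\cdots}}$, so recovering a Lipschitz homotopy with the original endpoints requires yet another rel-endpoints correction. None of this is supplied by the lemmas as stated, so as written the argument has a hole at exactly its load-bearing point.

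The paper avoids this entirely by routing through \emph{uniform} rather than Lipschitz homotopy equivalence of realizations. For finite $B,C$ the realizations $\uniformrealization{B},\uniformrealization{C}$ and their products with $\I$ are compact Hausdorff, hence carry unique uniformities, so the continuous homotopy inverse of $|\psi|$ and the continuous homotopies witnessing it are automatically uniform: $(1)$ gives at once that $\uniformrealization{\psi}$ is a uniform homotopy equivalence, and then the chain $(2)\Rightarrow(3)\Rightarrow(1)$ of Corollary \ref{cor:cubical.we} (which runs through the mapping-space comparison of Theorem \ref{thm:enriched.equivalence}) yields that $\psi$ is a Lipschitz weak equivalence. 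In other words, the hard conversion from a homotopy-theoretic equivalence of realizations to a Lipschitz weak equivalence of cubical sets is already packaged in Corollary \ref{cor:cubical.we}; you should invoke its clause $(2)$ (uniform homotopy equivalence of uniform realizations) rather than its clause $(4)$, and then you never need to construct a Lipschitz homotopy inverse or Lipschitz homotopies rel endpoints at all.
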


\begin{prop}
  \label{prop:cofibrant.objects}
  The category $\CUBICAL\SETS$ is a category of cofibrant objects in which \ldots
  \begin{enumerate}
    \item \ldots the weak equivalences are the Lipschitz weak equivalences
	\item \ldots the cofibrations are the monos
	\item \ldots the fibrant objects are the Kan cubical sets
  \end{enumerate}
\end{prop}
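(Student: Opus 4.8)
The plan is to verify, for the indicated choices, the (dual) axioms of a category of cofibrant objects in the sense of Brown \cite{brown1973abstract}, as recalled in Appendix \S\ref{sec:fibrant.objects}, and then separately to identify the fibrant objects. Because $\CUBICAL\SETS$ is a cocomplete presheaf topos, most of the axioms are formal: finite coproducts and an initial object $\varnothing$ exist; the monos contain all isomorphisms, are closed under composition, and include every $\varnothing\ira C$, so every object is cofibrant; and pushouts exist, with the pushout of a mono along an arbitrary cubical function again a mono, since pushouts are computed levelwise in $\SETS$. For the weak equivalences, each functor $\langle A,-\rangle$ carries isomorphisms to isomorphisms, hence to classical weak equivalences; applying $\langle A,-\rangle$ to a composable pair and invoking $2$-out-of-$3$ for the classical weak equivalences [Theorem \ref{thm:cubical.model.structure}] for every cubical set $A$ gives $2$-out-of-$3$ for Lipschitz weak equivalences, and hence the trivial cofibrations (the monic Lipschitz weak equivalences) are also closed under composition.

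For the cylinder object axiom I would take $\mathrm{Cyl}(C)=C\times\BOX[1]$ and factor the fold map as $C\amalg C\cong C\times\partial\BOX[1]\ira C\times\BOX[1]\ra C$. The first arrow is $\id_C\times(\partial\BOX[1]\ira\BOX[1])$, a base change of the mono $\partial\BOX[1]\ira\BOX[1]$ and therefore a mono; the second arrow, induced by $[1]\ra[0]$, is a cubical homotopy equivalence, a section of it being a cubical homotopy inverse with the required homotopy on $C\times\BOX[1]$ furnished by the coconnection $\vee_{\boxobj{2}}$ exactly as in Lemma \ref{lem:sharp.local.convexity}, hence a Lipschitz weak equivalence [Proposition \ref{prop:strong.to.weak.lipschitz.equivalences}].

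The substantive point, and the one I expect to be the main obstacle, is the axiom that the pushout $\bar\jmath\colon D\ira D\cup_B C$ of a trivial cofibration $\jmath\colon B\ira C$ along any $B\ra D$ is again a trivial cofibration: it is already a mono, so the content is that $\bar\jmath$ is a Lipschitz weak equivalence. One cannot argue directly through the mapping complexes, since $\langle A,-\rangle$ fails to preserve pushouts (it involves the right adjoints $\ex$ and $(-)^A$). The route I would take — which leans on cubical approximation and is thus best placed at the end of \S\ref{subsec:homotopical.comparisons}, alongside the proofs of Propositions \ref{prop:lipschitz.implies.classical} and \ref{prop:finite.lipschitz} — is to pass to geometric realizations: $\geometricrealization{-}$ is cocontinuous [Proposition \ref{prop:geometric.realization}], so it carries the pushout square to a pushout square in $\METRIC$; the realization of a trivial cofibration is the inclusion of a Lipschitz deformation retract [by the argument behind Lemma \ref{lem:extensions} together with the characterization in Corollary \ref{cor:cubical.we}]; inclusions of Lipschitz deformation retracts are stable under pushout, obtained by pushing out the retraction together with the deformation homotopy; hence $\geometricrealization{\bar\jmath}$ is again the inclusion of a Lipschitz deformation retract, in particular a Lipschitz homotopy equivalence on each path component, so $\bar\jmath$ is a Lipschitz weak equivalence by Corollary \ref{cor:cubical.we} applied one path component at a time.

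Finally, to identify the fibrant objects — those $X$ for which $X\ra\star$ has the extension property against all trivial cofibrations — with the Kan cubical sets: if $X$ is Kan it is fibrant in the classical model structure [Theorem \ref{thm:cubical.model.structure}], hence has the extension property against all monic classical weak equivalences, and therefore against all monic Lipschitz weak equivalences, since these are classical weak equivalences [Proposition \ref{prop:lipschitz.implies.classical}]. Conversely, each horn inclusion $\sqcup^{\pm i}\BOX\boxobj{n}\ira\BOX\boxobj{n}$ is a mono and, the $n$-cube cubically deformation retracting onto the horn up to finite subdivision (a cubical refinement of the elementary fact that $|\BOX\boxobj{n}|$ deformation retracts onto the union of all but one of its facets), a cubical homotopy equivalence, hence a trivial cofibration [Proposition \ref{prop:strong.to.weak.lipschitz.equivalences}]; so any $X$ with the extension property against all trivial cofibrations in particular admits extensions along horn inclusions, i.e. is Kan.
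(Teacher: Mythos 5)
Your proposal is correct and follows essentially the same route as the paper: the formal presheaf axioms, two-out-of-three inherited from classical weak equivalences, the cylinder factorization of the fold map through $\BOX[1]$ (the paper uses the tensor $C\otimes\BOX[1]$ where you use the Cartesian product, an immaterial difference), and, for the one substantive axiom, passage to geometric realizations, cocontinuity of $\geometricrealization{-}$, and Corollary \ref{cor:cubical.we} to transfer pushout-stability of Lipschitz homotopy equivalences of realizations back to Lipschitz weak equivalences --- indeed your explicit routing through deformation-retract inclusions is, if anything, more careful than the paper's bare assertion that pushouts preserve Lipschitz homotopy equivalences. The only notable divergence is in showing fibrant objects are Kan: where you assert directly that each horn inclusion is a cubical homotopy equivalence (a plausible but unproved claim about deformation retractions up to finite subdivision), the paper instead notes that horn inclusions are classical weak equivalences between finite cubical sets and invokes Proposition \ref{prop:finite.lipschitz}, which is the cleaner route.
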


It follows from the proposition that there exists a localization
$$\CUBICAL\SETS\ra h_\infty(\CUBICAL\SETS)$$
of $\CUBICAL\SETS$ by the Lipschitz weak equivalences.
Write $[-,-]$ for the hom-set functor $h_\infty(\CUBICAL\SETS(-,-))$.  
The hom-sets in the Lipschitz homotopy category $h_\infty(\CUBICAL\SETS)$ are just the connected components of the Lipschitz mapping complexes.

\begin{prop}
  \label{prop:lipschitz.weak.homotopy.classes}
  There exists a natural isomorphism
  $$[-,-]\cong\pi_0\langle -,-\rangle:\OP{(\CUBICAL\SETS)}\times\CUBICAL\SETS\ra\SETS.$$
\end{prop}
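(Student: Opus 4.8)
I would identify $\langle\star,-\rangle$ as a Kan-valued fibrant replacement functor for the Lipschitz homotopy theory and then run the usual Whitehead-theorem argument in the category of cofibrant objects $\CUBICAL\SETS$ [Proposition \ref{prop:cofibrant.objects}]. I take as established earlier — it is the cubical avatar of Kan's theorem that $X\to\mathrm{Ex}^{\infty}X$ is a weak equivalence onto a Kan complex, underlying the treatment of $\langle\star,-\rangle$ as a cubical $\mathrm{Ex}^{\infty}$ (cf. Corollary \ref{cor:classical.we}) — that $\langle\star,C\rangle$ is Kan and that the natural map $C\to\langle\star,C\rangle$ is a Lipschitz weak equivalence. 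I would note in passing that its first factor $C\ira C^{\sharp}$ is already a Lipschitz weak equivalence: by Definition \ref{defn:lipschitz.we} it suffices that $\langle A,C\rangle\to\langle A,C^{\sharp}\rangle$ be a classical weak equivalence for all $A$, and this map is the filtered colimit of the $(\ex^{k}(C^{\sharp}\ira C^{\sharp\sharp}))^{A}$, each a cubical homotopy equivalence because $C^{\sharp}\ira C^{\sharp\sharp}$ is one [Lemma \ref{lem:sharp.idempotency}] and $\ex$ and $(-)^{A}$ preserve cubical homotopy equivalences (as in the proof of Proposition \ref{prop:strong.to.weak.lipschitz.equivalences}), while filtered colimits of classical weak equivalences are classical weak equivalences in the model structure of Theorem \ref{thm:cubical.model.structure}.

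Granting this, $C\to\langle\star,C\rangle$ is an isomorphism in $h_{\infty}(\CUBICAL\SETS)$, so $[B,C]\cong[B,\langle\star,C\rangle]$ naturally in $B$ and $C$. Since $\langle\star,C\rangle$ is fibrant (Kan) and every cubical set is cofibrant, the description of hom-sets in the homotopy category of a category of cofibrant objects [Appendix \S\ref{sec:fibrant.objects}, following \cite{brown1973abstract}] identifies $[B,\langle\star,C\rangle]$ with the quotient of $\CUBICAL\SETS(B,\langle\star,C\rangle)$ by the cubical homotopy relation, which for a Kan target is exactly $\pi_{0}$ of the internal mapping complex. Thus $[B,C]\cong\pi_{0}\bigl(\langle\star,C\rangle^{B}\bigr)$ naturally in $B$ and $C$, and it remains only to produce a natural isomorphism $\pi_{0}\bigl(\langle\star,C\rangle^{B}\bigr)\cong\pi_{0}\langle B,C\rangle$; one may afterwards check that the resulting composite is compatible with the canonical inclusion $\CUBICAL\SETS(B,C)\ira\langle B,C\rangle$.

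For the remaining comparison I would first reduce to connected $B$: both $[-,C]$ and $\pi_{0}\langle-,C\rangle$ send coproducts to products — the former because the localization $\CUBICAL\SETS\to h_{\infty}(\CUBICAL\SETS)$ carries the coproduct $\amalg_{i}B_{i}$ of cofibrant objects to a coproduct, the latter because $\langle\amalg_{i}B_{i},C\rangle=\prod_{i}\langle B_{i},C\rangle$ by definition and $\pi_{0}$ preserves products of the Kan cubical sets $\langle B_{i},C\rangle$. So assume $B$ connected. The cocone $\{(\ex^{k}C^{\sharp})^{B}\to\langle\star,C\rangle^{B}\}_{k}$ assembles into a comparison map $\langle B,C\rangle=\colim_{k}(\ex^{k}C^{\sharp})^{B}\to(\colim_{k}\ex^{k}C^{\sharp})^{B}=\langle\star,C\rangle^{B}$, and since $\pi_{0}$ preserves the filtered colimit on the left, the task becomes showing that $\colim_{k}\pi_{0}\bigl((\ex^{k}C^{\sharp})^{B}\bigr)\to\pi_{0}\bigl((\colim_{k}\ex^{k}C^{\sharp})^{B}\bigr)$ is a bijection. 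Surjectivity is the assertion that every cubical function $B\to\colim_{k}\ex^{k}C^{\sharp}$ is cubically homotopic to one factoring through a finite stage $\ex^{K}C^{\sharp}$, and injectivity the same assertion for cubical homotopies $B\otimes\sd^{m}\BOX[1]\to\colim_{k}\ex^{k}C^{\sharp}$; transposing along $\sd\dashv\ex$, both become statements about compressing cubical functions $\sd^{k}B\to C^{\sharp}$ down the subdivision tower, which — using that each $\gamma^{\pm}$ realizes to a bi-Lipschitz homeomorphism ($\varphi_{C}$ of \S\ref{subsec:comparisons}) and that geometric realization detects cubical functions and homotopies up to subdivision — is the content of the cubical approximation theorem [Lemma \ref{lem:adjoint.approximation}; cf. \cite{krishnan2015cubical}].

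\textbf{Main obstacle.} It is exactly this last comparison. The internal hom $(-)^{B}$ fails to commute with the sequential colimit defining $\langle\star,C\rangle$ as soon as $B$ has infinitely many cells, so the identity $\pi_{0}\langle B,C\rangle\cong\pi_{0}\bigl(\langle\star,C\rangle^{B}\bigr)$ is genuinely homotopical: it says that an arbitrary map, or homotopy of maps, out of a cubical set can be approximated up to homotopy by one of bounded subdivision depth. Everything preceding it is formal, flowing from the category-of-cofibrant-objects structure of $\CUBICAL\SETS$ [Proposition \ref{prop:cofibrant.objects}] together with the $\mathrm{Ex}^{\infty}$-theoretic properties of $\langle\star,-\rangle$; the cubical approximation theorem enters only at this final step.
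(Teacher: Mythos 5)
Your opening assertion --- that $C\to\langle\star,C\rangle$ is a \emph{Lipschitz} weak equivalence --- is false, and it is the load-bearing step of the whole argument. Corollary \ref{cor:classical.we} only establishes that this map is a \emph{classical} weak equivalence; the implication between the two notions runs the other way (Proposition \ref{prop:lipschitz.implies.classical}). If $C\ira\langle\star,C\rangle$ were a Lipschitz weak equivalence, then $\langle\star,-\rangle$ would be a fibrant replacement for the Lipschitz theory and every cubical set would be isomorphic in $h_\infty(\CUBICAL\SETS)$ to a Kan complex, collapsing $h_\infty(\CUBICAL\SETS)$ onto $h(\CUBICAL\SETS)$; the paper states explicitly that ``cubical sets are too rigid \ldots to allow for fibrant replacement in Lipschitz homotopy theory,'' and Example \ref{eg:non-kan.BG} exhibits classical weak equivalences that are not Lipschitz weak equivalences. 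Your side remark that the first factor $C\ira C^\sharp$ is a Lipschitz weak equivalence is fine (cf. Lemma \ref{lem:sharp.replacement} and Corollary \ref{cor:cubical.we}), but the transfinite composite of the stages $\ex^kC^\sharp\ira\ex^{k+1}C^\sharp$ is exactly where the argument dies: the introduction's warning that ``this transfinite closure fails in the uniform setting'' is about precisely this composite, and it is not a technical inconvenience but the reason the Lipschitz homotopy category differs from the classical one.

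Everything downstream inherits the error. Since $\langle\star,C\rangle$ is Kan and $C\ira\langle\star,C\rangle$ is a classical weak equivalence, your intermediate group $\pi_0\bigl(\langle\star,C\rangle^B\bigr)$ is the \emph{classical} hom-set $h(\CUBICAL\SETS)(B,C)$, not $[B,C]$; and the final comparison $\pi_0\langle B,C\rangle\cong\pi_0\bigl(\langle\star,C\rangle^B\bigr)$ that you propose to extract from Lemma \ref{lem:adjoint.approximation} is simply not true. Concretely, take $B=C$ to be the cubical set $\cdots\ra\bullet\ra\bullet\ra\cdots$ realizing to $\R$: by Theorem \ref{thm:enriched.equivalence}, $\pi_0\langle B,B\rangle\cong\pi_0\sing_\BOX\UNIFORM(\R,\R)$ distinguishes $\id_\R$ from a constant (the paper notes $\R\ra\star$ is not a uniform homotopy equivalence), whereas $\pi_0\bigl(\langle\star,B\rangle^B\bigr)$ is a singleton because $\R$ is classically contractible. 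The approximation lemma approximates a \emph{uniform map} $\uniformrealization{B}\ra\uniformrealization{C}$ by a cubical function of bounded subdivision depth; it does not compress an arbitrary cubical function $B\ra\colim_k\ex^kC^\sharp$ into a finite stage up to homotopy, and no such compression holds. The paper's actual proof avoids the internal fibrant-replacement route entirely: it passes through uniform realization, chaining $[B,C]\cong h_\infty\UNIFORM(\uniformrealization{B},\uniformrealization{C})$ [Corollary \ref{cor:embedding}] with $\cong\pi_0\sing_\BOX\UNIFORM(\uniformrealization{B},\uniformrealization{C})$ [Proposition \ref{prop:cofibrant.uniform.spaces}] and then $\cong\pi_0\langle B,C\rangle$ [Theorem \ref{thm:enriched.equivalence}]; that last theorem, not a commutation of $(-)^B$ with the colimit, is where cubical approximation does its work.
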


The fibrant objects in the test model structure for classical cubical homotopy and weaker structure for Lipschitz cubical homotopy coincide.  
On such fibrant objects, Lipschitz homotopy theory reduces to classical homotopy theory.  
In other words, cubical sets are too rigid a model of geometric objects to allow for fibrant replacement in Lipschitz homotopy theory.    

\subsection{Continuous}\label{subsec:uniform.homotopy}
We define homotopy theories on (uniform) spaces in this section.
For convenience, we adopt the following terminology.
Fix a $\TOP$-category $\homotopicalcat{1}$ tensored and cotensored over $\TOP$.  
An \textit{h-homotopy} $\alpha\sim\beta$ between $\homotopicalcat{1}$-morphisms $\alpha,\beta$ is a dotted $\homotopicalcat{1}$-morphism making the left of the followingn diagrams commute:
\begin{equation*}
  \xymatrix{
    **[l]x\otimes\{0,1\}=x\amalg x\ar[r]\ar[d]_{x\otimes(\{0,1\}\ira\I)} & **[r]y\\
	**[l]x\otimes\I\ar@{.>}[ur]
	}\quad
  \xymatrix{
    **[l]a\otimes\I\ar[rr]^{a\otimes(\I\ra\{0\})}\ar[d]_{i\otimes\I} & & **[r]A\ar[d]^{\alpha\iota}\\
	**[l]x\otimes\I\ar[rr]_{\eta} & & y
	}
\end{equation*}
  
An h-homotopy $\eta:\alpha\sim\beta$ in $\homotopicalcat{1}$ is \textit{relative} a $\homotopicalcat{1}$-morphism $\iota:a\ra x$ if the right diagram above commutes.
A pair of parallel $\homotopicalcat{1}$-morphisms $\alpha,\beta$ are \textit{h-homotopic} if there exists a homotopy between them.
Call a $\homotopicalcat{1}$-morphism $\zeta$ an \ldots
\begin{enumerate}
	\item \ldots \textit{h-equivalence} if $\pi_0\homotopicalcat{1}(-,\zeta)$ is object-wise bijective
	\item \ldots \textit{h-fibration} if $\homotopicalcat{1}(-,\zeta)$ object-wise has the right lifting property against $\{0\}\ira\I$
	\item \ldots \textit{h-cofibration} if $\zeta$ has the left lifting property against $o^{\{0\}\ira\I}$ for all $\homotopicalcat{1}$-objects $o$ 
\end{enumerate}

\subsubsection{Classical}
A \textit{classical homotopy} is an h-homotopy in $\TOP$.  
A \textit{classical homotopy equivalence} is an h-equivalence in $\TOP$.  
A \textit{Hurewicz (co)fibration} is an h-(co)fibration in $\TOP$.  
Recall that a map $f:X\ra Y$ of spaces is a \textit{classical weak equivalence} if, for all cubical sets $C$,
$$\pi_0\TOP(|C|,f):\pi_0\TOP(|C|,X)\cong\pi_0\TOP(|C|,Y).$$

The classical weak equivalences are probably part of a model structure on $\TOP$ equivalent to model structures on more ubiquitous variants of $\TOP$ in the literature (eg. \cite{quillen2006homotopical}); however, we will not need the existence of such a model structure in the current paper.  
Every space is classically weakly equivalent to the topological realization of a cubical set.  
And the localization of the full subcategory of $\TOP$ consisting of such topological realizations by the classical weak equivalences can be constructed so that the morphisms are classical homotopy classes of maps.  
It therefore follows that there exists a localization
$$\TOP\ra h\TOP$$
of $\TOP$ by the classical weak equivalences.
Call a uniform map a \textit{classical weak equivalence} if it defines a classical weak equivalence of underlying spaces.

\subsubsection{Uniform}
A \textit{uniform homotopy} is an h-homotopy in $\UNIFORM$.  
A \textit{uniform homotopy equivalence} is an h-equivalence in $\UNIFORM$. 
A \textit{uniform Hurewicz (co)fibration} is an h-(co)fibration in $\UNIFORM$.  
Uniform homotopy theory is more subtle than classical homotopy theory.  

\begin{eg}
  A homotopy through uniform maps need not be uniform.
  The rule
  $$h(x,t)=tx$$
  defines a homotopy $\R\times\I\ra\R$ through linear and hence uniform maps $\R\ra\R$ that itself is not uniform.  
  In fact, the uniform map $\R\ra\star$ from the usual metric space $\R$ is not a uniform homotopy equivalence.  
\end{eg}

Just as uniform homotopy equivalences refine classical homotopy equivalences, \textit{uniform weak equivalences} refine classical weak equivalences.

\begin{defn}
  \label{defn:uniform.we}
  A uniform map $f:X\ra Y$ is a \textit{uniform weak equivalence} if 
  $$\UNIFORM(|C|_\infty,f):\UNIFORM(|C|_\infty,X)\ra\UNIFORM(|C|_\infty,Y)$$
  is a classical weak equivalence of spaces, for all cubical sets $C$.  
\end{defn}

\begin{prop}
  A uniform weak equivalence is a classical weak equivalence.
\end{prop}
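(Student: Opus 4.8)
The plan is to test the defining condition of a uniform weak equivalence against the terminal cubical set. First I would observe that $\star$, the terminal object of $\CUBICAL\SETS$, coincides with the representable $\BOX[0]$, so its uniform realization $\uniformrealization{\star}$ is the uniform realization of a one-point cubical set, namely the one-point (terminal) uniform space $\star$ in $\UNIFORM$. Consequently, specializing Definition \ref{defn:uniform.we} to the case $C=\star$ yields that $\UNIFORM(\star,f)\colon\UNIFORM(\star,X)\to\UNIFORM(\star,Y)$ is a classical weak equivalence of spaces.

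Next I would identify the $\TOP$-valued functor $\UNIFORM(\star,-)$ with the forgetful functor $\UNIFORM\to\TOP$ appearing in (\ref{eqn:comparisons}). Since $\star\times-\cong\id_{\UNIFORM}$, the right adjoint $(-)^{\star}$ is likewise isomorphic to $\id_{\UNIFORM}$; hence the $\TOP$-enriched hom-object $\UNIFORM(\star,Y)$ carries the uniform topology of $Y^{\star}\cong Y$, which is by definition the underlying space of $Y$, and the continuous map $\UNIFORM(\star,f)$ is the underlying map of $f$.

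Putting the two steps together, the underlying space map of $f$ is a classical weak equivalence of spaces, which by our convention (a uniform map is a classical weak equivalence exactly when its underlying space map is) says precisely that $f$ is a classical weak equivalence. I do not expect any genuine obstacle here: the entire content is the bookkeeping identifications $\uniformrealization{\star}\cong\star$ and $\UNIFORM(\star,-)\cong(\text{underlying-space functor})$, after which the conclusion is immediate from unwinding the two definitions of ``classical weak equivalence''.
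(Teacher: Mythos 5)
Your proposal is correct and follows exactly the paper's own argument: specialize the defining condition of a uniform weak equivalence to $C=\BOX[0]$, so that $\UNIFORM(\star,f)$ is a classical weak equivalence, and then identify $\UNIFORM(\star,-)$ (up to homeomorphism) with the underlying-space functor. The extra bookkeeping you supply ($\uniformrealization{\star}\cong\star$ and $(-)^{\star}\cong\id_{\UNIFORM}$) just makes explicit what the paper asserts in one line.
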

\begin{proof}
  For each uniform weak equivalence $f:X\ra Y$, 
  $$\UNIFORM(\star,f):\UNIFORM(\star,X)\ra\UNIFORM(\star,Y)$$
  is a classical weak equivalence that, up to homeomorphism, is the map of underlying spaces.
\end{proof}

\begin{prop}
  \label{prop:strong.to.weak.uniform.equivalences}
  A uniform homotopy equivalence is a uniform weak equivalence.
\end{prop}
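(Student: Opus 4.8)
The plan is to verify Definition \ref{defn:uniform.we} directly. Fix a cubical set $C$ and a uniform homotopy equivalence $f\colon P\to Q$; it suffices to show that $\UNIFORM(|C|_\infty,f)$ is a classical weak equivalence of spaces, that is, that $\pi_0\TOP(|D|,\UNIFORM(|C|_\infty,f))$ is bijective for every cubical set $D$. The tool is a natural isomorphism relating this hom-space to a uniform mapping space.

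Concretely, I would build the natural isomorphism
\[
  \TOP(|D|,\UNIFORM(|C|_\infty,P))\;\cong\;\UNIFORM(|C|_\infty\otimes|D|,P),
\]
natural in $\UNIFORM$-objects $P$, where $\otimes$ denotes the tensoring of $\UNIFORM$ over $\TOP$, so that $|C|_\infty\otimes|D|$ is the $\UNIFORM$-product of $|C|_\infty$ with the topological realization $|D|$ carrying its fine uniformity. This comes from composing the natural isomorphism $\TOP(S,\UNIFORM(X,Y))\cong\UNIFORM(FS,Y^X)$ exhibited in the corollary following Proposition \ref{prop:fine.uniform.spaces} (here $F$ equips a space with its fine uniformity) with the exponential adjunction of the Cartesian closed category $\UNIFORM$. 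Because $\UNIFORM$ is tensored and cotensored over $\TOP$, this isomorphism is compatible with cylinders: a classical homotopy $|D|\times_{\TOP}\I\to\UNIFORM(|C|_\infty,P)$ corresponds to a uniform homotopy $(|C|_\infty\otimes|D|)\otimes\I\to P$, using that the unit interval has a unique uniformity and that the $\TOP$-action on $\UNIFORM$ is unital and associative. Hence $\pi_0$ of the two sides agree, naturally in $P$:
\[
  \pi_0\TOP(|D|,\UNIFORM(|C|_\infty,P))\;\cong\;\pi_0\UNIFORM(|C|_\infty\otimes|D|,P).
\]

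To finish, apply this last isomorphism to $f$. A uniform homotopy equivalence is by definition an h-equivalence in $\UNIFORM$, so $\pi_0\UNIFORM(Z,f)$ is bijective for every $\UNIFORM$-object $Z$; taking $Z=|C|_\infty\otimes|D|$ shows $\pi_0\UNIFORM(|C|_\infty\otimes|D|,f)$ is bijective, hence $\pi_0\TOP(|D|,\UNIFORM(|C|_\infty,f))$ is bijective. Letting $D$ range over all cubical sets shows $\UNIFORM(|C|_\infty,f)$ is a classical weak equivalence of spaces; letting $C$ range over all cubical sets shows $f$ is a uniform weak equivalence. The only non-formal ingredient is the cylinder-compatibility of the displayed isomorphism, which reduces to the coherence of the $\TOP$-action and the uniqueness of the uniformity on a compact space; everything else unwinds definitions. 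An alternative, slightly less direct route is to observe that the representable $\TOP$-functor $\UNIFORM(|C|_\infty,-)$ is lax Cartesian monoidal (being the underlying space of the internal hom $(-)^{|C|_\infty}$), hence preserves h-homotopies by the two formal facts recorded at the start of \S\ref{sec:homotopy}, and therefore carries a uniform homotopy inverse of $f$ to a classical homotopy inverse of $\UNIFORM(|C|_\infty,f)$, which is in particular a classical weak equivalence of spaces.
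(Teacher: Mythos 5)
Your main argument is correct but takes a genuinely different route from the paper's. The paper's proof first observes, via a Yoneda-style argument, that a uniform homotopy equivalence $f:X\ra Y$ (an h-equivalence in $\UNIFORM$, i.e.\ $\pi_0\UNIFORM(-,f)$ object-wise bijective) actually admits a uniform homotopy inverse $g$ with uniform homotopies $gf\sim\id_X$ and $fg\sim\id_Y$, and then applies the $\TOP$-enriched hom $\UNIFORM(\uniformrealization{C},-)$ to convert these data into a classical homotopy inverse of $\UNIFORM(\uniformrealization{C},f)$, which is therefore a classical homotopy equivalence and in particular a classical weak equivalence. You never extract an inverse: you use the tensoring of $\UNIFORM$ over $\TOP$ to identify $\pi_0\TOP(|D|,\UNIFORM(\uniformrealization{C},P))$ with $\pi_0\UNIFORM(\uniformrealization{C}\otimes|D|,P)$ naturally in $P$, and then quote the representable definition of an h-equivalence at the test object $\uniformrealization{C}\otimes|D|$. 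What your route buys is a proof that applies verbatim to any h-equivalence in any tensored and cotensored $\TOP$-category without passing through the homotopy-inverse characterization; what it costs is the cylinder-compatibility of the tensor adjunction (your self-identified ``only non-formal ingredient''), which is precisely the sort of coherence the paper packages into the two formal facts recorded at the start of \S\ref{sec:homotopy}. Your alternative route at the end is essentially the paper's proof; note only that, as stated, it presupposes that $f$ has a uniform homotopy inverse, which is not the literal definition of an h-equivalence and requires the same one-line Yoneda observation that the paper makes explicitly before applying the enriched hom.
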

\begin{proof}
  Consider a uniform homotopy equivalence $f:X\ra Y$ in $\UNIFORM$.
  Then $f$ represents an isomorphism in the quotient of $\UNIFORM$ by the congruence that equates path-connected maps in each hom-space by the Yoneda Lemma.    
  In other words, there exists a uniform map $g:Y\ra X$ together with uniform homotopies $gf\sim\id_X$ and $fg\sim\id_Y$.
  For all cubical sets $C$, $\UNIFORM(|C|_\infty,f)$ has homotopy inverse $\UNIFORM(|C|_\infty,g)$ and in particular is a classical weak equivalence.
\end{proof}

\begin{prop}
  \label{prop:fibrant.uniform.spaces}
  The category $\UNIFORM$ is a category of fibrant objects such that \ldots
  \begin{enumerate}
    \item \ldots the weak equivalences are the uniform weak equivalences
	\item \ldots the fibrations $f$ are the uniform Hurewicz fibrations
  \end{enumerate}
\end{prop}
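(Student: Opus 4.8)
The plan is to verify directly the axioms of a category of fibrant objects recalled in \S\ref{sec:fibrant.objects}, taking the trivial fibrations to be the uniform Hurewicz fibrations that are also uniform weak equivalences. The category $\UNIFORM$ has finite products and a terminal object $\star$, being Cartesian closed, and is complete, so the pullbacks needed below exist. Several axioms are essentially formal. The uniform Hurewicz fibrations, being characterized by a right lifting property (equivalently, $\zeta$ has the right lifting property against $o\otimes(\{0\}\ira\I)$ for all $\UNIFORM$-objects $o$), are closed under composition and contain the isomorphisms. Every object $X$ is fibrant because the required lift in a square over $X\ra\star$ with left leg $o\otimes(\{0\}\ira\I)$ is produced by composing the given map $o\otimes\{0\}\ra X$ with the retraction $o\otimes\I\ra o\otimes\{0\}$ induced by the collapse $\I\ra\{0\}$. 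The saturation axiom for uniform weak equivalences reduces to the corresponding statement for classical weak equivalences of spaces: each functor $\UNIFORM(\uniformrealization{C},-)\colon\UNIFORM\ra\TOP$ carries a two-out-of-three configuration of uniform maps to one of maps of spaces, and classical weak equivalences of spaces satisfy two-out-of-three because they are detected by bijectivity of $\pi_0\TOP(|D|,-)$ over cubical sets $D$ and bijections satisfy two-out-of-three; closure of trivial fibrations under composition follows at once, and isomorphisms are carried to homeomorphisms by $\UNIFORM(\uniformrealization{C},-)$, hence are uniform weak equivalences.

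For the path object axiom I would use the cotensor $Y^{\I}$ with the unit interval, which, being compact, carries a unique uniformity. Factor the diagonal $Y\ra Y\times Y$ as the constant-path map $Y\ra Y^{\I}$ followed by endpoint evaluation $Y^{\I}\ra Y\times Y$. The constant-path map is a uniform homotopy equivalence with inverse $\mathrm{ev}_0$: the contracting homotopy $Y^{\I}\times\I\ra Y^{\I}$ is adjoint to $Y^{\I}\times\I\times\I\xra{\mathrm{id}\times m}Y^{\I}\times\I\xra{\mathrm{ev}}Y$, the composite of evaluation with the multiplication $m\colon\I\times\I\ra\I$, which is uniform because its domain is compact; hence the constant-path map is a uniform weak equivalence by Proposition \ref{prop:strong.to.weak.uniform.equivalences}. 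That endpoint evaluation is a uniform Hurewicz fibration is the standard reparametrization argument — a homotopy lifting problem is solved by precomposing the glued boundary data with a retraction $\I\times\I\ra(\{0\}\times\I)\cup(\I\times\{0,1\})$ — which transports verbatim to $\UNIFORM$ since this retraction, having compact domain, is automatically uniform. General factorization of any uniform map as a uniform weak equivalence followed by a uniform Hurewicz fibration then follows formally, via the mapping path space $X\times_Y Y^{\I}$.

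The substantive step is the pullback axiom. A pullback in $\UNIFORM$ is the coreflection onto $\UNIFORM$ of the underlying pullback of separated uniform spaces; since the tensor $o\otimes\I$ of a $\UNIFORM$-object $o$ with $\I$ is again a $\UNIFORM$-object, a homotopy lifting problem against $o\otimes(\{0\}\ira\I)$ for the pullback of a uniform Hurewicz fibration pushes forward to one for the original fibration, and its solution is recovered through the universal property of the pullback; so fibrations are stable under pullback. For trivial fibrations I would apply each functor $\UNIFORM(\uniformrealization{C},-)$, which is a right adjoint (tensored–cotensored structure over $\TOP$) and hence carries the $\UNIFORM$-pullback to the $\TOP$-pullback of hom-spaces; it sends a uniform weak equivalence to a classical weak equivalence of spaces by definition, and — this is the point to watch — it sends a uniform Hurewicz fibration to a Serre fibration of spaces. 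The last assertion is proved by transferring a lifting problem for $\UNIFORM(\uniformrealization{C},f)$ against $D^n\times(\{0\}\ira\I)$ to a lifting problem for $f$ against $(D^n\times\uniformrealization{C})\otimes(\{0\}\ira\I)$, using that a continuous map out of a compact space such as $D^n$ is automatically uniform and that a uniform map out of a space equipped with its fine uniformity is the same as a continuous map. One then invokes the classical fact that acyclic Serre fibrations of spaces are stable under pullback to conclude that the pulled-back map is a classical weak equivalence for every $C$, hence a uniform weak equivalence. The main obstacle is precisely this last reduction: arranging that the enriched lifting property defining uniform Hurewicz fibrations, the fine-uniformity adjunction, and compactness of the $D^n$ combine so that $\UNIFORM(\uniformrealization{C},-)$ carries uniform Hurewicz fibrations to honest fibrations of spaces.
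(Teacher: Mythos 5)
Your proposal is correct and follows essentially the same route as the paper: the one substantive axiom --- pullback stability of the acyclic fibrations --- is handled exactly as in the paper's proof, by applying the pullback-preserving functors $\UNIFORM(\uniformrealization{C},-)$ and reducing to the classical stability of acyclic fibrations of spaces. The remaining axioms, which you verify by hand (path object via the cotensor $Y^{\I}$, two-out-of-three, etc.), the paper instead delegates to the general fact that a bicomplete $\TOP$-category is a category of fibrant objects with h-equivalences and h-fibrations [Example \ref{eg:h-fibrant}] together with Proposition \ref{prop:strong.to.weak.uniform.equivalences}; this is an organizational rather than a mathematical difference.
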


The proof uses the fact that $\UNIFORM$ is a category of fibrant objects in which the weak equivalences are the uniform homotopy equivalences and the fibrations are the uniform Hurewicz fibrations [Example \ref{eg:h-fibrant}].

\begin{proof}
  For each cubical set $C$, $\UNIFORM(\uniformrealization{C},-)$ preserves pullbacks and sends uniform Hurewicz fibrations to classical Hurewicz fibrations.
  Therefore the class of uniform weak equivalences that are uniform Hurewicz fibrations is stable under pullbacks because classical Hurewicz fibrations of spaces are stable under pullback.  

  The rest of the axioms to check follow because $\UNIFORM$ is a category of fibrant objects whose weak equivalences are a special case of uniform weak equivalences [Proposition \ref{prop:strong.to.weak.uniform.equivalences}] and whose fibrations are the uniform Hurewicz fibrations.  
\end{proof}

It follows from the proposition that there exists a localization
$$\UNIFORM\ra h_\infty\UNIFORM$$
of $\UNIFORM$ by the uniform weak equivalences.
Write $[-,-]$ for the hom-set functor $h_\infty\UNIFORM(-,-)$.  
Call a uniform space \textit{uniformly cofibrant} if it is cofibrant in the above category of fibrant objects; in other words, a uniform space $X$ is uniformly cofibrant if every uniform map of the form $X\ra Y$ lifts along a uniform map to $Y$ that is at once a uniform weak equivalence and a uniform Hurewicz fibration.
Formally, a uniform map between uniformly cofibrant uniform spaces is a uniform weak equivalence if and only if it is a uniform homotopy equivalence.
The following sufficient condition for uniform cofibrancy generalizes a characterization of cofibrancy in the mixed model structure on spaces \cite{cole2006mixing}; a proof is formal and therefore omitted.

\begin{prop}
  \label{prop:cofibrant.uniform.spaces}
  Consider a cubical set $C$ and uniform homotopy equivalence
  $$X\simeq\uniformrealization{C}$$
  Then $X$ is uniformly cofibrant.
\end{prop}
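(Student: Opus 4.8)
The plan is to reduce to the case $X=\uniformrealization{C}$, by showing that uniform cofibrancy is invariant under uniform homotopy equivalence, and then to settle that case by a short enriched-categorical argument. For the reduction, suppose $\phi\colon X\to X'$ is a uniform homotopy equivalence with uniform homotopy inverse $\psi\colon X'\to X$ and that $X'$ is uniformly cofibrant; I would argue $X$ is uniformly cofibrant as follows. Given a uniform map $p\colon Z\to Y$ that is at once a uniform weak equivalence and a uniform Hurewicz fibration, together with a uniform map $f\colon X\to Y$, first lift $f\psi\colon X'\to Y$ through $p$ to some $\sigma\colon X'\to Z$, using that $X'$ is uniformly cofibrant. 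Then $p(\sigma\phi)=f(\psi\phi)$ is uniformly homotopic to $f$ since $\psi\phi\sim\id_X$; pick a uniform homotopy $G\colon X\otimes\I\to Y$ restricting to $p(\sigma\phi)$ and to $f$ at its two endpoints. Since $\UNIFORM$ is tensored and cotensored over $\TOP$, $G$ is a path in $\UNIFORM(X,Y)$, and since $p$ is an h-fibration the map $\UNIFORM(X,p)$ has the right lifting property against $\{0\}\ira\I$, so $G$ lifts, starting at $\sigma\phi\in\UNIFORM(X,Z)$, to a path whose endpoint is a uniform map $\tilde f\colon X\to Z$ with $p\tilde f=f$. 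Thus it suffices to show that $\uniformrealization{C}$ is uniformly cofibrant for each cubical set $C$.

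For that base case, fix $C$, a uniform map $p\colon Z\to Y$ that is both a uniform weak equivalence and a uniform Hurewicz fibration, and a uniform map $f\colon\uniformrealization{C}\to Y$. By Definition \ref{defn:uniform.we} the map $\UNIFORM(\uniformrealization{C},p)$ is a classical weak equivalence of spaces, hence in particular surjective on $\pi_0$ (probe with a point); so I would choose $\tilde f_0\colon\uniformrealization{C}\to Z$ with $p\tilde f_0$ in the path-component of $f$ in $\UNIFORM(\uniformrealization{C},Y)$, together with a path $\gamma$ from $p\tilde f_0$ to $f$. Since $p$ is an h-fibration, $\UNIFORM(\uniformrealization{C},p)$ has the right lifting property against $\{0\}\ira\I$, so it lifts $\gamma$ to a path in $\UNIFORM(\uniformrealization{C},Z)$ starting at $\tilde f_0$, whose endpoint $\tilde f$ satisfies $p\tilde f=f$. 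Hence $\uniformrealization{C}$ is uniformly cofibrant, completing the argument.

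The argument is entirely formal, so there is no genuine obstacle; the only points requiring care are the $\TOP$-enriched bookkeeping — that uniform homotopies $X\otimes\I\to Y$ are precisely paths in the hom-space $\UNIFORM(X,Y)$, and that the h-fibration hypothesis on $p$, namely the right lifting property of every $\UNIFORM(o,p)$ against $\{0\}\ira\I$, is exactly what is needed to lift such paths with prescribed starting point. It is worth emphasizing that no cell-by-cell or transfinite construction enters, which is what allows an \emph{arbitrary} uniform realization, and hence anything uniformly homotopy equivalent to one, to be uniformly cofibrant despite the failure, noted in the introduction, of the transfinite closure exploited in the classical and simplicial settings.
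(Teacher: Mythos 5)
Your proof is correct. The paper omits its own proof as ``formal,'' and your two-step argument --- first that the lifting property against acyclic fibrations is invariant under uniform homotopy equivalence via path lifting in the enriched hom-spaces (using that acyclic fibrations are in particular h-fibrations), and second that $\uniformrealization{C}$ itself lifts because $\UNIFORM(\uniformrealization{C},p)$ is $\pi_0$-surjective by the very definition of a uniform weak equivalence --- is exactly the formal argument the authors allude to when they cite the characterization of cofibrancy in the mixed model structure.
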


\begin{cor}
  \label{cor:spaces}
  Spaces of finite CW type with fine uniformities are uniformly cofibrant.
\end{cor}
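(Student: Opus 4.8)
The plan is to deduce this directly from Proposition \ref{prop:cofibrant.uniform.spaces}, which says that any uniform space uniformly homotopy equivalent to a uniform realization $\uniformrealization{C}$ is uniformly cofibrant. So it suffices to produce, for a space $X$ of finite CW type equipped with its fine uniformity, a cubical set $C$ together with a uniform homotopy equivalence $X\simeq\uniformrealization{C}$. First I would recall that a space of finite CW type is, by definition, classically homotopy equivalent to a finite CW complex, hence to the topological realization $|C|$ of a finite cubical set $C$ (finite CW complexes and topological realizations of finite cubical sets model the same classical homotopy types, via the equivalence $h(\CUBICAL\SETS)\simeq h\TOP$ of the bottom row of Corollary \ref{cor:embedding}).

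The key step is upgrading a \emph{classical} homotopy equivalence into a \emph{uniform} homotopy equivalence once everything is given the fine uniformity. The point is that when $Y$ carries its fine uniformity, every continuous map $Z\to Y$ from any uniform space $Z$ is automatically uniform, since the fine uniformity is the finest one compatible with the topology; so $\UNIFORM$-morphisms into $Y$ are just $\TOP$-morphisms into the underlying space, and likewise homotopies $Z\otimes\I\to Y$ through the tensor (which equips the $\TOP$-cylinder with its fine uniformity) are just continuous homotopies. Applying this with $Z$ ranging over $X$ with its fine uniformity, over $\uniformrealization{C}$, and over their cylinders, a classical homotopy equivalence $f\colon |C|\to X$ with classical homotopy inverse $g$ and classical homotopies $gf\sim\id$, $fg\sim\id$ promotes to data witnessing a uniform homotopy equivalence $\uniformrealization{C}\simeq X$ — \emph{provided} the relevant maps out of $\uniformrealization{C}$ are also uniform. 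Here one uses that for a \emph{finite} cubical set $C$, $\uniformrealization{C}=|C|_\infty$ is a compact metric space (Lemma \ref{lem:bounded.nerve} and the compactness of finite CW complexes), so by Corollary 24 of \S II of \cite{isbell1964uniform} (our \texttt{thm:compact.uniform.spaces}) it carries the unique uniformity compatible with its topology, which is therefore its fine uniformity; hence continuous maps and homotopies out of $\uniformrealization{C}$ are automatically uniform as well. Thus the classical homotopy equivalence $|C|\simeq X$ is genuinely a uniform homotopy equivalence $\uniformrealization{C}\simeq X$, and Proposition \ref{prop:cofibrant.uniform.spaces} applies.

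I expect the main obstacle to be the bookkeeping around which uniformities make which maps automatically uniform: one must be careful that the $\TOP$-tensor $-\otimes\I$ used to define uniform homotopies equips the product with the fine uniformity (as noted after the corollary to Proposition \ref{prop:fine.uniform.spaces}), so that a classical homotopy $|C|\otimes\I\to X$ really is a uniform homotopy when target and the interval factor carry fine uniformities, and dually that $X\otimes\I\to\uniformrealization{C}$ is uniform because $\uniformrealization{C}$ is compact metric with its unique compatible uniformity. Once this is set up, the argument is formal, and the only substantive input is the identification of the uniform realization of a finite cubical set as a compact metric space together with the uniqueness theorem \texttt{thm:compact.uniform.spaces}. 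One caveat to address: ``finite CW type'' should be taken to allow infinitely many cells as long as the space is homotopy equivalent to a finite complex; if instead it means finite-dimensional with finitely many cells in each dimension but possibly infinitely many cells, the same argument works after replacing ``$\uniformrealization{C}$ compact'' by the observation that a locally finite such realization still carries its fine uniformity as its unique compatible uniformity on each compact piece and the coreflection argument of Proposition \ref{prop:fine.uniform.spaces} assembles these — but I would state the corollary for the bounded (compact) case to keep the proof short.
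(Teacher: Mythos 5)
Your reduction to Proposition \ref{prop:cofibrant.uniform.spaces} is surely the intended route (the paper offers no proof of the corollary), and the pieces of your argument concerning maps and homotopies \emph{out of} the compact side are fine: a finite cubical set realizes to a compact Hausdorff space, which carries a unique --- hence fine --- uniformity, so continuous maps out of $\uniformrealization{C}$ and out of $\uniformrealization{C}\otimes\I$ are automatically uniform. But the universal property you invoke is stated backwards. The fine uniformity is \emph{left} adjoint to the underlying-space functor: it makes continuous maps \emph{out of} a finely uniformized space uniform, whereas a finer uniformity on the \emph{codomain} makes uniformity harder, not automatic. Your sentence ``when $Y$ carries its fine uniformity, every continuous map $Z\to Y$ \ldots is automatically uniform'' is false as stated; it happens not to be needed for $f$ and $g$ themselves, whose domains are fine, so the correct principle still covers them.

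The genuine gap is the homotopy $fg\sim\id_X$ on the possibly non-compact side. It must be a uniform map $X\otimes\I\to X$, and $X\otimes\I$ is \emph{not} ``the $\TOP$-cylinder with its fine uniformity'': by the paper's construction of the tensor it is the $\UNIFORM$-product of $X$ (fine) with $\I$. For the ordinary uniform product, the product of a fine uniformity with a compact space can be strictly coarser than the fine uniformity of the product --- $(x,t)\mapsto\sin(xt)$ on $\R\times\I$ is continuous but not uniformly continuous for the product of the fine uniformity on $\R$ with $\I$ --- and the paper's own example $h(x,t)=tx$ is precisely a warning that a continuous homotopy through uniform maps need not be uniform. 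So continuity of the homotopy $X\times\I\to X$ does not by itself give uniformity of $X\otimes\I\to X$; your compactness argument handles only the cylinder on $\uniformrealization{C}$. To close the gap you must either restrict to compact $X$ (which your closing caveat half-suggests, but which collapses the statement to essentially the situation of Proposition \ref{prop:finite.lipschitz}), or work through the Cartesian-closed structure of $\UNIFORM$: show that the adjoint of the homotopy, as a function from $X$ to $\UNIFORM(\I,X)$ with its uniform topology, is continuous, so that the tensor--cotensor adjunction of the corollary to Proposition \ref{prop:fine.uniform.spaces} produces the required uniform homotopy out of the coreflected product $X\otimes\I$. That analysis is exactly the bookkeeping you flag as the main obstacle, and it is not dispatched by the fine-uniformity principle you cite.
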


There exists homeomorphisms, one for each $n$, between a topological $n$-simplex and the topological realization of a cubical set, that collectively induce a homeomorphism between an $\ell_p$-simplicial complex and the geometric realization of a cubical set (eg. \cite{kapulkin2019co}.)
Each $\ell_p$-simplex of dimention $n$, a topological $n$-simplex with the $\ell_p$-metric on barycentric coordinates [Example \ref{eg:metric.simplex}], is bi-$\lambda_n$-Lipschitz equivalent to the geometric realization of a cubical set for some $\lambda_n>0$.  
Thus a finite dimensional simplicial complex $X$ is bi-Lipschitz equivalent to the geometric realization of a cubical set.
The corollary below follows.  

\begin{cor}
  \label{cor:cofibrant.manifolds}
  Finite dimensional $\ell_2$-simplicial complexes are uniformly cofibrant.
\end{cor}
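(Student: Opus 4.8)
The plan is to deduce the corollary from Proposition~\ref{prop:cofibrant.uniform.spaces} together with the bi-Lipschitz comparison recorded in the discussion just above, namely that a finite dimensional $\ell_2$-simplicial complex $X$ is bi-Lipschitz equivalent to the geometric realization $\geometricrealization{C}$ of a suitable cubical set $C$. Finite-dimensionality is essential to this input: it bounds the dimensions of the simplices occurring in $X$, so the simplex-wise bi-$\lambda_n$-Lipschitz comparisons assemble into a single bi-$\lambda$-Lipschitz homeomorphism for $\lambda=\max_n\lambda_n$ over the finitely many relevant $n$; the $\ell_2$ hypothesis only serves to fix the Euclidean metric on barycentric coordinates, and $\ell_p$ would serve equally well.

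First I would observe that $X$, regarded as a uniform space, lies in $\UNIFORM$: being bi-Lipschitz equivalent to $\geometricrealization{C}$, it is homeomorphic to the coproduct, with components placed at infinite distance from one another, of the connected, locally connected metric spaces $\geometricrealization{C_i}$ indexed by the connected components $C_i$ of $C$, hence a separated uniform colimit of connected, locally connected metric spaces. Next I would promote the bi-Lipschitz equivalence $X\to\geometricrealization{C}$ to a $\UNIFORM$-isomorphism $X\cong\uniformrealization{C}$, using that a surjective bi-Lipschitz map is an isomorphism in $\UNIFORM$ and that $\uniformrealization{C}$ is componentwise the underlying uniform space of $\geometricrealization{C}$, uniform realization preserving coproducts [Corollary~\ref{cor:uniform.realization}]. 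A $\UNIFORM$-isomorphism is in particular a uniform homotopy equivalence, so $X\simeq\uniformrealization{C}$ and Proposition~\ref{prop:cofibrant.uniform.spaces} gives that $X$ is uniformly cofibrant.

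I do not expect a real obstacle inside the corollary itself: granted the bi-Lipschitz equivalence to $\geometricrealization{C}$, the argument is a short chain of invocations, the only fussy point being the verification that $X\in\UNIFORM$. The substantive work lies upstream, in the statement cited rather than proved here that an $\ell_p$-simplex is bi-$\lambda_n$-Lipschitz equivalent to a geometric realization of a cubical set; that requires exhibiting a cubical subdivision of the simplex whose piecewise-linear realization metric is uniformly comparable, with constant controlled by the dimension, to the $\ell_p$-metric on barycentric coordinates.
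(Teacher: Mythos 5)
Your proposal is correct and follows essentially the same route as the paper: the paragraph preceding the corollary establishes that a finite dimensional $\ell_p$-simplicial complex is bi-Lipschitz equivalent to the geometric realization of a cubical set (finite-dimensionality bounding the constants $\lambda_n$), and the corollary is then deduced by promoting this to a uniform homotopy equivalence with $\uniformrealization{C}$ and invoking Proposition~\ref{prop:cofibrant.uniform.spaces}. Your added verification that $X$ lies in $\UNIFORM$ and the explicit passage through the $\UNIFORM$-isomorphism are just the details the paper leaves implicit in ``The corollary below follows.''
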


%There exists a large class of Riemannian manifolds admitting bi-Lipschitz triangulations \cite[Theorem 1.1]{bowditch2020bilipschitz} and hence having cofibrant underlying uniform spaces. 

\subsubsection{Lipschitz}
A \textit{Lipschitz homotopy} is a classical homotopy between Lipschitz maps $X\ra Y$ that defines a Lipschitz map $X\times\I\ra Y$.  
A \textit{Lipschitz homotopy equivalence} is a Lipschitz map $f:X\ra Y$ for which there exists another Lipschitz map $g:Y\ra X$ such that there exist Lipschitz homotopies $gf\sim\id_{X}$ and $fg\sim\id_{Y}$.   
Lipschitz homotopies are uniform homotopies.  
Lipschitz homotopy equivalences are uniform homotopy equivalences.

\subsection{Comparisons}\label{subsec:homotopical.comparisons}
The goal of this section is to investigate the degree to which uniform realization refines the following equivalence between classical homotopy categories of cubical sets and spaces.
Write $\sing_{\BOX}$ for the right adjoint to topological realization $|-|$.    
The following theorem follows from a Quillen equivalence between $\CUBICAL\SETS$ with its test model structure and a model category of traditionally defined spaces and maps between them \cite{quillen2006homotopical}.  

\begin{thm}
  \label{thm:equivalence}
  The adjunction $|-|\dashv\sing_\BOX$ passes to an adjoint categorical equivalence
  $$h(\CUBICAL\SETS)\simeq h\TOP.$$
\end{thm}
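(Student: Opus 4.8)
The plan is to deduce the statement from the known Quillen equivalence between $\CUBICAL\SETS$ equipped with the test model structure of Theorem~\ref{thm:cubical.model.structure} and a standard Quillen-type model structure on topological spaces, and then to transport the resulting equivalence of homotopy categories across a comparison between that standard model category and the convenient category $\TOP$ fixed in this paper.

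First I would recall that the category $\BOX$ of cubes with connections is a (strict) test category \cite{maltsiniotis2009categorie}, so that by Cisinski's theory \cite{cisinskiprefaisceaux} the adjunction $|-|\dashv\sing_{\BOX}$ is a Quillen equivalence from the test model structure on $\CUBICAL\SETS$ to the Quillen model structure $\mathscr{T}'$ on a convenient category of topological spaces containing all CW complexes, whose weak equivalences are the weak homotopy equivalences and which is Quillen equivalent to the usual model category of \cite{quillen2006homotopical}. Since every cubical set is cofibrant in the test model structure, $|-|$ preserves all classical weak equivalences, hence descends directly to a functor $\mathrm{Ho}(\CUBICAL\SETS)\to\mathrm{Ho}(\mathscr{T}')$ that is its own total left derived functor; dually, since every object of $\mathscr{T}'$ is fibrant, $\sing_{\BOX}$ is its own total right derived functor, and it is homotopical — it carries weak homotopy equivalences to classical weak equivalences of cubical sets and $\sing_{\BOX}X$ is always a Kan complex. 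Therefore $|-|\dashv\sing_{\BOX}$ induces an adjoint equivalence $\mathrm{Ho}(\CUBICAL\SETS)\simeq\mathrm{Ho}(\mathscr{T}')$, and by Theorem~\ref{thm:cubical.model.structure} the source is exactly $h(\CUBICAL\SETS)$, the localization of $\CUBICAL\SETS$ at the classical weak equivalences in the sense of this paper.

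Next I would identify $\mathrm{Ho}(\mathscr{T}')$ with $h\TOP$ compatibly with realization. Standard CW approximation and Whitehead's theorem identify $\mathrm{Ho}(\mathscr{T}')$ with the homotopy category of CW complexes and homotopy classes of maps. On the other hand, as recorded just before the statement, $h\TOP$ is the localization of $\TOP$ at the classical weak equivalences, and is equivalent to the localization of the full subcategory of those $\TOP$-objects of the form $|C|$, whose morphisms are classical homotopy classes of maps; since every such $|C|$ is a CW complex and a classical weak equivalence of $\TOP$-objects is by definition a weak homotopy equivalence of underlying spaces, hence a homotopy equivalence of CW complexes by Whitehead, this exhibits $h\TOP$ too as the homotopy category of CW complexes and homotopy classes of maps. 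Both identifications are compatible with $|-|$ and $\sing_{\BOX}$ — the unit $C\to\sing_{\BOX}|C|$ and the counit $|\sing_{\BOX}X|\to X$ being classical weak equivalences of the respective kinds — so the adjoint equivalence of the previous paragraph transports to an adjoint categorical equivalence $h(\CUBICAL\SETS)\simeq h\TOP$ carried by $|-|\dashv\sing_{\BOX}$.

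The main obstacle is not the cubical--simplicial--topological comparison, which is classical, but the bookkeeping showing that the paper's convenient $\TOP$ and the standard model category $\mathscr{T}'$ share a homotopy category through realization. Concretely one must check that the paper's classical weak equivalences of spaces (tested by probes $\TOP(|C|,-)$) coincide with ordinary weak homotopy equivalences on $\TOP$-objects; that every $\TOP$-object is classically weakly equivalent to some realization $|C|$, so that the ad hoc localization defining $h\TOP$ really is the homotopy category of CW complexes; and that $\TOP$ is large enough — in particular that it contains all realizations $|\sing_{\BOX}X|$ — for the counit of $|-|\dashv\sing_{\BOX}$ to be formed and recognized as a classical weak equivalence inside it. Once these compatibilities are secured, the equivalence follows formally from the Quillen equivalence.
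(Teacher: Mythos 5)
Your proposal is correct and takes essentially the same route as the paper, which proves this theorem only by citing the Quillen equivalence between the test model structure on $\CUBICAL\SETS$ and a standard model category of spaces; your additional bookkeeping (identifying the paper's probe-defined classical weak equivalences with weak homotopy equivalences, and identifying $h\TOP$ with the homotopy category of CW complexes via the localization the paper constructs just before the statement) correctly fills in the details the paper leaves implicit.
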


The adjoint categorical equivalence probably lifts to a direct Quillen equivalence between the test model category $\CUBICAL\SETS$ and $\TOP$ equipped with a suitable model structure, although we will not need such an equivalence in the current paper.  

\subsubsection{Cubical approximations}
We show that various uniform maps admit cubical approximations.
To begin, we show that the natural isomorphism $\varphi:\uniformrealization{\sd\,-}\cong\uniformrealization{-}$ naturally admits natural cubical approximations in the following sense.  

\begin{lem}
  \label{lem:lipschitz.approximation}
  There exist $1$-Lipschitz homotopies
  $$\geometricrealization{\firstvertexmap_C}\sim\geometricrealization{\lastvertexmap_C}\sim\varphi_C:\geometricrealization{\sd\,C}\ra\geometricrealization{C}$$
  natural in cubical sets $C$.  
\end{lem}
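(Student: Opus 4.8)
The plan is to cut everything down to a single $1$-cube and then write homotopies by hand. By the naturality and monoidality of $\firstvertexmap$, $\lastvertexmap$, $\varphi$ and of geometric realization [Proposition \ref{prop:geometric.realization}], together with the cocontinuity of $\geometricrealization{\sd\,(-)}$ and the fact that $\I\times_{\METRIC}(-)$ preserves the pseudometric colimits through which realizations are built [Lemma \ref{lem:metric.product.cocontinuity}], it is enough to produce the two homotopies naturally on representables $\BOX\boxobj{n}$. Since $\BOX\boxobj{n}=\BOX[1]^{\otimes n}$, under these identifications $\geometricrealization{\firstvertexmap_{\BOX\boxobj{n}}}$, $\geometricrealization{\lastvertexmap_{\BOX\boxobj{n}}}$ and $\varphi_{\BOX\boxobj{n}}$ become $n$-fold Cartesian products of the three maps for $\BOX[1]$; as a Cartesian product of $1$-Lipschitz homotopies is again $1$-Lipschitz for the supremum metric, it suffices to treat $C=\BOX[1]$.

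For $C=\BOX[1]$ the realization $\geometricrealization{\BOX[1]}$ is the unit interval and $\geometricrealization{\sd\,\BOX[1]}$ is, via $\varphi_{\BOX[1]}$, an arc of length $2$ carrying a coordinate $u$ with $\varphi_{\BOX[1]}(u)=u/2$; a direct inspection of the three vertices of $\sd\,\BOX[1]$ together with Lemma \ref{lem:dilation.formula} identifies $\geometricrealization{\firstvertexmap_{\BOX[1]}}\colon u\mapsto\max(u-1,0)$ and $\geometricrealization{\lastvertexmap_{\BOX[1]}}\colon u\mapsto\min(u,1)$ as piecewise-linear functions $[0,2]\ra[0,1]$. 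I would then exhibit the homotopies as minima and maxima of affine functions whose gradients have $\ell^1$-norm at most $1$ --- e.g. $(u,t)\mapsto\min\bigl(u,\max(u-1,t)\bigr)$ joins $\geometricrealization{\firstvertexmap_{\BOX[1]}}$ to $\geometricrealization{\lastvertexmap_{\BOX[1]}}$, and $(u,t)\mapsto\min\bigl(u,\,1,\,\tfrac{u+t}{2}\bigr)$ joins $\geometricrealization{\lastvertexmap_{\BOX[1]}}$ to $\varphi_{\BOX[1]}$. Each of these is $1$-Lipschitz for the supremum metric on $[0,2]\times\I$, because an affine function on $[0,2]\times\I$ whose gradient has $\ell^1$-norm at most $1$ is $1$-Lipschitz for that metric and minima and maxima of $1$-Lipschitz functions are $1$-Lipschitz; taking coordinatewise products recovers the homotopies for $\BOX\boxobj{n}$, and extending along $C=\colim_{\BOX\boxobj{n}\to C}\BOX\boxobj{n}$ delivers the result natural in $C$.

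The real difficulty is the insistence on Lipschitz constant exactly $1$. The obvious straight-line homotopy $(1-t)\geometricrealization{\firstvertexmap_{\BOX[1]}}+t\geometricrealization{\lastvertexmap_{\BOX[1]}}$ is \emph{not} $1$-Lipschitz: near a subdivision point it has simultaneously a positive $u$-slope and a positive $t$-slope, so the $\ell^1$-norm of its gradient exceeds $1$. The point of using minima and maxima of coordinate-like affine functions is precisely that these operations do not average gradients, and so keep the constant pinned at $1$; locating such explicit homotopies is the crux of the argument. A secondary, purely bookkeeping obstacle is checking that the homotopies defined on representables assemble into a natural transformation of functors on $\BOX$ --- this reduces to commutativity against the generating morphisms $\delta_{\pm}$, $[1]\ra[0]$, $\vee_{\boxobj{2}}$, and follows from the corresponding naturality of $\firstvertexmap$, $\lastvertexmap$, $\varphi$ and the piecewise-linear structure of realizations of representables.
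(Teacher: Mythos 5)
Your proposal is correct, and its global skeleton (reduce by naturality, monoidality and cocontinuity to representables, then to $\BOX[1]$, then assemble over $C=\colim_{\BOX\boxobj{n}\to C}\BOX\boxobj{n}$) is the same as the paper's. Where you genuinely diverge is at the crux: the paper's proof simply asserts that \emph{linear interpolation} gives the desired $1$-Lipschitz homotopies on representables ``because the metric on hypercubes $\I^n$ is defined in terms of a norm,'' whereas you replace the interpolation by the piecewise-affine formulas $\min(u,\max(u-1,t))$ and $\min(u,1,\tfrac{u+t}{2})$. Your criticism of the straight-line homotopy is well taken: with the supremum metric on $\geometricrealization{\sd\,\BOX[1]}\times\I$, the interpolation $(1-t)\max(u-1,0)+t\min(u,1)$ sends $(1,0)$ and $(1+\delta,\delta)$ to points at distance $2\delta-\delta^2$, so it is $2$-Lipschitz but not $1$-Lipschitz; the paper's one-line argument really only yields the constant $1$ in each variable separately (equivalently, for the $\ell^1$ product metric), and the same looseness about constants appears again in the proof of Lemma \ref{lem:uniform.approximation}. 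In the paper's defense, every downstream use of this lemma (Lemmas \ref{lem:uniform.approximation}, \ref{lem:adjoint.approximation}, \ref{lem:extensions}, Theorem \ref{thm:enriched.equivalence}) needs only \emph{a} Lipschitz homotopy, so the factor of $2$ is harmless there; but if the constant $1$ in the statement is to be taken literally, your min/max construction is the right fix, since duals of $\ell^1$-bounded gradients and lattice operations preserve the constant exactly. The only point you leave as ``bookkeeping'' that deserves an explicit check is naturality against the connection $\vee_{\boxobj{2}}$, whose realization is $\max$ in these coordinates: this holds precisely because your formulas are monotone nondecreasing in $u$ for each fixed $t$, so they commute with binary maxima; together with the constancy at $u=0,2$ (naturality against $\delta_\pm$) this completes the naturality verification.
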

\begin{proof}
  Linear interpolations define the desired $1$-Lipschitz homotopies in the case $C$ representable because the metric on hypercubes $\I^n$ is defined in terms of a norm and hence in the general case by naturality.
\end{proof}

\begin{lem}
  \label{lem:uniform.approximation}
  Consider the following solid commutative diagrams
  \begin{equation}
    \label{eqn:approx}
    \xymatrix{
	**[l]\sd^{k}A\ar[rr]^{({\gamma}^{\mins(k-4)}(\firstvertexmap\lastvertexmap)^2)_A}\ar[d]_{\iota} & & **[r]A\ar[d]^{\alpha}\\
      **[l]\sd^kB\ar@{.>}[rr]_{\psi} & & C
    }\quad
    \xymatrix{
	  **[l]\uniformrealization{A}\ar[d]_{\uniformrealization{\iota}}\ar[r]^{\uniformrealization{\alpha}} & **[r]\uniformrealization{C}\\
       **[l]\uniformrealization{B}\ar[ur]_f
    }
  \end{equation}
  of cubical sets and uniform spaces with $B$ a coproduct of finitely many connected cubical sets.  
  For all $k\gg 0$, there exists a cubical function $\psi_f$ such that the left triangle commutes and uniform homotopy $h_f:\uniformrealization{\psi_f}\sim f\uniformrealization{\gamma^{\mins(k-4)}(\firstvertexmap\lastvertexmap)^2)_B}$, both natural in $(\uniformrealization{-}/\uniformrealization{-})$-objects $f$.  
  If $f$ defines a Lipschitz map $\geometricrealization{B}\ra\geometricrealization{C}$, then $h_f$ can be taken to be a Lipschitz homotopy from $\geometricrealization{\psi_f}$ to $\geometricrealization{(\hat\gamma^{+(k-4)}(\adjointfirstvertexmap\adjointlastvertexmap)^2)_C}f$.
\end{lem}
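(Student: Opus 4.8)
The plan is to reduce everything to the local, representable case using the star–flower lemmas, and then build the approximating cubical function vertex-by-vertex together with an explicit straight-line homotopy. First I would invoke Lemma~\ref{lem:uniform.characterization}: since $f\uniformrealization{\iota}=\uniformrealization{\alpha}$ and $f$ is uniform, for every $b>0$ there is an $a>0$ so that $\varphi_C^{-b}f\varphi_B^{a}\colon|\sd^{a}B|\to|\sd^{b}C|$ carries each closed cell into the open star of a vertex. I would take $k$ large enough (depending only on $f$, hence natural in the obvious sense) that, after pre- and post-composing with enough subdivisions, the composite $\geometricrealization{(\gamma^{\mins(k-4)}(\firstvertexmap\lastvertexmap)^2)_A}$-twisted version of $f$ on $\sd^{k}B$ sends the closed star of every vertex $v$ into a single open star of $\sd^{4}C$; the extra ``$-4$'' and the double $(\firstvertexmap\lastvertexmap)^2$ are exactly what Lemmas~\ref{lem:collapse.star} and~\ref{lem:star.flower} are designed to consume, guaranteeing that quadruple subdivision of each cell factors through a representable, on which realizations are honestly linear.

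Next I would define $\psi_f$ on vertices: for $v\in(\sd^{k}B)_0$, the image $f(|v|)$ lies in the open star of a unique vertex of $\sd^{4}C$, and pulling back along $\gamma^{\mins}$-type maps picks out a well-defined vertex of $C$; this forces the value, and the commutativity of the left triangle in~(\ref{eqn:approx}) on vertices over $A$ is then checked by tracing through the defining formula $(\gamma^{\mins(k-4)}(\firstvertexmap\lastvertexmap)^2)_A$, using Lemma~\ref{lem:dilation.formula} to identify the relevant vertices of $\support_{|\sd-|}$. Because $\gamma^{\mins},\gamma^{\pls}$ and realization are monoidal and everything is natural, it suffices to verify the extension of this vertex assignment to a genuine cubical function on each representable cell of $\sd^{k}B$; there the star–flower lemma gives a representable $A\cap\sd^2B$ through which things factor, so the vertex map extends uniquely to a cubical map, and these glue to $\psi_f\colon\sd^{k}B\to C$. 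Naturality in $f$ is immediate since every choice was forced.

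For the homotopy $h_f$, I would use straight-line interpolation in the realizations of representables: on each closed cell $\tau$ of $|\sd^{k}B|$, both $\uniformrealization{\psi_f}|_\tau$ and $(f\uniformrealization{(\gamma^{\mins(k-4)}(\firstvertexmap\lastvertexmap)^2)_B})|_\tau$ land in a common open star, whose closure sits inside the realization of a single representable cubical set (star–flower again), and that realization is a convex subset of some $\I^{m}$; linear interpolation there is well-defined, continuous, agrees on overlaps, and is uniform because the cells are uniformly locally finite and the interpolation is $1$-Lipschitz on each cell — this is precisely the mechanism flagged before Lemma~\ref{lem:collapse.star}. Naturality in $f$ again follows from the canonical nature of all choices. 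Finally, if $f$ is Lipschitz on $\geometricrealization{B}$, the same linear interpolation is Lipschitz cell-by-cell with a uniform constant, hence Lipschitz globally, and tracking the adjoint maps $\adjointfirstvertexmap,\adjointlastvertexmap$ through the adjunction $\sd\dashv\ex$ identifies the target endpoint as $\geometricrealization{(\hat\gamma^{+(k-4)}(\adjointfirstvertexmap\adjointlastvertexmap)^2)_C}f$ as claimed.

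The main obstacle is bookkeeping the exponents and the interplay between the subdivision shifts on the source and target: one must choose $k$ uniformly (independent of $f$, or at least natural) so that \emph{simultaneously} the star–flower factorization applies on $\sd^{k}B$, the collapse lemma applies on $\sd^{2}(\,\cdot\,)$ after the $(\firstvertexmap\lastvertexmap)^2$ twist, and the homotopy stays inside representable cells at every cell of $\sd^{k}B$; verifying that the naive bound coming from Lemma~\ref{lem:uniform.characterization} can be upgraded to ``map closed \emph{stars} (not just cells) into single open stars'' after the extra subdivisions is the delicate point, and it is exactly why the statement carries the otherwise mysterious $(k-4)$ and the squared vertex maps.
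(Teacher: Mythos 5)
Your overall strategy---localize via Lemmas \ref{lem:uniform.characterization}, \ref{lem:collapse.star}, and \ref{lem:star.flower}, define $\psi_f$ on vertices by carriers, and use straight-line interpolation inside realizations of representables for $h_f$---is the paper's strategy, and your closing remarks correctly identify where the exponents come from. But there is a genuine gap in the construction of $\psi_f$. First, the vertex assignment is not well defined as you state it: a point does not lie in the open star of a \emph{unique} vertex; it lies in the open star of every vertex of its carrier cell. The paper instead assigns to $v$ the minimum vertex $\support_{|-|}(f_\sigma|v_*|,R(\sigma))_0(0,\dots,0)$ of the carrier of the image of $|v_*|$ in the representable $R(\sigma)$ produced by the star--flower lemma.

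Second, and more seriously, you assert that this vertex assignment ``extends uniquely to a cubical map'' on each cell. A function on the vertices of $\BOX\boxobj{n}$ does not in general extend to a cubical function into $C$, and the raw carrier-minimum assignment $\phi_{f,\sigma}$ need not even be order-preserving. The paper's resolution is twofold: (i) pass to one further subdivision and replace $\phi_{f,\sigma}$ by $\phi'_{f,\sigma}(v)=\min\bigl(\phi_{f,\sigma}((\firstvertexmap_{\BOX\boxobj{n}})_0(v)),\phi_{f,\sigma}((\lastvertexmap_{\BOX\boxobj{n}})_0(v))\bigr)$, which Lemma \ref{lem:dilation.order} shows is a \emph{monotone} map $\Tau_1\sd\BOX\boxobj{n}\ra\Tau_1R(\sigma)$; and (ii) take the adjoint cubical function into the nerve $\cnerve\Tau_1R(\sigma)$, so that the glued map $\psi_f$ lands in the sharp replacement $C^\sharp$ rather than in $C$ itself (it is only for maps into a nerve of a poset that a vertex function determines at most one cubical extension). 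Without the monotonicity step your $\psi_f$ need not exist, and without routing through $C^\sharp$ the uniqueness-of-extension claim fails; this is the missing idea. Your homotopy argument (convex interpolation, uniform because the time-slices inherit the uniform modulus of $f$ and each $h(x,-)$ is $1$-Lipschitz) matches the paper's use of Lemma \ref{lem:sharp.local.convexity} and is fine once the target of $\psi_f$ is corrected.
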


While several techniques in the proof mimic techniques in proofs of cubical approximation for directed topology \cite{krishnan2015cubical}, there are some differences.  
Firstly, uniform continuity makes it possible to drop finiteness constraints on the domain.  
Secondly, the construction of a cubical approximation preserving edge orientations follows from the properties of the subdivision operator instead of the directedness of a directed map.

\begin{proof}
  Take $k\gg 0$.
  Let $\sigma$ denote a $(\BOX[-]/\sd^kB)$-object.  
  For brevity, let 
  $$\gamma=\firstvertexmap\lastvertexmap:\sd^2\ra\id_{\CUBICAL\SETS}.$$

  For each $\epsilon>0$, there exists $\delta_\epsilon>0$ such that $|\gamma^2_{C}|(\varphi^{-4}_Cf\varphi^k_B)$ maps $\delta_\epsilon$-close points to $\epsilon$-close points by $f$ and hence $|\gamma^2_{C}|(\varphi^{-4}_Cf\varphi^k_B)$ uniform.
  Also $\varphi^{-4}_Cf\varphi^k_B$ maps each closed star of a vertex $v$ into an open star of a vertex by $f$ uniform [Lemma \ref{lem:uniform.characterization}].
  Therefore $|\gamma_{\sd^2C}|(\varphi^{-4}_Cf\varphi^k_B)$ maps each closed cell of $|\sd^kB|$ into a closed cell $A(\sigma)$ of $|\sd^2C|$ [Lemma \ref{lem:collapse.star}].
  We can take $A(\sigma)$ to be the minimal possible choice of a subpresheaf of $\sd^2C$, and hence natural in $\sigma$, because subpresheaves of $C$ that are images of representables are closed under non-empty intersection.  
  There exists unique minimal subpresheaf $B(\sigma)\subset C$, for each $\sigma$, for which $B(\sigma)$ is the image of a representable and $R(\sigma)=A(\sigma)\cap\sd^2B(\sigma)\neq\varnothing$ [Lemma \ref{lem:star.flower}].  
  There exists a unique retraction $\rho_\sigma:A(\sigma)\ra R(\sigma)$ and $R(\sigma)$ is isomorphic to a representable [Lemma \ref{lem:star.flower}].
  Take $f_\sigma$ to be the composite, natural in $\sigma$, of the corestriction of $|\gamma_{\sd^2C}|\varphi^{-4}_Cf\varphi^k_B\geometricrealization{\sigma}$ to $\geometricrealization{A(\sigma)}$ followed by $\geometricrealization{\rho_\sigma}$.
  Then $f_\sigma$ maps $\delta_\epsilon$-close points to $\epsilon$-close points by $\geometricrealization{\sigma_*}$ $1$-Lipschitz and the commutativity of the following diagram [Lemma \ref{lem:star.flower}]:
  \begin{equation*}
	  \begin{tikzcd}
		  {|\BOX\boxobj{n}|}\ar{rr}[above]{f_\sigma}\ar{d}[left]{\sigma} & &  {|R(\sigma)|}\ar{d}[right]{|\gamma_C(R(\sigma)\ira\sd^2C)|}\\
		  {|\sd^kB|}\ar{rr}[below]{|\gamma^2_{C}|(\varphi^{-4}_Cf\varphi^k_B)} & & {|C|}
	  \end{tikzcd}
  \end{equation*}

  Define a function $\phi_{f,\sigma}:(\BOX\boxobj{n})_0\ra\Tau_1R(\sigma)$, natural in $\sigma$, by the rule
  $$\phi_{f,\sigma}(v)=\support_{|-|}(f_\sigma|v_*|,R(\sigma))_0(0,\cdots,0).$$

  The function $\phi'_{f,\sigma}:(\sd\BOX\boxobj{n})_0\ra\Tau_1R(\sigma)$ defined by the rule
  $$\phi'_{f,\sigma}(v)=\min\left(\phi_{f,\sigma}((\firstvertexmap_{\BOX\boxobj{n}})_0(v)),\phi_{f,\sigma}((\lastvertexmap_{\BOX\boxobj{n}})_0(v))\right)$$
  is a monotone function $\Tau_1\sd\BOX\boxobj{n}\ra\boxobj{n(\sigma)}$ [Lemma \ref{lem:dilation.order}].
  Therefore $\phi'_{f,\sigma}$, natural in $\sigma$, admits an adjoint, natural in $\sigma$, of the form
  $$\psi_{f,\sigma}:\sd\BOX\boxobj{n}\ra\cnerve\Tau_1R(\sigma).$$

  There exists a homotopy $h_\sigma:|\BOX\boxobj{n(\sigma)}\ira\cnerve\boxobj{n(\sigma)}|f_\sigma\sim|\psi_{f,\sigma}|$ natural in $\sigma$ [Lemma \ref{lem:sharp.local.convexity}]. 
  For a fixed $t\in\I$, $h_{\sigma}(-,t)$ is a convex linear combination of $f_\sigma$ and a $1$-Lipschitz map and therefore maps $\min(\delta_\epsilon,\epsilon)$-close points to $\epsilon$-close points.  
  For a fixed $x\in|\BOX\boxobj{n}|=\I^n$, $h_{\sigma}(x,-)$ is $\|x\|_\infty$-Lipschitz and therefore $1$-Lipschitz.
  Therefore $h_\sigma$ maps $\min(\delta_\epsilon,\epsilon)$-close points to $\epsilon$-close points.  

  The $\psi_{f,\sigma}$'s induce a cubical function $\psi_f:\sd^kB\ra C^{\sharp}$.
  The $h_\sigma$'s induce a homotopy $h_f:|\gamma^2_{C}|(\varphi^{-4}_Cf\varphi^{k+1}_B)\sim|\psi_f|$ natural in $f$. 
  The function $h_f$ maps $\min(\delta_\epsilon,\epsilon)$-close points to $\epsilon$-close points because $\geometricrealization{\sigma}$ is $1$-Lipschitz.  
  Thus $h_f$ is uniform.  
  If $f$ is additionally Lipschitz, then $\delta_\epsilon$ can be taken to be linear in $\epsilon$; in this case $\min(\delta_\epsilon,\epsilon)$ is linear in $\epsilon$ and therefore $h_f$ is Lipschitz.

  It remains to show show that the left square commutes.  
  It suffices to take the case $A$ representable, $\iota=\id_{A}$ and hence $f=|\alpha|=|\id_{A}|$ by naturality.
  It therefore suffices to show that the square of vertex functions associated to the left square in (\ref{eqn:approx}) commutes, because cubical functions to representables $A=C$ are determined by their associated vertex functions.
  For each vertex $v$ in $\sd^{k}A$,
  \begin{align}
	     (\psi_{f})_0(v) 
	  &= \phi'_{f,\support_{\sd}(v_*,A)}(v)\\
	  &= \phi'_{|\alpha|,\support_{\sd}(v_*,A)}(v)\\
	  &= \support_{|-|}((|\gamma^2_C|\varphi^{-4}_C|\alpha|\varphi^{k}_A)(v),C)_0(0,\ldots,0)\\
	  &= \support_{|-|}((|\gamma^2_C|\varphi^{k-4}_{\sd^2C})(v_*),C)_0(0,\ldots,0)\\
	  &= (\gamma^2_C)_0\support_{|-|}((\varphi^{k-4}_{\sd^2C})(v_*),\sd^2C)_0(0,\ldots,0)\\
	  &=\label{eqn:first.vertex.map} (\gamma^2_C(\gamma^{\mins(k-3)}_{\sd^2C})_0(v)\\ 
	  &= ((\gamma^{\mins(k-4)}\gamma^2)_C)_0(v)\\ 
	  &= (\alpha\gamma^{\mins(k-4)})_{A}\gamma^2_A)_0(v)
  \end{align}
  the equality (\ref{eqn:first.vertex.map}) by Lemma \ref{lem:dilation.formula}.
\end{proof}

We can show that the adjoint $B\ra\ex^kC$ to a cubical approximation $\sd^kB\ra C$ of a uniform map $|B|\ra|C|$ is also a cubical approximation as follows.  

\begin{lem}
  \label{lem:adjoint.approximation}
  Consider the following solid commutative diagrams
  \begin{equation}
    \label{eqn:approx}
    \xymatrix{
	**[l]A\ar[r]^{\alpha}\ar[d]_{\iota} & **[r]C\ar[d]^{(\hat{\gamma}^{+(k-4)}(\adjointfirstvertexmap\adjointlastvertexmap)^2)_C}\\
      **[l]B\ar@{.>}[r]_{\psi_f} & **[r]\ex^kC
    }\quad
    \xymatrix{
	  **[l]\uniformrealization{A}\ar[d]_{\uniformrealization{\iota}}\ar[r]^{\uniformrealization{\alpha}} & **[r]\uniformrealization{C}\\
       **[l]\uniformrealization{B}\ar[ur]_f
    }
  \end{equation}
  of cubical sets and uniform spaces, with $B$ a coproduct of fintely many connected cubical sets.  
  For all $k\gg 0$, there exists a cubical function $\psi_f$ such that the left triangle commutes and uniform homotopy $h_f:\uniformrealization{\psi_f}\sim\uniformrealization{\hat\gamma^{+(k-4)}(\adjointfirstvertexmap\adjointlastvertexmap)^2)_C} f$.
  The homotopy $h_f$ can be taken to be relative $|\iota|$ if $\iota$ is a monoidal product of an identity with a monic cubical function between finite cubical sets.  
  If $f$ defines a Lipschitz map $\geometricrealization{B}\ra\geometricrealization{C}$, then $h_f$ can be taken to be a Lipschitz homotopy from $\geometricrealization{\psi_f}$ to $\geometricrealization{(\hat\gamma^{+(k-4)}(\adjointfirstvertexmap\adjointlastvertexmap)^2)_C}f$.
\end{lem}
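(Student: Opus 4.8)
The plan is to recognise the statement as the adjoint transpose of Lemma~\ref{lem:uniform.approximation} along the $k$-fold monoidal adjunction $\sd\dashv\ex$. First I would run the construction of Lemma~\ref{lem:uniform.approximation} on the present data $\alpha,\iota,f$, but using the last-vertex map $\lastvertexmap$ in place of $\firstvertexmap$ in its outermost $k-4$ subdivision stages (the proof accommodates this verbatim: replace $(0,\dots,0)$ by $(1,\dots,1)$ and $\min$ by $\max$, appealing to Lemma~\ref{lem:dilation.order} symmetrically). This yields, for all $k\gg0$, a cubical function $\tilde\psi\colon\sd^{k}B\ra C$ with $\tilde\psi\circ\sd^{k}\iota=\alpha\circ(\gamma^{+(k-4)}(\gamma^{-}\gamma^{+})^{2})_{A}$ together with a uniform homotopy $\tilde h_{f}\colon\uniformrealization{\tilde\psi}\sim f\circ\uniformrealization{(\gamma^{+(k-4)}(\gamma^{-}\gamma^{+})^{2})_{B}}$, natural in $f$ and Lipschitz when $f$ is. I would then define $\psi_{f}\colon B\ra\ex^{k}C$ to be the adjoint transpose $\ex^{k}\tilde\psi\circ\unit_{B}$, with $\unit\colon\id_{\CUBICAL\SETS}\ra\ex^{k}\sd^{k}$ the unit; its naturality in $f$ is inherited.

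For the left square I would transpose the desired identity $\psi_{f}\circ\iota=(\hat\gamma^{+(k-4)}(\hat\gamma^{-}\hat\gamma^{+})^{2})_{C}\circ\alpha$ across $\sd^{k}\dashv\ex^{k}$. Since $\counit_{C}\circ\sd^{k}\psi_{f}=\tilde\psi$ by the triangle identities, and since the component-wise mate of $\gamma^{\pm}$ under $\sd\dashv\ex$ is $\hat\gamma^{\pm}$ with mates of composites composing, the transposed identity reads $\tilde\psi\circ\sd^{k}\iota=(\gamma^{+(k-4)}(\gamma^{-}\gamma^{+})^{2})_{C}\circ\sd^{k}\alpha$, which by naturality of the vertex maps is precisely the square produced in the previous paragraph. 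As cubical functions into $C$ are determined on representables and all the data are natural, the check reduces to the representable case, where it is the vertex computation concluding the proof of Lemma~\ref{lem:uniform.approximation} (Lemmas~\ref{lem:dilation.order},~\ref{lem:dilation.formula}).

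The homotopy $h_{f}$ is the crux. Naturality of $\varphi$ together with $\tilde\psi=\counit_{C}\circ\sd^{k}\psi_{f}$ gives $\uniformrealization{\tilde\psi}=q_{C}\circ\uniformrealization{\psi_{f}}\circ\varphi^{k}_{B}$ for the natural deflation $q_{C}:=\uniformrealization{\counit_{C}}\circ(\varphi^{k}_{\ex^{k}C})^{-1}\colon\uniformrealization{\ex^{k}C}\ra\uniformrealization{C}$, and likewise $q_{C}\circ\uniformrealization{(\hat\gamma^{+(k-4)}(\hat\gamma^{-}\hat\gamma^{+})^{2})_{C}}=\uniformrealization{(\gamma^{+(k-4)}(\gamma^{-}\gamma^{+})^{2})_{C}}\circ(\varphi^{k}_{C})^{-1}$, which by iterating the Lipschitz homotopies of Lemma~\ref{lem:lipschitz.approximation} is Lipschitz-homotopic to $\id_{\uniformrealization{C}}$. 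Combining $\tilde h_{f}$ with Lemma~\ref{lem:lipschitz.approximation} gives a uniform (Lipschitz if $f$ is) homotopy $q_{C}\circ\uniformrealization{\psi_{f}}\sim f$; but one cannot cancel $q_{C}$, since the uniform homotopy type of $\uniformrealization{\ex^{k}C}$ need not coincide with that of $\uniformrealization{C}$, so $q_{C}$ is in general not a uniform homotopy equivalence. Instead I would build $h_{f}$ directly by transporting the cell-wise homotopies $h_{\sigma}$ from the proof of Lemma~\ref{lem:uniform.approximation} through the adjunction: on the image of each $n$-cube $b\colon\BOX\boxobj{n}\ra B$, $\psi_{f}$ restricts to the cubical function $\sd^{k}\BOX\boxobj{n}\ra C$ assembled from the $\psi_{f,\sigma}$'s with $\sigma$ refining $\sd^{k}b$, and the corresponding $h_{\sigma}$'s --- each a convex combination of a uniform map with $1$-Lipschitz maps, hence uniform (Lipschitz when $f$ is) --- glue naturally in $b$, and hence in $B$, to the required homotopy $\uniformrealization{\psi_{f}}\sim\uniformrealization{(\hat\gamma^{+(k-4)}(\hat\gamma^{-}\hat\gamma^{+})^{2})_{C}}\circ f$.

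For the relative refinement, if $\iota=\id_{D}\otimes(m\colon X\ira Y)$ with $m$ a monomorphism of finite cubical sets, then $\uniformrealization{\iota}$ is the inclusion of a subcomplex of a finite CW complex whose standard mapping-cylinder retraction is $1$-Lipschitz, so $\uniformrealization{\iota}$ is a uniform cofibration; since the left square commutes on the nose, $\uniformrealization{\psi_{f}}$ and $\uniformrealization{(\hat\gamma^{+(k-4)}(\hat\gamma^{-}\hat\gamma^{+})^{2})_{C}}\circ f$ already agree along $\uniformrealization{\iota}$, so the uniform homotopy extension property lets me modify $h_{f}$ to be relative $\uniformrealization{\iota}$. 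The Lipschitz case of $h_{f}$ is tracked through every step above, using that all instances of $\varphi$ are bi-Lipschitz and all realizations of cubical functions are $1$-Lipschitz. I expect the main obstacle to be the homotopy: because $\ex$ fails to commute with uniform realization even up to uniform homotopy, $h_{f}$ must be constructed by hand from the cell-by-cell data of Lemma~\ref{lem:uniform.approximation}, with Lemma~\ref{lem:lipschitz.approximation} absorbing the first-versus-last-vertex bookkeeping on the outer $k-4$ subdivisions.
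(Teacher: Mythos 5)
Your overall architecture is the paper's: run Lemma \ref{lem:uniform.approximation} to get $\tilde\psi\colon\sd^kB\ra C$ with its homotopy, set $\psi_f=\ex^k\tilde\psi\circ\eta_B$, verify the left square by transposing across $\sd^k\dashv\ex^k$ (your reduction to $\tilde\psi\circ\sd^k\iota=\alpha\gamma_A$ is exactly the paper's computation), and get the relative refinement from $\uniformrealization{\iota}$ being a uniform Hurewicz cofibration. The one genuine gap is in the construction of $h_f$. You correctly diagnose that the counit route is blocked --- $q_C=\uniformrealization{\epsilon_C}\varphi^{-k}_{\ex^kC}$ is not a uniform homotopy equivalence for general $C$ (Lemma \ref{lem:extensions} only supplies this for sharp replacements) --- but your replacement, ``transport the cell-wise homotopies $h_\sigma$ through the adjunction,'' is not an argument. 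The $h_\sigma$ are homotopies \emph{valued in} $\geometricrealization{C}$ with domains the cells of $\geometricrealization{\sd^kB}$; you never say how they become a homotopy valued in $\geometricrealization{\ex^kC}$ with domain $\geometricrealization{B}$, and ``glue naturally in $b$'' does not address either the change of codomain or the reparametrization of the domain.

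The fix uses only ingredients you already have, but on the \emph{unit} side rather than the counit side. Post-compose the homotopy $\uniformrealization{\tilde\psi}\sim f\,\uniformrealization{\gamma_B}$ with $\uniformrealization{\hat\gamma_C}$; naturality of $\hat\gamma$ gives $\hat\gamma_C\tilde\psi=\ex^k(\tilde\psi)\,\hat\gamma_{\sd^kB}$, and the triangle-type identity $\hat\gamma_{\sd^kB}=\eta_B\gamma_B$ rewrites this as $\psi_f\,\gamma_B$. So one obtains $\uniformrealization{\psi_f}\,\uniformrealization{\gamma_B}\sim\uniformrealization{\hat\gamma_C}f\,\uniformrealization{\gamma_B}$, and now the cancellation happens on the \emph{domain}, where it is legitimate: $\geometricrealization{\gamma_B}$ is Lipschitz-homotopic to the bi-Lipschitz homeomorphism $\varphi^k_B$ by Lemma \ref{lem:lipschitz.approximation}, hence is a Lipschitz homotopy equivalence, and precomposing with its homotopy inverse yields $\uniformrealization{\psi_f}\sim\uniformrealization{\hat\gamma_C}f$, Lipschitz when $f$ is. Your first-versus-last-vertex bookkeeping and your treatment of the relative case are fine.
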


We will use the formal fact that in a $\TOP$-enriched category $\homotopicalcat{1}$ tensored and cotensored over $\TOP$, a homotopy $x\otimes\I\ra y$ between maps $\alpha,\beta:x\ra y$ for which $\alpha\iota=\beta\iota$ can be taken to be relative $\iota$ if $\iota$ is an h-cofibration.   

\begin{proof}
  Fix $k\gg 0$.  
  Let $\eta$ denote the unit of $\sd^k\dashv\ex^k$.  
  Let
  $$\gamma=\gamma^{\mins (k-4)}(\firstvertexmap\lastvertexmap)^2\quad\hat\gamma=\hat\gamma^{\mins (k-4)}(\adjointfirstvertexmap\adjointlastvertexmap)^2.$$
  There exists a cubical function $\psi'_f$ making the top parallelogram in
\begin{equation*}
	\begin{tikzcd}
		{\geometricrealization{B}}
		      \ar{rr}[above]{f} 
			  \ar{dd}[left]{\geometricrealization{\eta_B}}
		& 
		& {\geometricrealization{C}}
		      \ar[dr,equals]
		\\
		& {\geometricrealization{\sd^kB}}
		\ar{ul}[description]{\geometricrealization{\gamma_B}}
		  \ar[dotted]{rr}[description]{\geometricrealization{\psi'_f}} 
			\ar{dl}[description]{\hat\gamma_{\sd^kB}}
		& 
		& {\geometricrealization{C}}
		  \ar{dl}[description]{\geometricrealization{\hat\gamma_C}}\\
		{\geometricrealization{\ex^k\sd^kB}}
		  \ar{rr}[below]{\geometricrealization{\ex^k\psi'_f}}
		&
		& {\geometricrealization{\ex^kC}}
	\end{tikzcd}
\end{equation*}
commute up to uniform homotopy, and in fact Lipschitz homotopy if $f$ is Lipschitz [Lemma \ref{lem:uniform.approximation}].
  The left triangle and bottom parallelogram commute by naturality.
  Moreover, the arrow $\geometricrealization{\gamma_B}$ admits an inverse up to Lipschitz homotopy [Lemma \ref{lem:lipschitz.approximation}].  
  Let $\psi_f=(\ex^k\psi'_f)\eta_B$.
  Then there exists a uniform homotopy of the following form, Lipschitz if $f$ is Lipschitz: $$\geometricrealization{\hat\gamma_C}f\sim\geometricrealization{\psi_f}.$$
 
  The cubical function $\psi'_f$ can be chosen so that the following holds [Lemma \ref{lem:uniform.approximation}]:
  $$\psi'_f(\sd^k\iota)=\alpha\gamma_A$$ .  
  The left square therefore commutes because
  $$\psi_f\iota=(\ex^k\psi'_f)\eta_B\iota=(\ex^k(\alpha\gamma_A))\eta_A=\hat\gamma_C\alpha.$$

  Suppose $\iota$ is a monoidal product of an identity with a monic cubical function between finite cubical sets. 
  The uniform map $\uniformrealization{\iota}$, a Cartesian monoidal product of an identity with a Hurewicz cofibration between compact Hausdorff spaces, is a uniform Hurewicz cofibration.
  Therefore the uniform homotopy $h_f$ can be taken to be relative $|\iota|$.   
\end{proof}

\subsubsection{Anodyne Extensions}
We next show that natural inclusions of the form
$$C\ira\ex^kC^{\sharp}$$
are Lipschitz homotopy equivalences after passing to geometric realizations and therefore uniform homotopy equivalences after passing to uniform realizations.  

\begin{lem}
  \label{lem:sharp.replacement}
  For each cubical set $C$, the $1$-Lipschitz map 
  $$\geometricrealization{C\ira C^{\sharp}}:\geometricrealization{C}\ra\geometricrealization{C^{\sharp}}$$
  is a Lipschitz homotopy equivalence.
\end{lem}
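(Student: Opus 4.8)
The plan is to exhibit the $1$-Lipschitz retraction $r_C\colon\geometricrealization{C^{\sharp}}\ra\geometricrealization{C}$ supplied by Lemma~\ref{lem:geometric.sharp.retraction} as a Lipschitz homotopy inverse of $\geometricrealization{C\ira C^{\sharp}}$. Since $r_C$ is a retraction, $r_C\circ\geometricrealization{C\ira C^{\sharp}}=\id_{\geometricrealization{C}}$ on the nose, so the only thing left to produce is a Lipschitz homotopy $\geometricrealization{C\ira C^{\sharp}}\circ r_C\sim\id_{\geometricrealization{C^{\sharp}}}$, natural in $C$.

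I would first reduce to the case $C=\BOX\boxobj{n}$. The functor $C\mapsto\geometricrealization{C^{\sharp}}$ is cocontinuous (a composite of the cocontinuous functors $(-)^{\sharp}$ and $\geometricrealization{-}$), and, using monoidality of geometric realization [Proposition~\ref{prop:geometric.realization}] together with $\geometricrealization{\BOX[1]}=\I$, so is the functor $C\mapsto\geometricrealization{C^{\sharp}}\times\I\cong\geometricrealization{C^{\sharp}\otimes\BOX[1]}$ (one may alternatively invoke Lemma~\ref{lem:metric.product.cocontinuity}). Since $(\BOX\boxobj{n})^{\sharp}=\cnerve\boxobj{n}$ and $r_C$ is natural, a Lipschitz homotopy $\geometricrealization{\BOX\boxobj{n}\ira\cnerve\boxobj{n}}\circ r_{\BOX\boxobj{n}}\sim\id_{\geometricrealization{\cnerve\boxobj{n}}}$ that is natural in $\BOX$-objects $\boxobj{n}$ extends uniquely along the Yoneda embedding to the desired natural homotopy for all $C$, since a natural transformation between cocontinuous functors out of $\CUBICAL\SETS$ is determined by its restriction to representables.

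Then I would build the homotopy on representables from the local convexity of the cubical nerve. Applying geometric realization to the natural cubical homotopy of Lemma~\ref{lem:sharp.local.convexity} between the two subprojections $(\cnerve\boxobj{n})^{\otimes 2}\ra\cnerve\boxobj{n}$, identifying $\geometricrealization{(\cnerve\boxobj{n})^{\otimes 2}}$ with $\geometricrealization{\cnerve\boxobj{n}}\times\geometricrealization{\cnerve\boxobj{n}}$ via monoidality, and reparametrizing the realized subdivided interval $\geometricrealization{\sd\BOX[1]}$, which is bi-Lipschitz equivalent to $\I$ (so that a cubical homotopy realizes to a Lipschitz homotopy), yields a Lipschitz homotopy $P_n\colon\geometricrealization{\cnerve\boxobj{n}}\times\geometricrealization{\cnerve\boxobj{n}}\times\I\ra\geometricrealization{\cnerve\boxobj{n}}$ between the two projections, natural in $\boxobj{n}$. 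Precomposing $P_n$ with the $1$-Lipschitz map $z\mapsto(z,\,\geometricrealization{\BOX\boxobj{n}\ira\cnerve\boxobj{n}}(r_{\BOX\boxobj{n}}(z)))$ produces the required natural Lipschitz homotopy from $\id_{\geometricrealization{\cnerve\boxobj{n}}}$ to $\geometricrealization{\BOX\boxobj{n}\ira\cnerve\boxobj{n}}\circ r_{\BOX\boxobj{n}}$, by the first of the two formal facts about $\odot$-homotopies recalled at the start of \S\ref{sec:homotopy}. Assembling via cocontinuity as above then completes the proof.

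The main obstacle I anticipate is the bookkeeping in the reduction: verifying that the homotopy built on representables really is natural in all $\BOX$-morphisms (so that it glues along $C=\colim_{\BOX[-]/C}\BOX\boxobj{n}$), and justifying the interchange of this colimit with the product $(-)\times\I$ through the monoidal identification $\geometricrealization{C^{\sharp}}\times\I\cong\geometricrealization{C^{\sharp}\otimes\BOX[1]}$ (or via Lemma~\ref{lem:metric.product.cocontinuity}). Everything else --- the natural retraction, the local-convexity homotopy, and the passage from cubical to Lipschitz homotopies --- is quoted directly from earlier results.
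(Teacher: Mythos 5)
Your proposal is correct and follows essentially the same route as the paper's proof: exhibit the natural $1$-Lipschitz retraction from Lemma \ref{lem:geometric.sharp.retraction}, reduce to the representable case by naturality, and obtain the remaining homotopy $\geometricrealization{C\ira C^{\sharp}}\circ r_C\sim\id$ from the Lipschitz homotopy between projections supplied by Lemma \ref{lem:sharp.local.convexity}. The extra detail you give on cocontinuity, monoidality, and the reparametrization of $\geometricrealization{\sd\,\BOX[1]}$ merely fills in steps the paper compresses into ``by naturality.''
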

\begin{proof}
  There exists a $1$-Lipschitz retraction $\rho_C$ to $\geometricrealization{C\ira C^{\sharp}}$ natural in $C$ [Lemma \ref{lem:geometric.sharp.retraction}].  
  It therefore suffices to show that there exists a Lipschitz homotopy from $\geometricrealization{C\ira C^{\sharp}}r_{C}$ to $\id_{\geometricrealization{C^{\sharp}}}$ natural in cubical sets $C$.  
  Therefore it suffices to take the case $C=\BOX\boxobj{n}$ representable by naturality.  
  In that case, $\geometricrealization{C\ira C^{\sharp}}r_{C}$ is a Lipschitz map to $\geometricrealization{\cnerve\boxobj{n}}$.
  There exists a Lipschitz homotopy, natural in $\BOX$-objects $\boxobj{n}$, between projections of the form $(\geometricrealization{\cnerve\boxobj{n}})^2\ra\geometricrealization{\cnerve\boxobj{n}}$ [Lemma \ref{lem:sharp.local.convexity}].  
  The desired Lipschitz homotopy follows.
\end{proof}

\begin{lem}
  \label{lem:extensions}
  For each cubical set $C$ and each $k$, the $1$-Lipschitz map
  \begin{equation}
    \label{eqn:extension}
	\geometricrealization{\hat{\gamma}^{+k}_{C^{\sharp}}}:\geometricrealization{C^{\sharp}}\ira\geometricrealization{\ex^k C^{\sharp}}
  \end{equation}
  is a Lipschitz homotopy equivalence.
\end{lem}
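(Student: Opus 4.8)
The plan is to produce a natural Lipschitz homotopy inverse to $\geometricrealization{\hat{\gamma}^{+k}_{C^{\sharp}}}$ and verify the two homotopies. First I would reduce to the case that $C$ is connected: since $\sd\BOX\boxobj{n}$ is connected the right adjoint $\ex$ preserves coproducts, and $(-)^{\sharp}$, $\geometricrealization{-}$ preserve coproducts as well, so the map in question is the $\METRIC$-coproduct over $\pi_0 C$ of the analogous maps for the components of $C$, and a morphism of $\METRIC$-coproducts is a Lipschitz homotopy equivalence exactly when each of its restrictions to a summand is. In particular $C^{\sharp}$, and each $\ex^{j}C^{\sharp}$, may now be assumed connected.

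Next I would build the candidate inverse from the monoidal adjunction $\sd\dashv\ex$. Writing $\epsilon$ for its counit and $\eta$ for its unit, set $r_D=\geometricrealization{\epsilon_D}\circ\varphi_{\ex D}^{-1}\colon\geometricrealization{\ex D}\to\geometricrealization{D}$ — a composite of a $1$-Lipschitz map with the bi-Lipschitz homeomorphism $\varphi_{\ex D}^{-1}$, natural in $D$ — and let $R$ be the $k$-fold composite $\geometricrealization{\ex^{k}C^{\sharp}}\xrightarrow{r_{\ex^{k-1}C^{\sharp}}}\cdots\xrightarrow{r_{C^{\sharp}}}\geometricrealization{C^{\sharp}}$. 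The homotopy $R\circ\geometricrealization{\hat{\gamma}^{+k}_{C^{\sharp}}}\sim\id_{\geometricrealization{C^{\sharp}}}$ I expect to be purely formal: expanding $\hat{\gamma}^{+}_D=\ex(\gamma^{+}_D)\circ\eta_D$ and using naturality of $\varphi$, naturality of $\epsilon$, and the triangle identities, the one-step composite $r_D\circ\geometricrealization{\hat{\gamma}^{+}_D}$ collapses to $\geometricrealization{\gamma^{+}_D}\circ\varphi_D^{-1}$, and Lemma~\ref{lem:lipschitz.approximation} supplies a $1$-Lipschitz homotopy $\geometricrealization{\gamma^{+}_D}\sim\varphi_D$ natural in $D$; iterating $k$ times and pre/post-composing with the bi-Lipschitz maps $\varphi^{\pm 1}$ keeps the homotopy Lipschitz. (The identical computation gives $r_D\circ\geometricrealization{\hat{\gamma}^{-}_D}\sim\id$, which I will also want, since the natural transformation $(\hat{\gamma}^{+(m-4)}(\adjointfirstvertexmap\adjointlastvertexmap)^2)$ produced by cubical approximation mixes both vertex maps; the same construction $R$ therefore also provides a Lipschitz homotopy left inverse of $\geometricrealization{(\hat{\gamma}^{+(m-4)}(\adjointfirstvertexmap\adjointlastvertexmap)^2)_{E}}$ for any $E$.)

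The main work — and the main obstacle — is the reverse homotopy $\geometricrealization{\hat{\gamma}^{+k}_{C^{\sharp}}}\circ R\sim\id_{\geometricrealization{\ex^{k}C^{\sharp}}}$. The reduction-to-representables device of Lemma~\ref{lem:sharp.replacement} is unavailable here because $\ex^{k}$ is a right adjoint, so $C\mapsto\geometricrealization{\ex^{k}C^{\sharp}}$ is not cocontinuous; in fact the closed star of a vertex of $\ex^{k}C^{\sharp}$ need not be contractible, so there is no naive local convexity to exploit directly on the target. Instead I would invoke cubical approximation [Lemma~\ref{lem:adjoint.approximation}] for the two Lipschitz self-maps $\geometricrealization{\hat{\gamma}^{+k}_{C^{\sharp}}}\circ R$ and $\id$ of the connected cubical set $\geometricrealization{\ex^{k}C^{\sharp}}$ (no finiteness hypothesis is needed, and $m$ may be taken as large as required), obtaining cubical functions $\psi_{1}\colon\ex^{k}C^{\sharp}\to\ex^{k+m}C^{\sharp}$ and $\psi_{2}=(\hat{\gamma}^{+(m-4)}(\adjointfirstvertexmap\adjointlastvertexmap)^2)_{\ex^{k}C^{\sharp}}$ with $\geometricrealization{\psi_{1}}\sim\geometricrealization{\psi_{2}}\circ\geometricrealization{\hat{\gamma}^{+k}_{C^{\sharp}}}\circ R$ by a Lipschitz homotopy. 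The crux is then to show $\psi_{1}\sim\psi_{2}$ cubically: here I would use that $\geometricrealization{\hat{\gamma}^{+k}_{C^{\sharp}}}\circ R$ factors through $\geometricrealization{C^{\sharp}}$ and that $C^{\sharp}=\colim_{\BOX\boxobj{n}\to C}\cnerve\boxobj{n}$, so that after passing far enough into the colimit built into the approximation the relevant values land in common images of cubical nerves $\cnerve\boxobj{n}$; Lemma~\ref{lem:sharp.local.convexity} then furnishes, naturally in $\boxobj{n}$, a cubical homotopy between the two projections $(\cnerve\boxobj{n})^{2}\to\cnerve\boxobj{n}$, and these glue — via the support construction and the naturality asserted in Lemma~\ref{lem:adjoint.approximation} — into a cubical homotopy $\psi_{1}\sim\psi_{2}$. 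Realizing this homotopy and precomposing with the Lipschitz homotopy left inverse of $\geometricrealization{\psi_{2}}$ described above cancels the approximation-correction factor and delivers $\geometricrealization{\hat{\gamma}^{+k}_{C^{\sharp}}}\circ R\sim\id$. Together with the first homotopy this shows $\geometricrealization{\hat{\gamma}^{+k}_{C^{\sharp}}}$ is a Lipschitz homotopy equivalence; since a Lipschitz homotopy between Lipschitz maps is in particular a homotopy through uniform maps, the corresponding statement for uniform realizations, used in the sequel, is immediate. The delicate bookkeeping — controlling exactly which cells of $\ex^{k+m}C^{\sharp}$ the two approximations touch, so that the $\cnerve\boxobj{n}$-homotopies of Lemma~\ref{lem:sharp.local.convexity} can be assembled coherently — is where I expect the genuine difficulty to lie.
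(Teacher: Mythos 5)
Your construction of the homotopy retraction and the verification that $R\circ\geometricrealization{\hat\gamma^{+k}_{C^\sharp}}\sim\id_{\geometricrealization{C^\sharp}}$ coincide with the paper's: the paper takes $r_C=\geometricrealization{\epsilon^k_{C^\sharp}}\varphi^{-k}_{\ex^kC^\sharp}$ and cites Lemma \ref{lem:lipschitz.approximation} for exactly the collapse $r_D\circ\geometricrealization{\hat\gamma^{+}_D}=\geometricrealization{\gamma^{+}_D}\varphi^{-1}_D\sim\id$ that you describe. The divergence, and the gap, lies in the other composite. The paper disposes of $\geometricrealization{\hat\gamma^{+k}_{C^\sharp}}\circ r_C\sim\id$ by reducing to $C$ representable and then invoking the formal facts recorded at the start of \S\ref{sec:homotopy}: the functor $F=\geometricrealization{\ex^k(-)^\sharp}$ is lax Cartesian monoidal ($\ex^k$ is a right adjoint, $(-)^\sharp$ sends tensors to products, and geometric realization is monoidal), so the cubical homotopy of Lemma \ref{lem:sharp.local.convexity} between the two projections $(\cnerve\boxobj{n})^2\to\cnerve\boxobj{n}$ transports to a Lipschitz homotopy between the two projections $(F\cnerve\boxobj{n})^2\to F\cnerve\boxobj{n}$, natural in $\boxobj{n}$; and a natural homotopy between the projections onto an object induces a natural homotopy between any two parallel maps into it, in particular between $\geometricrealization{\hat\gamma^{+k}}r$ and the identity. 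Your assertion that "there is no naive local convexity to exploit directly on the target" therefore dismisses the mechanism the paper actually uses: the convexity is not sought in closed stars of $\ex^kC^\sharp$ but is imported from $\cnerve\boxobj{n}$ through the product-preserving functor $\ex^k$.

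Having set that route aside, you substitute a cubical-approximation argument, and that is where the proposal stops being a proof. The entire content of the second homotopy is compressed into the claim that the two approximations $\psi_1$ and $\psi_2$ are cubically homotopic, to be assembled from Lemma \ref{lem:sharp.local.convexity} "via the support construction and the naturality asserted in Lemma \ref{lem:adjoint.approximation}" --- and you then explicitly defer the coherence of that assembly as the place "where I expect the genuine difficulty to lie." That deferred step \emph{is} the lemma. (The surrounding logic is sound: given $\psi_1\sim\psi_2$ and a Lipschitz left homotopy inverse of $\geometricrealization{(\hat\gamma^{+(m-4)}(\adjointfirstvertexmap\adjointlastvertexmap)^2)_{\ex^kC^\sharp}}$, cancellation does yield $\geometricrealization{\hat\gamma^{+k}_{C^\sharp}}\circ R\sim\id$, and there is no circularity since Lemma \ref{lem:adjoint.approximation} does not depend on the present lemma. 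Your observation that $\geometricrealization{\ex^k(-)^\sharp}$ is not cocontinuous is also a fair question to put to the paper's "by naturality" reduction. But identifying a difficulty in one route does not discharge the obligation to complete the other.) As written, the proposal establishes only that $\geometricrealization{\hat\gamma^{+k}_{C^\sharp}}$ admits a left inverse up to Lipschitz homotopy.
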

\begin{proof}
  Let $\epsilon$ denote the counit of the adjunction $\sd^k\dashv\ex^k$.
  Let 
  $$F=\geometricrealization{\ex^k-}:\CUBICAL\SETS\ra\METRIC.$$

  The $1$-Lipschitz map (\ref{eqn:extension}) admits, up to Lipschitz homotopy, a Lipschitz retraction
  $$r_C=\geometricrealization{\epsilon^{k}_{\C^{\sharp}}}\varphi^{-k}_{\ex^k\C^{\sharp}}:\geometricrealization{\ex^k\C^{\sharp}}\ra\geometricrealization{\C^{\sharp}}$$
  natural in cubical sets $C$ [Lemma \ref{lem:lipschitz.approximation}].  
  It thus suffices to construct a Lipschitz homotopy
  $$\geometricrealization{\hat{\gamma}^{+k}_{\C^{\sharp}}}r_{C}\sim\id_{FC}$$
  natural in cubical sets $C$.
  Therefore it suffices to consider the case $C$ representable by naturality.
  There exists a Lipschitz homotopy, natural in $\BOX$-objects $\boxobj{n}$, between both projections of the form $(F\cnerve\boxobj{n})^2\ra(F\cnerve\boxobj{n})$ [Lemma \ref{lem:sharp.local.convexity}].
  The existence of the desired Lipschitz homotopy follows.
\end{proof}

\subsubsection{Main results}
Putting together the results of the previous two sections yields a formal comparisom between the Lipschitz homotopy theory of cubical sets and the uniform homotopy theory of uniform spaces.

\begin{thm}
  \label{thm:enriched.equivalence}
  There exists a classical weak equivalence
  $$\langle B,C\rangle\ra\sing_{\BOX}\UNIFORM(|B|_\infty,|C|_\infty)$$
  natural in cubical sets $B,C$.  
\end{thm}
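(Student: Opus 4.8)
The plan is to build a natural cubical function $\Phi$ realizing the claimed comparison and then to verify it is a classical weak equivalence by passing to topological realizations and invoking cubical approximation. For the construction, recall $\langle B,C\rangle=\colim_k(\ex^kC^\sharp)^B$, so an $n$-cube of $\langle B,C\rangle$ is represented by a cubical function $\psi\colon B\otimes\BOX\boxobj{n}\to\ex^kC^\sharp$; on the other side, since uniform realization is monoidal [Corollary \ref{cor:uniform.realization}] and $\sing_{\BOX}$ is right adjoint to $|-|$, an $n$-cube of $\sing_{\BOX}\UNIFORM(\uniformrealization{B},\uniformrealization{C})$ is exactly a uniform map $\uniformrealization{B\otimes\BOX\boxobj{n}}\to\uniformrealization{C}$. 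I would send the class of $\psi$ to the composite
$$\uniformrealization{B\otimes\BOX\boxobj{n}}\xra{\uniformrealization{\psi}}\uniformrealization{\ex^kC^\sharp}\xra{r_C}\uniformrealization{C^\sharp}\xra{\rho_C}\uniformrealization{C},$$
where $\rho_C$ is the natural $1$-Lipschitz retraction of $\uniformrealization{C\ira C^\sharp}$ [Lemma \ref{lem:geometric.sharp.retraction}] and $r_C=\uniformrealization{\epsilon^k_{C^\sharp}}\circ\varphi^{-k}_{\ex^kC^\sharp}$ is the natural Lipschitz map retracting $\uniformrealization{\hat\gamma^{+k}_{C^\sharp}}$ up to homotopy [Lemma \ref{lem:extensions}], $\epsilon^k$ the counit of $\sd^k\dashv\ex^k$. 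That this rule is independent of the chosen representative, hence defines a cubical function $\Phi$ natural in $B$ and $C$, is a diagram chase using the naturality of the counits $\epsilon^k$ and the identities relating them to the telescope maps of the colimit.

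Next I would reduce. Since $\langle-,C\rangle$ and $\sing_{\BOX}\UNIFORM(\uniformrealization{-},\uniformrealization{C})$ both carry coproducts in the first variable to products (the former by definition, the latter by Corollary \ref{cor:uniform.realization} together with $\sing_{\BOX}$ preserving products), and products of classical weak equivalences are again classical weak equivalences, it suffices to treat $B$ connected. And by Theorem \ref{thm:equivalence} a cubical function is a classical weak equivalence iff its topological realization is one; composing $|\Phi|$ with the natural weak equivalence $|\sing_{\BOX}(-)|\ra\id_{\TOP}$ furnished by the same theorem, it is enough to prove that the resulting map
$$\Psi\colon|\langle B,C\rangle|\ra\UNIFORM(\uniformrealization{B},\uniformrealization{C}),$$
which concretely is ``realize $\psi$, then apply $\rho_Cr_C$'', is a weak homotopy equivalence.

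The main step is to establish surjectivity and injectivity of $\Psi$ on all homotopy groups via cubical approximation [Lemma \ref{lem:adjoint.approximation}]. For surjectivity on $\pi_n$, represent a class by a uniform map $g\colon\uniformrealization{B}\times S^n\to\uniformrealization{C}$ that is the basepoint on $\uniformrealization{B}\times\{\ast\}$, model $S^n$ by the finite cubical set $S=\BOX\boxobj{n}/\partial\BOX\boxobj{n}$ (using that $S^n$, being compact, carries a unique uniformity, so that its fine uniformity is $\uniformrealization{S}$, and that $\uniformrealization{B}\times\uniformrealization{S}=\uniformrealization{B\otimes S}$), and feed the composite $\uniformrealization{B\otimes S}\to\uniformrealization{C}\ira\uniformrealization{C^\sharp}$ into Lemma \ref{lem:adjoint.approximation} with $\iota=\id_B\otimes(\{\ast\}\ira S)$ --- a monoidal product of an identity with a monic between finite cubical sets, so that the output homotopy may be taken relative to $\uniformrealization{B}\times\{\ast\}$. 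This yields, for $k\gg0$, a cubical function $\psi_g\colon B\otimes S\to\ex^kC^\sharp$ whose class is a based $n$-cube of $\langle B,C\rangle$, together with a based uniform homotopy from $\uniformrealization{\psi_g}$ to $\uniformrealization{(\hat\gamma^{+(k-4)}(\adjointfirstvertexmap\adjointlastvertexmap)^2)_{C^\sharp}}\circ\uniformrealization{C\ira C^\sharp}\circ g$. Applying $\rho_Cr_C$ and using that the correction composite $\rho_Cr_C\circ\uniformrealization{(\hat\gamma^{+(k-4)}(\adjointfirstvertexmap\adjointlastvertexmap)^2)_{C^\sharp}}$ is homotopic to $\id_{\uniformrealization{C}}$ --- because $r_C$ retracts $\uniformrealization{\hat\gamma^{+k}_{C^\sharp}}$ and $\rho_C$ retracts $\uniformrealization{C\ira C^\sharp}$, each up to homotopy [Lemmas \ref{lem:extensions}, \ref{lem:sharp.replacement}], and any two natural transformations $(-)^\sharp\ra\ex^k(-)^\sharp$ built from $\adjointfirstvertexmap,\adjointlastvertexmap$ have homotopic realizations by Lemma \ref{lem:sharp.local.convexity} and naturality --- gives $\Psi[\psi_g]\sim g$. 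Injectivity on $\pi_n$ is the same argument applied to a based homotopy between two such cubes, realized as a uniform map on $\uniformrealization{B\otimes(S\otimes\BOX[1])}$ and approximated relative to $\id_B\otimes\bigl((S\otimes\partial\BOX[1])\cup(\{\ast\}\otimes\BOX[1])\bigr)$; after advancing along the telescope, the approximation exhibits the two classes as equal in $\pi_n(|\langle B,C\rangle|)$. The case $n=0$ is identical. Hence $\Psi$ is a weak homotopy equivalence, so $\Phi$ is the desired natural classical weak equivalence.

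The main obstacle is precisely this third step: one must thread basepoints through the cubical approximation so that the homotopies supplied by Lemma \ref{lem:adjoint.approximation} can be taken relative to the correct \emph{finite} subpresheaves --- this is exactly what lets them descend to the filtered colimit $\langle B,C\rangle$ --- and then carefully match the correction composite built from the anodyne Lemmas \ref{lem:sharp.replacement} and \ref{lem:extensions} with the identity up to coherent homotopy. A subsidiary point is to check that the rule defining $\Phi$ in the first step is well defined on the colimit on the nose, not merely up to homotopy.
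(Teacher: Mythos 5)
Your overall strategy is the one the paper uses: identify the $n$-cubes of both sides, reduce to $B$ connected, build the comparison map stage-by-stage on the telescope out of realization and the counit of $\sd^k\dashv\ex^k$, and verify bijectivity on $\pi_0$ and all homotopy groups by cubical approximation [Lemma \ref{lem:adjoint.approximation}], using that the telescope maps are monic so that the colimit computes a homotopy colimit. Your expanded treatment of basepoints and relative homotopies in the third step is a legitimate unpacking of the paper's one-line appeal to Lemma \ref{lem:adjoint.approximation}.

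There is, however, one genuine error, precisely at the point you flag as a ``subsidiary point'': the rule you give is \emph{not} well defined on the colimit on the nose, and no diagram chase with the counits will make it so. Concretely, if $\psi\colon B\otimes\BOX\boxobj{n}\ra\ex^kC^\sharp$ and $\adjointfirstvertexmap_{\ex^kC^\sharp}\circ\psi$ represent the same $n$-cube of $\langle B,C\rangle$, then compatibility of your assignment requires
$$\geometricrealization{\epsilon^{k+1}_{C^\sharp}}\varphi^{-(k+1)}_{\ex^{k+1}C^\sharp}\geometricrealization{\adjointfirstvertexmap_{\ex^kC^\sharp}}=\geometricrealization{\epsilon^{k}_{C^\sharp}}\varphi^{-k}_{\ex^{k}C^\sharp},$$
and unwinding the adjunction identity $\epsilon_D\circ\sd\,\adjointfirstvertexmap_D=\firstvertexmap_D$ together with naturality of $\varphi$ reduces this to the identity $\geometricrealization{\firstvertexmap_D}=\varphi_D$. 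That identity is false (already for $D=\BOX[1]$, where $\firstvertexmap$ sends the middle vertex of $\sd\,\BOX[1]$ to an endpoint while $\varphi$ sends it to $\half$); the two maps agree only up to the $1$-Lipschitz homotopy of Lemma \ref{lem:lipschitz.approximation}. So your cocone is only homotopy-coherent, not strict. The repair is exactly the paper's move: observe that the stage maps $\rho_{B,C,k}$ commute with the telescope only up to homotopy, and use that the transition maps are monic cofibrations (so the colimit is a homotopy colimit, and the stage maps can be rectified, or one argues directly on homotopy groups of the telescope) to obtain the induced map $\langle B,C\rangle\ra\sing_{\BOX}\UNIFORM(\uniformrealization{B},\uniformrealization{C})$ and to check it is a classical weak equivalence. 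With that correction, the rest of your argument goes through.
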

\begin{proof}
  For brevity, adopt the abbreviation
  $$S_{BC}=\sing_\BOX\UNIFORM(|B|_\infty,|C|_\infty).$$
  
  We can henceforth make natural identifications
  $$\CUBICAL\SETS(B,\ex^kC)_n=\CUBICAL\SETS(\sd^k(B\otimes\BOX\boxobj{n}),C)\quad (S_{BC})_n=\UNIFORM(|B|_\infty\times|\BOX\boxobj{n}|_\infty,|C|_\infty)$$
  by the Yoneda Embedding and the adjoint relationships $|-|\dashv\sing_\BOX$ and $\sd\dashv\ex$.  

  It suffices to take $B$ connected.  
  Consider the following diagram
  \begin{equation*}
    \begin{tikzcd}
		  \CUBICAL\SETS(B,C)\ar[r]\ar{d}[description]{\rho_{B,C,1}} 
		& \CUBICAL\SETS(B,\ex\,C)\ar[r]\ar{dl}[description]{\rho_{B,C,2}} 
		& \CUBICAL\SETS(B,\ex^2C)\ar[r]\ar{dll}[description]{\rho_{B,C,3}}  
		& \cdots
		& \langle B,C\rangle\ar[dotted]{d}[description]{\rho_{B,C}}\\
		  S_{BC}\ar[rrrr,equals]
		&
		&
		&
		& S_{BC}
	\end{tikzcd}
  \end{equation*}
  where $(\rho_{B,C,k})_n(\sigma)=|\sigma|_\infty\varphi^{k}_{B\otimes\BOX\boxobj{n}}$ and the top horizontal arrows are induced by $\gamma^{\mins}_{\ex^kC}$. 
  The solid diagram commutes up to cubical homotopy [Lemma \ref{lem:lipschitz.approximation}].  
  Therefore the dotted cubical functions induce a cubical function $\rho_{B,C}$ from the colimit of the top row, $\langle B,C\rangle$, a homotopy colimit in the classical test model structure on $\CUBICAL\SETS$ by the solid arrows monic, making the entire diagram commute up to cubical homotopy. 
  The $|\rho_{B,C,k}|$'s induce bijections on path-components and homotopy groups by an application of cubical approximation [Lemma \ref{lem:adjoint.approximation}].  
  Therefore $|\rho_{B,C}|$, and hence also $\rho_{B,C}$, are classical weak equivalences.
\end{proof}

\begin{cor}
  \label{cor:cubical.we}
  Fix a cubical function $\psi:B\ra C$.
  The following are equivalent.
  \begin{enumerate}
	  \item $\psi$ is a uniform weak equivalence
	  \item $\uniformrealization{\psi}$ is a uniform homotopy equivalence
	  \item $\uniformrealization{\psi}$ is a uniform weak equivalence
	  \item $\geometricrealization{\psi}$ restricts and corestricts to Lipschitz homotopy equivalences between connected components
  \end{enumerate}
\end{cor}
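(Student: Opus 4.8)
The plan is to establish the equivalences $(2)\Leftrightarrow(3)$, $(1)\Leftrightarrow(3)$, and $(2)\Leftrightarrow(4)$. For $(2)\Rightarrow(3)$ I would invoke Proposition~\ref{prop:strong.to.weak.uniform.equivalences}. For $(3)\Rightarrow(2)$: the uniform spaces $\uniformrealization{B}$ and $\uniformrealization{C}$ are uniformly cofibrant (Proposition~\ref{prop:cofibrant.uniform.spaces}, applied to the identity homotopy equivalence), and a uniform weak equivalence between uniformly cofibrant uniform spaces is a uniform homotopy equivalence. For $(1)\Leftrightarrow(3)$: Theorem~\ref{thm:enriched.equivalence} supplies a natural classical weak equivalence $\langle A,\psi\rangle\simeq\sing_{\BOX}\UNIFORM(\uniformrealization{A},\uniformrealization{\psi})$, and since the adjunction $|-|\dashv\sing_{\BOX}$ induces the equivalence of classical homotopy categories of Theorem~\ref{thm:equivalence} --- so that $\sing_{\BOX}$ both preserves and reflects classical weak equivalences --- the condition that $\langle A,\psi\rangle$ be a classical weak equivalence for every cubical set $A$, i.e.\ that $\psi$ be a Lipschitz weak equivalence (Definition~\ref{defn:lipschitz.we}), is equivalent to the condition that $\UNIFORM(\uniformrealization{A},\uniformrealization{\psi})$ be a classical weak equivalence of spaces for every cubical set $A$, i.e.\ that $\uniformrealization{\psi}$ be a uniform weak equivalence (Definition~\ref{defn:uniform.we}); the families $\{\uniformrealization{A}\}$ and $\{|C|_\infty\}$ coincide.

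For $(2)\Leftrightarrow(4)$ I would first reduce to the connected case. Geometric and uniform realization preserve coproducts (Proposition~\ref{prop:geometric.realization}, Corollary~\ref{cor:uniform.realization}), $\psi$ carries connected components of $B$ into connected components of $C$, the connected components of $\geometricrealization{B}$ and $\uniformrealization{B}$ are the realizations of the connected components of $B$, and $\UNIFORM$ is Cartesian closed so that $-\times\I$ distributes over coproducts. Assembling and restricting homotopy inverses and homotopies componentwise, $\uniformrealization{\psi}$ is a uniform homotopy equivalence if and only if $\pi_0\psi$ is a bijection and the restriction--corestriction $\uniformrealization{B_i}\to\uniformrealization{C_{j(i)}}$ is a uniform homotopy equivalence for each component $B_i$ of $B$; the same bookkeeping identifies $(4)$ with the condition that $\pi_0\psi$ be a bijection and each $\geometricrealization{B_i}\to\geometricrealization{C_{j(i)}}$ be a Lipschitz homotopy equivalence. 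Since Lipschitz homotopy equivalences are uniform homotopy equivalences, this gives $(4)\Rightarrow(2)$ and reduces the converse to the assertion: if $B$ and $C$ are connected and $\uniformrealization{\psi}$ is a uniform homotopy equivalence, then $\geometricrealization{\psi}$ is a Lipschitz homotopy equivalence.

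I would prove this last assertion by cubical approximation. Since $\geometricrealization{B\ira B^{\sharp}}$ and $\geometricrealization{C\ira C^{\sharp}}$ are Lipschitz homotopy equivalences (Lemma~\ref{lem:sharp.replacement}) and the naturality square commutes, it suffices to treat $\geometricrealization{\psi^{\sharp}}$, and $\uniformrealization{\psi^{\sharp}}$ remains a uniform homotopy equivalence. Cubically approximating a uniform homotopy inverse of $\uniformrealization{\psi^{\sharp}}$ (Lemma~\ref{lem:adjoint.approximation}) yields, for $k\gg 0$, a cubical function $\psi_{g}\colon C^{\sharp}\to\ex^{k}B^{\sharp}$ with $\uniformrealization{\psi_{g}\circ\psi^{\sharp}}$ uniformly homotopic to $\uniformrealization{\hat\gamma}$, where $\hat\gamma\colon B^{\sharp}\to\ex^{k}B^{\sharp}$ is the relevant comparison map; these are uniformly homotopic realizations of cubical functions $B^{\sharp}\to\ex^{k}B^{\sharp}$. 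Regarding such a uniform homotopy as a uniform map out of $\uniformrealization{B^{\sharp}}\times\I=\uniformrealization{B^{\sharp}\otimes\BOX[1]}$ and feeding it into the relative form of Lemma~\ref{lem:adjoint.approximation} --- with $\iota=B^{\sharp}\otimes(\partial\BOX[1]\ira\BOX[1])$, an identity tensored with a monic map of finite cubical sets --- promotes it, after further subdivision, to a \emph{cubical} homotopy into $\ex^{N+k}B^{\sharp}$; its geometric realization is a \emph{Lipschitz} homotopy since $\geometricrealization{B^{\sharp}\otimes\BOX[1]}=\geometricrealization{B^{\sharp}}\times\I$ and realizations of cubical functions are $1$-Lipschitz. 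Because composites of the anodyne comparison maps restrict to Lipschitz homotopy equivalences on geometric realizations --- by Lemma~\ref{lem:extensions} together with Lemma~\ref{lem:sharp.local.convexity} (the local convexity of cubical nerves used in its proof), after reducing to representables --- one may cancel them to produce a Lipschitz left homotopy inverse for $\geometricrealization{\psi^{\sharp}}$; a symmetric approximation gives a Lipschitz right homotopy inverse; and the standard argument (valid since Lipschitz homotopy is an equivalence relation on Lipschitz maps) assembles these into a two-sided Lipschitz homotopy inverse. The main obstacle is exactly this connected case: everything established so far produces only \emph{uniform} homotopy data for $\uniformrealization{\psi}$, and the force of $(4)$ is to refine it to \emph{Lipschitz} data for $\geometricrealization{\psi}$ --- which is what the Lipschitz-sensitive form of cubical approximation achieves, at the cost of passing to sharp replacements (so that the comparison maps realize to Lipschitz, not merely uniform, homotopy equivalences via Lemma~\ref{lem:extensions}) and of carefully tracking the subdivision indices and the $\gamma$/$\hat\gamma$ comparison maps threading through both the colimit defining $\langle-,-\rangle$ and the statement of Lemma~\ref{lem:adjoint.approximation}.
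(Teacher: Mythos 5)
Your proposal is correct, but it routes the equivalences differently from the paper, and the hardest implication is proved by a genuinely different argument. The paper establishes the single cycle $(1)\Rightarrow(4)\Rightarrow(2)\Rightarrow(3)\Rightarrow(1)$ (reading (1), as you do, as ``Lipschitz weak equivalence''): the only hard step is $(1)\Rightarrow(4)$, which it gets almost for free from the mapping complex --- bijectivity of $\pi_0\langle C,\psi\rangle$ and $\pi_0\langle B,\psi\rangle$ hands over cubical functions $\psi^\dagger:\sd^kC\ra B$ together with edges of $\langle B,B\rangle$ and $\langle C,C\rangle$ witnessing $\psi^\dagger(\sd^k\psi)\sim\gamma^{\mins k}_B$, and these edges realize directly to Lipschitz homotopies, so $\geometricrealization{\psi}$ has a retraction and a section in the category $\mathscr{H}$ of Lipschitz homotopy classes. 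You instead prove $(2)\Leftrightarrow(3)$, $(1)\Leftrightarrow(3)$ and $(2)\Leftrightarrow(4)$, and your hard step is $(2)\Rightarrow(4)$, done by a double application of cubical approximation [Lemma \ref{lem:adjoint.approximation}]: once to the uniform homotopy inverse and once, relatively, to the uniform homotopies themselves, so that the resulting cylinders are realizations of cubical functions and hence automatically $1$-Lipschitz. This is sound --- it is essentially the argument hidden inside the proof of Theorem \ref{thm:enriched.equivalence} ($\pi_0$-surjectivity and $\pi_0$-injectivity of $\rho_{B,B}$), unpacked by hand --- but it costs you the bookkeeping of the extra comparison maps $\hat\gamma'$ appearing on the endpoints of the approximating cubical homotopy, which you must then cancel via Lemma \ref{lem:extensions}; the paper's $\langle-,-\rangle$ formalism absorbs exactly this bookkeeping. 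Your $(3)\Rightarrow(2)$ via uniform cofibrancy of realizations [Proposition \ref{prop:cofibrant.uniform.spaces}] is a clean shortcut the paper does not need, since it reaches (2) from (4) by the one-line observation that Lipschitz homotopy equivalences are uniform homotopy equivalences. Net effect: your proof is longer and proves more implications than necessary, but each step is justified by results available in the paper.
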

\begin{proof}
  It suffices to take $B,C$ connected.  

  Suppose (1).
  Then $\pi_0\langle C,\psi\rangle$ is a bijection and hence there exists a cubical function $\psi^\dagger:\sd^kC\ra B$ such that $\psi^\dagger(\sd^k\psi)$ and $\gamma^{-k}_B$ represent the same element in $\pi_0\langle B,B\rangle$.  
  It follows that there exists a Lipschitz homotopy $\geometricrealization{\psi^\dagger}\geometricrealization{\sd^k\psi}\sim\geometricrealization{\gamma^{-k}_B}$ and hence there exists a Lipschitz homotopies $\geometricrealization{\psi^\dagger}\varphi_C^{-k}\sim\geometricrealization{\psi}\sim\geometricrealization{\gamma^{-k}_B}\varphi_B^{-k}\sim\geometricrealization{\id_B}$.  
  Let $\mathscr{H}$ denote the category of pseudometric spaces and Lipschitz homotopy classes of Lipschitz maps.  
  Thus $\geometricrealization{\psi}$ represents a morphism in $\mathscr{H}$ admitting a retraction.
  Similarly $\geometricrealization{\psi}$ represents a morphism in $\mathscr{H}$ admitting a section.  
  Therefore $\geometricrealization{\psi}$ represents an $\mathscr{H}$-isomorphism.
  Hence (4).  

  Suppose (4). 
  Lipschitz maps are uniform.
  Therefore (2).  

  Suppose (2).  
  Then (3) [Proposition \ref{prop:strong.to.weak.uniform.equivalences}].  

  Suppose (3).  
  Fix a cubical set $A$. 
  There are dotted classical weak equivalences making
  \begin{equation*}
	  \begin{tikzcd}
		          \langle A,B\rangle\ar{rrrr}[above]{\langle A,\psi\rangle}\ar[d,dotted] 
		  & & & & \langle A,C\rangle\ar[d,dotted]
		  \\ 
		          \sing_{\BOX}\UNIFORM(|A|_\infty,|B|_\infty)\ar{rrrr}[below]{\sing\UNIFORM(|A|_\infty,|\psi|_\infty)} 
		  & & & & \sing_{\BOX}\UNIFORM(|A|_\infty,|C|_\infty)
	  \end{tikzcd}
  \end{equation*}
  commute [Theorem \ref{thm:enriched.equivalence}].
  The bottom horizontal arrow is a classical weak equivalence by assumption.
  Therefore the top horizontal arrow is a classical weak equivalence.
  Hence (1).
\end{proof}

\begin{cor}
  \label{cor:classical.we}
  The following are equivalent for a cubical function $\psi$.
  \begin{enumerate}
    \item $\psi$ is a classical weak equivalence
	\item $|\psi|$ is a classical weak equivalence
	\item $|\psi|$ is a classical homotopy equivalence
	\item $\sing_{\BOX}\psi$ is a cubical homotopy equivalence
  \end{enumerate}
  For all cubical sets $C$, $\langle \star,C\rangle$ is a Kan complex and the natural inclusion $C\ira\langle\star,C\rangle$ is a classical weak equivalence.
\end{cor}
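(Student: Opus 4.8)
\emph{Strategy.} The only genuinely combinatorial input is the assertion that $\langle\star,C\rangle$ is a Kan complex; granting it, the fourfold equivalence drops out by combining Theorem~\ref{thm:enriched.equivalence} (at $B=\star$), Theorem~\ref{thm:equivalence}, Theorem~\ref{thm:cubical.model.structure}, and the classical Whitehead theorem applied to the CW complexes $|B|$ and $|C|$. So I would first prove the Kan property, then the chain $(1)\Leftrightarrow(2)\Leftrightarrow(3)\Leftrightarrow(4)$, and finally that $C\ira\langle\star,C\rangle$ is a classical weak equivalence.

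\emph{Kan property.} By construction $\langle\star,C\rangle=\colim_{k\geqslant 1}\ex^k C^{\sharp}$, so---$\sqcup^{\pm i}\BOX\boxobj{n}$ being finite---every horn $h\colon\sqcup^{\pm i}\BOX\boxobj{n}\to\langle\star,C\rangle$ factors through a single stage $h'\colon\sqcup^{\pm i}\BOX\boxobj{n}\to\ex^N C^{\sharp}$ with $N\geqslant 1$. The key lemma I would prove is that the subdivided horn inclusion $\sd(\sqcup^{\pm i}\BOX\boxobj{n}\ira\BOX\boxobj{n})$ admits a cubical retraction $r_n\colon\sd\,\BOX\boxobj{n}\to\sd\,\sqcup^{\pm i}\BOX\boxobj{n}$: the cube $\BOX\boxobj{n}$ collapses onto the open box $\sqcup^{\pm i}\BOX\boxobj{n}$ through the single free pair consisting of the top $n$-cube and its missing face, and---exactly as with the simplicial lemma underlying Kan's theorem that $\mathrm{Ex}^{\infty}$ takes values in Kan complexes---a single subdivision converts an elementary collapse into a cubical retraction (indeed a strong deformation retract); I would verify this by induction on $n$ using $\BOX\boxobj{n}=\BOX\boxobj{n-1}\otimes\BOX[1]$. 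Given $r_n$, transpose $h'$ under $\sd^N\dashv\ex^N$ to $\widetilde{h}\colon\sd^N\sqcup^{\pm i}\BOX\boxobj{n}\to C^{\sharp}$, precompose with $\sd^{N-1}r_n\colon\sd^N\BOX\boxobj{n}\to\sd^N\sqcup^{\pm i}\BOX\boxobj{n}$, and transpose back; since $\sd^{N-1}r_n$ composed with the subdivided horn inclusion is the identity, the resulting cubical function $\BOX\boxobj{n}\to\ex^N C^{\sharp}\to\langle\star,C\rangle$ restricts to $h$ along the horn inclusion. Hence $\langle\star,C\rangle$ is Kan.

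\emph{The equivalences.} Now $\langle\star,B\rangle$ and $\langle\star,C\rangle$ are Kan, hence so are the internal homs $\langle\star,-\rangle^{A}$, and $\pi_0\langle\star,C\rangle^{A}=[A,\langle\star,C\rangle]$; by the Yoneda lemma in $h(\CUBICAL\SETS)$ (Theorem~\ref{thm:cubical.model.structure}) it follows that $\psi$ is a classical weak equivalence iff $\langle\star,\psi\rangle$ is. Theorem~\ref{thm:enriched.equivalence} at $B=\star$, with $\UNIFORM(\uniformrealization{\star},\uniformrealization{C})$ identified with the space $|C|$, gives a natural classical weak equivalence $\langle\star,\psi\rangle\simeq\sing_{\BOX}|\psi|$; by two-out-of-three, and then by Theorem~\ref{thm:equivalence} (under which $\sing_{\BOX}$ preserves and reflects classical weak equivalences of spaces, $\sing_{\BOX}X$ always being Kan), this holds iff $|\psi|$ is a classical weak equivalence of spaces. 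That is $(1)\Leftrightarrow(2)$, and Whitehead's theorem for $|B|,|C|$ gives $(2)\Leftrightarrow(3)$. For $(3)\Leftrightarrow(4)$: since $\sing_{\BOX}$ preserves products and $|\BOX[1]|=\I$ supplies a map $\BOX[1]\to\sing_{\BOX}\I$, a classical homotopy realizes to a cubical homotopy, so $\sing_{\BOX}$ carries homotopy equivalences to cubical homotopy equivalences; conversely, if $\sing_{\BOX}|\psi|$ is a cubical homotopy equivalence then $|\sing_{\BOX}|\psi||$ is a homotopy equivalence, and as the counit maps $|\sing_{\BOX}|B||\to|B|$ and $|\sing_{\BOX}|C||\to|C|$ are weak equivalences between CW complexes, hence homotopy equivalences, two-out-of-three makes $|\psi|$ a homotopy equivalence.

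\emph{The inclusion, and the obstacle.} Factor $C\ira\langle\star,C\rangle$ as $C\ira C^{\sharp}\xrightarrow{\adjointfirstvertexmap_{C^{\sharp}}}\ex C^{\sharp}\to\langle\star,C\rangle$. Topological realization, being left adjoint to $\sing_{\BOX}$, preserves colimits; $|C\ira C^{\sharp}|$ is a homotopy equivalence by Lemma~\ref{lem:sharp.replacement} and each $|C^{\sharp}\ira\ex^{k}C^{\sharp}|$ by Lemma~\ref{lem:extensions} (forgetting Lipschitz structure), while monos of cubical sets realize to closed CW inclusions; hence $|C^{\sharp}\ira\langle\star,C\rangle|$ is a sequential colimit of homotopy equivalences along closed cofibrations, so a weak equivalence, and therefore $|C\ira\langle\star,C\rangle|$ is a classical weak equivalence of spaces. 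By $(1)\Leftrightarrow(2)$ the cubical function $C\ira\langle\star,C\rangle$ is a classical weak equivalence. The sole non-formal step is the retraction lemma for $r_n$---that the collapse $\BOX\boxobj{n}\searrow\sqcup^{\pm i}\BOX\boxobj{n}$ is realized by a genuine cubical retraction after one application of the paper's subdivision operator; this is where the real work lies, everything else being an assembly of Theorems~\ref{thm:cubical.model.structure}, \ref{thm:equivalence}, \ref{thm:enriched.equivalence}, Whitehead's theorem, and Lemmas~\ref{lem:sharp.replacement}, \ref{lem:extensions}.
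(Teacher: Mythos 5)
Your treatment of the four equivalences and of the inclusion $C\ira\langle\star,C\rangle$ is essentially the paper's argument (the paper is terser, citing Theorem~\ref{thm:equivalence}, Whitehead, the adjunction $|-|\dashv\sing_\BOX$, and Lemmas~\ref{lem:sharp.replacement}, \ref{lem:extensions} in the same way you do). The problem is the step you yourself flag as ``the real work'': the retraction lemma for subdivided horn inclusions is \emph{false} for this paper's subdivision operator, and no iterate of $\sd$ repairs it. The obstruction is metric. Geometric realization lands in $\METRIC$, so every cubical function realizes to a $1$-Lipschitz map, and $\sd$ is metrically rigid here ($\varphi_C$ is $\half$-bi-Lipschitz, i.e.\ $\sd^N$ just rescales by $2^N$). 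Take $n=2$ and the open box missing the face $\{0\}\times[1]$. A retraction $r\colon\sd^N\BOX\boxobj{2}\ra\sd^N\sqcup^{-1}\BOX\boxobj{2}$ would realize to a $1$-Lipschitz map fixing the two corners $(0,0)$ and $(0,1)$; their distance in $\geometricrealization{\sd^N\BOX\boxobj{2}}\cong[0,2^N]^2$ is $2^N$, while their distance in the realization of the open box (a ``U'' of three segments of length $2^N$ with its path metric) is $3\cdot 2^N$, so $3\cdot 2^N\leqslant 2^N$, a contradiction. (Already at the vertex level: in $\sd\,\sqcup^{-1}\BOX\boxobj{2}$ the corners $(0,0)$ and $(0,1)$ have no common out-neighbour, so the subdivided left edge $\bullet\ra\bullet\la\bullet$ cannot map into the open box fixing its endpoints.) Your analogy with Kan's $\mathrm{Ex}^\infty$ breaks precisely because barycentric subdivision of a simplex creates new simplices through the barycentre that collapse distances, whereas the tensor-power subdivision $\sd\,\BOX\boxobj{n}=(\sd\,\BOX[1])^{\otimes n}$ does not; this rigidity is the whole point of the paper's Lipschitz refinement, and it is why $\langle\star,-\rangle$ is a nontrivial construction there.

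The paper's proof of the Kan property is therefore not combinatorial. Given a horn $\sqcup^{\pm i}\BOX\boxobj{n}\ra\ex^kC^\sharp$, it realizes, composes with the \emph{topological} retraction $|\BOX\boxobj{n}|\ra|\sqcup^{\pm i}\BOX\boxobj{n}|$ (uniform because the cube is compact Hausdorff, but not induced by any cubical function), and then invokes the relative cubical approximation lemma (Lemma~\ref{lem:adjoint.approximation}, with $\iota$ the horn inclusion, a mono between finite cubical sets, so that the left square commutes strictly). The filler so produced lives at a strictly later stage $\ex^{k+m}C^\sharp$ of the colimit defining $\langle\star,C\rangle$ and does not factor through the open box at any fixed stage --- which is exactly what the metric obstruction above forces. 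Since your proof of $(1)\Leftrightarrow(2)$ also leans on the Kan property of $\langle\star,B\rangle$ and $\langle\star,C\rangle$ (to identify $\pi_0\langle\star,-\rangle^A$ with hom-sets in $h(\CUBICAL\SETS)$), you should either reroute that step through Theorem~\ref{thm:enriched.equivalence} and Theorem~\ref{thm:equivalence} alone, as the paper does, or establish Kan-ness by the approximation argument first.
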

\begin{proof}
  The equivalence (1)$\iff$(2) follows from Theorem \ref{thm:equivalence}.
  The equivalence (2)$\iff$(3) follows from the Whitehead Theorem.  
  The equivalence (3)$\iff$(4) follows from the adjunction $|-|\dashv\sing_{\BOX}$.

  Consider the left of the solid diagrams
  \begin{equation}
    \label{eqn:horn.approx}
	\begin{tikzcd}
	\sqcup^i\BOX\boxobj{n}\ar[r]\ar[d,hookrightarrow] & \langle\star,C\rangle\\
	\BOX\boxobj{n}\ar[ur,dotted]
    \end{tikzcd}\quad
	\begin{tikzcd}
		{|\sqcup^i\BOX\boxobj{n}|}\ar[r]\ar[d,hookrightarrow] & {|\ex^kC|}\\
		{|\BOX\boxobj{n}|}\ar[ur,dotted]
    \end{tikzcd}
   \end{equation}
  There exists a continuous, and hence uniform by $|\BOX\boxobj{n}|$ compact Hausdorff, retraction to $\uniformrealization{\sqcup^i\boxobj{n}\ira\BOX\boxobj{n}}$. 
  Therefore there exists a dotted uniform map making the right triangle in (\ref{eqn:horn.approx}) commute.
  Therefore $C$ is Kan because there exists a dotted cubical function making the left triangle commute [Lemma \ref{lem:uniform.approximation}].  
  The natural inclusion $C\ira\langle\star,C\rangle$, a transfinite composite of homotopy equivalences after applying topological realization [Lemma \ref{lem:extensions}], is a transfinite composite of monic classical weak equivalences [Theorem \ref{thm:equivalence}] and hence a classical weak equivalence.  
\end{proof}

\begin{cor}
  \label{cor:embedding}
  \COREmbedding{}
\end{cor}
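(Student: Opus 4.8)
The plan is to show uniform realization inverts Lipschitz weak equivalences (so descends to the localizations), that the resulting square commutes, and then that the descended functor is bijective on every hom-set. By Corollary \ref{cor:cubical.we} uniform realization carries Lipschitz weak equivalences to uniform weak equivalences, so the universal property of localization produces a functor $\bar{F}\colon h_\infty(\CUBICAL\SETS)\to h_\infty\UNIFORM$ with $\bar{F}B=\uniformrealization{B}$, and likewise the identity on $\CUBICAL\SETS$ (which carries Lipschitz weak equivalences to classical weak equivalences [Proposition \ref{prop:lipschitz.implies.classical}]) and the forgetful functor $\UNIFORM\to\TOP$ (which carries uniform weak equivalences to classical weak equivalences of underlying spaces) induce the remaining two functors. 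Since diagram (\ref{eqn:comparisons}) commutes, $\uniformrealization{-}$ followed by $\UNIFORM\to\TOP$ is topological realization $|-|$; so after localizing, both composites $h_\infty(\CUBICAL\SETS)\to h\TOP$ equal the functor induced by $|-|$, and the square commutes. As a fully faithful functor is injective on isomorphism classes of objects, it then remains only to prove $\bar{F}$ full and faithful.

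So fix cubical sets $B,C$; the goal is bijectivity of $h_\infty(\CUBICAL\SETS)(B,C)\to h_\infty\UNIFORM(\uniformrealization{B},\uniformrealization{C})$. The source is $[B,C]\cong\pi_0\langle B,C\rangle$ [Proposition \ref{prop:lipschitz.weak.homotopy.classes}]. For the target I would use that $\uniformrealization{B}$ is uniformly cofibrant [Proposition \ref{prop:cofibrant.uniform.spaces}] while every $\UNIFORM$-object is fibrant [Proposition \ref{prop:fibrant.uniform.spaces}], so that by the theory of categories of fibrant objects [Appendix \ref{sec:fibrant.objects}] the target is the set of uniform homotopy classes of uniform maps $\uniformrealization{B}\to\uniformrealization{C}$; since $\UNIFORM$ is $\TOP$-tensored and cotensored, a uniform homotopy $\uniformrealization{B}\otimes\I\to\uniformrealization{C}$ is the same datum as a path in $\UNIFORM(\uniformrealization{B},\uniformrealization{C})$, whence this set is $\pi_0\UNIFORM(\uniformrealization{B},\uniformrealization{C})=\pi_0\sing_{\BOX}\UNIFORM(\uniformrealization{B},\uniformrealization{C})$. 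Now Theorem \ref{thm:enriched.equivalence} furnishes a natural classical weak equivalence $\langle B,C\rangle\to\sing_{\BOX}\UNIFORM(\uniformrealization{B},\uniformrealization{C})$, and classical weak equivalences of cubical sets induce bijections on $\pi_0$ (they realize to homotopy equivalences [Corollary \ref{cor:classical.we}] and $\pi_0$ agrees with $\pi_0$ of the realization), so applying $\pi_0$ gives a bijection $\pi_0\langle B,C\rangle\cong\pi_0\UNIFORM(\uniformrealization{B},\uniformrealization{C})$, and composing the isomorphisms yields the bijection sought.

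There remains the bookkeeping that this composite bijection is $\bar{F}$ on hom-sets: tracking an honest cubical function $f\colon B\to C$, its class in $\pi_0\langle B,C\rangle$ is the image of $f$ under the natural inclusion $\CUBICAL\SETS(B,C)\ira\langle B,C\rangle$, and the explicit formula for the comparison map in the proof of Theorem \ref{thm:enriched.equivalence} together with the retraction of Lemma \ref{lem:geometric.sharp.retraction} sends this to the class of $\uniformrealization{f}$; since every element of $[B,C]$ is such an $f$ composed with the formal inverse of a Lipschitz weak equivalence $C\ira\ex^{k}C^{\sharp}$ [Lemma \ref{lem:extensions}, Corollary \ref{cor:cubical.we}] and $\bar{F}$ is a functor, the two maps agree on all of $[B,C]$, with naturality handling the dependence on $B,C$. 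The main obstacle is the content hidden inside Theorem \ref{thm:enriched.equivalence}: unwound, full faithfulness of $\bar{F}$ is the statement that every uniform map $\uniformrealization{B}\to\uniformrealization{C}$ is uniformly homotopic to the realization of a cubical approximation (surjectivity) and that a uniform homotopy between realizations of cubical functions admits a cubical approximation relative the endpoints $B\otimes(\partial\BOX[1]\ira\BOX[1])$ (injectivity), both of which are applications of the cubical approximation theorem [Lemma \ref{lem:adjoint.approximation}]; that theorem carries the genuine work, and the present corollary is its formal packaging.
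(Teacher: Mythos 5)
Your overall route is the one the paper intends: the paper supplies no explicit proof of this corollary, assembling it instead from Corollary \ref{cor:cubical.we} (descent of $\uniformrealization{-}$ to the localizations), the commutativity of diagram (\ref{eqn:comparisons}) (for the square), Propositions \ref{prop:fibrant.uniform.spaces} and \ref{prop:cofibrant.uniform.spaces} together with Brown's theory (to compute the target hom-set as $\pi_0\UNIFORM(\uniformrealization{B},\uniformrealization{C})$), and Theorem \ref{thm:enriched.equivalence} (to compare with $\pi_0\langle B,C\rangle$). All of those steps in your write-up are sound, and you correctly locate the real content in the cubical approximation theorem.

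There is, however, one genuine logical gap: you compute the source hom-set by citing Proposition \ref{prop:lipschitz.weak.homotopy.classes}, but the paper's only proof of that proposition is the three-line chain of bijections at the end of \S\ref{subsec:homotopical.comparisons}, whose first step is precisely an appeal to Corollary \ref{cor:embedding}. As written, your argument is therefore circular within the paper's logic. To close the loop you must identify $h_\infty(\CUBICAL\SETS)(B,C)$ with $\pi_0\langle B,C\rangle$ independently, using only the category-of-cofibrant-objects structure of Proposition \ref{prop:cofibrant.objects}: by the dual of Brown's calculus of fractions, a morphism $B\ra C$ in the localization is represented by a cospan $B\ra C'\xla{\sim} C$ with $C\ra C'$ an acyclic cofibration, so one must show that the monic Lipschitz weak equivalences $C\ira\ex^kC^{\sharp}$ (they are weak equivalences by Lemma \ref{lem:extensions} and Corollary \ref{cor:cubical.we}) are cofinal among such acyclic cofibrations, whence $h_\infty(\CUBICAL\SETS)(B,C)\cong\colim_k\pi_0(\ex^kC^{\sharp})^B=\pi_0\langle B,C\rangle$. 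The cofinality is where cubical approximation (Lemma \ref{lem:adjoint.approximation}) enters once more, applied to a retraction-up-to-homotopy of an arbitrary acyclic cofibration out of $C$; without this step neither your proof of the corollary nor the paper's proof of Proposition \ref{prop:lipschitz.weak.homotopy.classes} can get off the ground.
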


We now give a proof that Lispchitz weak equivalences are classical weak equivalences. 

\begin{proof}[proof of Proposition \ref{prop:lipschitz.implies.classical}]
  For a Lipschitz weak equivalence $\psi:B\ra C$, 
  $$\langle\star,\psi\rangle:\langle\star,B\rangle\ra\langle \star,C\rangle$$
  is a classical weak equivalence of Kan cubical sets [Corollary \ref{cor:classical.we}] and hence a cubical homotopy equivalence.  
  Therefore for each cubical set $A$, $\langle\star,\psi\rangle^A$ is a cubical homotopy equivalence and in particular a bijection after applying $\pi_0$.  
\end{proof}

We can then give a proof that Lipschitz and classical weak equivalences coincide on cubical functions between finite cubical sets.  

\begin{proof}[proof of Proposition \ref{prop:finite.lipschitz}]
  Suppose (1).  
  It suffices to show (2) [Proposition \ref{prop:lipschitz.implies.classical}].  
  Then $|\psi|$ is a classical homotopy equivalence of spaces.  
  Therefore $\uniformrealization{\psi}$ is a uniform homotopy equivalence because $\uniformrealization{A}\times\I,\uniformrealization{B}\times\I$ are compact Hausdorff uniform spaces and therefore compact Hausdorff spaces equipped with their fine uniformities.  
  Therefore $\psi$ is a Lipschitz weak equivalence [Corollary \ref{cor:classical.we}].  
\end{proof}

We now give a proof that cubical sets is a category of cofibrant objects whose weak equivalences are the Lipschitz weak equivalences and whose cofibrations are the monos.  

\begin{proof}[proof of Proposition \ref{prop:cofibrant.objects}]
  The category $\CUBICAL\SETS$, a presheaf category, has all finite coproducts.  
  Lipschitz weak equivalences satisfy the 2-of-out-3 property because classical weak equivalences satisfy the 2-out-of-3 property. 
  Each codiagonal $C\amalg C\ra C$ factors as a mono $C\amalg C\ira C\otimes\BOX[1]$ followed by a cubical homotopy equivalence and hence Lipschitz weak equivalence $C\otimes(\BOX[1]\ra\BOX[0]):C\otimes\BOX[1]\ra C$ [Proposition \ref{prop:strong.to.weak.lipschitz.equivalences}].
  Every initial cubical function $\varnothing\ra C$ is monic.  
  Monos are closed under pushouts in presheaf categories like $\CUBICAL\SETS$.
  
  Consider a monic Lipschitz weak equivalence $\psi:A\ra B$.
  Then $\geometricrealization{\psi}$ is a Lipschitz homotopy equivalence [Corollary \ref{cor:cubical.we}].
  Then every pushout of $\psi$, monic because monos are closed under pushouts, is a Lipschitz homotopy equivalence after taking geometric realizations because $\geometricrealization{-}$ preserves pushouts and pushouts preserve Lipschitz homotopy equivalences.
  Therefore every pushout of $\psi$, monic because pushouts preserve monos in presheaf categories, is also a Lipschitz weak equivalence [Corollary \ref{cor:cubical.we}].  

  It follows that $\CUBICAL\SETS$ is a category of cofibrant objects with the desired weak equivalences and cofibrations.  
  Consider a cubical set $B$.  
  If $B$ is Kan, then every cubical function $A\ra B$ extends along every monic classical weak equivalence from $A$, and in particular every monic Lipschitz weak equivalence from $A$ [Proposition \ref{prop:lipschitz.implies.classical}], and hence $B$ is fibrant. 
  If $B$ is fibrant, then every cubical function $A\ra B$ extends along every monic Lipschitz weak equivalence from $A$, including inclusions of the form $\sqcup^{\pm i}\BOX\boxobj{n}\ira\BOX\boxobj{n}$ for $A=\sqcup^{\pm i}\BOX\boxobj{n}$[Proposition \ref{prop:finite.lipschitz}], and hence $B$ is Kan.  
\end{proof}

We now give a proof that the hom-sets in $h_\infty(\CUBICAL\SETS)$ are of the form $\pi_0\langle-,-\rangle$.

\begin{proof}[proof of Proposition \ref{prop:lipschitz.weak.homotopy.classes}]
  There exist natural bijections of the form
  \begin{align*}
	  [B,C] 
	  &\cong h_\infty\UNIFORM(\uniformrealization{B},\uniformrealization{C}) && [\mathrm{Corollary}\,\ref{cor:embedding}]\\
	  &\cong \pi_0\sing_{\BOX}\UNIFORM(\uniformrealization{B},\uniformrealization{C}) && [\mathrm{Proposition}\,\ref{prop:cofibrant.uniform.spaces}]\\
	  &\cong \pi_0\langle B,C\rangle && [\mathrm{Theorem}\,\ref{thm:enriched.equivalence}]
  \end{align*}
\end{proof}

\section{Cohomology}\label{sec:cohomology}
We introduce cubical and uniform variants of bounded cohomology, the former of which generalizes bounded group cohomology [Example \ref{eg:bounded.group.cohomology}] and the latter of which generalizes bounded singular cohomology [Example \ref{eg:bounded.singular.cohomology}].  
The main result in this section is the representability of cubical bounded cohomology on a large class of cubical sets by a surgical construction on cubical models of Eilenberg-Maclane spaces [Theorem \ref{thm:cubical.cohomology.representable}].
We can then obtain a representable theory on uniform spaces that generalizes bounded singular cohomology on path-connected spaces [Corollary \ref{cor:singular.cohomology.representable}].  
A special case of the embedding of the Lipschitz homotopy category into the uniform homotopy category is that our representable cubical theory coincides with our representable uniform theory [Proposition \ref{prop:cohomological.comparison}].
Along the way, the weak Lipschitz invariance of $\ell_p$ cubical cohomology is investigated [Example \ref{eg:lipschitz.invariance}, Proposition \ref{prop:lipschitz.invariance}].

\subsection{Cubical}
We give a straightforward definition of $\ell_p$-cohomology as the cohomology of the $\ell_p$ cellular cochains on topological realizations; investigate the invariance of $\ell_p$ cohomology [Proposition \ref{prop:lipschitz.invariance}]; and additionally investigate the representability of $\ell_\infty$ cohomology [Theorem \ref{thm:cubical.cohomology.representable}].

\subsubsection{Bounded cohomology}
There exist dotted homomorphisms making
\begin{equation}
  \label{eqn:cubical.cochains}
	\begin{tikzcd}
		\ell_p(C_0,\pi)\ar{r}[above]{\partial^0_{\ell_p}}\ar{d}[description]{\ell_p(sC_{-1}\ira C_0,\pi)}
	  &	\ell_p(C_1,\pi)\ar{r}[above]{\partial^{1}_{\ell_p}}\ar{d}[description]{\ell_p(sC_{0}\ira C_1,\pi)} 
	  & \ell_p(C_2,\pi)\ar{d}[description]{\ell^p(sS_{1}\ira S_2,\pi)}\ar[r]
	  & \cdots
      \\
	    \ell_p(sC_{-1},\pi)\ar[r,dotted]
	  & \ell_p(sC_0,\pi)\ar[r,dotted]
	  & \ell_p(sC_{1},\pi)\ar[r,dotted]
	  & \cdots
	\end{tikzcd}
\end{equation}
commute and natural in cubical sets $C$, where 
$$(\partial^n_{\ell_p}c)(\sigma)=\sum_{i=1}^n(-1)^ic(d_{+i}(\sigma))-c(d_{-i}(\sigma))$$

Write $\chains^*_{\ell_p}(C;\pi)$ for the kernel of the above diagram, regarded as a map from the top connective cochain complex to the bottom connective cochain complex.
Let 
$$\lpcohomology{*}{p}(C;\pi)=H^n\chains^*_{\ell_p}(C;\pi).$$ 

For Abelian groups $\pi$ (regarded as equipped with a constant seminorm), write $\chains^*(C;\pi)$ for  $\chains^*_{\ell_\infty}(C;\pi)$ and $H^*(C;\pi)$ for $\boundedcohomology{*}(C;\pi)$.
The identity function on the underlying Abelian group of a seminormed Abelian group $\pi$ induces a natural \textit{comparison} map
$$\lpcohomology{*}{p}(C;\pi)\ra H^*(C;\pi).$$

\subsubsection{Lipschitz invariance}
We now investigate the invariance of $\ell_p$ cohomology.
Cubical homotopy equivalences $A\simeq B$ induce chain homotopy equivalences $\chains^*_{\ell_p}(A;\pi)\simeq\chains^*_{\ell_p}(B;\pi)$ and therefore isomorphisms $\lpcohomology{*}{p}(A;\pi)\cong\lpcohomology{*}{p}(B;\pi)$.

\begin{eg}
  \label{eg:bounded.group.cohomology}
  Fix a group $G$. 
  Observe
  $$\boundedcohomology{*}(G;\R)\cong\boundedcohomology{*}(\sing_{\BOX}|\cnerve G|;\R)\cong\boundedcohomology{*}(\cnerve G;\R),$$
  where $\boundedcohomology{*}(G;\R)$ denotes the \textit{bounded group cohomology} of a group $G$ with coefficients in the Abelian group of additive reals $\R$ equipped with the usual norm and trivial $\R[G]$-action.
  The first isomorphism is classical \cite{gromov1982volume} after identifying the middle group with the usual kind of bounded singular cohomology based on semi-simplicial sets [Example \ref{eg:bounded.singular.cohomology}].
  The second isomorphism follows because $\sing_{\BOX}|\cnerve G|,\cnerve G$ are classically weakly equivalent and hence cubically homotopy equivalent by fibrancy in the test model structure.
\end{eg}

Ordinary cubical cohomology is representable on the classical homotopy category of cubical sets by the Dold-Kan correspondence and therefore a weak homotopy invariant.
Bounded cubical cohomology, while cubically homotopy invariant, is generally not weak Lipschitz invariant.

\begin{eg}
  \label{eg:lipschitz.invariance}
  Define a cubical set $C$ as the colimit of the solid diagram
  \begin{equation*}
    \begin{tikzcd}
        \BOX[0]\ar[r] & C\\
		\amalg_{i=1}^\infty\BOX[0]\ar[u]
		\ar[r,shift right]
		\ar[r,shift left]
	 &  \amalg_{i=1}^\infty\BOX[1]\ar[u]
	\end{tikzcd}
  \end{equation*}
  where the parallel arrows are induced by cubical functions of the respective forms $\BOX[0\mapsto(0,\ldots,0)]:\BOX[0]\ra\BOX\boxobj{n}$ and  $\BOX[0\mapsto(1,\ldots,1)]:\BOX[0]\ra\BOX\boxobj{n}$.
  Intuitively, $C$ is an infinite wedge sum of ever larger cubical models of circles.
  Then $\boundedcohomology{1}(C\ira C^\sharp;\Z)$ is not an isomorphism because the $1$-cocycle in $\chains^*_{\ell_\infty}(C;\Z)$ assigning to each non-degenerate $1$-cube of $C$ the number $1$ extends to a representative of a non-trivial $1$-cohomology class in $H^1(C;\Z)$ but does not extend to a $1$-cochain on $\chains^*_{\ell_\infty}(C^\sharp;\Z)$.
\end{eg}

In general, $\ell_p$ cubical cohomology is subdivision invariant.

\begin{lem}
  \label{lem:sd.invariance}
  For each cubical set $C$ and seminormed Abelian group $\pi$,
  \begin{equation}
    \label{eqn:cohomological.subdivision}
    \lpcohomology{n}{p}(\gamma^{\pm}_C;\pi):\lpcohomology{n}{p}(C;\pi)\cong\lpcohomology{n}{p}(\sd\,C;\pi).
  \end{equation}
\end{lem}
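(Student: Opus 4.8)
The plan is to realize $(\gamma^{\pm}_C)^{*}\colon\chains^{*}_{\ell_p}(C;\pi)\to\chains^{*}_{\ell_p}(\sd\,C;\pi)$ as a cochain homotopy equivalence with an explicit natural homotopy inverse built from the classical subdivision chain map, and then to check that every chain map and chain homotopy involved is \emph{uniformly locally finite}, so that it restricts to the $\ell_p$-bounded subcomplexes. The key geometric input is that $\varphi_C\colon\geometricrealization{\sd\,C}\cong\geometricrealization{C}$ identifies the CW structure of $\geometricrealization{\sd\,C}$ with a subdivision of that of $\geometricrealization{C}$ in which each open $n$-cell of $\geometricrealization{C}$ contains exactly $2^{n}$ open $n$-cells of $\geometricrealization{\sd\,C}$ --- the number of nondegenerate $n$-cubes of $\sd\,\BOX\boxobj{n}=(\sd\,\BOX[1])^{\otimes n}$ --- and each open $n$-cell of $\geometricrealization{\sd\,C}$ lies in a unique open $n$-cell of $\geometricrealization{C}$; this is visible on representables and inherited for general $C$ by cocontinuity of $\sd$ and $\geometricrealization{-}$.

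First I would introduce the natural \emph{subdivision} cellular chain map $s_{*}\colon\chains_{*}(C)\to\chains_{*}(\sd\,C)$ sending each coarse $n$-cell to the signed sum of the $2^{n}$ fine $n$-cells it contains, assembled cell-by-cell from the representable case using naturality under face maps. Using Lemma~\ref{lem:lipschitz.approximation} and Lemma~\ref{lem:dilation.formula} one checks that $\geometricrealization{\gamma^{\pm}_C}$ collapses all but one of the $2^{n}$ fine $n$-cells inside each coarse $n$-cell $E$ and carries the remaining one homeomorphically onto $E$, so that, with compatible orientations, $\chains_{*}(\gamma^{\pm}_C)\circ s_{*}=\id_{\chains_{*}(C)}$ on the nose. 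Dually --- and this is standard CW subdivision theory applied to the finite complexes $\BOX\boxobj{m}$ and transported naturally --- $s_{*}\circ\chains_{*}(\gamma^{\pm}_C)\simeq\id_{\chains_{*}(\sd\,C)}$ via a natural chain homotopy $h_{*}$ in which each $h_{n}$ sends a fine $n$-cell to a $\Z$-combination of at most a dimension-dependent number of fine $(n+1)$-cells, and each fine $(n+1)$-cell occurs in at most a dimension-dependent number of these.

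Since $s_{*}$, $\chains_{*}(\gamma^{\pm}_C)$ and $h_{*}$ are uniformly locally finite, their transposes preserve the $\ell_p$-condition: the only estimate required is the elementary bound $\bigl\|\textstyle\sum_{i=1}^{c}a_{i}\bigr\|^{p}\leqslant c^{p-1}\sum_{i=1}^{c}\|a_{i}\|^{p}$ for $p<\infty$ together with the obvious $\sup$-bound for $p=\infty$. Passing to the $\ell_p$-bounded cochains therefore yields $s^{*}(\gamma^{\pm}_C)^{*}=\id_{\chains^{*}_{\ell_p}(C;\pi)}$ and $(\gamma^{\pm}_C)^{*}s^{*}\simeq\id_{\chains^{*}_{\ell_p}(\sd\,C;\pi)}$, so $(\gamma^{\pm}_C)^{*}$ is a cochain homotopy equivalence and induces the asserted isomorphism $\lpcohomology{n}{p}(C;\pi)\cong\lpcohomology{n}{p}(\sd\,C;\pi)$. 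The main obstacle is the middle paragraph --- producing the natural subdivision chain map and the homotopy $h_{*}$ for arbitrary (infinite, connection-bearing) cubical sets and verifying their uniform local finiteness from the representable case; once that is in hand the $\ell_p$ estimate and the homological conclusion are routine.
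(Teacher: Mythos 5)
Your overall architecture is the same as the paper's: produce a chain-level homotopy inverse to $\chains_*\gamma^{\pm}_C$ together with natural chain homotopies, verify that all of these maps have uniformly bounded local complexity, and then dualize to conclude that the induced cochain maps restrict to the $\ell_p$-subcomplexes and give a cochain homotopy equivalence. The difference lies in how the chain-level data is produced. You build an explicit natural subdivision chain map $s_*$ (signed sum of the $2^n$ fine cells in each coarse cell) and an explicit prism homotopy, and you correctly identify this construction --- in particular its naturality for arbitrary cubical functions in a site with connections, and the verification that $h_*$ is uniformly locally finite in both directions --- as the main obstacle. The paper sidesteps exactly this obstacle by the method of acyclic models: since $\chains_*\sd$ is free and $\chains_*$ is acyclic on representables, a natural chain homotopy inverse $\Gamma^{\pm}_B$ and the two natural homotopies exist abstractly, with no formulas to check. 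The paper then gets the uniform $\ell_1$-bounds for free by first reducing to the case that $C$ is $(n+1)$-dimensional (legitimate since $\sd$ preserves dimension and $H^n_{\ell_p}$ only sees degrees $\leqslant n+1$), so that only the finitely many models $\BOX\boxobj{0},\ldots,\BOX\boxobj{n+1}$ are relevant and finiteness plus naturality yields the bound; your dimension-dependent local-finiteness constants play the same role. One small point in your favor: you explicitly record the ``co-local-finiteness'' condition (each fine cell occurs in boundedly many images) and the inequality $\|\sum_{i=1}^{c}a_i\|^p\leqslant c^{p-1}\sum_i\|a_i\|^p$ needed for $p<\infty$, which the paper leaves implicit. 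If you want to complete your route without the explicit construction, note that your middle paragraph is precisely an instance of the acyclic models theorem, and invoking it would close the gap you flag.
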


The proof assumes familiarity with the method of acyclic models \cite{eilenberg1953acyclic}.

\begin{proof}
  Take the \textit{$\ell_1$-norm} of an element $\sum_\sigma\lambda_\sigma\sigma\in\chains_mB$ to be
  $$\|\sum_\sigma\lambda_\sigma\sigma\|_1=\sum_\sigma|\lambda_\sigma|.$$

  It suffices to take $C$ $(n+1)$-dimensional by definition of $n$th $\ell_p$ cohomology and dimension-preservation of $\sd$.
  
  The functor $\chains_*\sd$ is free on all cubical sets.
  The functor $\chains_*$ is acyclic on all representable cubical sets.  
  Therefore there exists a chain homotopy inverse $\Gamma^\pm_B$ to $\chains_*\gamma^{\pm}_B$ and chain homotopies $(\chains_*\gamma^{\pm}_B)\Gamma^\pm_B\sim\id_{\chains_*B}$ and $\Gamma^\pm_B(\chains_*\gamma^{\pm}_B)\sim\id_{\chains_*\sd\,B}$ natural in cubical sets $B$ by an acyclic models argument.  
  Write $(\Gamma^\pm_C)^*$ for the following cochain map induced by $\Gamma^\pm_C$:
  $$(\Gamma^\pm)_C^*:\chains^*(\sd\,C;\pi)\ra\chains^*(C;\pi).$$

  The chain map $\Gamma^\pm_B$ and the aforementioned chain homotopies all map cells to elements with uniformly bounded $\ell_1$-norms for the case $B=\BOX\boxobj{0},\cdots,\BOX\boxobj{n+1}$ by finiteness and hence for the case $B=C$ by naturality. 
  Therefore $(\Gamma^\pm_C)^*$ restricts and corestricts to a cochain homotopy inverse to $\chains^*_{\ell_p}(\gamma^{\pm}_C;\pi)$.
  Hence (\ref{eqn:cohomological.subdivision}).  
\end{proof}

On sharp and connected cubical sets, $\ell_p$ cohomology is invariant under sharp replacement.

\begin{lem}
  \label{lem:sharp.invariance}
  For each connected sharp cubical set $C$ and seminormed Abelian group $\pi$,
  $$\lpcohomology{*}{p}(C\ira C^\sharp;\pi):\lpcohomology{*}{p}(C^\sharp;\pi)\cong\lpcohomology{*}{p}(C;\pi).$$
\end{lem}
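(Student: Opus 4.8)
The plan is to prove the sharper statement that, when $C$ is connected and sharp, the natural inclusion $\eta_C\colon C\hookrightarrow C^{\sharp}$ is a \emph{cubical} homotopy equivalence. The lemma follows at once: cubical homotopy equivalences induce chain homotopy equivalences of $\ell_p$ cochain complexes, hence isomorphisms on $\ell_p$ cohomology (recorded in \S\ref{sec:cohomology}). Note that a cubical homotopy $B\otimes\sd^k\BOX[1]\to C$ induces, in each fixed degree, a chain homotopy whose $\ell_1$-bound depends only on the degree and on $k$, so the boundedness needed to dualize into $\ell_p$ cochains is automatic; this is why a cubical homotopy equivalence suffices.

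Since $C$ is sharp [Definition \ref{defn:sharp.replacement}], fix a retraction $r\colon C^{\sharp}\to C$ with $r\eta_C=\id_C$. It remains to produce a cubical homotopy $\eta_C r\simeq\id_{C^{\sharp}}$. I would first reduce this using naturality of the inclusion $X\hookrightarrow X^{\sharp}$: applied to $r$ it gives $\eta_C r=r^{\sharp}\circ\eta_{C^{\sharp}}$, while applying the functor $(-)^{\sharp}$ to $r\eta_C=\id_C$ gives $\id_{C^{\sharp}}=r^{\sharp}\circ(\eta_C)^{\sharp}$, where $r^{\sharp}\colon C^{\sharp\sharp}\to C^{\sharp}$. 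Post-composing with $r^{\sharp}$ therefore reduces the task to exhibiting a cubical homotopy between the two cubical functions $\eta_{C^{\sharp}},\,(\eta_C)^{\sharp}\colon C^{\sharp}\to C^{\sharp\sharp}$, which—again by naturality, now at $\eta_C$—already agree on the subpresheaf $\eta_C(C)\subseteq C^{\sharp}$; so one in fact wants a homotopy rel $\eta_C(C)$.

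To construct this homotopy I would proceed cube by cube over the canonical presentation $C^{\sharp}=\colim_{\BOX\boxobj{n}\to C}\cnerve\boxobj{n}$. The successive extension problems have target $C^{\sharp\sharp}$, which is locally modeled by the cubical nerves $\cnerve\boxobj{m}$; these carry natural cubical self-homotopies [Lemma \ref{lem:sharp.local.convexity}], and $(-)^{\sharp}$, being cocontinuous and sending tensor products to binary products, preserves cubical homotopies—so the required $\sd^k\BOX[1]$-homotopies can be filled in compatibly with the naturality constraints, at the cost of a large subdivision parameter $k$. Connectedness of $C$ serves to run the argument over a single connected component (equivalently, to keep $C^{\sharp}$ connected). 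As a consistency check, after geometric realization the homotopy so produced must recover the Lipschitz deformation retraction of $\geometricrealization{C^{\sharp}}$ onto $\geometricrealization{C}$ supplied by Lemma \ref{lem:sharp.replacement}.

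The main obstacle is precisely this last construction. For \emph{every} connected $C$ the inclusion $\eta_C$ is already a Lipschitz weak equivalence—$\geometricrealization{\eta_C}$ is always a Lipschitz homotopy equivalence by Lemma \ref{lem:sharp.replacement}—yet that alone does not force an $\ell_p$-cohomology isomorphism (Example \ref{eg:lipschitz.invariance}). The genuine work is to exploit sharpness to upgrade this to a \emph{subdivision-free} cubical homotopy $\eta_C r\simeq\id_{C^{\sharp}}$, equivalently to solve the naturality-constrained filling problem for $\eta_{C^{\sharp}}\simeq(\eta_C)^{\sharp}$; this obstruction-theoretic argument, leaning on the natural local contractibility of the cubical nerves that assemble $C^{\sharp}$, is the nontrivial content of the lemma.
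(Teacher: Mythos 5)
Your reduction is sound, and your self-diagnosis is accurate: the step you flag as ``the main obstacle'' --- producing a cubical homotopy $\eta_{C^\sharp}\sim(\eta_C)^\sharp:C^\sharp\ra C^{\sharp\sharp}$ --- is exactly where the proposal stops being a proof. The cube-by-cube filling you sketch over the presentation $C^\sharp=\colim_{\BOX\boxobj{n}\ra C}\cnerve\boxobj{n}$ does not go through as described: a cubical homotopy here is a single map $C^\sharp\otimes\sd^k\BOX[1]\ra C^{\sharp\sharp}$ with one finite $k$, whereas successive extensions over the cells of an infinite $C^\sharp$ will in general force $k$ to grow without bound; $C^{\sharp\sharp}$ is not Kan, so there is no horn-filling property to drive the induction; and the choices must be compatible over a colimit diagram while the retraction $r$ destroys the naturality that makes the analogous gluing work in Lemma \ref{lem:sharp.idempotency}. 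So, as written, there is a genuine gap at the decisive step.

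The gap is fillable, and more cheaply than you propose: the homotopy $\eta_{C^\sharp}\sim(\eta_C)^\sharp$ already follows from Lemma \ref{lem:sharp.idempotency} by whiskering. Writing $\rho_C$ for the natural retraction of $\eta_{C^\sharp}$ constructed there (which, via Lemma \ref{lem:qt.cocontinuous} and the proof of Lemma \ref{lem:sharp.sharp}, is $\qua\,\epsilon_{\tri C}$), the triangle identity $\epsilon_{\tri C}\circ\tri\,\eta_C=\id_{\tri C}$ gives $\rho_C\circ(\eta_C)^\sharp=\id_{C^\sharp}$, so precomposing the homotopy $\eta_{C^\sharp}\rho_C\sim\id_{C^{\sharp\sharp}}$ with $(\eta_C)^\sharp$ yields $\eta_{C^\sharp}\sim(\eta_C)^\sharp$, and postcomposing with $r^\sharp$ gives $\iota r\sim\id_{C^\sharp}$ exactly as in your reduction. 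Completed this way your argument is purely combinatorial, does not use connectedness, and is genuinely different from the paper's proof, which never claims $C\ira C^\sharp$ is a cubical homotopy equivalence for general sharp $C$: the paper instead passes to geometric realization, uses sharpness together with the Lipschitz homotopy equivalence $\geometricrealization{C}\simeq\geometricrealization{C^\sharp}$ to get a Lipschitz homotopy $\geometricrealization{\iota\rho}\sim\id$, converts this into a cubical homotopy only after precomposing with a high subdivision $\sd^kC^\sharp\ra C^\sharp$ [Lemma \ref{lem:uniform.approximation}] --- which is where connectedness enters --- and then concludes by subdivision invariance of $\ell_p$ cohomology [Lemma \ref{lem:sd.invariance}]. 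Your opening observation that a cubical homotopy dualizes to an $\ell_p$ cochain homotopy because its $\ell_1$-bounds depend only on the degree and on $k$ is correct and matches the paper's stated homotopy invariance.
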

\begin{proof} 
  For all cubical functions $\psi:A\ra B$, let $\psi_*$ denote the induced map 
  $$\psi_*=\lpcohomology{*}{p}(\psi;\pi):\lpcohomology{*}{p}(A;\pi)\ra\lpcohomology{*}{p}(B;\pi).$$
  Let $\iota$ be inclusion $C\ira C^\sharp$.  
  Let $\mathscr{H}$ be the category of pseudometric spaces and Lipschitz homotopy classes of maps between them.  
  
  There exists a retraction $\rho$ to $C\ira C^\sharp$ by $C$ sharp. 
  The $1$-Lipschitz map $\geometricrealization{\iota}$ represents an $\mathscr{H}$-isomorphism [Lemma \ref{lem:extensions}] with retraction, an inverse, represented by $\geometricrealization{\rho}$.  
  Thus there exists a Lipschitz homotopy $\geometricrealization{\iota\rho}\sim\id_{\geometricrealization{C^\sharp}}$.
  Hence for $k\gg 0$ the following diagram commutes up to cubical homotopy by cubical approximation [Lemma \ref{lem:uniform.approximation}].  
  \begin{equation*}
    \begin{tikzcd}
	          C^\sharp\ar{rrrr}[above]{\id_{C^\sharp}} 
	  & & & & C^\sharp\\
	          \sd^kC^\sharp\ar{u}[left]{({\gamma}^{\mins(k-4)}(\firstvertexmap\lastvertexmap)^2)_{C^\sharp}}
	          \ar{rrrr}[below]{({\gamma}^{\mins(k-4)}(\firstvertexmap\lastvertexmap)^2)_{C^\sharp}} 
	  & & & & C^\sharp\ar{u}[right]{\iota\rho}
	\end{tikzcd}
  \end{equation*}
  The left vertical and bottom horizontal cubical functions induce isomorphisms on $\ell_p$ cohomology [Lemma \ref{lem:sd.invariance}].  
  Thus $\rho_*$, a section to $\iota_*$ by $\rho$ a retraction to $\iota$, is also a retraction to $\iota_*$.
\end{proof}

It then follows that $\ell_p$ cubical cohomology, a cubical homotopy invariant and therefore weakly Lipschitz invariant on connected Kan cubical sets [Proposition \ref{prop:cofibrant.objects}], is actually weakly Lipschitz invariant on more general sharp and connected cubical sets.   

\begin{prop}
  \label{prop:lipschitz.invariance}
  For a Lipschitz weak equivalence $\psi:A\ra B$,
  $$\lpcohomology{*}{p}(\psi;\pi):\lpcohomology{*}{p}(A;\pi)\ra\lpcohomology{*}{p}(B;\pi)$$ 
  is an isomorphism for all seminormed Abelian groups $\pi$ if $A,B$ are both sharp and connected. 
\end{prop}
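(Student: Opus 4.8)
The plan is to generalize the proof of Lemma~\ref{lem:sharp.invariance}: cubical approximation turns $\psi$ together with a Lipschitz homotopy inverse of $\geometricrealization{\psi}$ into cubical functions satisfying the two ``homotopy inverse'' identities up to cubical homotopy and a single subdivision, and those identities descend to $\lpcohomology{*}{p}(-;\pi)$ because multiple subdivision is an isomorphism there [Lemma~\ref{lem:sd.invariance}]. For a cubical function $\theta$ write $\theta_\ast$ for the induced map on $\lpcohomology{*}{p}(-;\pi)$, so $(\theta\vartheta)_\ast=\vartheta_\ast\theta_\ast$ and cubically homotopic cubical functions have $\theta_\ast=\theta'_\ast$; for a cubical set $X$ write $\kappa_X\colon\sd^kX\ra X$ for the collapse $(\gamma^{\mins(k-4)}(\firstvertexmap\lastvertexmap)^2)_X$ of Lemma~\ref{lem:uniform.approximation}. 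Being a composite of maps of the form $\gamma^{\pm}$, the map $(\kappa_X)_\ast$ is an isomorphism [Lemma~\ref{lem:sd.invariance}] and $\geometricrealization{\kappa_X}$ is a Lipschitz homotopy equivalence [Lemmas~\ref{lem:lipschitz.approximation},~\ref{lem:sd.invariance}].

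First I would isolate the following device, already implicit in the proof of Lemma~\ref{lem:sharp.invariance}: \emph{if $u,v\colon X\ra Y$ are cubical functions with $X$ connected and $Y$ sharp, and $\geometricrealization{u}\sim\geometricrealization{v}$ by a Lipschitz homotopy, then for $k\gg0$ there is a cubical homotopy $u\kappa_X\sim v\kappa_X$, hence $u_\ast=v_\ast$.} One feeds the Lipschitz homotopy---a Lipschitz map $\geometricrealization{X\otimes\BOX[1]}\ra\geometricrealization{Y}$ restricting to $\geometricrealization{u}$ and $\geometricrealization{v}$ on the two ends---into Lemma~\ref{lem:uniform.approximation} with $\iota=X\otimes(\partial\BOX[1]\ira\BOX[1])$ and $\alpha=(u,v)\colon X\amalg X\ra Y$; the output, valued a priori in $Y^\sharp$, is retracted back into $Y$ using sharpness of $Y$, and by the left triangle of that lemma together with $\sd^k(X\otimes\BOX[1])=\sd^kX\otimes\sd^k\BOX[1]$ its two ends are $u\kappa_X$ and $v\kappa_X$, so it is a cubical homotopy between them; applying $(-)_\ast$ and cancelling the isomorphism $(\kappa_X)_\ast$ yields $u_\ast=v_\ast$. (This is exactly where sharpness is used, and Example~\ref{eg:lipschitz.invariance} shows it cannot be omitted.)

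Next I would construct the approximate inverse. Since $A$ and $B$ are connected, $\geometricrealization{\psi}$ is a Lipschitz homotopy equivalence [Corollary~\ref{cor:cubical.we}]; fix a Lipschitz $g\colon\geometricrealization{B}\ra\geometricrealization{A}$ with Lipschitz homotopies $g\geometricrealization{\psi}\sim\id_{\geometricrealization{A}}$ and $\geometricrealization{\psi}g\sim\id_{\geometricrealization{B}}$. Cubical approximation [Lemma~\ref{lem:uniform.approximation}] applied to $g$, followed by the retraction $A^\sharp\ra A$ given by sharpness of $A$, produces for $k\gg0$ a cubical function $\chi\colon\sd^kB\ra A$ with a Lipschitz homotopy $\geometricrealization{\chi}\sim g\,\geometricrealization{\kappa_B}$. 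Since $\kappa$ is natural, $\kappa_B\circ\sd^k\psi=\psi\circ\kappa_A$, whence $\geometricrealization{\chi\circ\sd^k\psi}\sim g\geometricrealization{\psi}\geometricrealization{\kappa_A}\sim\geometricrealization{\kappa_A}$ and $\geometricrealization{\psi\circ\chi}\sim\geometricrealization{\psi}g\,\geometricrealization{\kappa_B}\sim\geometricrealization{\kappa_B}$, all four maps $\chi\circ\sd^k\psi,\kappa_A\colon\sd^kA\ra A$ and $\psi\circ\chi,\kappa_B\colon\sd^kB\ra B$ having connected source and sharp target.

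Finally I would apply the device of the second paragraph to the pairs $(\chi\circ\sd^k\psi,\kappa_A)$ and $(\psi\circ\chi,\kappa_B)$, getting $(\sd^k\psi)_\ast\chi_\ast=(\kappa_A)_\ast$ and $\chi_\ast\psi_\ast=(\kappa_B)_\ast$, both isomorphisms. The first forces $\chi_\ast$ to be injective and the second forces it to be surjective, so $\chi_\ast$ is an isomorphism; then $\psi_\ast=(\chi_\ast)^{-1}(\kappa_B)_\ast$ is an isomorphism, which is the claim. The main obstacle is the cubical-approximation bookkeeping---the homotopy-approximation device of the second paragraph, and the systematic descent from $(-)^\sharp$-valued approximations back into the original cubical set---but this requires only machinery already developed for Lemmas~\ref{lem:uniform.approximation} and~\ref{lem:sharp.invariance}.
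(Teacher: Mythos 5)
Your argument is correct and is essentially the paper's proof: both construct an approximate cubical inverse $\sd^k B\ra A$ to $\psi$ by cubically approximating a Lipschitz homotopy inverse of $\geometricrealization{\psi}$ (available by Corollary \ref{cor:cubical.we}), and both cancel the subdivision collapses via Lemma \ref{lem:sd.invariance} to conclude by the same section/retraction argument. The only cosmetic difference is that the paper routes the approximation through $\psi^\sharp:A^\sharp\ra B^\sharp$ and invokes Lemma \ref{lem:sharp.invariance} to compare back down the vertical inclusions, whereas you retract the $(-)^\sharp$-valued approximations directly into $A$ and $B$; the role played by sharpness is identical in both versions.
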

\begin{proof}
  Consider the following commutative diagram of solid arrows.
  \begin{equation*}
    \begin{tikzcd}
		A\ar{rr}[above]{\psi}\ar[d,hookrightarrow] & & B\ar[d,hookrightarrow]\\
		A^\sharp\ar{rr}[description]{\psi^\sharp} & & B^\sharp\\
		\sd^k A^\sharp\ar{u}[left]{({\gamma}^{\mins(k-4)}(\firstvertexmap\lastvertexmap)^2)_{A^\sharp}}
		  \ar{rr}[below]{\sd^k\psi^\sharp} & & \sd^kB^\sharp\ar{u}[right]{({\gamma}^{\mins(k-4)}(\firstvertexmap\lastvertexmap)^2)_{B^\sharp}}\ar[ull,dotted]
	\end{tikzcd}
  \end{equation*}
  All of the vertical induces isomorphisms on $\ell_p$ cohomology [Lemmas \ref{lem:sd.invariance}, \ref{lem:sharp.invariance}].  
  And all of the arrows induce Lipschitz homotopy equivalences between geometric realizations of connected cubical sets [Lemma \ref{lem:lipschitz.approximation}, Corollary \ref{cor:cubical.we}].  
  Thus for $k\gg 0$ there exists a dotted cubical function making the entire diagram commute up to cubical homotopy by cubical approximation [Lemma \ref{lem:uniform.approximation}].  
  It therefore follows that all of the arrows induce isomorphisms on $\ell_p$ cohomology.  
\end{proof}

\subsubsection{Representability}
Bounded cohomology is generally not representable.

\begin{eg}
  \label{eg:non-representability}
  Bounded cohomology does not send disjoint unions to products:
  $$\boundedcohomology{0}\left(\amalg_{i=1}^\infty\BOX[0];\R\right)=\ell_\infty(\N,\R)\neq\prod_i\R=\prod_i\boundedcohomology{0}(\BOX[0];\R).$$
  Therefore $\boundedcohomology{0}(-;\pi)$ is not representable in the Lipschitz homotopy category, even for sharp cubical sets like $\amalg_{i=1}^\infty\BOX[0]$.  
\end{eg}

We will show that cubical cohomology can be expressed in terms of representable functors on the Lipschitz homotopy category as follows.  
For each Abelian group $\pi$, define $C(\pi,n)_m$ to be the following Abelian group natural in $\BOX$-objects $\boxobj{m}$:
$$C(\pi,n)_m=\CONNECTIVECHAINCOMPLEXES\left(\chains_*\cnerve\boxobj{m},\Sigma^n\pi\right).$$

\begin{eg}
  For each Abelian group $\pi$, there exists a cubical homotopy equivalence
  $$\cnerve\,\pi\simeq C(\pi,1).$$
\end{eg}

Fix a seminormed Abelian group $\pi$.  
We represent $\boundedcohomology{*}(-;\pi)$ in certain cases as follows.   
As a cubical set, $C(\pi,n)$ admits a filtration whose $i$th stage is the subpresheaf of $C(\pi,n)$ whose $m$-cubes are all chain maps $\chains_*\cnerve\boxobj{m}\ra\Sigma^n\pi$ which, in degree $n$, sends each $n$-cell of $|\cnerve\boxobj{m}|$ to an element in $(\Sigma^n\pi)_n=\pi$ with norm no greater than $i$.

\begin{lem}
  \label{lem:prerepresentation}
  There exists an isomorphism
  \begin{equation}
	\label{eqn:prerepresentation}
    \boundedcohomology{*}(B^\sharp;\pi)\cong\colim_{i\ra\infty}\left[B,C(\pi,n)^{(i)}\right]
  \end{equation}
  natural in seminormed Abelian groups $\pi$.
\end{lem}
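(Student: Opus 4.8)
The plan is to realize $C(\pi,n)$ as a representing object for cellular cocycles on $B^\sharp$ and then to track the $\ell_\infty$-filtration through the Lipschitz mapping complex. First I would observe that $\chains_*(-)^\sharp\colon\CUBICAL\SETS\ra\CONNECTIVECHAINCOMPLEXES$ is a composite of cocontinuous functors --- $(-)^\sharp$ is a left adjoint, and $\chains_*$ corresponds under Dold-Kan to $\Z[-]$ --- hence a cocontinuous functor between locally presentable categories, so it admits a right adjoint $R$. The Yoneda Lemma and the identity $\cnerve\boxobj{m}=(\BOX\boxobj{m})^\sharp$ then identify $C(\pi,n)$ with $R(\Sigma^n\pi)$, whence $\CUBICAL\SETS(B,C(\pi,n))\cong\CONNECTIVECHAINCOMPLEXES(\chains_*B^\sharp,\Sigma^n\pi)$ naturally in $B$ and $\pi$; the right side is the group of $n$-cocycles of the cellular cochain complex $\Hom(\chains_*B^\sharp,\pi)$. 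Since every cell of $|B^\sharp|$ is the image of a cell of some $|\cnerve\boxobj{m}|$, and cubical functions act cellularly, sending each cell to a signed cell or to zero, a cubical function $B\ra C(\pi,n)$ factors through the $i$th filtration stage $C(\pi,n)^{(i)}$ precisely when the corresponding cocycle has $\ell_\infty$-norm at most $i$; therefore $\colim_i\CUBICAL\SETS(B,C(\pi,n)^{(i)})$ is the group $Z^n\chains^*_{\ell_\infty}(B^\sharp;\pi)$ of \emph{bounded} $n$-cocycles. For $B$ a disjoint union the resulting colimit of products singles out uniformly bounded families, which is consistent with the non-representability of bounded cohomology in Example \ref{eg:non-representability}.

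Next I would pass to $h_\infty(\CUBICAL\SETS)$. By Proposition \ref{prop:lipschitz.weak.homotopy.classes} one has $[B,C(\pi,n)^{(i)}]=\pi_0\langle B,C(\pi,n)^{(i)}\rangle$, so it suffices to prove: (a) $\colim_i\pi_0\CUBICAL\SETS(B,C(\pi,n)^{(i)})\cong\boundedcohomology{n}(B^\sharp;\pi)$; and (b) the natural map $\colim_i\pi_0\CUBICAL\SETS(B,C(\pi,n)^{(i)})\ra\colim_i\pi_0\langle B,C(\pi,n)^{(i)}\rangle$ is a bijection. For (a), a cubical homotopy $B\otimes\sd^k\BOX[1]\ra C(\pi,n)^{(i')}$ between two maps $B\ra C(\pi,n)^{(i)}$ corresponds, via the adjunction of the first paragraph applied to the cylinder together with an Eilenberg-Zilber argument, to a bounded cellular cochain on $|B^\sharp|$ --- a prism operator applied to the cocycle on the cylinder, whose $\ell_\infty$-norm is controlled by $i'$ --- witnessing that the two corresponding cocycles differ by a bounded coboundary; conversely, if $c_1-c_0=\partial^n_{\ell_\infty}b$ with $b$ bounded, a sufficiently fine cubical cylinder carries a homotopy into $C(\pi,n)^{(i')}$ for $i'$ controlled by $\|c_0\|_\infty,\|c_1\|_\infty,\|b\|_\infty$. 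Hence the left-hand colimit is $Z^n\chains^*_{\ell_\infty}(B^\sharp;\pi)/\im\partial^n_{\ell_\infty}=\boundedcohomology{n}(B^\sharp;\pi)$.

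Claim (b) is the step I expect to be the main obstacle, since $C(\pi,n)^{(i)}$ is not itself a representing object --- only its union $C(\pi,n)$ is --- so one must control how sharp replacement and the transfinite composite of $\ex$ interact with the $\ell_\infty$-filtration. The point to make precise is that each stage $(\ex^{j}(C(\pi,n)^{(i)})^{\sharp})^{B}$ of the tower defining $\langle B,C(\pi,n)^{(i)}\rangle$ is again a bounded-cocycle object, now for a subdivided and sharp-replaced cell structure on $|B|$ whose bounded cohomology is isomorphic to $\boundedcohomology{n}(B^\sharp;\pi)$ by subdivision- and sharp-invariance of $\ell_p$ cohomology [Lemmas \ref{lem:sd.invariance}, \ref{lem:sharp.invariance}, \ref{lem:sharp.replacement}, Corollary \ref{cor:cubical.we}]: sharp replacement and cubical subdivision act on cocycles by pullback along cellular collapses, which send cells to signed cells or zero and are therefore $\ell_\infty$-norm non-increasing, whereas the inverse equivalences (the retractions of Lemmas \ref{lem:sharp.replacement} and \ref{lem:extensions}, and the acyclic-models chain homotopy inverse of Lemma \ref{lem:sd.invariance}) inflate $\ell_\infty$-norms only by factors bounded cell-by-cell. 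Such factors are absorbed by the colimit over $i$, so $\colim_i\pi_0\langle B,C(\pi,n)^{(i)}\rangle$ records exactly bounded $n$-cocycles on $|B^\sharp|$ modulo bounded coboundaries, giving (b); naturality in $\pi$ is inherited from that of every construction used, which completes the argument.
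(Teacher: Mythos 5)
Your proposal is correct and follows essentially the same route as the paper's proof: identify strict maps into the filtration stages with norm-bounded cocycles on $B^\sharp$ (your right adjoint $R$ to $\chains_*(-)^\sharp$ is exactly the paper's $(D(\pi,n))_\sharp$ for $D(\pi,n)=\CONNECTIVECHAINCOMPLEXES(\chains_*\BOX[-],\Sigma^n\pi)$), identify homotopies with norm-bounded coboundaries, and collapse the $\ex$-tower by norm-controlled subdivision and sharp invariance. The only organizational difference is that the paper packages your step (b) as a ``homotopical sharp adjunction'' proved via an explicitly norm-controlled retraction of $\chains_*(\BOX[1]\ira\cnerve[1])$ together with Lemma \ref{lem:sharp.whitehead}, rather than by citing Lemma \ref{lem:sharp.invariance} (which formally requires connectedness and would not give the uniformity over components that you correctly note is needed), but the underlying cell-by-cell norm estimates are the same.
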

\begin{proof}
  Let $D(\pi,n)$ be the cubical Abelian group
  $$D(\pi,n)=\CONNECTIVECHAINCOMPLEXES(\chains_*\BOX[-],\Sigma^n\pi).$$
  
  Call a chain map from a cellular chain complex to $\Sigma^n\pi$ \textit{$i$-bounded} if it sends $n$-cells to elements in $(\Sigma^n\pi)_n=\pi$ with norm no greater than $i$.  
  Regard $D(\pi,n)$ as filtered so that the $m$-cubes in the $i$th stage of its filtration consists of all $i$-bounded chain maps $\chains_*\BOX\boxobj{m}\ra\Sigma^n\pi$.   

  The inclusion $|\BOX[1]\ira\cnerve[1]|$ of CW complexes admits a cellular retraction mapping vertices to vertices and $m$-cells onto the unique $1$-cell for each $m>0$.  
  It therefore follows that the inclusion $\chains_*(\BOX[1]\ira\cnerve[1])$ of chain complexes admits a retraction which degree-wise sends each cell in $|\cnerve[1]|$ to either a cell in $|\BOX[1]|$ or $0$. 
  Every $i$-bounded chain map from $\chains_*(A\otimes\BOX[1])=\chains_*A\otimes\chains_*\BOX[1]$ to $\Sigma^n\pi$ thus extends to an $i$-bounded chain map on $\chains_*(A\otimes\cnerve[1])=\chains_*A\otimes\chains_*\cnerve[1]$.  
  It therefore follows that there exist natural bijections
  \begin{align*}
           \CUBICAL\SETS((-)\otimes\BOX[m],(D(\pi,n)^{(i)})_\sharp)_0
	&\cong \CUBICAL\SETS((-\otimes\BOX[m])^\sharp,(D(\pi,n)^{(i)}))_0\\
	&\cong \CUBICAL\SETS((-)^\sharp\otimes\cnerve[m],(D(\pi,n)^{(i)}))_0\\
	&\cong \CUBICAL\SETS((-)^\sharp\otimes\BOX[m],(D(\pi,n)^{(i)}))_0
  \end{align*}
  natural in $\BOX$-objects of the form $[m]$ for $m=0,1$. 
  Thus there exists a natural bijection 
  \begin{equation}
	\label{eqn:homotopical.sharp.adjunction}
    \pi_0\CUBICAL\SETS(-,(D(\pi,n)^{(i)})_\sharp)\cong\pi_0\CUBICAL\SETS((-)^\sharp,D(\pi,n)^{(i)}).
  \end{equation}

  Let $Z^{(n,i)}(-;\pi)$ denote the endofunctor on $\CUBICAL\SETS$ defined by
  $$Z^{(n,i)}(-;\pi)=\CUBICAL\SETS(-,D(\pi,n)^{(i)}).$$
 
  Under the Dold-Kan correspondence, $\pi_0Z^{(n,i)}(A;\pi)$ corresponds to the the set of all $n$-cocycles $c$ of $\chains^*_{\ell_\infty}(A;\pi)$ with $\sup_{\sigma\in A_n}\|c(\sigma)\|\leqslant i$ up to the smallest equivalence relating two such cocycles if their difference is the coboundary of an $(n-1)$-cochain $h$ of $\chains^*_{\ell_\infty}(A;\pi)$ with $\sup_{\sigma\in A_{n-1}}\|h(\sigma)\|\leqslant i$. 
  
  There exist natural isomorphisms
  \begin{align}
	  \colim_i[B,C(\pi,n)^{(i)}]
	  &\cong\label{eqn:spectrum.definition} \colim_i[B,(D(\pi,n)^{(i)})_{\sharp}]\\
	  &\cong\label{eqn:inverse.sd} \colim_{i,k}\pi_0\CUBICAL\SETS(\sd^kB,(D(\pi,n)^{(i)}_{\sharp})^{\sharp})\\
	  &\cong\label{eqn:spectrum.deformation} \colim_{i,k}\pi_0\CUBICAL\SETS(\sd^kB^\sharp,D(\pi,n)^{(i)}_{\sharp})\\
	  &\cong\label{eqn:adjunction} \colim_{i,k}\pi_0Z^{(n,i)}(\sd^kB^\sharp;\pi)\\
	  &\cong\label{eqn:dold-kan} \colim_{k}\boundedcohomology{n}(\sd^kB^\sharp;\pi)\\
	  &\cong\label{eqn:sd-invariance} \boundedcohomology{n}(B^\sharp;\pi),
  \end{align}
  where the colimits over $k\ra\infty$ are taken over all maps induced by $\gamma^{\mins k}$, because: (\ref{eqn:spectrum.definition}) by $C(\pi,n)^{(i)}=(D(\pi,n)^{(i)})_\sharp$; (\ref{eqn:inverse.sd}) by a formula for the hom-sets $[-,-]$ [Proposition \ref{prop:lipschitz.weak.homotopy.classes}]; (\ref{eqn:spectrum.deformation}) by the cubical homotopy equivalence $(D(\pi,n)^{(i)})_\sharp\simeq((D(\pi,n)^{(i)})_\sharp)^\sharp$ [Lemma \ref{lem:sharp.whitehead}]; (\ref{eqn:adjunction}) by (\ref{eqn:homotopical.sharp.adjunction}); (\ref{eqn:dold-kan}) by the Dold-Kan Correspondence; and (\ref{eqn:sd-invariance}) by subdivision invariance of $\ell_p$ cohomology [Lemma \ref{lem:sd.invariance}].
\end{proof}

Let $C_\infty(\pi,n)$ denote the coequalizer of the diagram
$$
\xymatrix{\coprod_{i}C(\pi,n)^{(i)}
  \ar@<.7ex>[rrrrrrr]^{(\amalg_iC(\pi,n)^{(i)})\otimes\cnerve\delta_-}
  \ar@<-.7ex>[rrrrrrr]_{\amalg_i(C(\pi,n)^{(i)}\ira C(\pi,n)^{(i+1)}\ira\amalg_jC(\pi,n)^{(j)})\otimes\cnerve\delta_+} 
  & 
  & 
  & 
  & 
  &
  & 
  & 
  \coprod_{i}C(\pi,n)^{(i)}\otimes\cnerve[1]}$$

\begin{eg}
  \label{eg:non-kan.BG}
  Consider a finitely generated group $G$ with word-length function
  $$\ell:G\ra\N.$$
  
  Fix a monotone function $\beta:\N\ra\N$.
  Define $\cnerve^\beta G$ as a mapping telescope like $C_\infty(\pi,n)$ based on a filtration of $\cnerve G$ whose $i$th stage is the maximal subpresheaf of $\cnerve G$ having as its $1$-cubes all $g\in G$ with $\ell(g)\leqslant\beta(i)$.  
  If $\beta(n)\ra\infty$, then the natural cubical function $\cnerve^\beta G\ra \cnerve G$ is a classical weak equivalence but generally not a Lipschitz weak equivalence.
  In the case $\Z$ is equipped with the standard absolute value word-length function and $\beta=\id_{\N}$, then $\cnerve^\beta\Z=C_\infty(\Z,1)$.
\end{eg}

The cubical set $C_\infty(\pi,n)$ admits the structure of a cubical semigroup whose multiplication is induced from dotted cubical functions making commutative diagrams of the form
\begin{equation*}
  \begin{tikzcd}
	    (C(\pi,n)^{(i)}\otimes\BOX[1])\times(C(\pi,n)^{(j)}\otimes \cnerve[1])\ar{r}[above]{\sigma}\ar[d,dotted] 
	  & C(\pi,n)^{(i)}\times C(\pi,n)^{(j)}\otimes \cnerve[1]^2\ar[d,equals]\\
        C(\pi,n)^{(i+j)}\otimes\BOX[1]
	  & C(\pi,n)^{(i)}\times C(\pi,n)^{(j)}\otimes \cnerve[1]^2
	    \ar{l}[below]{\mu\otimes \cnerve(\vee_{\boxobj{2}})}
  \end{tikzcd}
\end{equation*}
where $\sigma$ permutes factors and $\mu$ is induced by addition on $\pi$.
The map
$$C_\infty(\pi,n)\ra C(\pi,n)$$
induced by inclusions $C(\pi,n)^{(i)}\ira C(\pi,n)$ is a cubical semigroup homomorphism by the associativity of $\vee_{\boxobj{2}}$ and addition on $\pi$.

\begin{thm}
  \label{thm:cubical.cohomology.representable}
  \THMCubicalCohomologyRepresentable{}
\end{thm}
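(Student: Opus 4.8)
The statement bundles two separate isomorphisms — the right vertical arrow $[B,C(\pi,n)]\cong H^n(B;\pi)$ and the left vertical arrow $[B,C_\infty(\pi,n)]\cong\boundedcohomology{n}(B;\pi)$ — together with commutativity of the square, all natural in $\pi$ and in $B$ over the stated class of cubical sets. The plan is to establish the right arrow, then the left arrow, then commutativity.

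For the right arrow I would first identify $C(\pi,n)$ with $D(\pi,n)_\sharp$, where $D(\pi,n)=\CONNECTIVECHAINCOMPLEXES(\chains_*\BOX[-],\Sigma^n\pi)$ is the cubical Abelian group Dold--Kan-corresponding to $\Sigma^n\pi$: by the sharp adjunction, the Dold--Kan correspondence, and the identification $\chains_*(-)\leftrightarrow\Z[-]$, one has $(D(\pi,n)_\sharp)_m=\CUBICAL\SETS(\cnerve\boxobj{m},D(\pi,n))=\CONNECTIVECHAINCOMPLEXES(\chains_*\cnerve\boxobj{m},\Sigma^n\pi)=C(\pi,n)_m$. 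Being a cubical Abelian group, $C(\pi,n)$ is Kan [Theorem 2.1, \cite{tonks1992cubical}], hence fibrant for the Lipschitz structure [Proposition \ref{prop:cofibrant.objects}]. Then I would rerun the chain of isomorphisms in the proof of Lemma \ref{lem:prerepresentation} with the norm bound $(i)$ and the colimit over $i$ suppressed throughout — the cellular retraction to $\chains_*(\BOX[1]\ira\cnerve[1])$ used there is blind to the bound, so the analogue of equation \ref{eqn:homotopical.sharp.adjunction} holds unbounded — and combine it with Proposition \ref{prop:lipschitz.weak.homotopy.classes}, Lemma \ref{lem:sharp.whitehead}, the Dold--Kan correspondence and subdivision invariance [Lemma \ref{lem:sd.invariance}] to get $[B,C(\pi,n)]\cong\colim_k H^n(\sd^k B^\sharp;\pi)\cong H^n(B^\sharp;\pi)\cong H^n(B;\pi)$, the last step because $B\ira B^\sharp$ is a classical weak equivalence [Lemma \ref{lem:sharp.replacement}, Corollary \ref{cor:cubical.we}].

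For the left arrow, note that $C_\infty(\pi,n)$ is the mapping telescope of the filtration $C(\pi,n)^{(0)}\ira C(\pi,n)^{(1)}\ira\cdots$, whose colimit is $C(\pi,n)$ itself — every $m$-cube of $C(\pi,n)$ is a chain map out of a complex that is finitely generated in each degree, hence $i$-bounded for $i\gg0$. When $B$ is finite, the functor $[B,-]=\pi_0\langle B,-\rangle$ preserves filtered colimits: $(-)^\sharp$ is cocontinuous, $\ex$ and $(-)^B$ preserve filtered colimits because they are built from hom-functors out of the finite cubical sets $\sd\BOX\boxobj{m}$ and $B\otimes\BOX\boxobj{m}$, and $\pi_0$ is a left adjoint; since each finite stage of the telescope is cubically homotopy equivalent to the corresponding $C(\pi,n)^{(i)}$, Lemma \ref{lem:prerepresentation} yields $[B,C_\infty(\pi,n)]\cong\colim_i[B,C(\pi,n)^{(i)}]\cong\boundedcohomology{n}(B^\sharp;\pi)$. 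To replace $B^\sharp$ by $B$: when $B$ is finite every cochain is automatically $\ell_\infty$, so $\boundedcohomology{n}(B;\pi)=H^n(B;\pi)$, and $\boundedcohomology{n}(B^\sharp;\pi)$ is identified with it via a cubical approximation to the Lipschitz homotopy equivalence $\geometricrealization{B}\simeq\geometricrealization{B^\sharp}$ together with subdivision and cubical-homotopy invariance of $\ell_p$ cohomology; when $B$ is connected and Kan, $B$ is sharp (being fibrant, $\id_B$ lifts along the monic classical weak equivalence $B\ira B^\sharp$), so Lemma \ref{lem:sharp.invariance} gives $\boundedcohomology{n}(B^\sharp;\pi)\cong\boundedcohomology{n}(B;\pi)$ at once.

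Commutativity is then bookkeeping: the bottom arrow $[B,C_\infty(\pi,n)\ra C(\pi,n)]$ is induced stagewise by the inclusions $C(\pi,n)^{(i)}\ira C(\pi,n)$, which on chains send the class of an $i$-bounded $n$-cocycle to its class with boundedness forgotten — precisely the comparison map $\boundedcohomology{n}(B;\pi)\ra H^n(B;\pi)$ — so the square commutes once the two vertical isomorphisms are in place. I expect the main obstacle to be the non-finite connected-and-Kan case of the left arrow: there $\langle B,-\rangle$ fails to commute with the telescope colimit on the nose, and one must still show the canonical comparison $\colim_i\langle B,C(\pi,n)^{(i)}\rangle\ra\langle B,C_\infty(\pi,n)\rangle$ is a classical weak equivalence — a $\lim^1$-type no-phantom-maps argument using that $B$ is Kan, that the telescope's transition maps are monomorphisms, and the cubical approximation machinery [Lemma \ref{lem:adjoint.approximation}]. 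A secondary technical chore is carrying naturality in $B$ through Lemma \ref{lem:prerepresentation} (stated naturally only in $\pi$) and through the sharp-replacement reductions.
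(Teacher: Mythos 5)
Your overall architecture matches the paper's: both routes run through Lemma \ref{lem:prerepresentation} to get $\boundedcohomology{n}(B^\sharp;\pi)\cong\colim_i[B,C(\pi,n)^{(i)}]$, identify each telescope stage $C_\infty(\pi,n)^{(i)}$ with $C(\pi,n)^{(i)}$ up to cubical homotopy equivalence, handle the right vertical arrow by the unbounded version of the same computation, and reduce commutativity to the observation that the stagewise inclusions $C(\pi,n)^{(i)}\ira C(\pi,n)$ induce the forgetting of the norm bound on cocycles. Up to that point the proposal is sound.

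The genuine gap is precisely the step you defer as ``the main obstacle'': passing from $\colim_i[B,C_\infty(\pi,n)^{(i)}]$ to $[B,C_\infty(\pi,n)]$ when $B$ is connected and Kan but not finite. You propose a $\lim^1$/no-phantom-maps argument but do not supply it, and no purely formal such argument is available: phantom maps into infinite telescopes are a genuine phenomenon in ordinary homotopy theory, and Kanness of $B$ alone does not rule them out. The paper's mechanism is metric, and it is exactly what the hypothesis ``finite or both connected and Kan'' is there to feed: in either case each component of $\geometricrealization{B}$ has finite diameter (for connected Kan $B$ this is Lemma \ref{lem:bounded.nerve}, since $\BOX[-]/B$ is filtered), hence so does $\geometricrealization{\sd^k(B\otimes\BOX\boxobj{m})}$; traversing from stage $i$ to stage $j$ of the realized telescope $\geometricrealization{C_\infty(\pi,n)}$ costs distance on the order of $j-i$, so every Lipschitz map out of such a bounded-diameter source --- and every homotopy between two such, its source still having bounded diameter --- factors through $\geometricrealization{C_\infty(\pi,n)^{(i)}}$ for $i\gg 0$. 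Hence every relevant cubical function $\sd^k(B\otimes\BOX\boxobj{m})\ra C_\infty(\pi,n)$ factors through a finite stage, giving surjectivity and injectivity of the comparison at once. This single argument subsumes your finite-$B$ filtered-colimit computation and closes the Kan case, which is the case actually needed for the intended applications (e.g.\ bounded group cohomology via $B=\cnerve G$). Without it, the left vertical isomorphism is unproved for infinite $B$. A secondary loose end, which the paper shares, is the identification $\boundedcohomology{n}(B^\sharp;\pi)\cong\boundedcohomology{n}(B;\pi)$ for finite $B$ that is neither connected nor sharp, where Lemma \ref{lem:sharp.invariance} does not literally apply; your sketch via cubical approximation is a reasonable way to fill it.
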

\begin{proof}
  Write $C_\infty(\pi,n)^{(j)}$ for the image of the canonical cubical function
  $$\coprod_{i=1}^j C(\pi,n)^{(i)}\otimes\cnerve[1]\ra C_\infty(\pi,n).$$

  Consider the following solid arrow diagram of Abelian groups:
  \begin{equation*}
	  \begin{tikzcd}
	      \boundedcohomology{n}(B;\pi)\ar[rr,dotted]
		&
		& {[B,C_\infty(\pi,n)]} 
		\\
		  \boundedcohomology{n}(B^\sharp;\pi)\ar{u}[left]{\boundedcohomology{n}(B\ira B^\sharp;\pi)}\ar[dotted]{r}[below]{\cong}
		& \colim_i[B,C(\pi,n)^{(i)}]\ar{r}[below]{\cong}
		& \colim_i[B,C_\infty(\pi,n)^{(i)}]\ar{u}[right]{\cong}
	  \end{tikzcd}
  \end{equation*}
  Kan cubical sets are sharp by $C\ira C^{\sharp}$ a weak equivalence from a fibrant object in the test model structure [Lemma \ref{lem:extensions}].  
  Therefore the left vertical arrow is a natural isomorphism [Lemma \ref{lem:prerepresentation}].
  There exists a bottom horizontal natural dotted isomorphism [Lemma \ref{lem:prerepresentation}].  
  The bottom solid horizontal arrow is an isomorphism because inclusion $C(\pi,n)^{(i)}\ira C_\infty(\pi,n)^{(i)}$ is a cubical homotopy equivalence and hence a Lipschitz weak equivalence.
  The right vertical arrow is an isomorphism because $\geometricrealization{B}$ and hence $\geometricrealization{\sd^kB}\times\geometricrealization{\BOX\boxobj{n}}$ has finite diameter [Lemma \ref{lem:bounded.nerve}], hence every Lipschitz map $\geometricrealization{\sd^kB}\times\geometricrealization{\BOX\boxobj{n}}\ra\geometricrealization{C_\infty(\pi,n)}$ factors through $\geometricrealization{C_{\infty}(\pi,n)^{(i)}}$ for $i\gg 0$, and hence every cubical function of the form $\sd^k(B\otimes\BOX\boxobj{n})\ra C_\infty(\pi,n)$ factors through $C_{\infty}(\pi,n)^{(i)}$ for $i\gg 0$.
\end{proof}

\begin{eg}
  \label{eg:non-kan.BG.cocycle}
  The first bounded cohomology of a group always vanishes.
  However, we can define an alternative cohomology theory $H^n_\beta(-;\pi)$, with respect to a \textit{bounding function} $\beta$, on groups $G$ equipped with word-length functions, by the rule 
  $$H^n_\beta(G;\pi)=[\cnerve^\beta G,C_\infty(\pi,n)],$$
  where $\cnerve^\beta G$ is a certain surgical construction on the cubical nerve $\cnerve G$ [Example \ref{eg:non-kan.BG}].
  The group $H^n_\beta(G;\pi)$ is intepretable as ordinary group cohomology, but where the growth of the cochains as word lengths increase is constrained by the bounding function $\beta$.  
  In the case $G=\Z$ is equipped its usual absolute value norm and $\beta=\id_{\N}$, $H^1_\beta(\Z;\Z)\neq 0$.
\end{eg}

\subsection{Uniform}
Let $K_\infty(\pi,n),K(\pi,n)$ denote the uniform realizations of the respective cubical sets $C_\infty(\pi,n),C(\pi,n)$.  
The semigroup multiplications induce a multiplication turning the induced uniform map $K_\infty(\pi,n)\ra K(\pi,n)$ into a homomorphism of commutative semigroup objects in $\UNIFORM$.

\begin{eg}
  \label{eg:bounded.singular.cohomology}
  There exists a natural identification
  $$\boundedcohomology{*}(\sing_{\DEL_{\pls}}X;\R)=\boundedcohomology{*}(\sing_{\BOX_1^*}X;\R),$$
  where $\sing_{\DEL_{\pls}}$ and $\sing_{\BOX_1^*}$ are the right adjoint to topological realization functors from, respectively, semi-simplicial sets and classically defined cubical sets, and the cochain complexes used to define the cohomologies are defined as the bounded subcomplexes of the usual cochain complexes for semi-simplicial and classically defined cubical sets \cite[Example 3.2, Proposition 4.3]{loeh2015cubical}.
  The same kind of reasoning used to give this kind of reasoning, the method of acyclic models \cite{eilenberg1953acyclic}, shows that we can make the identification $\boundedcohomology{*}(\sing_{\DEL_+}X;\R)=\boundedcohomology{*}(\sing_{\BOX}X;\R)$.
\end{eg}

\begin{cor}
  \label{cor:singular.cohomology.representable}
  \CORSingularCohomologyRepresentable{}
\end{cor}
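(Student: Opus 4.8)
The plan is to deduce the corollary from Theorem~\ref{thm:cubical.cohomology.representable} applied to the cubical set $\sing_{\BOX}X$, and then transport the resulting square into $h_\infty\UNIFORM$ along uniform realization. First I would check that $\sing_{\BOX}X$ is a \emph{connected} and \emph{Kan} cubical set. Connectedness is immediate: $\pi_0\sing_{\BOX}X\cong\pi_0 X=\star$ since $X$ is path-connected. For the Kan property, a cubical function $\sqcup^{\pm i}\BOX\boxobj{n}\to\sing_{\BOX}X$ is the same as a continuous map $|\sqcup^{\pm i}\BOX\boxobj{n}|\to X$, which extends along $|\sqcup^{\pm i}\BOX\boxobj{n}|\ira|\BOX\boxobj{n}|$ because that inclusion of compact Hausdorff spaces admits a retraction --- the same device used in the proof of Corollary~\ref{cor:classical.we}. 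Hence Theorem~\ref{thm:cubical.cohomology.representable} applies with $B=\sing_{\BOX}X$: we obtain dotted isomorphisms in a commuting square whose top row is the cubical comparison map $\boundedcohomology{n}(\sing_{\BOX}X;\pi)\to H^n(\sing_{\BOX}X;\pi)$ and whose bottom row is $[\sing_{\BOX}X,C_\infty(\pi,n)]\to[\sing_{\BOX}X,C(\pi,n)]$ in $h_\infty(\CUBICAL\SETS)$, induced by $C_\infty(\pi,n)\to C(\pi,n)$. The top row is exactly the top row required by the corollary, and $\boundedcohomology{n}(\sing_{\BOX}X;\pi)$, $H^n(\sing_{\BOX}X;\pi)$ are the cubical bounded and ordinary cohomologies by definition, so only the bottom row needs translating.

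To translate it, I would use the full and faithful embedding of Corollary~\ref{cor:embedding}, induced by uniform realization, to get natural isomorphisms $[\sing_{\BOX}X,C]_{h_\infty(\CUBICAL\SETS)}\cong[\uniformrealization{\sing_{\BOX}X},\uniformrealization{C}]_{h_\infty\UNIFORM}$; since $\uniformrealization{C_\infty(\pi,n)}=K_\infty(\pi,n)$, $\uniformrealization{C(\pi,n)}=K(\pi,n)$, and uniform realization carries $C_\infty(\pi,n)\to C(\pi,n)$ to $K_\infty(\pi,n)\to K(\pi,n)$, the bottom row becomes $[\uniformrealization{\sing_{\BOX}X},K_\infty(\pi,n)]\to[\uniformrealization{\sing_{\BOX}X},K(\pi,n)]$ with the correct map between them. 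It then remains to produce a natural isomorphism $[\uniformrealization{\sing_{\BOX}X},\uniformrealization{C}]_{h_\infty\UNIFORM}\cong[X_{\mathrm{fine}},\uniformrealization{C}]_{h_\infty\UNIFORM}$, where $X_{\mathrm{fine}}$ denotes $X$ with its fine uniformity; this is the heart of the matter.

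This last identification is the main obstacle. Morally $\uniformrealization{\sing_{\BOX}X}$ should be a cofibrant approximation of $X_{\mathrm{fine}}$ in the category of fibrant objects $\UNIFORM$ of Proposition~\ref{prop:fibrant.uniform.spaces} --- it is uniformly cofibrant by Proposition~\ref{prop:cofibrant.uniform.spaces} --- but the topological counit $|\sing_{\BOX}X|\to X$ does \emph{not} define a uniform map $\uniformrealization{\sing_{\BOX}X}\to X_{\mathrm{fine}}$, because the fine uniformity is rigid for incoming maps, and uniform realization has no right adjoint, so there is no formal counit either. I would instead exploit the dual flexibility: a uniform map \emph{out of} a fine-uniformity space into a target whose uniformity is induced by a metric is nothing more than a continuous map of underlying spaces (here using that $X$, being path-connected, is connected and lies in the convenient category of spaces). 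Consequently the $\TOP$-valued functor $\UNIFORM(X_{\mathrm{fine}},\uniformrealization{-})$ on $\CUBICAL\SETS$ is naturally isomorphic to $\TOP(X,|-|)$ up to the standard comparison between metric and CW realizations. Feeding this into the enriched equivalence of Theorem~\ref{thm:enriched.equivalence} (which presents $\sing_{\BOX}\UNIFORM(\uniformrealization{\sing_{\BOX}X},\uniformrealization{C})$, up to classical weak equivalence, as $\langle\sing_{\BOX}X,C\rangle$), Theorem~\ref{thm:equivalence}, and the formula $[-,-]\cong\pi_0\langle-,-\rangle$ of Proposition~\ref{prop:lipschitz.weak.homotopy.classes}, I expect a chain of natural isomorphisms $[X_{\mathrm{fine}},\uniformrealization{C}]_{h_\infty\UNIFORM}\cong\pi_0\langle\sing_{\BOX}X,C\rangle\cong[\sing_{\BOX}X,C]_{h_\infty(\CUBICAL\SETS)}\cong[\uniformrealization{\sing_{\BOX}X},\uniformrealization{C}]_{h_\infty\UNIFORM}$. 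The delicate points are matching the $\TOP$-enrichments and legitimising the cofibrant-approximation step; a safe route is probably to treat compact $X$ first --- where $X_{\mathrm{fine}}$ is literally the uniform realization of a cubical model, as in Corollary~\ref{cor:spaces} --- and then present a general $X$ as a filtered colimit of compact spaces, controlling $C_\infty(\pi,n)$ under the colimit by the finite-diameter argument in the proof of Theorem~\ref{thm:cubical.cohomology.representable}.

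Granting this identification, pasting the three steps yields the asserted square: the bottom-left group becomes $[X_{\mathrm{fine}},K_\infty(\pi,n)]$ and the bottom-right group becomes $[X_{\mathrm{fine}},K(\pi,n)]$, the bottom horizontal arrow is the one induced by $K_\infty(\pi,n)\to K(\pi,n)$, and the dotted verticals are the images of the dotted verticals of Theorem~\ref{thm:cubical.cohomology.representable}; in particular the right-hand vertical arrow lands in $H^n(\sing_{\BOX}X;\pi)$ and is an isomorphism because it does so already in the cubical square. Commutativity of the full square and naturality in $X$ and $\pi$ are inherited from Theorem~\ref{thm:cubical.cohomology.representable} together with the naturality of the embedding of Corollary~\ref{cor:embedding}, of Theorem~\ref{thm:enriched.equivalence}, and of the identification $\UNIFORM(X_{\mathrm{fine}},\uniformrealization{-})\cong\TOP(X,|-|)$.
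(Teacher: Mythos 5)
Your first two steps are sound: $\sing_{\BOX}X$ is connected and Kan, so Theorem \ref{thm:cubical.cohomology.representable} applies with $B=\sing_{\BOX}X$, and the full and faithful embedding of Corollary \ref{cor:embedding} (recorded for this purpose as Proposition \ref{prop:cohomological.comparison}) converts the bottom row into $[\uniformrealization{\sing_{\BOX}X},K_\infty(\pi,n)]\ra[\uniformrealization{\sing_{\BOX}X},K(\pi,n)]$. The paper gives no explicit argument beyond this, so everything rests on the step you yourself call the heart of the matter: identifying $[X,\uniformrealization{C}]$ (fine uniformity on $X$) with $[\uniformrealization{\sing_{\BOX}X},\uniformrealization{C}]$ in $h_\infty\UNIFORM$. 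Your proposed mechanism for that step is broken. The observation $\UNIFORM(X,\uniformrealization{-})\cong\TOP(X,|-|)$ is correct, but it cannot be fed into Theorem \ref{thm:enriched.equivalence}: that theorem computes $\sing_{\BOX}\UNIFORM(\uniformrealization{B},\uniformrealization{C})$, the space of \emph{uniform} maps between \emph{metric} realizations, which is strictly smaller than $\TOP(X,|C|)$, and the two have different path components precisely in the cases at issue. Indeed $\pi_0\TOP(X,|C_\infty(\pi,n)|)$ is the set of classical homotopy classes; since $C_\infty(\pi,n)\ra C(\pi,n)$ is a classical weak equivalence (a mapping telescope mapped to its colimit), the chain of isomorphisms you ``expect'' would give $[X,K_\infty(\pi,n)]\cong H^n(\sing_{\BOX}X;\pi)$ and force the comparison map $\boundedcohomology{n}\ra H^n$ to be an isomorphism for every path-connected $X$, which is false and contradicts the corollary itself. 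The same computation shows that $X$ with its fine uniformity cannot be uniformly cofibrant in general, so $[X,-]\neq\pi_0\UNIFORM(X,-)$ and the ``dual flexibility'' of maps out of a fine space is of no use here.

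What is actually needed is a zigzag of uniform weak equivalences connecting $X$ (fine uniformity) to $\uniformrealization{\sing_{\BOX}X}$: in the category of fibrant objects of Proposition \ref{prop:fibrant.uniform.spaces}, the hom-set $[X,Y]$ is a filtered colimit of homotopy classes over uniform weak equivalences $X'\ra X$, so it is controlled by replacements mapping \emph{into} $X$, not by the mapping space \emph{out of} $X$. You correctly note that the topological counit $|\sing_{\BOX}X|\ra X$ fails to be uniform for these uniformities, but you never produce a substitute. The fallback of treating compact $X$ first and passing to a filtered colimit does not close the gap either: a general path-connected space is not a filtered colimit of compacta in any sense respected by $[-,K_\infty(\pi,n)]$, and failure to commute with such colimits is exactly the pathology of bounded cohomology highlighted in Example \ref{eg:non-representability}. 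Until the required zigzag is exhibited, the bottom corners of your square have not been identified with those of Theorem \ref{thm:cubical.cohomology.representable}, and the corollary remains unproved.
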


\subsection{Comparisons}
For each $n$, there exists an isomorphism 
$$H^*(B;\pi)\cong H^*(|B|;\pi)$$
natural in cubical sets $B$ and Abelian groups $\pi$, because $\chains^*(B,\pi)$ is the cellular cochain complex of $|B|$ when $\pi$ is trivially seminormed.  
A special case of the embedding of the Lipschitz homotopy category into the uniform homotopy category [Corollary \ref{cor:embedding}] is the following refinement of that isomorphism.

\begin{prop}
  \label{prop:cohomological.comparison}
  Uniform realization induces an isomorphism
  $$\left[B,C_\infty(\pi,n)\right]\cong \left[\uniformrealization{B},K_\infty(\pi,n)\right]$$
  of Abelian groups natural in cubical sets $B$ and seminormed Abelian groups $\pi$.
\end{prop}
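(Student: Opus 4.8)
The plan is to read the statement off Corollary~\ref{cor:embedding} and then promote the resulting bijection to an isomorphism of Abelian groups. First I would observe that the full and faithful embedding $h_\infty(\CUBICAL\SETS)\ira h_\infty\UNIFORM$ induced by uniform realization [Corollary~\ref{cor:embedding}] specializes at $C=C_\infty(\pi,n)$ to a bijection
\[
  [B,C_\infty(\pi,n)]\;\cong\;[\uniformrealization{B},\uniformrealization{C_\infty(\pi,n)}]\;=\;[\uniformrealization{B},K_\infty(\pi,n)]
\]
natural in cubical sets $B$. Unwinding, this bijection is $\pi_0$ applied to the natural classical weak equivalence $\langle B,C_\infty(\pi,n)\rangle\ra\sing_\BOX\UNIFORM(\uniformrealization{B},K_\infty(\pi,n))$ of Theorem~\ref{thm:enriched.equivalence}, using the formula $[-,-]\cong\pi_0\langle-,-\rangle$ [Proposition~\ref{prop:lipschitz.weak.homotopy.classes}] on the left and the uniform cofibrancy of $\uniformrealization{B}$ [Proposition~\ref{prop:cofibrant.uniform.spaces}] on the right. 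Naturality in $\pi$ is immediate, since every construction entering $C_\infty(\pi,n)$, $K_\infty(\pi,n)$, and this weak equivalence is functorial in $\pi$.

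Next I would pin down the two group structures. The cubical set $C_\infty(\pi,n)$ is a cubical commutative semigroup with multiplication induced by addition on $\pi$, and since $g\mapsto -g$ is a bounded homomorphism $\pi\ra\pi$ it induces a cubical endomorphism of $C_\infty(\pi,n)$ carrying each $C(\pi,n)^{(i)}$ into itself. Hence the colimit presentation $[B,C_\infty(\pi,n)]\cong\colim_i[B,C(\pi,n)^{(i)}]$ from the proof of Theorem~\ref{thm:cubical.cohomology.representable} exhibits $[B,C_\infty(\pi,n)]$ as a genuine Abelian group: the sum of classes represented at stages $i$ and $j$ lies at stage $i+j$, and negatives lie at the same stage because $\|-g\|=\|g\|$. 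Uniform realization is monoidal and preserves coproducts [Corollary~\ref{cor:uniform.realization}], so it carries the cubical functions assembling $C_\infty(\pi,n)$, its multiplication, and its negation to uniform maps exhibiting $K_\infty(\pi,n)$ as a commutative semigroup object with negation in $\UNIFORM$, thereby equipping $[\uniformrealization{B},K_\infty(\pi,n)]$ with an Abelian group structure natural in $\pi$.

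Finally I would check the bijection of the first paragraph is a homomorphism. The weak equivalence $\langle B,-\rangle\ra\sing_\BOX\UNIFORM(\uniformrealization{B},\uniformrealization{-})$ of Theorem~\ref{thm:enriched.equivalence} is natural in its second argument, hence compatible with the multiplication, unit, and negation maps transported to both sides; applying $\pi_0$ therefore turns it into a homomorphism of the two Abelian group structures, which is the bijection above. Combining with the two preceding paragraphs yields the asserted natural isomorphism of Abelian groups.

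The main obstacle is the bookkeeping in the middle steps: one must verify that the mapping-complex construction $\langle B,-\rangle$ --- built from $(-)^{\sharp}$, iterated $\ex$, $(-)^{B}$, and a filtered colimit --- and the comparison of Theorem~\ref{thm:enriched.equivalence} are lax monoidal in the sense needed to transport the cubical commutative semigroup structure of $C_\infty(\pi,n)$ coherently, and likewise that uniform realization does the same for $K_\infty(\pi,n)$; since $\otimes$ and $\times$ of cubical sets do not coincide one has to track which monoidal structure each functor preserves. None of this is deep, and everything about the underlying bijection and its naturality in $B$ is already contained in Corollary~\ref{cor:embedding} and Theorem~\ref{thm:enriched.equivalence}.
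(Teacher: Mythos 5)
Your proposal is correct and follows the paper's route exactly: the paper offers no written proof, presenting the proposition as an immediate special case of Corollary \ref{cor:embedding} applied at $C=C_\infty(\pi,n)$, which is precisely your first paragraph, with the group structures transported by monoidality of uniform realization. One caveat on your supplementary verification: the colimit presentation $[B,C_\infty(\pi,n)]\cong\colim_i[B,C(\pi,n)^{(i)}]$ is established in the proof of Theorem \ref{thm:cubical.cohomology.representable} only for $B$ finite or connected and Kan (it rests on the finite diameter of $\geometricrealization{B}$), so for general $B$ the Abelian group structure should instead be read off directly from the cubical commutative semigroup structure on $C_\infty(\pi,n)$ together with the strict zero cube and the filtration-preserving negation endomorphism induced by $g\mapsto -g$, which you already exhibit.
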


\section{Conclusion}
Lipschitz homotopy theory naturally extends to the model categorical setting.  
Consider a closed Cartesian monoidal presheaf model category $\modelcat{1}$ in which all objects are cofibrant.
Further suppose there exists a distinguished \textit{generating diagram} $J$ for the acyclic cofibrations, a diagram in the arrow category of $\modelcat{1}$ for which the fibrations in $\modelcat{1}$ are characterized as having suitably natural right lifting properties against the objects in $J$ \cite{barthel2013construction}.
The choice of $J$ determines a kind of metric structure on $\modelcat{1}$ as follows.  
An algebraic small object argument builds a fibrant replacement monad $R$ for $\modelcat{1}$ from $J$ as a transfinite colimit of pointed endofunctors.
A weak equivalence in $\modelcat{1}$ is exactly a homotopy equivalence after applying $R$.  
We can call a weak equivalence $\zeta$ in $\modelcat{1}$ a \textit{Lipschitz weak equivalence} if for each $\modelcat{1}$-object $o$, $\zeta^o$ is a homotopy equivalence after applying one of the finite intermediate stages in the construction of $R$.  
In the case $\modelcat{1}$ is a category of presheaves over a test category \cite{cisinskiprefaisceaux,grothendieck1983pursuing}, a natural question is whether there exists a choice of generating diagram $J$ so that the localization by the resulting class of Lipschitz weak equivalences is equivalent to the Lipschitz homotopy category of cubical sets.
The special case where $\modelcat{1}$ is the category of simplicial sets, each of which is plausibly sharp in a Lipschitz-theoretic sense (eg. \cite{rezk1998fibrations}), is of particular interest in simplicial $\ell_p$ cohomology.

\section{Acknowledgements}
The first author was partially supported by AFOSR grant FA9550-16-1-0212.

\addcontentsline{toc}{section}{Appendix}
\addtocontents{toc}{\protect\setcounter{tocdepth}{0}}
\appendix

\section{Simplicial sets}\label{sec:simplicial}
We factor both geometric realization and sharp replacement [Definition \ref{defn:sharp.replacement}] through simplicial sets.  
These factorizations make it easy to formally construct certain maps needed in proofs throughout the paper.
Let $\DEL$ denote the category of non-empty finite ordinals $[0],[1],\ldots$ and all monotone functions between them.
Let $\SIMPLICIAL\SETS$ denote the category of functors $\OP\DEL\ra\SETS$ and natural transformations between them.  
\textit{Simplicial sets} and \textit{simplicial functions} are the objects and morphisms of $\SIMPLICIAL\SETS$. 
Write $\snerve\cat{1}$ for the \textit{simplicial nerve} natural in a small category $\cat{1}$, defined as naturally sending each $\DEL$-object $[n]$ to the set of functors $[n]\ra\cat{1}$.  
Let $\tri\dashv\qua$ denote the monoidal adjunction
$$\tri:\CUBICAL\SETS\lras\SIMPLICIAL\SETS:\qua$$
with $\tri$ naturally sending a representable $\BOX\boxobj{n}$ to the simplicial nerve $\snerve\,\boxobj{n}$.
Formally, $\tri$ is cocontinuous.  
Let $\qua$ denote the right adjoint to $\tri$.  
Less formally, the monad $\qua\,\tri$ is cocontinuous.

\begin{lem}
  \label{lem:qt.cocontinuous}
  There exists an isomorphism
  $$C^{\sharp}\cong\qua\,\tri\,C$$
  natural in cubical sets $C$.  
\end{lem}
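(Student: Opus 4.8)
The plan is to use cocontinuity to reduce everything to representable cubical sets. Both $(-)^\sharp$ and the monad $\qua\,\tri$ are cocontinuous endofunctors of $\CUBICAL\SETS$: the former because it is a left adjoint, the latter by the observation recorded immediately before this lemma. A cocontinuous endofunctor of a presheaf category is determined up to canonical isomorphism by its restriction along the Yoneda embedding $\BOX[-]\colon\BOX\ira\CUBICAL\SETS$ — writing $C=\colim_{\BOX\boxobj{n}\ra C}\BOX\boxobj{n}$, any natural isomorphism between the values on representables propagates along this colimit — so it suffices to exhibit an isomorphism $(\BOX\boxobj{n})^\sharp\cong\qua\,\tri\,\BOX\boxobj{n}$ natural in $\BOX$-objects $\boxobj{n}$.

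By the defining property of $(-)^\sharp$ and of $\tri$, the left-hand side is $\cnerve\boxobj{n}$ and the right-hand side is $\qua\,\snerve\boxobj{n}$. I would then compute the latter cube by cube: the adjunction $\tri\dashv\qua$ together with the Yoneda Lemma give
\[
  (\qua\,\snerve\boxobj{n})_m=\SIMPLICIAL\SETS(\tri\,\BOX\boxobj{m},\snerve\boxobj{n})=\SIMPLICIAL\SETS(\snerve\boxobj{m},\snerve\boxobj{n}),
\]
and, since the simplicial nerve of small categories is fully faithful, this set is naturally $\CATS(\boxobj{m},\boxobj{n})=(\cnerve\boxobj{n})_m$. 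Under this identification the cubical operators of $\qua\,\snerve\boxobj{n}$, which act by precomposition with $\snerve$ (equivalently $\tri$) applied to $\BOX$-morphisms, become precomposition with the $\BOX$-morphisms themselves — that is, the cubical operators of $\cnerve\boxobj{n}$. Hence $\qua\,\snerve\boxobj{n}\cong\cnerve\boxobj{n}$ as cubical sets, naturally in $\boxobj{n}$. Equivalently and more briefly: $\qua\,\snerve\cong\cnerve$ as functors $\CATS\ra\CUBICAL\SETS$, since both send a small category $\cat{1}$ to the cubical set $\boxobj{m}\mapsto\CATS(\boxobj{m},\cat{1})$; one then restricts along the inclusion $\BOX\ira\CATS$.

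Assembling the previous two paragraphs yields the desired natural isomorphism $C^\sharp\cong\qua\,\tri\,C$. The one step carrying any content is the cube-by-cube comparison, and the place to be careful is checking compatibility with \emph{all} the cubical structure maps — notably the coconnections induced by $\vee_{\boxobj{2}}$, which distinguish the site $\BOX$ used here from the traditional cube category — but once full faithfulness of $\snerve$ is invoked this becomes the tautology that restricting a functor $\boxobj{n}\ra\cat{1}$ along a $\BOX$-morphism is restricting a functor along a functor. The only external input is cocontinuity of $\qua\,\tri$, which is exactly the claim made just before the statement and which I would cite directly rather than reprove.
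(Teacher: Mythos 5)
There is a genuine gap, and it is a circularity. Your argument has two steps: (i) identify $(\BOX\boxobj{n})^\sharp=\cnerve\boxobj{n}$ with $\qua\,\tri\,\BOX\boxobj{n}=\qua\,\snerve\boxobj{n}$ naturally in $\boxobj{n}$, and (ii) propagate this along the canonical colimit $C=\colim_{\BOX\boxobj{n}\ra C}\BOX\boxobj{n}$ using cocontinuity of both endofunctors. Step (i) is correct and is indeed the easy part: the Yoneda/adjunction computation $(\qua\,\snerve\boxobj{n})_m\cong\SIMPLICIAL\SETS(\snerve\boxobj{m},\snerve\boxobj{n})\cong\CATS(\boxobj{m},\boxobj{n})=(\cnerve\boxobj{n})_m$ via full faithfulness of the simplicial nerve, together with the compatibility of the cubical operators, is fine. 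The problem is step (ii). Cocontinuity of $(-)^\sharp$ is free (it is a left adjoint by construction), but $\qua$ is a \emph{right} adjoint, so there is no formal reason for the monad $\qua\,\tri$ to preserve colimits. The sentence in the paper that you propose to ``cite directly rather than reprove'' --- ``less formally, the monad $\qua\,\tri$ is cocontinuous'' --- is not a prior result; it is an informal preview of the conclusion of this very lemma (note the label \texttt{lem:qt.cocontinuous}): once $C^\sharp\cong\qua\,\tri\,C$ is established, cocontinuity of $\qua\,\tri$ follows because $(-)^\sharp$ is cocontinuous. Taking that cocontinuity as an input therefore assumes exactly what is to be proved.

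The actual content of the lemma is the colimit comparison $\colim_{\BOX\boxobj{n}\ra C}\qua\,\tri\,\BOX\boxobj{n}\ra\qua\,\tri\,C$, i.e.\ the claim that a simplicial map $\snerve\boxobj{m}\ra\tri\,C$ into the triangulation of a general cubical set is assembled from maps into triangulations of the individual cubes of $C$. This is where the combinatorics of the site enters: the paper's (omitted, cited) proof rests on the unique epi--mono factorization of $\BOX$-morphisms and on injective $\BOX$-morphisms being determined by where they send extrema, which is what lets one show that such a simplicial map is supported on $\tri$ of a single representable image in $C$. To repair your proof you would need to supply this comparison argument for general $C$ rather than invoke cocontinuity of $\qua\,\tri$.
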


A proof for the corresponding statement under a different definition of the $\BOX$ category \cite[Lemma 4.2]{krishnan2015cubical} only depends on the following facts: every $\BOX$-morphism uniquely factors into a surjective $\BOX$-morphism followed by an injective $\BOX$-morphism; and injective $\BOX$-morphisms are uniquely determined by where they send extrema.  
Therefore a proof for the lemma above is omitted.

\begin{lem}
  \label{lem:simplicial.geometric.realization}
  Geometric realization factors through triangulation.  
\end{lem}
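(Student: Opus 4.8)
The plan is to realize geometric realization as the composite of triangulation $\tri$ with the \emph{simplicial geometric realization} $\simplicialgeometricrealization{-}\colon\SIMPLICIAL\SETS\ra\METRIC$, namely the cocontinuous functor extending along the Yoneda embedding $\DEL\ira\SIMPLICIAL\SETS$ the cosimplicial metric space naturally sending each $\DEL$-object $[n]$ to the \emph{order simplex}
$$\{(u_1,\ldots,u_n)\in\I^n\mid 0\leqslant u_1\leqslant\cdots\leqslant u_n\leqslant 1\}$$
with the subspace metric from the product metric space $\I^n$, the cofaces and codegeneracies acting as the evident affine $1$-Lipschitz maps (coordinate insertions of $0$ or $1$, coordinate duplications, and coordinate deletions). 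Note this is the \emph{correct} metrization: the naive choice using the standard $\ell_p$-simplices of Example \ref{eg:metric.simplex} would not make the factorization hold. First I would observe that both $\geometricrealization{-}$ and $\simplicialgeometricrealization{\tri\,-}$ are cocontinuous functors $\CUBICAL\SETS\ra\METRIC$ — the former by its defining property relative to (\ref{eqn:comparisons}), the latter as a composite of the cocontinuous $\tri$ and $\simplicialgeometricrealization{-}$ — so each is the left Kan extension along the Yoneda embedding $\BOX\ira\CUBICAL\SETS$ of its own restriction to representables. It therefore suffices to produce an isomorphism
$$\simplicialgeometricrealization{\snerve\boxobj{n}}=\simplicialgeometricrealization{\tri\,\BOX\boxobj{n}}\;\cong\;\I^n=\geometricrealization{\BOX\boxobj{n}}$$
in $\METRIC$, natural in $\boxobj{n}\in\BOX$.

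The main content is the metric identification of the middle term. On underlying sets both sides are the topological cube $|\snerve\boxobj{n}|$, triangulated by the order simplices of the Boolean lattice $\boxobj{n}$: one nondegenerate $n$-simplex $\chi_\sigma\colon[n]\ra\boxobj{n}$ per maximal chain $(0,\ldots,0)=w_0<\cdots<w_n=(1,\ldots,1)$. A short computation in barycentric coordinates — substituting $x_{\sigma(j)}=1-u_j$ — identifies the component $\simplicialgeometricrealization{\chi_\sigma}$ of the realization with an isometry of $\simplicialgeometricrealization{\DEL[n]}$ onto the region $\mathrm{conv}(w_0,\ldots,w_n)\subset\I^n$; hence the $\simplicialgeometricrealization{}$-metric on each closed simplex is exactly the subspace metric of the product metric space $\I^n$. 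Each such order region is convex, so its subspace metric coincides with its intrinsic (path) metric; the order regions cover $\I^n$ and overlap along faces; and the straight segment between two points of $\I^n$ meets only finitely many of them, with product-metric length additive along the induced subdivision. Unravelling the construction of $\METRIC$-colimits recalled in \S\ref{subsec:pseudometric.spaces}, these facts force the colimit that glues the order simplices along their faces to return precisely the product metric on $\I^n$; the colimit pseudometric is already a genuine metric, so the reflector onto $\METRIC$ acts trivially. This yields the required isomorphism, carried by the identity on underlying sets.

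For naturality in $\boxobj{n}\in\BOX$: a $\BOX$-morphism $\phi\colon\boxobj{m}\ra\boxobj{n}$ acts on $\simplicialgeometricrealization{\snerve\boxobj{m}}$ through $\simplicialgeometricrealization{\snerve\phi}$, the realization of a simplicial map and so affine on each simplex, and on $\I^m$ through the $1$-Lipschitz map of (\ref{eqn:comparisons}) that is linear on the convex closure of each chain, hence affine on each order simplex; both send a vertex $w$ to $\phi(w)$, so they agree under the identifications above. The left Kan extension argument then upgrades this natural isomorphism of $\BOX$-restrictions to a natural isomorphism $\geometricrealization{-}\cong\simplicialgeometricrealization{\tri\,-}$, which is the asserted factorization.

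I expect the main obstacle to be the metric identification in the second paragraph: one must check that gluing the order simplices of the cube, each carrying its product (sup) subspace metric, reproduces $\I^n$ and not some strictly larger path metric. This rests on the geodesic convexity of the order regions in the sup norm together with additivity of sup-length along a straight segment subdivided by the triangulation; everything else is formal bookkeeping with cocontinuity and the universal property of presheaf categories.
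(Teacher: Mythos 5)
Your proof is correct and follows essentially the same route as the paper's: both factor $\geometricrealization{-}$ through $\tri$ via a cocontinuous simplicial realization built on order simplices with the sup metric, reduce to representables by cocontinuity, and verify the cube identification by checking that each maximal-chain simplex of $\I^n$ embeds isometrically. Your second paragraph is in fact slightly more explicit than the paper about why the glued colimit metric on the union of order simplices recovers the product metric on $\I^n$ (additivity of sup-length along subdivided straight segments), a step the paper compresses into a single ``hence.''
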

\begin{proof}
  Let $F$ be the cocontinuous functor 
  $$F:\SIMPLICIAL\SETS\ra\METRIC$$
  sending each representable $\DEL[n]$ to the subset of $\R^n$ consisting of all tuples $(x_1,\cdots,x_n)$ with $1\geqslant x_1\geqslant \cdots\geqslant x_n\geqslant 0$, equipped with the metric such that two points $x,y$ are $\delta$-close if $x_i,y_i$ are $\delta$-close for all $i$, and sending each simplicial function $\DEL[\phi]:\DEL[m]\ra\DEL[n]$ to the linear extension of the function sending the point $(1,1,\cdots,1,0,\cdots,0)$ with $m-i$ non-zero coordinates to the point $(1,1,\cdots,1,0,\cdots,0)$ with $n-\phi(0)$ non-zero entries.   
  It suffices to show $F\tri\,\BOX[-]$ is a monoidal functor $\BOX\ra\METRIC$.
  For then there would exist $\METRIC$-isomorphisms
  \begin{align*}
    F\tri\,\BOX\boxobj{n}
	&\cong F(\tri\,\BOX[1])^n\\
	&\cong F(\DEL[1])^n\\
	&=\geometricrealization{\BOX[1]}^n\\
	&=\geometricrealization{\BOX[1]^{\boxobj{n}}}\\
  \end{align*}
  natural in $\BOX$-object $\boxobj{n}$.
  Then $F\tri=\geometricrealization{-}$ would follow because all functors in sight cocontinuous.

  Let $\pi_{i}$ denote projection $\boxobj{n}\ra[1]$ onto the $i$th factor.  
  Let $\phi$ denote an injective monotone function $[n]\ra\boxobj{n}$.  
  It suffices to show that the $1$-Lipschitz function $\prod_iF\snerve\pi_i\phi$ for each $\phi$, and hence the $1$-Lipschitz bijection $\prod_i(F\,\snerve\pi_i):F\tri\,\BOX\boxobj{n}\ra(F\tri\,\BOX[1])^n$, is $1$-bi-Lipschitz.  

  Fix $\phi$.
  For each $z\in F\DEL[n]$, let $\bar{z}_i=(F\snerve\pi_{i}\phi)(z)$.  
  Consider $x,y\in F\DEL[n]$ for which $\prod_i\bar{x}_i,\prod_i\bar{y}_i$ are $\epsilon$-close.
  Then $\bar{x}_i,\bar{y}_i$ are $\epsilon$-close for each $i$.  
  For each $1\leqslant i\leqslant n$, $\pi_i\phi:[n]\ra[1]$ is the monotone function sending $i-1$ to $0$ and $i$ to $1$.  
  Therefore $\bar{x}_i=x_i$ and $\bar{y}_i=y_i$ for each $1\leqslant i\leqslant n$.  
  Hence $x,y$ are $\epsilon$-close in $F\DEL[n]$.  
\end{proof}

\begin{rem}
  \label{rem:simplicial.disadvantages}
  Geometric realization thus extends along $\tri$ to a functor 
  \begin{equation}
	\label{eqn:simplicial.geometric.realization}
    \SIMPLICIAL\SETS\ra\METRIC
  \end{equation}
  This geometric realization functor does not commute with abstract barycentric subdivision up to natural homeomorphism.
  There does exist a homeomorphism, linear in barycentric coordinates, between a topological simplicial complex and its topological barycentric subdivision.  
  There even exists a homeomorphism, quadratic in barycentric coordinates, between more general geometric realizations of a simplicial set and its abstract barycentric subdivision \cite{fritsch1967homoomorphie}.  
  However, neither of these homeomorphisms are bi-Lipschitz for spaces of unbounded dimension.  
  Geometric realization (\ref{eqn:simplicial.geometric.realization}) does commute, up to bi-Lipschitz equivalence linear in barycentric coordinates, with abstract edgewise subdivision \cite{segal1973configuration}.
  But iterated edgewise subdivision, unlike iterated baryentric subdivision, does not factor even locally through a polyhedral complex. 
\end{rem}

\begin{proof}[proof of Lemma \ref{lem:sharp.sharp}]
  The inclusion $C^\sharp\ira C^{\sharp\sharp}$ can be identified with
  $$\eta_{\qua\,\tri\,C}:\qua\,\tri\,C\ira \qua\,\tri\,\qua\,\tri\,C,$$
  where $\eta$ is the unit of the adjunction $\tri\dashv\qua$ [Lemma \ref{lem:qt.cocontinuous}], and therefore admits a retraction $\qua\,\epsilon_{\tri\,C}$ where $\epsilon$ is the counit of the adjunction $\tri\dashv\qua$.
\end{proof}

\begin{proof}[proof of Lemma \ref{lem:geometric.sharp.retraction}]
  The simplicial function
  $$\tri\,(C\ira C^{\sharp})=\tri\,\eta_C:C\ira\tri\,\qua\,\tri\,C$$
  [Lemma \ref{lem:qt.cocontinuous}] admits a retraction natural in cubical sets $C$ by the zig-zag identities for the adjunction $\tri\dashv\qua$.
  Therefore $\geometricrealization{C\ira C^{\sharp}}$ admits a $1$-Lipschitz retraction $r_C$ natural in cubical sets $C$ [Lemma \ref{lem:simplicial.geometric.realization}].  
\end{proof}

\section{Categories of (co)fibrant objects}\label{sec:fibrant.objects}
We briefly recall definitions associated to categories of (co)fibrant objects and the basic fact that such categories admit localizations by their weak equivalences.  
The reader is referred elsewhere \cite{brown1973abstract} for more details, including concrete constructions of morphisms in the homotopy category and applications to the construction of cocycles in cohomology.  
A \textit{category of fibrant objects} is a category $\homotopicalcat{1}$ with weak equivalences together with a class of morphisms, called the \textit{weak equivalences}, and a class of morphisms, called the \textit{fibrations}, such that the following conditions hold:
\begin{enumerate}
  \item $\homotopicalcat{1}$ has all finite limits 
  \item The weak equivalences in $\homotopicalcat{1}$ satisfy the 2-out-of-3 axiom.
  \item Each diagonal map in $\homotopicalcat{1}$ factors as a weak equivalence followed by a fibration.
  \item Every terminal $\homotopicalcat{1}$-morphism is a fibration.
  \item Fibrations in $\homotopicalcat{1}$ are closed under pullback.
  \item The $\homotopicalcat{1}$-morphisms that are at once fibrations and weak equivalences are closed under pullbacks.
\end{enumerate}

Consider a category $\homotopicalcat{1}$ of fibrant objects. 
Call a $\homotopicalcat{1}$-morphism $\zeta$ an \textit{acyclic fibration} if $\zeta$ is at once a fibration and weak equivalence.
A $\homotopicalcat{1}$-morphism is a \textit{cofibration} if it has the left lifting property against all acyclic fibrations.
Suppose further that $\homotopicalcat{1}$ has an initial object. 
Then the \textit{cofibrant objects} are those $\homotopicalcat{1}$-objects $o$ for which every $\homotopicalcat{1}$-morphism of the form $o\ra x$ lifts against every acyclic fibration to $x$.

\begin{eg}
  A model category with all objects are fibrant is a category of fibrant objects.
\end{eg}

Every category $\homotopicalcat{1}$ of fibrant objects admits a localization
$$\homotopicalcat{1}\ra h\homotopicalcat{1}$$
by the weak equivalences.  
Under some mild assumptions, the h-equivalences and h-fibrations form a model structure \cite{barthel2013construction}.  
Under no assumptions, the h-equivalences and h-fibrations form a category of fibrant objects.

\begin{eg}
  \label{eg:h-fibrant}
  A bicomplete $\TOP$-category is a category of fibrant objects in which \ldots
  \begin{enumerate}
	\item weak equivalences are the h-equivalences
	\item fibrations are the h-fibrations
  \end{enumerate}
  The verification of all but the factorization axiom is formal.  
  The factorization axiom essentially follows from observations elsewhere \cite[Lemma 4.3.1 (iii)]{may2006parametrized} but for the inconsequential detail that $\TOP$, while complete, cocomplete, and Cartesian closed, is not the usual category of weak Hausdorff k-spaces.    
\end{eg}

A \textit{category of cofibrant objects} is a category $\homotopicalcat{1}$ together with a class of morphisms, called the \textit{weak equivalences}, and a class of morphisms, called the \textit{cofibrations}, such that $\OP{\homotopicalcat{1}}$ is a category of fibrant objects with the same weak equivalences but where the fibrations are the cofibrations in $\homotopicalcat{1}$.  
Consider a category $\homotopicalcat{1}$ of cofibrant objects.  
Call a $\homotopicalcat{1}$-morphism $\zeta$ an \textit{acyclic cofibration} if $\zeta$ is at once a cofibration and weak equivalence.
Suppose further that $\homotopicalcat{1}$ has a terminal object. 
Then the \textit{fibrant objects} in $\homotopicalcat{1}$ are those $\homotopicalcat{1}$-objects $o$ for which every $\homotopicalcat{1}$-morphism to $o$ extends along every acyclic cofibration.
By duality, every category $\homotopicalcat{1}$ of cofibrant objects admits a localization
$$\homotopicalcat{1}\ra h\homotopicalcat{1}$$
by the weak equivalences.  

\bibliography{at,cubical,cohomology,topology,ditopology}{}
\bibliographystyle{plain}

\end{document}